\documentclass[11pt,reqno]{amsart}
\usepackage{amsmath}
\usepackage{mathpazo}
\usepackage[margin=1in]{geometry}
\usepackage{color}
\usepackage{tikz-cd}
\usetikzlibrary{cd}
\usepackage{hyperref}
\usepackage{url}
\definecolor{darkred}{rgb}{1,0,0}
\definecolor{darkgreen}{rgb}{0,1,0}
\definecolor{darkblue}{rgb}{0,0,1}

\usepackage{graphicx,txfonts}

\usepackage{amsthm}

\hypersetup{colorlinks,
linkcolor=darkblue,
filecolor=darkblue,
urlcolor=black,
citecolor=darkblue}

\renewcommand{\H}{\mathbf{H}^{p,q}}

\newcommand{\bH}{\partial_\infty \mathbf{H}^{p,q}}
\newcommand{\Ads}{\mathbf{H}^{2,1}}
\newcommand{\bAds}{\partial_\infty\mathbf{H}^{2,1}}
\newcommand{\Htwo}{\mathbf{H}^{2}}

\newcommand{\GH}{\mathbf G(\H)}
\newcommand{\GHH}{\mathbf G(\H_+)}
\newcommand{\Gr}{\mathbf{G}(E)}
\renewcommand{\P}{\mathbf P}
\newcommand{\g}{\mathbf g}

\newcommand{\B}{\mathbf B}
\newcommand{\Ball}{\mathbf{B}}
\newcommand{\seqn}[1]{\{ #1 \}_{n\in \mathbf N}}
\newcommand{\seqm}[1]{\{ #1 \}_{m\in \mathbf N}}
\newcommand{\Hom}{\text{Hom}}
\renewcommand{\S}{\mathbf S}
\newcommand{\T}{\mathsf{T}}
\newcommand{\N}{\mathsf{N}}
\newcommand{\G}{\mathsf{G}}
\newcommand{\K}{\mathsf{K}}
\newcommand{\I}{\mathrm I}
\newcommand{\II}{\mathrm{II}}
\newcommand{\q}{\mathbf{b}}
\newcommand{\SO}{\mathsf{O}(\q)}
\newcommand{\PO}{\mathsf{PO}(\q)}
\newcommand{\Stab}{\text{Stab}}
\renewcommand{\span}{\text{span}}

\newcommand{\Bd}{\mathcal B}
\newcommand{\M}{\mathcal M}
\newcommand{\Quad}{\mathbf Q^{p,q}}
\newcommand{\tr}{\text{tr}}
\newcommand{\Conv}{{\text{\rm Conv}}}
\newcommand{\Id}{\mathrm{Id}}
\newcommand{\Ker}{\text{Ker}}
\newcommand{\Lip}{\text{Lip}}
\newcommand{\V}{\mathbf{V}}
\newcommand{\Isot}{\mathbf{Isot}}

\newcommand{\Upp}{\mathbf{S}_+}
\newcommand{\oprad}{{\text{\rm rad}}}
\newcommand{\opExp}{{\text{\rm Exp}}}
\newcommand{\opArccosh}{{\text{\rm arccosh}}}
\newcommand{\opSin}{{\text{\rm sin}}}
\newcommand{\opArcSin}{{\text{\rm arcsin}}}

%
%

\catcode`\@=11
\def\eqalign#1{\null\,\vcenter{\openup1\jot \m@th %
\ialign{\strut\hfil$\displaystyle{{}##}$&$\displaystyle{{}##}$\hfil\crcr#1\crcr}}\,}
\def\triplealign#1{\null\,\vcenter{\openup1\jot \m@th %
\ialign{\strut\hfil$\displaystyle{##}$&$\displaystyle{{}##}$\hfil&$\displaystyle{{}##}$\hfil\crcr#1\crcr}}\,}
\def\multiline#1{\null\,\vcenter{\openup1\jot \m@th %
\ialign{\strut$\displaystyle{##}$\hfil&$\displaystyle{{}##}$\hfil\crcr#1\crcr}}\,}
\catcode`\@=12
\newcommand{\opConv}{{\text{\rm Conv}}}
\newcommand{\opHom}{{\text{\rm Hom}}}

\newcommand{\opIm}{{\text{\rm Im}}}
\newcommand{\opId}{{\text{\rm Id}}}

\newcommand{\opdVol}{{\text{\rm dVol}}}
\newcommand{\opVol}{{\text{\rm Vol}}}

\newcommand{\oploc}{{\text{\rm loc}}}

\newcommand{\POpqp}{\mathsf{PO}(p,q+1)}

\newcommand{\Rpqp}{\mathbf{R}^{p,q+1}}

\newcommand{\Rpq}{\textbf{R}^{p,q}}
\newcommand{\Rp}{\textbf{R}^p}
\newcommand{\Hp}{\mathbf{H}^p}

\newcommand{\Hqppn}{\mathbf{H}^{q+1,p-1}}
\newcommand{\Hthree}{\textbf{H}^3}
\newcommand{\Hpone}{\textbf{H}^{p,1}}
\newcommand{\Htwoq}{\textbf{H}^{2,q}}

\newcommand{\R}{\mathbf{R}}
\newcommand{\Exp}{\text{\rm Exp}}

\newcommand{\opL}{{\text{L}}}

\newcommand{\opCos}{{\text{\rm cos}}}
\newcommand{\opDet}{{\text{\rm Det}}}
\newcommand{\AdS}{{\text{\rm AdS}}}

\newcommand{\ophatdVol}{\widehat{\opdVol}{}}
\newcommand{\opequiv}{{\text{\rm equiv}}}
\def\myitem#1{\leavevmode\newline\noindent\hbox to .5cm{\hfill#1\hss}}

\newcommand{\opren}{{\text{\rm ren}}}
\newcommand{\oppr}{{\text{\rm pr}}}
\newcommand{\opDiam}{{\text{\rm Diam}}}

%
%

%
%
%
\def\newsubhead#1[#2]{\subsection{#1}\label{subhead:#2}}

\def\myeqnum#1{#1}
\def\nexteqnno[#1]{\label{eqn:#1}}
\def\eqnref#1{(\ref{eqn:#1})}
\def\proclabel#1{\label{proc:#1}}
\def\procref#1{\ref{proc:#1}}
\def\Cal#1{{\mathcal #1}}
\def\myproof{\begin{proof}}
\def\myqed{\end{proof}}
\def\opT{{\text{\rm T}}}
\def\mlim{\lim}

\def\geqslant{\geq}

\newcommand{\opSupp}{{\text{\rm Supp}}}
\newcommand{\opHess}{{\mathrm{Hess}}}

\newcommand{\opSinh}{{\text{\rm sinh}}}
\newcommand{\opCoth}{{\text{\rm coth}}}
\newcommand{\opCosh}{{\text{\rm cosh}}}

\newcommand{\opII}{{\text{\rm II}}}

\newcommand{\Spmq}{\mathbf{S}^{p-1,q}}
\def\seq[#1]#2{\{#2\}_{#1\in \mathbf N}}
\def\munion{\cup}
\def\minter{\cap}
\def\msup{{\text{\rm sup}}}
%
%
\def\harr#1#2{\smash{\mathop{\hbox to .5in{\rightarrowfill}}\limits^{\scriptstyle #1}_{\scriptstyle #2}}}%
\def\varr#1#2{\llap{$\scriptstyle #1$}\left\downarrow\vcenter to .5in{}\right.\rlap{$\scriptstyle #2$}}%
\def\diagram#1{{\normallineskip=8pt \normalbaselineskip=0pt \begin{matrix}#1\end{matrix}}}%
\def\sharr#1#2{\smash{\mathop{\hbox to .3in{\rightarrowfill}}\limits^{\scriptstyle #1}_{\scriptstyle #2}}}%
%
%
%

\makeatletter
\newtheorem*{rep@theorem}{\rep@title}
\newcommand{\newreptheorem}[2]{%
\newenvironment{rep#1}[1]{%
 \def\rep@title{#2 \ref{##1}}%
 \begin{rep@theorem}}%
 {\end{rep@theorem}}}
\makeatother

\makeatletter
\newtheorem*{rep@cor}{\rep@title}
\newcommand{\newrepcor}[2]{%
\newenvironment{rep#1}[1]{%
 \def\rep@title{#2 \ref{##1}}%
 \begin{rep@cor}}%
 {\end{rep@cor}}}
\makeatother

\newtheorem{proposition}{Proposition}[section]

\newtheorem{corollary}[proposition]{Corollary}
\newtheorem{corx}{Corollary}

\newtheorem{theorem}[proposition]{Theorem}
\newtheorem{thmx}[corx]{Theorem}

\newtheorem{definition}[proposition]{Definition}
\newtheorem{lemma}[proposition]{Lemma}

\newtheorem{step}{Step}

\theoremstyle{remark}
\newtheorem{remark}[proposition]{Remark}
\newtheorem*{remark*}{Remark}

\counterwithin{equation}{section}

\title{On complete maximal submanifolds in pseudo-hyperbolic space}

\author[A. Seppi]{Andrea Seppi}
\address{A. Seppi: Univ. Grenoble Alpes, CNRS, IF, 38000 Grenoble, France.} \email{andrea.seppi@univ-grenoble-alpes.fr}

\author[G. Smith]{Graham Smith}
\address{G. Smith: Departamento de Matem\'atica, Pontif\'\i cia Universidade Cat\'olica, Rio de Janeiro, Brasil} \email{grahamandrewsmith@gmail.com}

\author[J. Toulisse]{J\'er\'emy Toulisse}
\address{J. Toulisse: Universit\'e C\^ote d'Azur, CNRS,  LJAD,  France}
\email{jeremy.toulisse@univ-cotedazur.fr}

\begin{document}

\maketitle
\begin{abstract}
We provide a full classification of complete maximal $p$-dimensional spacelike submanifolds in the pseudo-hyperbolic space $\H$, and we study its applications to Teichm\"uller theory and to the theory of Anosov representations of hyperbolic groups in $\POpqp$.
\end{abstract}
\tableofcontents

\section{Introduction}

Minimal submanifolds have become an invaluable part of the modern geometer's toolbox, having played fundamental roles in the proofs of a number of notable results. In recent years 
there has been a growing interest in the application of maximal submanifolds to the study of pseudo-riemannian symmetric spaces. In this paper, we provide a full classification of complete maximal $p$-dimensional spacelike submanifolds in the pseudo-hyperbolic space $\H$, and we discuss new results that this classification yields both in Teichm\"uller theory and in the theory of Anosov representations of hyperbolic groups in $\POpqp$.\footnote{For the study of maximal surfaces in pseudo-riemannian symmetric spaces, please see \cite{ishi,open,collierSp,CTT,LTW,nie,LT,colliertoulisse}. For existing applications of maximal surfaces to Teichm\"uller theory please see \cite{bonschl,MR3035326,scarincikrasnov,MR3714718,andreaJEMS}, and for discussions of the theory of Anosov representations, please see \cite{LabAnFlow,GW,fannyICM,DGK,zbMATH07467770,canarynotes}.}

\subsection{Main results}

In this paper, we take \emph{pseudo-hyperbolic space} to be the projective space $\H$ of negative-definite lines in $\R^{p,q+1}$. It is a homogeneous pseudo-riemannian space of constant sectional curvature equal to $-1$ and of signature $(p,q)$, that is, having $p$ positive and $q$ negative linearly-independent directions. When $q$ is equal to $0$ or $1$, $\H$ reduces to the better-known examples of hyperbolic space $\Hp$ and anti-de Sitter space $\AdS^{p,1}$ respectively.

As in the hyperbolic and anti-de Sitter cases, $\H$ possesses an \emph{asymptotic boundary} $\partial_\infty\H$, which we identify with the space of isotropic lines in $\R^{p,q+1}$. The union $\H\cup\partial_\infty\H$ is compact with respect to the topology that it inherits as a subset of projective space.

We identify a special class of topologically embedded spheres in $\partial_\infty\H$ as follows. We say that a triple of pairwise distinct points in $\partial_\infty\H$ - which, we recall, represent lines in $\R^{p,q+1}$ - is \emph{positive} whenever their span is $3$-dimensional with signature $(2,1)$, and \emph{non-negative} whenever their span contains no negative-definite $2$-plane. When $p\geq 3$, we say that a topologically embedded $(p-1)$-sphere $\Lambda$ in $\partial_\infty\H$ is \emph{positive} (respectively \emph{non-negative}) whenever every triple of pairwise distinct points that it contains is positive (respectively non-negative). When $p=2$, for topological reasons, we require in addition that $\Lambda$ contain at least $1$ positive triple. This latter case is studied in detail by Labourie--Toulisse--Wolf in \cite{LTW}, where they call non-negative $1$-spheres \emph{semi-positive loops}. We denote by $\Bd$ the space of non-negative $(p-1)$-spheres in $\partial_\infty\H$, furnished with the Hausdorff topology.

We define a \emph{maximal $p$-submanifold} of $\H$ to be a connected, $p$-dimensional, smooth spacelike submanifold which is a critical point of the area functional with respect to compactly supported variations. We denote by $\M$ the space of complete maximal $p$-submanifolds of $\H$, furnished with the topology of smooth convergence over compact sets. Given any complete maximal $p$-submanifold $M$ of $\H$, we denote by $\partial_\infty M$ the intersection of its closure with $\partial_\infty\H$, and we call this set the \emph{asymptotic boundary} of $M$. It is relatively straightforward to show (see Corollary \ref{cor:BoundaryOperatorIsContinuous}) that $\partial_\infty M$ is always a non-negative $(p-1)$-sphere, and even that $\partial_\infty$ maps $\M$ continuously into $\Bd$. We prove the converse of this fact.

\begin{thmx}\label{thm:introhomeo}
The asymptotic boundary map $\partial_\infty:\M\to\Bd$ is a homeomorphism. In particular, for every non-negative $(p-1)$-sphere $\Lambda$ in $\bH$, there exists a unique complete maximal $p$-submanifold of $\H$ with asymptotic boundary $\Lambda$.
\end{thmx}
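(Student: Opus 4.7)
The plan is to show that $\partial_\infty:\M\to\Bd$, whose continuity is asserted in Corollary~\ref{cor:BoundaryOperatorIsContinuous}, is a bijection with continuous inverse; this reduces to three largely independent tasks: uniqueness, existence, and properness of the boundary map.

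For \textbf{uniqueness}, I would run a maximum principle argument. Given two complete maximal $p$-submanifolds $M_1,M_2\subset\H$ sharing an asymptotic boundary $\Lambda$, lift them to the quadric in $\R^{p,q+1}$ and consider a scalar function $\varphi:M_1\times M_2\to\R$ built from the ambient $(p,q+1)$-scalar product of the lifts---the pseudo-Riemannian analogue of the $\cosh$-of-distance used in the hyperbolic case. On each factor, the maximal equation combined with the constant curvature of $\H$ should yield a sharp Bochner-type identity forcing the Laplacian of $\varphi$ to be sign-controlled. The common boundary $\Lambda$ dictates the asymptotic value of $\varphi$ along $\partial_\infty(M_1\times M_2)$, and if $M_1\ne M_2$ a comparison argument would exhibit an interior extremum of $\varphi$ contradicting this sign. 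The subtle point is verifying that the relevant Bochner identity remains valid in indefinite signature, where both factors are spacelike but the ambient metric is not.

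For \textbf{existence}, I would approximate $\Lambda$ by a sequence $\Lambda_n\in\Bd$ that can each be realized as the asymptotic boundary of an explicit spacelike $p$-disk---for instance small perturbations of totally geodesic copies of $\Hp$ that already lie in $\H$. On each approximation, solve a compact Plateau-type problem for the area functional restricted to spacelike $p$-submanifolds, producing maximal $M_n$ with $\partial_\infty M_n=\Lambda_n$. A smooth subsequential limit $M_\infty$ on compact subsets of $\H$ would then be constructed using interior regularity estimates, and barrier arguments comparing $M_\infty$ to appropriate spacelike witnesses near $\Lambda$ would force $\partial_\infty M_\infty=\Lambda$. For \textbf{properness} (hence continuity of the inverse), essentially the same limiting scheme, combined with uniqueness, yields that if $\Lambda_n\to\Lambda$ in $\Bd$ then the associated maximal submanifolds $M_n$ converge in $\M$ to $\partial_\infty^{-1}(\Lambda)$.

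The main obstacle I anticipate is establishing uniform interior second fundamental form estimates for spacelike maximal $p$-submanifolds of $\H$ depending only on their asymptotic data. Such estimates are classical when $q=0$ (Heinz-type bounds in $\Hp$) and available when $q=1$ (anti-de Sitter), but the general indefinite case calls either for a careful Simons-type computation adapted to signature $(p,q)$ or for a version of the Omori--Yau maximum principle suited to the ambient pseudo-Riemannian geometry. A secondary subtlety is that $\Lambda$ need only be \emph{non-negative}, not positive: degenerate triples could in principle allow the maximal submanifold to touch $\bH$ tangentially, so both the uniqueness and existence arguments must be robust enough to accommodate this degeneracy along the boundary.
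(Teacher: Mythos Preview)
Your uniqueness strategy matches the paper's closely: the function $\beta(x_1,x_2)=\q(\hat x_1,\hat x_2)$ on $M_1\times M_2$ is exactly the test function used in Section~\ref{sec:uniqueness}, and the Hessian computation plus a maximum-principle argument (after renormalising by isometries to force the extremum to lie in a compact region) is precisely how injectivity of $\partial_\infty$ is established. The second fundamental form bound you anticipate needing is already available in the literature as Ishihara's theorem (Theorem~\ref{theorem:Ishihara}): $\|\opII\|^2\le pq$ for any complete maximal $p$-submanifold of $\H$, with no hypothesis on the asymptotic data.

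The genuine gap is in your existence step. You propose to ``solve a compact Plateau-type problem for the area functional restricted to spacelike $p$-submanifolds'' for each approximating $\Lambda_n$, but no such direct variational method is known in mixed signature: there is no pseudo-riemannian analogue of the geometric measure theory Anderson used in $\Hp$, and the area functional on spacelike submanifolds has neither the coercivity nor the lower-semicontinuity needed for a direct approach (the paper makes this point explicitly in Section~\ref{subsec:techniques}). The paper circumvents this entirely. Instead of solving any Plateau problem, it proves a \emph{stability} result (Theorem~\ref{thm:perturb H+}): if a smooth spacelike sphere $\Lambda_0$ already bounds a complete maximal $M_0$, then every nearby smooth spacelike $\Lambda_t$ also bounds. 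This requires inverting the Jacobi operator of $M_0$ on suitable weighted H\"older spaces over the non-compact $M_0$, which is the technical heart of the paper (Sections~\ref{sec:cones}--\ref{PerturbationsOfMinimalEnds}): one models the ends of $M_0$ on radially parametrised cones, derives elliptic estimates via an ODE trick on the radial variable, and applies the implicit function theorem. Existence then follows by the continuity method: totally-geodesic $\Hp$'s give a starting point, stability gives openness, properness (from the compactness dichotomy of Theorem~\ref{thm:degenerate}) gives closedness, and the space of smooth spacelike spheres is path-connected and dense in $\Bd$. Your limiting scheme and properness discussion are close in spirit to the paper's compactness step, but without the stability input there is nothing to take limits of.
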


\noindent The proof of Theorem \ref{thm:introhomeo} requires an in-depth study of the asymptotic structure of complete maximal $p$-submanifolds in $\H$. The techniques that we develop yield, in addition, the following new result concerning the total curvatures of complete maximal $p$-submanifolds with sufficiently regular asymptotic boundaries.

\begin{thmx}\label{thm:DecayOfSecondFundamentalForm}
If $M$ is a complete maximal $p$-submanifold in $\H$ with $C^{3,\alpha}$ asymptotic boundary, and if $\opII$ denotes its second fundamental form, then, for all $s>p-1$,
\begin{equation}\label{eqn:DecayOfSecondFundamentalForm}
\|\opII\|\in L^s(M,\mathrm{dVol}_M)\ .
\end{equation}
\end{thmx}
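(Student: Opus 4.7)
The plan is to combine two quantitative estimates: an exponential decay estimate $\|\opII(x)\|_{g_M} \leq C\, e^{-d_M(x,x_0)}$ valid outside a compact subset of $M$, and an upper bound $\opVol(B_M(x_0,r)) \leq C\, e^{(p-1)r}$ on the intrinsic volume growth of $M$. These match exactly so that, via the coarea formula,
\begin{equation*}
\int_{M \setminus B_M(x_0,R)} \|\opII\|^s \, d\opVol_M \;\leq\; C \int_R^\infty e^{-(s-(p-1))r} \, dr,
\end{equation*}
which is finite precisely for $s > p-1$, while $\|\opII\|$ is trivially bounded on the compact set $B_M(x_0,R)$.

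\textbf{Volume growth.} The asymptotic analysis of Theorem \ref{thm:introhomeo} describes $M$, in a neighbourhood of each point of $\partial_\infty M$, as a suitably bounded graph over a reference spacelike model in local charts for $\H \cup \partial_\infty \H$. In these charts the induced Riemannian metric $g_M$ is locally bi-Lipschitz to the hyperbolic metric on $\Hp$. Combined with compactness of $\partial_\infty M$ and completeness of $M$, this yields the hyperbolic-type volume growth estimate above.

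\textbf{Decay of $\opII$.} This is the main new content. The maximal submanifold equation, written in the charts above, becomes a degenerate quasilinear elliptic PDE for the graph function $u$ whose boundary behaviour is governed by $\partial_\infty M$. The $C^{3,\alpha}$ regularity assumption on $\partial_\infty M$, coupled with boundary Schauder-type estimates for this class of degenerate elliptic problems, provides uniform $C^{2,\alpha}$ estimates for $u$ up to the boundary. Computing $\opII$ from the first two covariant derivatives of $u$ in the Fermi-type coordinates, and reading off the natural decay factor inherited from the conformally compact structure of the ambient metric near $\partial_\infty \H$, then yields the pointwise bound
\begin{equation*}
\|\opII(x)\|_{g_M} \leq C\, e^{-d_M(x,x_0)}.
\end{equation*}

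\textbf{Main obstacle.} The hard step is the pointwise decay estimate for $\opII$. The rate $e^{-d_M(x,x_0)}$ is sharp: it sits exactly at the threshold where integrability against hyperbolic volume growth just fails at the critical exponent $s = p-1$, so any slower rate would not deliver the full range $s > p-1$. Capturing it requires genuinely exploiting the $C^{3,\alpha}$ regularity of $\partial_\infty M$ through the boundary regularity theory for the maximal submanifold PDE that underpins the proof of Theorem \ref{thm:introhomeo}; mere completeness of $M$ together with curvature bounds are not enough.
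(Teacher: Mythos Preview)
Your overall architecture---exponential decay $\|\opII\|\lesssim e^{-r}$ against volume growth $\lesssim e^{(p-1)r}$, then integrate---is exactly the paper's. The difference lies entirely in how the decay estimate is obtained, and here you diverge substantially from the paper's route.

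The paper does \emph{not} work in local conformally compact charts near $\partial_\infty M$ and does not invoke boundary Schauder theory for degenerate elliptic operators. Instead it uses a single global radial model: spacelike polar coordinates about a point $x_0$ identify $M$ outside a compact set with the graph of a normal section $\sigma$ over a truncated cone $\hat X_{r_0}$ (Lemma~\procref{AsymptoticsOfMaximalGraph}). The decay $\|\opII\|\lesssim e^{-r}$ is then precisely the statement $\sigma\in C^{2,\alpha}_{-1}(\N\hat X_{r_0})$, proved in Lemma~\ref{lemma:SigmaDecaysExponentially} by a bootstrap: from $\sigma\in C^{2,\alpha}_0$ one observes that the mean curvature of the cone already lies in $C^{0,\alpha}_{-1}$, and the \emph{invertibility of the Jacobi operator on weighted H\"older spaces} (Theorem~\procref{InvertibilityOverEndHoelder}, developed as part of the stability step for Theorem~\ref{thm:introhomeo}) then forces $\sigma$ itself into the weight~$-1$ space. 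The $C^{3,\alpha}$ hypothesis enters not through boundary Schauder estimates for the graph PDE, but because the normal bundle of the cone must be $C^{2,\alpha}$ for this weighted calculus to make sense.

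Your proposed alternative---local charts plus degenerate Schauder estimates up to the boundary---is a plausible programme (in the spirit of Mazzeo's edge theory or Graham--Lee type results), but as written it is only an assertion: you have not identified which degenerate-elliptic boundary theory applies to the maximal-submanifold system in signature $(p,q)$, nor verified that it delivers exactly the rate $e^{-r}$ rather than something weaker. Since, as you yourself note, this rate is sharp for the stated range of $s$, this is the place where the proposal is incomplete rather than merely different.
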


\subsection{Historical background}\label{sec:historical}

From a historical perspective, Theorem \ref{thm:introhomeo} is best understood in the context of asymptotic Plateau problems. The asymptotic Plateau problem in hyperbolic $p$-space was first addressed by Anderson in \cite{anderson}, where he proved the existence of volume-minimizing $k$-dimensional currents in $\Hp$ bounded by any given closed, embedded $(k-1)$-dimensional submanifold of $\partial_\infty\Hp$. In general, the questions of regularity, topological type, and uniqueness of such minimizing currents present hard and subtle problems (see, for example, \cite{surveyAPP}). Nonetheless, in dimension $3$, Anderson showed that every Jordan curve $\Lambda$ in $\partial_\infty\Hthree$ bounds a smoothly embedded minimal disk. However, these disks are not necessarily unique, even under the additional hypothesis that $\Lambda$ be the limit set of some quasi-fuchsian representation (see \cite[Theorem 5.2]{anderson}). Indeed, Huang--Wang constructed in \cite{huangwang} quasi-fuchsian representations whose limit sets may bound arbitrarily many smoothly embedded, invariant minimal disks.

This contrasts sharply with the situation in anti-de Sitter space. Indeed, in \cite{ABBZ}, Andersson--Barbot--B{\'e}guin--Zeghib showed that, given any representation $\rho:\Gamma=\pi_1(N)\to\mathsf{PO}(p,2)$, which is the holonomy of a maximal, globally hyperbolic, anti-de Sitter manifold homeomorphic to $N\times\R$, for some closed manifold $N$, there exists a unique $\rho$-invariant maximal hypersurface in $\Hpone$ with asymptotic boundary equal to the limit set of $\rho$. In particular, $\rho$ acts on this hypersurface freely and properly discontinuously, with quotient diffeomorphic to $N$. Likewise, in \cite{bonschl}, Bonsante--Schlenker showed that every positive $(p-1)$-sphere in $\partial_\infty\Hpone$ is the asymptotic boundary of some complete maximal hypersurface and, furthermore, when $p=2$ and the boundary curve is the graph of a quasi-symmetric homeomorphism, this hypersurface is also unique.

Analogous results have recently been obtained for maximal surfaces in the pseudo-hyperbolic space $\textbf{H}^{2,q}$. Indeed, in \cite{CTT}, Collier--Tholozan--Toulisse proved that, for any closed, orientable surface $S$ of genus at least $2$, and any maximal representation $\rho:\pi_1(S)\rightarrow\mathsf{PO}_0(2,q+1)$, the limit set of $\rho$ is the asymptotic boundary of a unique complete $\rho$-invariant maximal surface in $\Htwoq$. As before, $\rho$ then acts freely and properly-discontinuously on this surface, with quotient diffeomorphic to $S$. Finally, in \cite{LTW}, Labourie--Toulisse--Wolf generalised this result to prove that every non-negative 1-sphere in $\partial_\infty\textbf{H}^{2,q}$ is the asymptotic boundary of a unique complete maximal surface.

Theorem \ref{thm:introhomeo} thus unifies the known results for $\Hpone$ and $\Htwoq$, and extends them to $\H$ for all $(p,q)$, whilst addressing the most general hypotheses on the asymptotic boundary.

\subsection{Techniques and novelties}\label{subsec:techniques}

We note first that our approach is quite different from those used in the earlier works mentioned above. Indeed, Anderson used geometric measure theory to address the Plateau problem in $\Hp$, a technique which to date has no pseudo-riemannian analogue. Likewise, Andersson--Barbot--B{\'e}guin--Zeghib and Bonsante--Schlenker used the works \cite{zbMATH03822642,zbMATH05173443} of Gerhardt, \cite{zbMATH04092296} of Bartnik, and \cite{zbMATH02198988} of Ecker, which are peculiar to the Lorentzian setting. Finally Collier--Tholozan--Toulisse used Higgs bundles, whilst Labourie--Toulisse--Wolf used pseudo-holomorphic curves, both of which are peculiar to the case of surfaces.

We prove Theorem \ref{thm:introhomeo} by applying the continuity method in a global manner. This has the advantage over previous approaches of yielding detailed information concerning the asymptotic structures of complete maximal $p$-submanifolds with smooth asymptotic boundaries, yielding, as a byproduct, Theorem \ref{thm:DecayOfSecondFundamentalForm}.

The continuity method decomposes into three main steps, namely compactness, uniqueness and perturbation (or, stability).

Our compactness result is a manifestation of the dichotomy first observed by Labourie in \cite{LabMA} in the context of $k$-surfaces in hyperbolic $3$-space (see also \cite{Schl_degen,SmiSLC,SmiCSC,SmiQMAK}).

\begin{step}[Compactness -- Theorem \ref{thm:degenerate}]\label{thm:introdegenerate}
If $\seq[m]{M_m}$ is a sequence of complete maximal $p$-submanifolds of $\H$ then, either
\begin{enumerate}
\item $\seq[m]{M_m}$ subconverges in the $C^\infty_\oploc$ topology to a smooth, complete maximal $p$-submanifold of $\H$, or
\item $\seq[m]{M_m}$ subconverges in the Hausdorff topology to a Lipschitz $p$-submanifold foliated by complete, lightlike geodesics, all having the same endpoint at infinity.
\end{enumerate}
\end{step}

\noindent We call submanifolds of the second type \emph{degenerate}. We will see below (see Lemma \ref{lemma:AdmissableBoundaryCriteria}) that, up to isometries of the ambient space, the space of degenerate submanifolds is itself homeomorphic to the space of $1$-Lipschitz maps from a hemisphere $\Upp^{p-1}\subseteq\S^{p-1}$ into $\S^{q-1}$. Indeed, degenerate submanifolds are simply the graphs of suspensions of such maps.

Recall now that the space $\M$ of complete maximal $p$-submanifolds carries the topology of smooth convergence over compact sets, whilst the space $\Bd$ of non-negative $(p-1)$-spheres carries the Hausdorff topology. Since no degenerate submanifold can have a non-negative sphere as its asymptotic boundary, one of the main consequences of Theorem \ref{thm:degenerate} for the proof of Theorem \ref{thm:introhomeo} is that the asymptotic boundary map $\partial_\infty:\M\rightarrow\Bd$ is proper.

Uniqueness is proven using the maximum principle. Our proof is similar to that of \cite{LTW}, although some care is required in the present, higher-dimensional setting.

\begin{step}[Uniqueness -- Theorem \ref{thm:injective}]\label{thm:introuniqueness}
A non-negative $(p-1)$-sphere in $\bH$ is the asymptotic boundary of at most one complete maximal $p$-submanifold of $\H$.
\end{step}

The lengthiest and most technical part of the proof is the following stability result.

\begin{step}[Stability -- Theorem \ref{thm:perturb H+}]\label{thm:introMaximalGraphsPerturb}
Let $(\Lambda_t)_{t\in(-\epsilon,\epsilon)}$ be a smoothly varying family of smooth, spacelike spheres in $\partial_\infty\H$. If there exists a complete maximal $p$-submanifold $M$ of $\H$ with asymptotic boundary $\Lambda_0$ then, upon reducing $\epsilon$ if necessary, there exists a family $(M_t)_{t\in(-\epsilon,\epsilon)}$ of complete maximal $p$-submanifolds such that, $M_0=M$ and, for all $t$, $M_t$ has asymptotic boundary $\Lambda_t$.
\end{step}

\noindent The usual approach to proving stability results of this kind is to first represent maximal $p$-submanifolds near $M$ as zeroes of some functional over some Banach space, and then to apply the implicit function theorem. In the present case, the non-compactness of the submanifolds in question presents an extra layer of difficulty, requiring us to study such things as asymptotic models and pre-elliptic estimates. This will be carried out in Section \ref{sec:stability general}, where, in addition, we will provide a far more detailed discussion of the main ideas used in this step of the proof.

Having established Steps \ref{thm:introdegenerate},  \ref{thm:introuniqueness} and \ref{thm:introMaximalGraphsPerturb}, Theorem \ref{thm:introhomeo} readily follows by the continuity method. Indeed, let $\opIm(\partial_\infty)$ denote the image of $\partial_\infty$, let $\Bd^\infty$ denote the space of smooth, spacelike $(p-1)$-spheres in $\partial_\infty\H$, and note that this is a dense subset of $\Bd$. Since totally geodesic, spacelike $p$-subspaces of $\H$ are trivially maximal, $\opIm(\partial_\infty)$ has non-trivial intersection with $\Bd^\infty$. Since $\Bd^\infty$ is path-connected, the continuity method then shows that $\opIm(\partial_\infty)$ contains $\Bd^\infty$. Finally, by density of $\Bd^\infty$, and properness of $\partial_\infty$, it follows that $\opIm(\partial_\infty)$ contains $\Bd$, thus proving Theorem \ref{thm:introhomeo}.

Finally, the asymptotic analysis developed to prove Step \ref{thm:introMaximalGraphsPerturb} involves the use of weighted function spaces over complete maximal $p$-submanifolds. Although weighted spaces are not actually required for the proof of Theorem \ref{thm:introhomeo}, they provide detailed asymptotic information at little extra cost. In particular, they permit us to show that the norms of the second fundamental forms of suitably regular cones, on the one hand, and suitably regular, complete maximal $p$-submanifolds, on the other, both decay exponentially at an equal rate, whilst their volume forms both grow exponentially at an equal, slower rate. Theorem \ref{thm:DecayOfSecondFundamentalForm} is then a straightforward consequence of these two properties.

\subsection{Applications I - Anosov representations}

Amongst our main motivations for the study of complete maximal $p$-submanifolds in $\H$ are their applications to the study of representations of word-hyperbolic groups in $\mathsf{PO}(p,q+1)$. In what follows, $\Gamma$ will denote a word-hyperbolic group with connected Gromov boundary.

In \cite{DGK}, Danciger--Gu\'eritaud--Kassel introduced a notion of convex-cocompactness for representations in $\mathsf{PO}(p,q+1)$. We say that a representation $\rho:\Gamma\rightarrow\mathsf{PO}(p,q+1)$ is $\H$-\emph{convex-cocompact} whenever it has finite kernel and acts properly-discontinuously and cocompactly on some closed, convex subset $K$ of $\H$ whose interior $\text{Int}(K)$ is non-trivial and whose ideal boundary $\partial_\infty K$ contains no non-trivial segment. We will be concerned here with the more specific case where the Gromov boundary of $\Gamma$ is homeomorphic to a $(p-1)$-sphere. In this case, Danciger--Gu\'eritaud--Kassel showed that $\Lambda=\partial_\infty K$ is a positive $(p-1)$-sphere\footnote{\label{footnote2}Note that, in the terminology of \cite{DGK}, objects that are here described as ``positive'' are there referred to as ``negative'' (and vice-versa). The terminology of \cite{DGK} reflects the fact that, in the above situation, $\Lambda$ lifts to a cone in $\R^{p,q+1}$ of which any pair of non-colinear points has negative scalar product (c.f. Lemma \ref{lemma:PositiveSpheres}). The terminology of the present paper has been chosen to be consistent with \cite{fockgoncharov} and \cite{GLW}.} in $\partial_\infty\H$. Since every positive sphere is, in particular, non-negative, Theorem \ref{thm:introhomeo} thus yields the following result.

\begin{corollary}\label{cor:intro2}
Let $\Gamma$ be a word-hyperbolic group with Gromov boundary homeomorphic to a $(p-1)$-sphere. Every $\H$-convex-cocompact representation $\rho:\Gamma\rightarrow\mathsf{PO}(p,q+1)$ preserves a unique complete maximal $p$-submanifold $M$ in $\H$. Furthermore
\begin{enumerate}
\item $\rho$ acts properly-discontinuously and cocompactly on $M$, and
\item $M$ depends real analytically on $\rho$.
\end{enumerate}
\end{corollary}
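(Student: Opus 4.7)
The plan is to deduce the corollary from Theorem \ref{thm:introhomeo} applied to the limit set $\Lambda$ of $\rho$, supplemented by the $\H$-convex-cocompactness hypothesis and an analytic refinement of Step \ref{thm:introMaximalGraphsPerturb}. For existence, uniqueness and invariance of an invariant submanifold: since the Gromov boundary of $\Gamma$ is a $(p-1)$-sphere, the theorem of Danciger--Gu\'eritaud--Kassel cited in the text ensures that $\Lambda := \partial_\infty K$ is a positive, hence non-negative, $(p-1)$-sphere in $\partial_\infty\H$. Theorem \ref{thm:introhomeo} then produces a unique complete maximal $p$-submanifold $M$ with $\partial_\infty M = \Lambda$. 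Since the asymptotic boundary map $\partial_\infty:\M\to\Bd$ is tautologically $\POpqp$-equivariant, its inverse is as well, and the fact that $\rho(\Gamma)$ preserves $\Lambda$ forces it to preserve $M$.

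Next I would establish proper discontinuity and cocompactness. Proper discontinuity of $\rho$ on $M$ follows from the general fact that every $\rho(\Gamma)$-orbit in $\H$ accumulates only on the limit set $\Lambda$, which is disjoint from $M$; hence any compact subset of $M$ meets only finitely many $\rho$-translates of itself. For cocompactness, the most natural route is to prove that $M\subset K$. I expect this to follow from a convex-envelope/maximum-principle argument: at a hypothetical maximiser of a suitable $K$-support function restricted to $M$, the Hessian of the support function compared against the vanishing of the mean curvature vector of $M$ should yield a contradiction, provided the support function is chosen to be convex in spacelike directions. Once $M\subset K$ is established, $M/\rho$ is a closed subset of the compact quotient $K/\rho$, hence compact.

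Real analytic dependence on $\rho$ follows from an analytic refinement of Step \ref{thm:introMaximalGraphsPerturb}. On the open set of $\H$-convex-cocompact representations in $\opHom(\Gamma,\POpqp)$, the equivariant boundary map parametrising $\Lambda_\rho$ depends real analytically on $\rho$, as is standard in the theory of Anosov / convex-cocompact families (which, in the present setting, are known by Danciger--Gu\'eritaud--Kassel to coincide). The stability theorem is proved by linearising the maximal submanifold equation around a reference $M$ and applying an implicit function theorem on weighted function spaces. Since the operator involved is real analytic in both the perturbing section and the boundary data, replacing the $C^\infty$ implicit function theorem by its real analytic version upgrades the continuous dependence of $M_t$ on $t$ to real analytic dependence of $M$ on $\rho$.

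The main obstacle I foresee is the cocompactness step: in pseudo-Riemannian signature, barrier and convexity arguments that are routine in the Riemannian or purely Lorentzian settings require delicate adaptation, and in particular any argument establishing $M\subset K$ must accommodate the mixed-signature geometry of $\H$ and the fact that $K$ is only assumed convex in the projective sense. The remaining assertions are essentially formal consequences of Theorem \ref{thm:introhomeo} combined with the analytic form of Step \ref{thm:introMaximalGraphsPerturb}.
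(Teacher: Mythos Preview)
Your argument for existence, invariance, and Item~(1) is essentially correct and matches the paper. The obstacle you anticipate---showing $M$ lies in the convex hull---is in fact Lemma~\ref{lemma:ContainedInConvexHull}, proved earlier by observing that any linear form $\varphi$ on $E$ satisfies $\Delta\varphi = p\varphi$ on a maximal submanifold, whence the maximum principle forces $M\subset\Conv(\Lambda)$. Cocompactness then follows immediately from the result of Danciger--Gu\'eritaud--Kassel that $\rho$ acts properly discontinuously and cocompactly on $\Conv(\Lambda)\cap\H$. One small gap: your uniqueness argument only shows that $M$ is the unique maximal submanifold with $\partial_\infty M=\Lambda$, but the corollary asserts uniqueness among \emph{all} $\rho$-invariant complete maximal submanifolds. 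The paper handles this by showing that any such $M'$ satisfies $\partial_\infty\Conv(\partial_\infty M')=\Lambda$ (using further results of Danciger--Gu\'eritaud--Kassel), whence $\partial_\infty M'=\Lambda$ by Lemma~\ref{lemma:DeltaConvIsDelta}.

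There is, however, a genuine problem with your approach to Item~(2). The proximal limit set $\Lambda_\rho$ is in general only a \emph{topological} positive sphere, not a smooth spacelike one, and the perturbation theory of Step~\ref{thm:introMaximalGraphsPerturb} (Theorem~\ref{thm:perturb H+}) is developed only for smooth spacelike boundaries: the entire asymptotic model of radially parametrised cones and the weighted H\"older spaces of Section~\ref{sec:stability general} require this regularity. Moreover, the assertion that $\Lambda_\rho$ varies real analytically with $\rho$ is not standard---the boundary map of an Anosov representation is typically only H\"older---and it is unclear what analytic dependence of a merely topological sphere would even mean in the framework of Section~\ref{sec:stability general}. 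The paper therefore takes a completely different route: it exploits the cocompactness established in Item~(1) to work with $\rho_0$-equivariant sections of $\N M$, constructs an explicit analytic family $\rho\mapsto e^\rho$ of $\rho$-equivariant embeddings of $M$ via a partition of unity (Lemmas~\ref{lemma:PartitionOfUnity}--\ref{lemma:NbdOfEquivariance}), and then applies the implicit function theorem to the mean curvature functional on the space $C^{2,\alpha}_{\opequiv}(\N M)$, where invertibility of the Jacobi operator is elementary because the quotient is compact. This bypasses the boundary altogether.
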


Beyrer--Kassel have told us of an intriguing application of Corollary \ref{cor:intro2} to the study of higher-dimensional extensions of higher-rank Teichm\"uller theory. Recall (c.f. \cite{wienhardICM}) that when $\Gamma$ is a compact surface group, and $\G$ is a reductive Lie group of higher rank, a connected component of $\opHom(\Gamma,\G)$ is said to be a \emph{higher-rank Teichm\"uller space} whenever it consists entirely of discrete and faithful representations. The study of such spaces, which share a number of properties of classical Teichm\"uller space, has yielded over the last two decades a rich and fascinating theory. Although it is natural to generalize this concept to higher-dimensional word-hyperbolic groups, to date only a few higher-dimensional cases have been shown to exist. One such is given by Barbot in \cite{barbot}, where he showed that, when $\Gamma$ is the fundamental group of a compact, $p$-dimensional hyperbolic manifold, the quasi-fuchsian component of $\opHom(\Gamma,\mathsf{PO}(p,2))$ consists entirely of $\Hpone$-convex-cocompact representations, which, in particular, are discrete and faithful. In their work \cite{BK}, contemporaneous with the present paper, Beyrer--Kassel prove a converse of Corollary \ref{cor:intro2}, namely that any representation acting properly-discontinuously and cocompactly on a weakly spacelike $p$-dimensional submanifold of $\H$ is $\H$-convex-cocompact. Together with Corollary \ref{cor:intro2}, this allows them to show in \cite[Theorem $1.3$]{BK} that, for all $p\geq 2$ and $q\geq 1$, and for any word-hyperbolic group $\Gamma$ with Gromov boundary homeo\-morphic to a $(p-1)$-sphere, the set of $\H$-convex-cocompact representations $\rho:\Gamma\rightarrow\mathsf{PO}(p,q+1)$ is a union of connected components of $\opHom(\Gamma,\mathsf{PO}(p,q+1))$. In this manner, they extend Barbot's result to yield a large family of new higher-dimensional higher-rank Teichm\"uller spaces, including many examples which are not quasi-fuchsian (c.f. \cite{leemarquis,MST}).

We now discuss applications of Corollary \ref{cor:intro2} to the study of $\mathsf{P}_1$-Anosov representations of word-hyper\-bolic groups in $\mathsf{PO}(p,q+1)$. Anosov representations were introduced by Labourie in \cite{LabAnFlow}, and have since become a cornerstone of higher-rank Teichm\"uller theory. We refer the reader to Danciger--Gu\'eritaud--Kassel's paper \cite{DGK} for a formal definition of the concept of $\mathsf{P}_1$-Anosov representations. In addition, in the same paper, they described a direct relationship between such representations and the $\H$-convex-cocompact representations discussed above. To understand this, note first that the set of non-null lines in $\R^{p,q+1}$ consists of two connected components, namely $\H$ and $\Hqppn$, where the sign of the metric of the latter is inverted. Danciger--Gu\'eritaud--Kassel showed that every $\H$-convex-cocompact representation is $\mathsf{P}_1$-Anosov and, conversely, that every $\mathsf{P}_1$-Anosov representation is either $\H$-convex-cocompact or $\Hqppn$-convex-cocompact. We say that the representation is \emph{positive} in the former case, and \emph{negative} in the latter, so that every $\H$-convex-cocompact representation is a positive $\mathsf{P}_1$-Anosov representation, and vice versa. In particular, in this case, $\Lambda:=\partial_\infty K$ coincides with the proximal limit set of the representation $\rho$\footnote{See Footnote \ref{footnote2}.}.

Our first application of Corollary \ref{cor:intro2} provides a new constraint on the hyperbolic groups which admit positive $\mathsf{P}_1$-Anosov representations.

\begin{corollary}\label{cor:intro3}
Let $\Gamma$ be a torsion-free word-hyperbolic group with Gromov boundary homeomorphic to a $(p-1)$-sphere. If $\Gamma$ admits a positive $\mathsf{P}_1$-Anosov representation in $\mathsf{PO}(p,q+1)$, then it is isomorphic to the fundamental group of a smooth, closed $p$-dimensional manifold with universal cover diffeomorphic to $\R^p$.
\end{corollary}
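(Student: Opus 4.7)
My plan is to combine Corollary \ref{cor:intro2} with the fact that complete maximal $p$-submanifolds in $\H$ are diffeomorphic to $\R^p$. By the correspondence of Danciger--Gu\'eritaud--Kassel recalled immediately above, a positive $\mathsf{P}_1$-Anosov representation $\rho:\Gamma\to\POpqp$ is $\H$-convex-cocompact. Corollary \ref{cor:intro2} therefore provides a unique complete maximal $p$-submanifold $M\subset\H$ preserved by $\rho(\Gamma)$, on which $\rho(\Gamma)$ acts properly discontinuously and cocompactly.

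The first step is to verify that this action is free. Anosov representations have finite kernel, and the torsion-freeness of $\Gamma$ forces $\ker\rho$ to be trivial, so $\rho$ is faithful. Since the action of $\rho(\Gamma)$ on $M$ is proper, point stabilizers are finite subgroups of $\rho(\Gamma)\cong\Gamma$; torsion-freeness then forces them to be trivial, so the action is free. Consequently $N:=M/\rho(\Gamma)$ is a smooth, closed $p$-dimensional manifold, and the projection $M\to N$ is a regular covering with deck group isomorphic to $\Gamma$.

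It then remains to show that $M$ is diffeomorphic to $\R^p$, since this will make $M\to N$ the universal cover and yield both $\pi_1(N)\cong\Gamma$ and the statement about the universal cover. My plan is to fix an orthogonal splitting $\R^{p,q+1}=\R^{p,1}\oplus\R^{0,q}$, producing a totally geodesic spacelike copy of $\Hp$ inside $\H$, and to prove that the induced orthogonal projection $\pi:M\to\Hp$ is a diffeomorphism. Spacelikeness of $M$ guarantees that $\pi$ is everywhere a local diffeomorphism. To upgrade this to a global diffeomorphism, one uses the Riemannian completeness of $M$ together with the observation that $\partial_\infty M$ is not merely non-negative but \emph{positive}, being the proximal limit set of a positive $\mathsf{P}_1$-Anosov representation; this positivity allows one to choose the splitting so that $\partial_\infty M$ avoids the lightcone of the chosen $\R^{p,1}$, and a barrier argument of the kind implicit in the graph parametrizations employed in Section \ref{sec:stability general} then yields that $\pi$ is proper. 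A proper local diffeomorphism onto the simply connected space $\Hp\cong\R^p$ is a diffeomorphism, which completes the argument.

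The main obstacle is the last step: establishing properness of $\pi$, which is where the positivity of $\partial_\infty M$ is essential in order to rule out asymptotic degeneration of the projection. Once properness is secured, the topological conclusions of the corollary follow at once.
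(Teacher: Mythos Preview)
Your overall architecture is correct and matches the paper: invoke Corollary~\ref{cor:intro2} to obtain the invariant complete maximal $p$-submanifold $M$, use torsion-freeness together with finiteness of the kernel of an Anosov representation to get faithfulness and freeness of the action, and conclude that $N=M/\rho(\Gamma)$ is a closed smooth $p$-manifold with $\pi_1(N)\cong\Gamma$ and universal cover $M$.

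Where you diverge from the paper is in the justification that $M$ is diffeomorphic to $\R^p$, and here you are making your life much harder than necessary. The paper simply quotes a fact already established in Section~\ref{sec:prel graphs}: by Lemma~\ref{lemma:EntireGraph}, any \emph{complete} spacelike $p$-submanifold of $\H$ is a smooth entire graph, and by Lemma~\ref{lemma:EntireGraphsAreGraphs} any smooth entire graph is diffeomorphic to $\R^p$. The mechanism is the Fermi projection $\pi$ onto a totally geodesic copy of $\Hp$: by Lemma~\ref{lemma:WarpedProjectionIncreaseLength}(2), $\pi$ is distance-nondecreasing on spacelike directions, so $\pi|_M$ is a distance-increasing local diffeomorphism; completeness of $M$ then gives the path-lifting property, hence a covering map onto the simply connected $\Hp$, hence a diffeomorphism. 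No hypothesis on $\partial_\infty M$ is used at all.

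Your proposed route---an orthogonal projection associated to a splitting $\R^{p,q+1}=\R^{p,1}\oplus\R^{0,q}$, with properness secured via positivity of $\partial_\infty M$ and a ``barrier argument'' from Section~\ref{sec:stability general}---is therefore an unnecessary detour. The ``main obstacle'' you identify (properness of $\pi$) is not an obstacle once you use the Fermi projection and completeness, and positivity of the limit set plays no role. Moreover, the orthogonal projection you describe is not globally defined on $\H$ (the $\R^{p,1}$-component of a point of $\Quad$ can be null or spacelike), so making your version precise would force you back to something equivalent to the Fermi projection anyway. The reference to Section~\ref{sec:stability general} is also misplaced: the relevant graph parametrizations live in Section~\ref{sec:prel graphs}.
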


\noindent It is worth comparing Corollary \ref{cor:intro3} to the result \cite{barluckwein} of Bartels--L\"uck--Weinberger, which states that, for $p\geq 6$, any torsion-free hyperbolic group with Gromov boundary homeomorphic to a $(p-1)$-sphere is the fundamental group of a closed $p$-dimensional \emph{topological} manifold with universal cover \emph{homeomorphic} to $\R^p$. Note, in particular, that in Example $5.2$ and Lemma $5.3$ of that paper, the authors construct, for all $k\geq 2$, a torsion-free hyperbolic group $\Gamma$, with Gromov boundary homeomorphic to $\S^{4k-1}$, which is \emph{not} isomorphic to the fundamental group of any closed \emph{smooth} aspherical manifold. We consequently have the following corollary.

\begin{corollary}\label{cor:nonexistence}
For any $k\geq 2$ and $q\geq 1$, there exists a torsion-free word-hyperbolic group $\Gamma$ with Gromov boundary homeomorphic to a $(4k-1)$-sphere which does not admit any positive $\mathsf{P}_1$-Anosov representation in $\mathsf{PO}(4k,q+1)$.
\end{corollary}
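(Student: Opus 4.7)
The plan is simply to combine Corollary \ref{cor:intro3} with the Bartels--L\"uck--Weinberger examples cited just before the statement. Fix $k\geq 2$ and $q\geq 1$, and set $p:=4k$, so that $p\geq 8\geq 6$, putting us in the range where the relevant examples are available.

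First, I would invoke \cite[Example 5.2 and Lemma 5.3]{barluckwein} to produce a torsion-free word-hyperbolic group $\Gamma$ whose Gromov boundary is homeomorphic to $\S^{4k-1}$ and which is not isomorphic to the fundamental group of any closed \emph{smooth} aspherical manifold. This is precisely the group that will witness the non-existence claim.

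Next, I would argue by contradiction. Suppose that $\Gamma$ were to admit a positive $\mathsf{P}_1$-Anosov representation $\rho:\Gamma\to\mathsf{PO}(4k,q+1)$. Since $\Gamma$ is torsion-free, has Gromov boundary homeomorphic to a $(p-1)$-sphere with $p=4k$, and is equipped with such a representation, Corollary \ref{cor:intro3} applies and supplies a closed, smooth $4k$-dimensional manifold $N$ with $\pi_1(N)\cong\Gamma$ and whose universal cover is diffeomorphic to $\R^{4k}$. Since $\R^{4k}$ is contractible, $N$ is aspherical, so $\Gamma$ would be the fundamental group of a closed, smooth, aspherical manifold, contradicting the defining property of $\Gamma$ recalled in the previous paragraph. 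Hence no such $\rho$ exists, proving the corollary.

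There is essentially no obstacle: the entire substance has been absorbed into Corollary \ref{cor:intro3} (which itself rests on Theorem \ref{thm:introhomeo}) and into the Bartels--L\"uck--Weinberger construction. The only observation required is the elementary fact that a closed smooth manifold whose universal cover is diffeomorphic to $\R^{4k}$ is aspherical, which follows immediately from the contractibility of $\R^{4k}$. The role of the hypothesis $k\geq 2$ is purely to guarantee $p=4k\geq 6$, which is the dimension threshold above which the Bartels--L\"uck--Weinberger examples are produced.
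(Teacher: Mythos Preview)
Your proposal is correct and matches the paper's own argument essentially verbatim: the paper does not give a separate proof of this corollary but deduces it in the introduction directly from Corollary~\ref{cor:intro3} together with the Bartels--L\"uck--Weinberger examples, exactly as you do. The only additional remark you make explicit---that a manifold with universal cover diffeomorphic to $\R^{4k}$ is aspherical---is immediate and is implicitly used in the paper as well.
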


\begin{remark*}
More precisely, the top Pontrjagin number of the piecewise linear manifold associated to $\Gamma$ appears to provide an obstruction to the existence of positive $\mathsf{P}_1$-Anosov representations in $\mathsf{PO}(p,q+1)$.
\end{remark*}

Our second application concerns the structure of a class of manifolds introduced by Guichard--Weinhard in \cite{GW}. Indeed, let $\Isot(E)$ denote the space of maximally isotropic subspaces of $E$. Given a positive $\mathsf P_1$-Anosov representation $\rho$ of $\Gamma$ in $\mathsf{PO}(p,q+1)$, Guichard--Wienhard constructed a domain $\Omega_\rho$ in $\Isot(E)$ upon which $\rho(\Gamma)$ acts properly-discontinuously and cocompactly. The quotient $\Omega_\rho/\rho(\Gamma)$is a closed manifold locally modelled on $\Isot(E)$. However, to date, little more has been shown concerning its structure.

Given two natural numbers $k\leq n$, the \emph{Stiefel manifold} $\V_{k,n}$ is the space of $k$-tuples of unit vectors in $\R^n$ that are pairwise orthogonal. For all such $k$ and $n$, $\V_{k,n}$ is diffeomorphic to the homogeneous space $\mathsf{O}(n)/\mathsf{O}(n-k)$. In particular, it is connected unless $k=n$, in which case it has $2$ connected components.

\begin{corollary}\label{theorem:GeometricStructures}
Let  $\Gamma$ be a torsion-free word-hyperbolic group with Gromov boundary homeomorphic to a $(p-1)$-sphere, let $\rho:\Gamma\to\mathsf{PO}(p,q+1)$ be a positive $\mathsf{P}_1$-Anosov representation, and let $\Omega_\rho$ denote its Guichard--Wienhard domain.
\begin{enumerate}
	\item If $q\geq p$, and if $M$ denotes the complete maximal $p$-submanifold preserved by $\rho$, then $\Omega_\rho/\rho(\Gamma)$ is homeomorphic to a $\V_{p,q}$-bundle over $M/\rho(\Gamma)$.
	\item If $q<p$, then $\Omega_\rho$ is empty.
\end{enumerate}
In particular, in the former case, $\Omega_\rho$ is connected, unless $p=q$ and the first Stiefel-Whitney class of $\N M/\rho(\Gamma)$ vanishes, in which case it has 2 connected components.
\end{corollary}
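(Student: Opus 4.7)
\emph{Overall strategy.} I plan to construct a $\rho(\Gamma)$-equivariant homeomorphism $\Phi : F \to \Omega_\rho$, where $F$ is a naturally-defined $\V_{p,q}$-bundle over the $\rho(\Gamma)$-invariant lift $\tilde M \subset \H$, and then quotient by $\rho(\Gamma)$ to obtain $(1)$. The emptiness statement $(2)$ will then drop out of the same construction.

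\emph{Fiber identification.} For each $x = [v] \in \tilde M$ normalized so that $\langle v, v\rangle = -1$, the orthogonal complement $v^\perp \subset E = \R^{p,q+1}$ splits as $T_x\tilde M \oplus \N_x\tilde M$, with $T_x\tilde M$ positive-definite of dimension $p$ and $\N_x\tilde M$ negative-definite of dimension $q$. Set
\[
F_x := \{P \in \Isot(E) : P \subseteq v^\perp\}, \qquad F := \bigsqcup_{x \in \tilde M} F_x.
\]
Since $v^\perp$ has signature $(p,q)$, it contains a $p$-dimensional isotropic subspace if and only if $q \geq p$, and any such subspace is precisely the graph of a linear isometry $\phi : T_x\tilde M \to \N_x\tilde M$ (with the latter endowed with $-\langle\cdot,\cdot\rangle$ to make it Euclidean). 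Consequently $F_x \cong \V_{p,q}$ when $q \geq p$ and $F_x = \emptyset$ when $q < p$; globally, for $q \geq p$, $F$ is the orthonormal $p$-frame bundle of $\N\tilde M$.

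\emph{Identification with $\Omega_\rho$.} Consider the tautologically $\rho(\Gamma)$-equivariant forgetful map $\Phi : F \to \Isot(E)$, $(x,P) \mapsto P$. Using the Guichard--Wienhard description of $\Omega_\rho$ as the set of $P \in \Isot(E)$ whose projective span $\mathbf P(P) \subset \bH$ is disjoint from the proximal limit set $\Lambda_\rho = \partial_\infty \tilde M$, proving that $\Phi$ is a homeomorphism onto $\Omega_\rho$ reduces to three claims: (a) for $(x,P) \in F$, $\mathbf P(P) \cap \Lambda_\rho = \emptyset$, which follows from positivity, as a common point would be an isotropic line in $v^\perp$ incompatible with the asymptotic geometry of $\tilde M$ at $x$; (b) every $P \in \Omega_\rho$ comes from some $x$, i.e., $\tilde M \cap A_P \neq \emptyset$, where $A_P := \mathbf P^-(P^\perp) \cap \H$ is the totally geodesic, nowhere-spacelike, $q$-dimensional subset of $\H$ whose asymptotic boundary is $\mathbf P(P) \cong \R\P^{p-1}$; and (c) uniqueness of such $x$. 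For (b) the plan is a topological linking argument in the null quadric $\bH$, using that $\tilde M$ is a topologically embedded $p$-disk with boundary $\Lambda_\rho$ and that $\mathbf P(P)$ links $\Lambda_\rho$ non-trivially by positivity. For (c), transversality of the spacelike $T\tilde M$ with the nowhere-spacelike $TA_P$ is automatic and yields discreteness; the actual uniqueness requires a convexity argument that leverages the maximality of $\tilde M$, for instance by restricting to $\tilde M$ a $P$-adapted quadratic function on $\H$ and showing strict positivity of its Hessian.

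\emph{Main obstacle and conclusion.} The main obstacle is step (c), since one must extract a global uniqueness statement from the maximal spacelike PDE governing $\tilde M$. Once (a)--(c) are in place, $\Phi$ is an equivariant homeomorphism $F \cong \Omega_\rho$, and quotienting yields the $\V_{p,q}$-bundle over $M/\rho(\Gamma)$, proving (1). For (2): when $q < p-1$, no $p$-dimensional isotropic subspace exists in $E$ and $\Omega_\rho = \emptyset$ trivially; when $q = p-1$, both $\mathbf P(P) \cong \R\P^{p-1}$ and $\Lambda_\rho \cong S^{p-1}$ have dimension $p-1$ in $\bH$ of dimension $2p-2$, and the same type of linking argument as in (b), together with positivity of $\Lambda_\rho$, forces $\mathbf P(P) \cap \Lambda_\rho \neq \emptyset$ for every $P \in \Isot(E)$, so $\Omega_\rho = \emptyset$. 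Finally, $\V_{p,q}$ is connected except when $p = q$, in which case $\V_{p,p} \cong \mathsf O(p)$ has two components corresponding to orientations of $\R^p$; the associated $\V_{p,q}$-bundle over $M/\rho(\Gamma)$ is connected if and only if the monodromy swaps these two components, which happens precisely when $w_1(\N M/\rho(\Gamma)) \neq 0$, giving the connectivity statement.
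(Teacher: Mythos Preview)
Your overall strategy---build the bundle $F$ of maximal isotropics contained in $x^\perp$ over the invariant maximal graph $M$, then show the forgetful map $F\to\Isot(E)$ is an equivariant homeomorphism onto $\Omega_\rho$---is exactly what the paper does. But two of your substeps are off.

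First, you misidentify the main obstacle. Uniqueness in (c) does \emph{not} require maximality and needs no Hessian argument. Since $V$ is maximal isotropic and $p\leq q$, the restriction $\q|_{V^\perp}$ is negative semidefinite with kernel $V$, so any two vectors $\hat x,\hat y\in V^\perp$ with $\q(\hat x,\hat x)=\q(\hat y,\hat y)=-1$ satisfy $\q(\hat x,\hat y)\geq -1$ by Cauchy--Schwarz on $V^\perp/V$. On the other hand, any two points of a smooth entire graph satisfy $\q(\hat x,\hat y)\leq -1$, with equality only when they coincide (Lemma~\ref{lemma:Acausal0}). Combining these gives uniqueness immediately, for \emph{any} smooth entire graph---maximality plays no role. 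The less obvious step is (b); the paper handles it not by a linking argument but by approximating $V^\perp$ by genuinely negative-definite $(q{+}1)$-planes $W_n$ and using that $M$, being an entire graph in every Fermi chart, meets each totally-geodesic timelike sphere $\P(W_n)$ exactly once; compactness of $\overline M$ then yields a point of $M\cup\Lambda$ in $\P(V^\perp)$.

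Second, your treatment of item~(2) is wrong. When $q<p$ the space $\Isot(E)$ is \emph{not} empty: maximal isotropics then have dimension $q+1=\min(p,q+1)$, not $p$, so the case $q<p-1$ is not vacuous and the problem does not reduce to a linking argument at $q=p-1$. The paper's argument (Lemma~\ref{lemma GW p>q}) is uniform for all $q<p$: in a Fermi chart, $\P_+(V)$ is the graph of an isometry $\psi:\Sigma^q\to\S^q$ over some totally-geodesic $q$-sphere $\Sigma^q\subset\S^{p-1}$, while $\Lambda_+$ is the graph of a $1$-Lipschitz map $f:\S^{p-1}\to\S^q$; positivity of $\Lambda$ forces the self-map $\psi^{-1}\circ f|_{\Sigma^q}$ of $\Sigma^q$ to have image avoiding antipodal points, hence to be null-homotopic, and the Lefschetz fixed-point theorem then produces a fixed point, i.e., an intersection of $\P(V)$ with $\Lambda$.
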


\subsection{Applications II - renormalized area}\label{subsec:introren}

We conclude this introduction with a discussion of the renormalized area of complete maximal surfaces. Motivated by applications to the AdS-CFT correspondence, the \emph{renormalized area} of a maximal surface $M$ in $\mathbf{H}^{2,q}$ with second fundamental form $\opII$ is defined by
\begin{equation}\label{eqn:RenormalizedArea}
A_\opren(M) := \int_M\|\opII\|^2\opdVol_M\ .
\end{equation}
The study of the renormalized area of maximal surfaces in $\mathbf{H}^{2,q}$ presents a number of interesting, as yet unstudied, problems. For example, following \cite{AlexMazz} it is of interest to determine its first and second variations. Likewise, in the spirit of \cite{Bishop}, it is also of interest to determine how finiteness of the renormalized area may be expressed in terms of properties of the asymptotic boundary. Substituting $p=s=2$ in Theorem \ref{thm:DecayOfSecondFundamentalForm} yields a partial response to this latter problem for surfaces in pseudo-hyperbolic space.

\begin{corollary}\label{cor:renarea}
Every complete maximal surface $M$ in $\mathbf{H}^{2,q}$ with $C^{3,\alpha}$ asymptotic boundary has finite renormalized area.
\end{corollary}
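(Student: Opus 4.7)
The plan is to derive this as an immediate specialization of Theorem~\ref{thm:DecayOfSecondFundamentalForm}. Since $M$ is a complete maximal surface in $\mathbf{H}^{2,q}$, we are exactly in the situation $p=2$ of that theorem, and by hypothesis $\partial_\infty M$ is of class $C^{3,\alpha}$. The theorem then asserts that $\|\opII\|\in L^s(M,\opdVol_M)$ for every $s>p-1=1$.

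Choosing the admissible exponent $s=2$, which satisfies $2>1$, yields
\begin{equation*}
\int_M\|\opII\|^2\,\opdVol_M<\infty\ .
\end{equation*}
Comparing with the definition \eqref{eqn:RenormalizedArea} of the renormalized area, this is precisely the statement $A_\opren(M)<\infty$, which completes the proof.

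There is no real obstacle at this stage: all of the analytic work -- the asymptotic analysis of complete maximal submanifolds, the comparison with the exponentially decaying second fundamental form of regular cones, and the exponential growth rate of the volume form -- has already been packaged into Theorem~\ref{thm:DecayOfSecondFundamentalForm}. The corollary is essentially the observation that the threshold $s>p-1$ is compatible with the quadratic exponent appearing in the definition of $A_\opren$ precisely when $p=2$, so that the renormalized area sits inside the admissible $L^s$ range provided by the theorem.
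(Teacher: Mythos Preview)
Your proof is correct and matches the paper's approach exactly: the paper states that Corollary~\ref{cor:renarea} is an immediate consequence of Theorem~\ref{thm:DecayOfSecondFundamentalForm} for $p=2$ together with the definition~\eqref{eqn:RenormalizedArea} of renormalized area, which is precisely the specialization $p=s=2$ you carry out.
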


Finally, in the $(2+1)$-dimensional anti-de Sitter case, that is, when $(p,q)=(2,1)$, using the ideas developed by Mess in \cite{mess} (see also \cite{mess,surveyandreafra}), Corollary \ref{cor:renarea} yields the following new Teichm\"uller-theoretic result. A self-diffeomorphism of $\mathbf{H}^2$ is called \emph{minimal Lagrangian} whenever its graph is a minimal Lagrangian surface in $\mathbf{H}^2\times \mathbf{H}^2$. In \cite{bonschl}, Bonsante-Schlenker used maximal surfaces in anti-de Sitter geometry to prove that every quasisymmetric circle homeomorphism admits a unique quasiconformal minimal Lagrangian extension to $\mathbf{H}^2$.

\begin{corollary}\label{cor:minlag}
If $f$ is a $C^{3,\alpha}$ circle diffeomorphism, then the Beltrami coefficient of its unique quasiconformal minimal Lagrangian extension is an element of $L^2(\mathbf{H}^2,\mathrm{dVol}_{\mathbf{H}^2})$.
\end{corollary}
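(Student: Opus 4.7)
The plan is to realise the quasiconformal minimal Lagrangian extension $F$ of $f$ as arising, via the Mess correspondence, from a complete maximal surface $M$ in $\Ads$ whose asymptotic boundary is the graph of $f$, and then to compare pointwise the Beltrami coefficient of $F$ to the norm of the second fundamental form of $M$. Corollary \ref{cor:renarea} will then yield the desired $L^2$ bound essentially for free.

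Identifying $\bAds$ with $\partial_\infty\Htwo\times\partial_\infty\Htwo$, the graph $\Lambda_f\subset\bAds$ of a $C^{3,\alpha}$ circle diffeomorphism is a $C^{3,\alpha}$ spacelike, hence positive, $1$-sphere; Theorem \ref{thm:introhomeo} in the case $(p,q)=(2,1)$ therefore provides a unique complete maximal surface $M\subset\Ads$ with $\partial_\infty M=\Lambda_f$. The Bonsante--Schlenker theory \cite{bonschl} produces from $M$ two left/right projections $\pi_\pm:M\to\Htwo$, whose differentials are $d\pi_\pm=\opId\pm JB$, where $J$ is the complex structure of the induced metric $g_M$ and $B$ is the shape operator of $M$; both $\pi_\pm$ extend continuously to $\overline M$ and restrict on $\Lambda_f$ to the two coordinate projections onto $\partial_\infty\Htwo$, so that the Bonsante--Schlenker uniqueness theorem identifies $F=\pi_+\circ\pi_-^{-1}$ with the minimal Lagrangian extension of $f$. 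Since $M$ is maximal, $B$ is self-adjoint and traceless, so the same is true of $JB$; its eigenvalues are $\pm b$ for some $b\in[0,1)$, and $\|\opII\|^2=\opTr(B^2)=2b^2$.

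Now let $\{f_1,f_2\}$ be a local $g_M$-orthonormal frame diagonalising $JB$. The pulled-back metric $\pi_-^*g_{\Htwo}=g_M((\opId-JB)\cdot,(\opId-JB)\cdot)$ is then diagonal in this frame with eigenvalues $(1-b)^2$ and $(1+b)^2$, and the analogous statement holds for $\pi_+^*g_{\Htwo}$ with the roles exchanged. Hence
\begin{equation*}
\pi_-^*\opdVol_{\Htwo}=(1-b^2)\,\opdVol_M,
\end{equation*}
and the dilatation of $F$, viewed as the identity map between the two conformal structures on $M$, is $K=(1+b)^2/(1-b)^2$; the formula $|\mu|=(K-1)/(K+1)$ gives $|\mu_F|\circ\pi_-=2b/(1+b^2)$. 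Combining these identities yields
\begin{equation*}
(|\mu_F|^2\circ\pi_-)\cdot\pi_-^*\opdVol_{\Htwo}=\frac{4b^2(1-b^2)}{(1+b^2)^2}\,\opdVol_M\leq 4b^2\,\opdVol_M=2\|\opII\|^2\,\opdVol_M.
\end{equation*}
Since $f$ is $C^{3,\alpha}$ the asymptotic boundary $\Lambda_f$ is $C^{3,\alpha}$, so Corollary \ref{cor:renarea} yields $\|\opII\|^2\in L^1(M,\opdVol_M)$; changing variables via $\pi_-$ then gives
\begin{equation*}
\int_{\Htwo}|\mu_F|^2\,\opdVol_{\Htwo}\leq 2\int_M\|\opII\|^2\,\opdVol_M<\infty.
\end{equation*}

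The principal technical points that must be checked carefully are the classical normalisations invoked above: the identity $d\pi_\pm=\opId\pm JB$ (which relies on the Gauss--Codazzi equations for maximal surfaces in $\Ads$ and on the identification of $\Ads$ with $\mathsf{PSL}(2,\R)$), the pointwise bound $b<1$ on any complete maximal surface with spacelike asymptotic boundary (ensuring that $\pi_\pm$ are local diffeomorphisms), and the fact that both $\pi_\pm$ are in fact global diffeomorphisms onto $\Htwo$ extending to the coordinate projections on $\Lambda_f$. All three are well established in the AdS literature \cite{bonschl,mess}, but they are where the genuine geometric content of the Mess correspondence is absorbed; once they are granted, the corollary reduces directly to the renormalised-area estimate of Corollary \ref{cor:renarea}.
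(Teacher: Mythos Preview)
Your proof is correct and follows essentially the same route as the paper's: realise $F$ via the Gauss maps of the complete maximal surface $M$ with $\partial_\infty M=\Lambda_f$, use the pullback formula $(\pi_\pm)^*g_{\Htwo}=\I((\opId\pm JB)\cdot,(\opId\pm JB)\cdot)$ to express both $|\mu_F|$ and the Jacobian of $\pi_-$ in terms of the principal curvature $b$, and then bound $\int_{\Htwo}|\mu_F|^2\,\opdVol_{\Htwo}$ by the renormalised area of $M$, which is finite by Corollary~\ref{cor:renarea}. The only cosmetic difference is that you package the area comparison and the Beltrami bound into a single pointwise inequality with the explicit constant $2$, whereas the paper treats $|\mu|\leq C\|\opII\|$ and $\pi_1^*\opdVol_{\Htwo}\leq\opdVol_M$ separately with an unspecified constant.
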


\noindent It is known that quasiconformal maps of $\mathbf{H}^2$ with square integrable Beltrami coefficient extend to \emph{Weil-Petersson} circle homeomorphisms. This latter family is precisely the closure of the space of circle diffeomorphisms with respect to the topology induced on the universal Teichm\"uller space by the (infinite-dimensional) K\"ahler structure constructed in \cite{takteo}. Motivated by the recent work of Bishop \cite{Bishop}, which characterized Weil-Petersson quasicircles in ${\mathbf{H}^2}$ as those Jordan curves that bound a complete minimal surface in $\mathbf{H}^3$ of finite renormalized area, it is natural to conjecture that the quasiconformal minimal Lagrangian extension of $f$ is in $L^2(\mathbf{H}^2,\mathrm{dVol}_{\mathbf{H}^2})$ if and only if $f$ is in the Weil-Petersson class. We leave this question for future investigation.

\subsection{Structure of paper}
In Section \ref{sec:prel pseudo hyp}, we provide some background on pseudo-hyperbolic space and on the associated Riemannian symmetric space. In Section \ref{sec:prel graphs}, we introduce spacelike and maximal submanifolds and, making use of so-called Fermi charts, we define the notion of entire graph. In Section \ref{sec:boundaries}  we study positive and non-negative spheres, and their relationship to the asymptotic boundaries of entire graphs.  In Section \ref{sec:proper} we prove Theorem \ref{thm:degenerate} and deduce the properness of the asymptotic boundary map (Theorem \ref{theorem:Properness}). In Section \ref{sec:uniqueness} we prove the uniqueness of a complete maximal $p$-submanifold with given asymptotic boundary (Theorem \ref{thm:injective}).
In Section \ref{sec:stability general} we prove  the stability result, namely Theorem \ref{thm:perturb H+}. In Section \ref{sec:conclude} we conclude the proofs of Theorem \ref{thm:introhomeo} and Theorem \ref{thm:DecayOfSecondFundamentalForm}.
In Section \ref{sec:anosov} we prove Corollaries \ref{cor:intro2}, \ref{cor:intro3}, \ref{theorem:GeometricStructures}, and \ref{cor:renarea}. Finally, in Section \ref{sec:RenormalizedArea}, we prove Corollary \ref{cor:minlag}.

\subsection{Acknowledgements} We are grateful to Jonas Beyrer and Fanny Kassel for the interest they have shown in this work, and for helpful comments made to earlier drafts of this paper. A good portion of this research was carried out whilst the second author was visiting the Institut des Hautes \'Etudes Scientifiques, and we are grateful for the excellent research conditions provided during that time. A further part of this research was also carried out in the Institut de Mathématiques de Marseille, and we are grateful for their hospitality.

\section{Pseudo-hyperbolic geometry}\label{sec:prel pseudo hyp}

We begin by introducing pseudo-hyperbolic space and reviewing its basic properties. In this section, we describe the $3$ main models for pseudo-hyperbolic space that will be used throughout the sequel, namely the projective model $\H$, its double cover $\H_+$, and the quadric $\Quad$, which is a subset of $\Rpqp$ isometric to $\H_+$. In addition, we classify its geodesics and totally geodesic subspaces, and we describe the geometry of its grassmannian bundle of spacelike tangent $p$-planes.

\subsection{Pseudo-hyperbolic space}\label{subsec:pseudohyperbolicspace}

Let $p,q$ be non-negative integers, let $E$ be a real vector space of dimension $(p+q+1)$, and let $\P(E)$ denote its projective space. We equip $E$ with a bilinear form $\q$ of signature $(p,q+1)$, that is, with $p$ positive and $(q+1)$ negative linearly independent directions. We define \emph{pseudo-hyperbolic space} by
\begin{equation}
\H := \left\{ x\in \P(E)\ |\ \q(x,x)<0\right\}~.
\end{equation}
It will often be useful to work with a double cover of this space. Thus, let $\P_+(E)$ denote the projective space of {\sl oriented} lines in $E$, that is the quotient of $E\setminus\{0\}$ by $\mathbf{R}_+$, and define
\begin{equation}
\H_+ := \left\{ x\in \P_+(E)\ |\ \q(x,x)<0\right\}~.
\end{equation}
The natural map $\P_+(E)\to \P(E)$ yields the desired double cover $\H_+ \to \H$. Note that the preimage of any connected subset $A$ of $\P(E)$ consists of either one or two connected components. We call any such connected component a \emph{lift} of $A$.

The double cover $\H_+$ naturally identifies with a quadric as follows. Denote
\begin{equation}
\Quad := \left\{ x\in E\ |\ \q(x,x)=-1 \right\}\ .
\end{equation}
Trivially, $\Quad$ projects onto $\H_+$, whilst every point of $\H_+$ contains a unique representative in $\Quad$. We thus obtain a natural diffeomorphism $r:\H_+\to\Quad$. In particular, this diffeomorphism induces pseudo-riemannian metrics on both $\H$ and $\H_+$. Indeed, for any $x\in\H_+$, the differential of $r$ maps the tangent space $\T_x\H_+$ to $r(x)^\bot$, the $\q$-orthogonal subspace to $r(x)$ in $E$. The pull-back through $r$ of the restriction of $\q$ to $\Quad$ then defines a metric $\g$ on $\H_+$ of signature $(p,q)$ and of constant sectional curvature equal to $-1$. Since this metric invariant under the involution $x\mapsto-x$, it likewise induces a metric over $\H$ which we also denote by $\g$, making the double cover $\H_+ \to \H$ into a local isometry.

The group $\mathsf{O}(p,q+1)$ of orthogonal transformations of $(E,\q)$ acts by isometries on $\H_+$, whilst its projectivization $\mathsf{PO}(p,q+1)$ acts by isometries on $\H$. Together with $\mathbf{Z}_2$, these groups form the short exact sequence
\begin{equation}
1 \longrightarrow \mathbf Z_2 \longrightarrow \SO \longrightarrow \PO \longrightarrow 1 ~.
\end{equation}
In both cases, the stabilizer of any point is isomorphic to $\mathsf{O}(p,q)$, yielding identifications of $\H$ and $\H_+$ with the respective pseudo-riemannian symmetric spaces $\mathsf{PO}(p,q+1)/\mathsf{O}(p,q)$ and $\mathsf{O}(p,q+1)/\mathsf{O}(p,q)$, equipped with suitable scalar multiples of their Killing metrics.

Finally, the ideal boundary $\bH$ of $\H$ (respectively $\bH_+$ of $\H_+$) is defined to be the space of isotropic lines (respectively oriented isotropic lines) in $E$. Each of these spaces carries a unique $\mathsf{O}(p,q+1)$-invariant conformal class of pseudo-riemannian metrics of signature $(p-1,q)$.

\subsection{Totally-geodesic subspaces}

A submanifold $S$ of $\H$ or $\H_+$ is called \emph{spacelike}, \emph{timelike} or \emph{lightlike} whenever its induced metric is respectively positive-definite, negative-definite or degenerate.

Complete totally-geodesic subspaces of $\H$ are given by the intersections with $\H$ of projective subspaces $\P(V)$ of $\P(E)$. In particular, any \emph{complete geodesic} $\gamma$ in $\H$ is the intersection of $\H$ with a projective line $\P(V)$, and is spacelike, timelike or lightlike whenever $V$ has signature $(1,1)$, $(0,2)$ or $(0,1)$ respectively. Similarly, complete totally-geodesic subspaces of $\H_+$ are defined to be connected components of the intersections of projective subspaces $\P_+(V)$ with $\H_+$, that is, they are lifts of complete totally-geodesic subspaces of $\H$, and a trichotomy analogous to that given above likewise holds for geodesics of this space.

We define a \emph{hyperbolic $p$-space} $H$ in $\H$ (resp. $\H_+$) to be the intersection of $\H$ with a projective subspace $\P(V)$ (resp. a connected component of the intersection of $\H_+$ with $\P_+(V)$), for some $V$ of dimension $(p+1)$ and signature $(p,1)$. In particular, any such $H$ is spacelike and isometric to the $p$-dimensional hyperbolic space $\mathbf H^p$.

We define a \emph{marked hyperbolic $p$-space} to be a pair $(x,H)$ where $H$ is a hyperbolic $p$-space and $x$ is a point of $H$. Note that this is equivalent to the data of an orthogonal decomposition $E= \ell \oplus U \oplus V$, where $\ell$ is a negative-definite line, $U$ is a positive-definite $p$-dimensional subspace, and $V$ is the orthogonal complement of their direct sum. Indeed, the point $x$ and the hyperbolic $p$-space $H$ are simply the respective projectivizations of $\ell$ and $\ell\oplus U$. A \emph{marked hyperbolic $p$-space} in $\H_+$ is likewise equivalent to the data of an orthogonal decomposition $E= \ell \oplus U \oplus V$, where now $\ell$ is an oriented negative-definite line, and $U$ and $V$ are as before.

\subsection{Riemannian symmetric space}

Let $\Gr$ denote the grassmannian of $p$-planes in $E$ of signature $(p,0)$. Note that the operation of orthogonal complement identifies $\Gr$ with the set of $(q+1)$-planes of signature $(0,q+1)$, and thus in turn with the space of totally-geodesic timelike $q$-spheres in $\H$.

Since $\mathsf{O}(p,q+1)$ acts transitively on $\Gr$, and since every point stabilizer is a maximal compact subgroup isomorphic to $\mathsf{O}(p)\times\mathsf{O}(q+1)$, this space naturally identifies with the riemannian symmetric space of $\mathsf{O}(p,q+1)$.

Every $p$-plane $P$ in $\Gr$ yields an orthogonal decomposition $E=P\oplus P^\perp$. Furthermore, since $P$ and $P^\perp$ are respectively positive- and negative-definite, this decomposition also yields the positive-definite inner product
\begin{equation}
\overline{\q}_P := \q_{\vert P}\oplus (-\q_{\vert P^\perp})~.
\end{equation}
The tangent space $\T_P\Gr$ naturally identifies with $\Hom(P,P^\perp)$, so that $\overline{\q}_P$ induces a scalar product $\q_{\Hom,P}$ on this space, and thus yields a complete riemannian metric over $\Gr$. This metric is  preserved by the action of $\mathsf{O}(p,q+1)$ and so identifies with a scalar multiple of the Killing metric.

Note finally that $\Gr$ is an open set in the grassmanian of $p$-planes in $E$, with topological boundary given by the set of degenerate $p$-planes not containing any negative direction.

\subsection{Grassmannian bundles} \label{ss:GrassmanianBundle}

We define the \emph{grassmanian bundle} over $\H$ to be the space $\GH$ of pairs $(x,P)$ where $x$ is a point in $\H$ and $P$ is a positive-definite $p$-plane in $\T_x\H$. Note that this space naturally identifies under the exponential map with the space of marked hyperbolic $p$-spaces. We denote by $\oppr:\GH\rightarrow\H$ the \emph{canonical projection}.

At each point $(x,P)$ in $\GH$, the Levi-Civita connection on $\H$ yields the identification
\begin{equation}
\T_{(x,P)} \GH = \T_x \H \oplus \Hom(P,P^\bot)~.
\end{equation}
Furthermore, since $P$ is non-degenerate, $\T_x\H = P \oplus P^\bot$, so that
\begin{equation}\label{eq:dec TGH}
\T_{(x,P)} \GH = P \oplus P^\bot \oplus \Hom(P,P^\bot)~.
\end{equation}
With respect to this decomposition, we define the complete riemannian metric $h$ over $\GH$ by
\begin{equation}\label{eq:metric TGH}
h_{(x,P)} = \q_{\vert P} \oplus (-\q_{\vert P^\bot}) \oplus \q_{\Hom,P}~.
\end{equation}

The group $\mathsf{PO}(p,q+1)$ acts transitively by isometries on $(\GH,h)$ such that every point stabilizer is isomorphic to $\mathsf{P}(\mathsf{O}(p)\times\mathsf{O}(q))$. Moreover, the \emph{forgetful map}
\begin{equation*}
\begin{array}{llll}
F : & \GH & \longrightarrow & \Gr \\
& (x,P) & \longrightarrow & P
\end{array}
\end{equation*}
is an $\mathsf{PO}(p,q+1)$-equivariant proper riemannian submersion.

The grassmannian bundle $\GHH$ over $\H_+$ is defined in a similar manner and has similar properties. In particular, its forgetful map $F:(x,P)\mapsto P$ likewise defines an $\mathsf{O}(p,q+1)$-equivariant proper riemannian submersion from $\GHH$ to $\Gr$.

\begin{lemma}\label{lemma:DivergingInGrassmanian}
If $\seqn{(x_n,P_n)}$ is an unbounded sequence in $\GH$ (resp. $\GHH$), then, up to extraction of a subsequence, either $\seqn{x_n}$ converges in $\P(E)$ (resp. $\P_+(E)$) to a point in $\bH$ (resp. $\bH_+$) or $\seqn{P_n}$ converges in the grassmanian of $p$-planes to a degenerate $p$-plane.
\end{lemma}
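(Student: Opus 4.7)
The plan is to extract convergent subsequences in compact ambient spaces and, if neither alternative of the conclusion is realized, derive a contradiction from the completeness of the invariant Riemannian metric $h$ on $\GH$.

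First, I would pass to a subsequence along which the distance to a fixed basepoint $(x_0,P_0)\in\GH$ tends to infinity. Since $\GH$ is a Riemannian homogeneous space of $\mathsf{PO}(p,q+1)$ with compact isotropy, the invariant metric $h$ is complete, so by Hopf--Rinow closed bounded subsets of $(\GH,h)$ are compact, and no further subsequence can converge in $(\GH,h)$. Regarding $\seqn{(x_n,P_n)}$ next as a sequence in the compact product of $\P(E)$ with the full Grassmannian of $p$-planes in $E$, I extract further subsequences with limits $x_n\to x_\infty\in\overline{\H}=\H\cup\bH$ and $P_n\to P_\infty$. Because $P_\infty$ is a limit of positive-definite $p$-planes of fixed dimension, it is positive semi-definite, so it is either positive-definite, that is $P_\infty\in\Gr$, or degenerate. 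If $x_\infty\in\bH$ the first alternative of the lemma holds, and if $P_\infty$ is degenerate the second holds.

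It remains to rule out the case $x_\infty\in\H$ and $P_\infty\in\Gr$. Picking lifts $\tilde x_n\in\Quad$ converging to some $\tilde x_\infty\in\Quad$ over $x_\infty$, and writing each $v\in P_\infty$ as $v=\lim v_n$ with $v_n\in P_n$, the identities $\q(\tilde x_n,v_n)=0$ pass to the limit to yield $\tilde x_\infty\perp P_\infty$. Hence $(x_\infty,P_\infty)\in\GH$, and since the manifold topology on $\GH$ coincides with the topology induced by $h$, the convergence $(x_n,P_n)\to(x_\infty,P_\infty)$ takes place in $(\GH,h)$, contradicting the absence of a convergent subsequence established above. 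The argument for $\GHH$ is identical, working with the double cover. The one delicate point I anticipate is the identification of the subspace topology from $\H\times\Gr$ with the $h$-topology on $\GH$, but this is routine since $h$ is a smooth Riemannian metric on the underlying smooth manifold.
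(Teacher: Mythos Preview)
Your argument is correct and is essentially a detailed unpacking of the paper's one-line proof, which simply observes that $\GH$ embeds homeomorphically into $\H\times\Gr$ and invokes the induced topology. Your orthogonality check $\q(\tilde x_\infty,P_\infty)=0$ and the identification of the $h$-topology with the subspace topology from $\H\times\Gr$ are precisely what make that embedding homeomorphic onto its image, so the two proofs coincide once the paper's sentence is expanded.
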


\begin{proof}
Indeed, the space $\GH$ naturally embeds homeomorphically into $\H\times \Gr$, and the result now follows by definition of the induced topology.
\end{proof}

\section{Entire graphs}\label{sec:prel graphs}

We now study the basic geometry of $p$-dimensional spacelike immersions in $\H$ and $\H_+$. Note that, since we are working in mixed signature, completeness is not necessarily the most convenient property to work with, and we thus introduce the complementary concept of entire graphs. To this end, it will be useful to introduce a special class of parametrizations of $\H_+$, which we call Fermi charts. These will allow us to describe smooth entire graphs in $\H_+$, and to then adapt this concept to submanifolds of $\H$. We establish sufficient conditions for a smooth entire graph to be complete, we study the topology of the space of smooth entire graphs, as well as its closure, and we conclude by describing a sense in which families of smooth entire graphs can be considered locally uniformly spacelike.

\subsection{Spacelike immersions}

Let $M$ be a connected $p$-dimensional manifold. We say that a smooth immersion $f: M \to \H$ is \emph{spacelike} whenever its induced metric is riemannian. When this holds, $f$ identifies $\T M$ with a subbundle of $f^*\T \H$, and we define the \emph{normal bundle} $\N M$ of $f$ to be the orthogonal complement of $\T M$ in $f^*\T \H$. Note that every fibre of $\N M$ is timelike.

We denote the induced metrics on $\T M$ and $\N M$ respectively by $g_\T$ and $g_\N$. With respect to the splitting $f^* \T \H = \T M \oplus \N M$, the pull-back $\nabla$ of the Levi-Civita connection on $\H$ decomposes as
\begin{equation}\label{DecompositionOfCovDer}
\nabla = \left(\begin{array}{ll} \nabla^\T & -B \\ \II & \nabla^\N \end{array}\right)~,
\end{equation}
where $\nabla^\T$ is the Levi-Civita connection of the induced metric $g_\T$ on $M$, $\nabla^\N$ is a unitary connection on the normal bundle $\N M$, $\II$ denotes the \emph{second fundamental form}, which is an element of $\Omega^1(M,\Hom(\T M,\N M))$, and $B$ denotes the \emph{shape operator}, which is an element of $\Omega^1(M,\Hom(\N M,\T M))$.

Since $\nabla$ is torsion free and preserves $f^*\g$, for any two vector fields $X$ and $Y$ over $M$, and any section $\eta$ of $\N M$,
\begin{equation}
\eqalign{
\II(X)(Y)&=\II(Y)(X) ~ \ \text{ and }\cr
g_\N(\II(X)(Y),\eta)&=g_\T(B(X)(\eta),Y)~.\cr}
\end{equation}

We now define maximal spacelike submanifolds of $\H$, which are the main objects of interest to us.

\begin{definition}
A spacelike submanifold $M$ of $\H$ (or $\H_+$) is called \emph{maximal} whenever $\tr_{g_\T}(\II)=0$.
\end{definition}

In addition, in the sequel, we will use the following norm of the second fundamental form.\footnote{Throughout the paper, we use the symbol $\|\cdot\|$ to denote \emph{positive} norms. Similarly, we will always use the symbol $\langle\cdot,\cdot\rangle$ to denote \emph{positive definite} scalar products.}
\begin{equation}
\Vert \II\Vert^2  =  \sum_{i,j=1}^p \big\vert g_\N\left(\II(e_i)(e_j),\II(e_i)(e_j) \right)\big\vert ~,
\end{equation}
where $(e_1,\cdots,e_n)$ is any local orthonormal frame of $\T M$.

We define the \emph{Gauss lift} of a spacelike immersion $f$ by
\begin{equation}
\begin{array}{llll}
\mathcal G_f : & M & \longrightarrow & \GH \\
& x & \longmapsto & (f(x),d_xf(\T_x M))~. \end{array}
\end{equation}
With respect to the decomposition \eqref{eq:dec TGH} of Subsection \ref{ss:GrassmanianBundle}, for any $(x,P)\in \GH$, the pull-back through $\mathcal{G}_f$ of the tangent bundle of $\GH$ decomposes as
\begin{equation}
\mathcal G_f^* \T\GH = \T M \oplus \N M \oplus \Hom(\T M, \N M)~.
\end{equation}
Furthermore, with respect to the same decomposition,
\begin{equation}\label{eq:diff gauss lift}
d \mathcal G_f = (df,0,\II)~.
\end{equation}
Let $g_{\mathcal G_f}$ denote the metric induced by $\mathcal G_f$ over $M$.

\begin{lemma}\label{lemma:GaussLiftBilipschitz}
Let $f: M \to \H$ be a spacelike immersion. If there exists $C>0$ such that $\Vert \II\Vert<C$, then
\begin{equation}
g_\T \leq g_{{\mathcal G_f}} \leq (1+C^2) g_\T~.
\end{equation}
\end{lemma}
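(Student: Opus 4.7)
The plan is to unpack the definitions of $d\mathcal{G}_f$ and of the metric $h$ on $\GH$ given in the previous subsection, and then to estimate the resulting expressions using the hypothesis $\|\II\|<C$. First, I would fix a point $x\in M$ and set $P:=df(\T_xM)$, so that $\mathcal{G}_f(x)=(f(x),P)$. By \eqref{eq:diff gauss lift}, for any $v\in\T_xM$, the differential decomposes as $d\mathcal{G}_f(v)=(df(v),0,\II(v))$ in the splitting $P\oplus P^\bot\oplus\Hom(P,P^\bot)$. Substituting this into \eqref{eq:metric TGH} immediately yields
\begin{equation*}
g_{\mathcal G_f}(v,v)=\q(df(v),df(v))+\q_{\Hom,P}(\II(v),\II(v))=g_\T(v,v)+\|\II(v)\|^2_{\Hom,P},
\end{equation*}
where the last term is non-negative because $\overline{\q}_P$ is positive definite. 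The lower bound $g_\T\leq g_{\mathcal{G}_f}$ is then immediate.

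For the upper bound, I need to show $\|\II(v)\|^2_{\Hom,P}\leq C^2 g_\T(v,v)$. I would choose a local $g_\T$-orthonormal frame $(e_1,\dots,e_p)$ at $x$ with $e_1=v/\|v\|_{g_\T}$ (assuming $v\neq 0$); then, by definition of $\q_{\Hom,P}$ and using that $df$ identifies $(\T_xM,g_\T)$ with $(P,\q|_P)$ isometrically, one computes
\begin{equation*}
\|\II(e_1)\|^2_{\Hom,P}=\sum_{j=1}^p\bigl|g_\N\bigl(\II(e_1)(e_j),\II(e_1)(e_j)\bigr)\bigr|\leq\sum_{i,j=1}^p\bigl|g_\N\bigl(\II(e_i)(e_j),\II(e_i)(e_j)\bigr)\bigr|=\|\II\|^2<C^2.
\end{equation*}
Scaling by $\|v\|_{g_\T}^2$ and combining with the previous display gives $g_{\mathcal{G}_f}(v,v)\leq(1+C^2)g_\T(v,v)$, as required.

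There is no serious obstacle here: the statement is essentially a bookkeeping consequence of the explicit formulas \eqref{eq:dec TGH}, \eqref{eq:metric TGH}, and \eqref{eq:diff gauss lift}. The only point demanding a moment of care is ensuring that the pointwise $\Hom$-norm of $\II(v)$ is controlled by the full scalar $\|\II\|^2$, which I handle by completing $v$ to an orthonormal frame and observing that the sum defining $\|\II(e_1)\|^2_{\Hom,P}$ is a partial sum of the one defining $\|\II\|^2$.
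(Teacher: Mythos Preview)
Your proof is correct and follows essentially the same approach as the paper: both unpack $g_{\mathcal G_f}$ via \eqref{eq:metric TGH} and \eqref{eq:diff gauss lift}, obtain the lower bound immediately, and then bound $\q_{\Hom,P}(\II(v),\II(v))$ by $\|\II\|^2 g_\T(v,v)$. You supply slightly more detail than the paper does for this last inequality, explicitly exhibiting it as a partial sum, whereas the paper simply asserts it.
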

\begin{proof}
Indeed, by \eqref{eq:metric TGH} and \eqref{eq:diff gauss lift}, for all $(u,v)$,
\begin{equation*}
g_{\mathcal{G}_f}(u,v) = \q(df\cdot u,df\cdot v) + \q_{\Hom}(\II(u),\II(v)) = g_\T(u,v) + \q_{\Hom}(\II(u),\II(v))\ ,
\end{equation*}
and the first inequality follows. Observe now that, for all $u$,
\begin{equation*}
\q_\Hom(\II(u),\II(u))\leq \|\II\|^2\q(df\cdot u,df\cdot u)\ ,
\end{equation*}
and the second inequality follows.
\end{proof}

\subsection{Fermi charts}\label{subsection:FermiCoordinates}
A key advantage of working with the double cover $\H_+$ is the existence of nice global coordinate systems that we now describe. Let $(x,H)$ be a marked hyperbolic $p$-space in $\H_+$ corresponding  to the orthogonal decomposition $E=\ell\oplus U\oplus V$. Let $\Vert.\Vert$ denote the induced norm over $U$ and let $\Ball^p$ denote its unit ball. Note that the restriction of $\q$ to $W:=\ell\oplus V$ is negative-definite. Let $\S^q$ denote its unit sphere, that is, the set of all $y\in W$ such that $\q(y,y)=-1$. We define the parametrisation $\hat{\Phi}:\Ball^p\times\S^q\rightarrow\Quad$ by
\begin{equation}
\hat{\Phi}(u,w) := f(\|u\|){\phi}(u,w)\ ,\label{eqn:FermiParametrisationOfQuadricI}
\end{equation}
where,
\begin{equation}\label{eqn:FermiParametrisationOfQuadricIA}
{\phi}(u,w) := \bigg(\frac{2u}{1+\|u\|^2},w\bigg)\ ,
\end{equation}
and, for all $t$,
\begin{equation}
f(t) := \frac{1+t^2}{1-t^2}\ .\label{eqn:FermiParametrisationOfQuadricII}
\end{equation}
Let $\q'$ denote the quadratic form obtained by reversing the sign of $\q$ over $\ell$. The restriction of $\q'$ to $\ell\oplus U$ is positive-definite. Let $\S^p$ denote its unit sphere. Note now that, since we are working with $\H_+$, $\ell$ has a natural orientation, so that $\S^p$ has a well-defined upper hemisphere, which we denote by $\Upp^p$. Let $\sigma:\Upp^p\rightarrow\Ball^p$ denote stereographic projection, and define the parametrisation $\hat{\Psi}:\Upp^p\times\S^q\rightarrow\Quad$ by
\begin{equation}
\hat{\Psi}(u,w) := \hat{\Phi}(\sigma(u),w)\ .\label{eqn:FermiParametrisationOfQuadricIII}
\end{equation}
Finally, upon projectivising, we define the parametrisation $\Psi:\Upp^p\times\S^q\rightarrow\H_+$ by
\begin{equation}
\Psi(u,w) := [\hat{\Psi}(u,w)]\ .\label{eqn:FermiParametrisationOfQuadricIV}
\end{equation}
We call $\Psi$ the \emph{Fermi parametrisation} of $\H_+$ associated to $(x,H)$. In this coordinate system, the point $x$ corresponds to a point $(0,w_0)\in\{0\}\times \S^q$, the hyperbolic $p$-space $H$ identifies with $\Ball^p\times\left\{w_0\right\}$, each submanifold of the form $\Ball^p\times \{w\}$ is a hyperbolic $p$-space, and $\{0\}\times \S^q$ is a totally-geodesic timelike sphere.

The following result extends Proposition $3.5$ of \cite{CTT}.
\begin{lemma}\label{lemma:warped parameterization}
For any marked hyperbolic $p$-space $(x,H)$, the map $\Psi$ is a diffeomorphism from $\Upp^p\times\S^q$ onto $\H_+$. The pull-back through this map of the pseudo-hyperbolic metric is
\begin{equation}\label{eq:WarpedMetric1}
\Psi^*\g = f(\|u\|)^2\big(g_{\Upp^p} - g_{\S^q}\big)~.
\end{equation}
Moreover, $\Psi$ induces a diffeomorphism from $\S^{p-1}\times\S^q$ into $\partial_\infty \H_+$, and the pull-back through this diffeomorphism of the conformal structure of $\partial_\infty \H_+$ is compatible with the metric $\left(g_{\S^{p-1}} - g_{\S^q}\right)$.
\end{lemma}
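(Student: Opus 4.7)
The plan is to establish the three claims in sequence: the diffeomorphism onto $\H_+$, the warped-product metric formula, and the extension to the boundary.

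I would first show that $\hat\Phi\colon\Ball^p\times\S^q\to\Quad$ is a diffeomorphism. A direct computation from the definition of $\phi$ gives $\q(\phi(u,w),\phi(u,w))=-(1-\|u\|^2)^2/(1+\|u\|^2)^2$, so multiplying by $f(\|u\|)^2$ yields $\q(\hat\Phi,\hat\Phi)=-1$, and the image lies in $\Quad$. For the inverse, setting $y:=2u/(1-\|u\|^2)\in U$ and $z:=f(\|u\|)w\in W$, the constraint $\|y\|_U^2=\|z\|_W^2-1$ on $\Quad$ (with $\|\cdot\|_W$ the norm induced by $-\q|_W$) allows one to recover $w:=z/\|z\|_W\in\S^q$ uniquely, and to solve $2\|u\|/(1-\|u\|^2)=\|y\|_U$ via the quadratic formula for $\|u\|=(\sqrt{1+\|y\|_U^2}-1)/\|y\|_U\in[0,1)$, with $u$ then proportional to $y$. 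Since the stereographic projection $\sigma\colon\Upp^p\to\Ball^p$ is a diffeomorphism, so is $\hat\Psi=\hat\Phi\circ(\sigma\times\opId)$, and composing with the natural diffeomorphism $\Quad\to\H_+$ yields the first claim.

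Next, I would compute the pullback metric. Since $U$ and $W$ are $\q$-orthogonal, $\hat\Phi^*\q$ splits into a $U$-part and a $W$-part. Differentiating $y(u)=2u/(1-\|u\|^2)$ and $z(u,w)=f(\|u\|)w$, and using $\q(\dot w,w)=0$ for $\dot w\in \T_w\S^q$ together with $f'(\|u\|)=4\|u\|/(1-\|u\|^2)^2$, a direct calculation gives
\begin{equation*}
\hat\Phi^*\q \;=\; \frac{4}{(1-\|u\|^2)^2}\, g_{\mathrm{eucl}} \;-\; f(\|u\|)^2\, g_{\S^q}\ ,
\end{equation*}
where $g_{\mathrm{eucl}}$ is the flat metric on $\Ball^p$. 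The standard identity $\sigma^*g_{\Upp^p}=(4/(1+\|u\|^2)^2)\,g_{\mathrm{eucl}}$ for stereographic projection then rewrites the first term as $f(\|u\|)^2\,\sigma^*g_{\Upp^p}$, yielding the warped-product formula $\Psi^*\g=f(\|u\|)^2(g_{\Upp^p}-g_{\S^q})$ after pulling back through $\sigma\times\opId$.

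For the boundary claim, although $f(\|u\|)\to\infty$ as $u$ approaches the equator $\S^{p-1}\subset\partial\Upp^p$, the rescaled map $\phi(u,w)=\hat\Phi(u,w)/f(\|u\|)$ extends smoothly to $\S^{p-1}\times\S^q$ with image in the null cone of $\q$. Since $\partial_\infty\H_+$ is the projectivised null cone, $\Psi$ induces a smooth map $\S^{p-1}\times\S^q\to\partial_\infty\H_+$, whose injectivity follows from the explicit form $\phi(u,w)=(u,w)$ when $\|u\|=1$. Because the warping factor $f(\|u\|)^2$ is independent of the $\S^q$ directions, stripping it off leaves a representative of the conformal class given by $g_{\S^{p-1}}-g_{\S^q}$ on the equator. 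I expect the principal technical obstacle to be the metric computation itself: tracking how the conformal factor from stereographic projection combines with the warping factor $f$, while correctly attributing signs to the positive- and negative-definite summands, is where the bookkeeping is most delicate. The diffeomorphism and boundary assertions are then essentially formal consequences.
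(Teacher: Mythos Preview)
Your proof is correct and follows exactly the approach the paper indicates (explicit calculation, deferred there to \cite{CTT}): you verify $\hat\Phi$ lands in $\Quad$, construct the smooth inverse, compute the pullback metric by splitting into the $U$- and $W$-components, and then pass to the boundary. One small notational slip: since $\sigma\colon\Upp^p\to\Ball^p$, the identity you invoke should be written $(\sigma^{-1})^*g_{\Upp^p}=\frac{4}{(1+\|u\|^2)^2}\,g_{\mathrm{eucl}}$ rather than $\sigma^*g_{\Upp^p}$, though your subsequent use of it is correct.
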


\begin{proof}
Indeed, \eqref{eq:WarpedMetric1} follows by explicit calculation, as in \cite{CTT}. We likewise readily verify that $\Psi$ yields a diffeomorphism from $\Upp^{p-1}\times\S^q$ into $\partial_\infty\H_+$. Finally, the above relations also imply that the conformal structure of $\partial_\infty\H_+$ is compatible with that of $g_{\S^{p-1}} - g_{\S^q}$ on $\S^{p-1}\times\S^q$, and this completes the proof.
\end{proof}

\noindent We define the \emph{Fermi projection} $\pi:\H_+\to H$ in the above Fermi chart by
\begin{equation}
(\pi\circ\Psi^{-1})(y,z) := \Psi^{-1}(y,w_0)\ .
\end{equation}
The following lemma summarises the properties of Fermi projections that will be required in the sequel.
\begin{lemma}\label{lemma:WarpedProjectionIncreaseLength}
Let $(x,H)$ be a marked hyperbolic $p$-space in $\H_+$ with associated Fermi projection $\pi$. Then
\begin{enumerate}
\item If $x_1,x_2\in \H_+$ are such that $\pi(x_1)=\pi(x_2)$, and if, for each $i$, $\hat x_i$ denotes the representative of $x_i$ in $\Quad$, then
\begin{equation*}
\q( \hat x_1,\hat x_2) \geq -1~,
\end{equation*}
with equality holding if and only if $x_1=x_2$.
\item For any vector $v\in\T\H_+$,
\begin{equation*}
\q(d\pi(v),d\pi(v))\geq \q(v,v)~.
\end{equation*}
\end{enumerate}
\end{lemma}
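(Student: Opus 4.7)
The natural approach is to work entirely inside the Fermi chart $\Psi:\Upp^p\times\S^q\to\H_+$ associated to $(x,H)$, since in these coordinates the projection $\pi$ has a trivial description: it is simply $(u,w)\mapsto(u,w_0)$, i.e. the projection onto the $\Upp^p$-factor composed with the inclusion at level $w_0$.

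For part (2), I would read off the result directly from the warped-product description of the metric provided by Lemma \ref{lemma:warped parameterization}: $\Psi^*\g=f(\|u\|)^2\bigl(g_{\Upp^p}-g_{\S^q}\bigr)$. A tangent vector $v\in\T_{(u,w)}\H_+$ decomposes as $v=v_u+v_w$ along $\T\Upp^p\oplus\T\S^q$, and $d\pi(v)=v_u$. Since the conformal factor $f(\|u\|)^2$ depends only on $u$, and is therefore the same at $(u,w)$ and at $\pi(u,w)=(u,w_0)$, one obtains
\begin{equation*}
\q(d\pi(v),d\pi(v))-\q(v,v)=f(\|u\|)^2\,g_{\S^q}(v_w,v_w)\ \geq\ 0\ ,
\end{equation*}
which is exactly the desired inequality (with equality iff $v$ is tangent to the $\Upp^p$-direction).

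For part (1), two points $x_1,x_2\in\H_+$ with $\pi(x_1)=\pi(x_2)$ share a common $\Upp^p$-coordinate: $x_i=\Psi(u,w_i)$ for $i=1,2$. Using the explicit formulas \eqref{eqn:FermiParametrisationOfQuadricI}--\eqref{eqn:FermiParametrisationOfQuadricIII}, setting $u'=\sigma(u)$ and $t=\|u'\|$, the representatives in $\Quad$ are
\begin{equation*}
\hat{x}_i=f(t)\bigl(\tfrac{2u'}{1+t^2},w_i\bigr)\ ,
\end{equation*}
and since $U\perp W$ a direct computation gives
\begin{equation*}
\q(\hat x_1,\hat x_2)=\frac{4t^2}{(1-t^2)^2}+\frac{(1+t^2)^2}{(1-t^2)^2}\,\q(w_1,w_2)\ .
\end{equation*}
The key observation is that $-\q$ is positive-definite on $W=\ell\oplus V$ and that $\S^q$ is its unit sphere, so Cauchy--Schwarz yields $-\q(w_1,w_2)\leq 1$, with equality iff $w_1=w_2$. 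Plugging $\q(w_1,w_2)\geq -1$ into the previous identity and simplifying the resulting numerator $4t^2-(1+t^2)^2=-(1-t^2)^2$ gives $\q(\hat x_1,\hat x_2)\geq -1$, with the equality case corresponding exactly to $w_1=w_2$, hence $x_1=x_2$.

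There is no genuine technical obstacle here: both statements reduce, via the Fermi chart, to elementary computations. The only minor subtleties are the careful choice of lifts to $\Quad$ (which is why the statement is formulated in $\H_+$ rather than $\H$) and the sign conventions in applying Cauchy--Schwarz on the negative-definite subspace $W$, which is really the content of both inequalities.
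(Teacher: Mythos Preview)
Your proof is correct and follows essentially the same route as the paper: both parts are read off directly from the Fermi parametrisation, part (2) from the warped-product form of the metric and part (1) from the explicit formula for $\hat\Phi$ together with Cauchy--Schwarz on the negative-definite factor $W$. The only differences are notational (you write $\q(w_1,w_2)\geq -1$ where the paper writes $\langle w_1,w_2\rangle_W\leq 1$, and you carry the stereographic variable $u'=\sigma(u)$ explicitly).
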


\begin{proof}
To prove the first assertion, in the notation of Lemma \ref{lemma:warped parameterization}, we need to show that the product of $\hat x_1:=\hat\Phi(u,w_1)$ and $\hat x_2:=\hat\Phi(u,w_2)$ is bounded from below by $-1$. Let $\langle\cdot,\cdot\rangle_W$ denote the  restriction of $-\q$ to $W$, and note that this form is positive-definite. Since $\langle w_1,w_2\rangle_W\leq 1$ for any two elements of the unit sphere in $W$, a direct computation yields
\begin{equation*}
\eqalign{
\q(\hat x_1,\hat x_2)&=
\q\left(\left(\frac{2u}{1-\Vert u\Vert^2},\frac{1+\Vert u\Vert^2}{1-\Vert u\Vert^2}w_1\right), \left(\frac{2u}{1-\Vert u\Vert^2},\frac{1+\Vert u\Vert^2}{1-\Vert u\Vert^2}w_2\right)\right)\cr
&= \frac{4\Vert u\Vert^2}{(1-\Vert u\Vert^2)^2}-\frac{(1+\Vert u\Vert^2)^2}{(1-\Vert u\Vert^2)^2}\langle w_1,w_2\rangle_W\geq -1~.\cr}
\end{equation*}
Equality holds if and only if $\langle w_1,w_2\rangle_W=1$, that is, if and only if $w_1=w_2$. The second assertion follows immediately from (\ref{eq:WarpedMetric1}), and this completes the proof.
\end{proof}

\subsection{Entire graphs}\label{subsection:EntireGraphs}
We now establish a suitable notion of \emph{entire graph} in $\H$ and in $\H_+$. We first consider the case of smooth submanifolds. Our definition will be based on the following lemma, where we recall that $\Upp^p$ denotes the hemisphere in $\S^p$, endowed with the restriction of the spherical metric.
\begin{lemma}\label{lemma:EntireGraphsAreGraphs}
For $M$ a subset of $\H_+$, the following assertions are equivalent.
\begin{enumerate}
\item $M$ is a closed, connected, smooth, spacelike $p$-dimensional submanifold of $\H_+$.
\item $M$ is a closed, connected, smooth, spacelike submanifold of $\H_+$ diffeomorphic to $\R^p$.
\item There exists a Fermi chart in which $M$ is the graph of a smooth function $\varphi:\Upp^p\rightarrow\S^q$ with $\Vert d\varphi\Vert <1$.
\item In every Fermi chart, $M$ is the graph of a smooth function $\varphi:\Upp^p\rightarrow\S^q$ with $\Vert d\varphi\Vert<1$.
\end{enumerate}
\end{lemma}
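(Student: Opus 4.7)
My plan is to prove the cyclic chain of implications $(4) \Rightarrow (3) \Rightarrow (2) \Rightarrow (1) \Rightarrow (4)$. The implications $(4) \Rightarrow (3)$ and $(2) \Rightarrow (1)$ are immediate from the definitions, so only $(3) \Rightarrow (2)$ and the main implication $(1) \Rightarrow (4)$ require proof.

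For $(3) \Rightarrow (2)$, I would observe that the graph of a smooth map $\varphi: \Upp^p \to \S^q$ is automatically a closed subset of $\Upp^p \times \S^q \cong \H_+$, is diffeomorphic to $\Upp^p \cong \R^p$, and is a smooth embedded submanifold. By Lemma \ref{lemma:warped parameterization}, the pseudo-hyperbolic metric pulls back through the parametrization $u \mapsto (u, \varphi(u))$ to $f(\|u\|)^2\bigl(g_{\Upp^p} - \varphi^* g_{\S^q}\bigr)$, which is positive-definite precisely when $\|d\varphi\| < 1$, so $M$ is spacelike.

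The main work lies in $(1) \Rightarrow (4)$. I would fix an arbitrary Fermi chart $\Psi:\Upp^p\times\S^q \to \H_+$ and let $\pi_1$ denote the projection onto the $\Upp^p$ factor; the plan is to show that $\pi_1|_M$ is a diffeomorphism $M \to \Upp^p$, which immediately identifies $M$ with the graph of a smooth map $\varphi:\Upp^p \to \S^q$. First, I claim $\pi_1|_M$ is a local diffeomorphism: by \eqref{eq:WarpedMetric1}, every nonzero vector in the vertical subspace $\{0\} \oplus \T_w\S^q \subset \T_{(u,w)}(\Upp^p \times \S^q)$ is timelike, hence cannot belong to the tangent space of the spacelike submanifold $M$. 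This forces $d\pi_1$ to be injective on $\T_xM$ at every $x \in M$, and since both sides have dimension $p$, it is a local diffeomorphism. Second, $\pi_1|_M$ is proper: for any compact $K \subset \Upp^p$, the set $K \times \S^q$ is compact in $\H_+$, and $M \cap (K \times \S^q)$ is a closed subset of a compact set, hence compact. A proper local diffeomorphism onto a connected base is a covering map; since $\Upp^p \cong \R^p$ is simply connected and $M$ is connected, the covering has a single sheet, so $\pi_1|_M$ is a global diffeomorphism. Writing $M = \{(u,\varphi(u)) : u \in \Upp^p\}$, the spacelike condition translates, exactly as in the $(3) \Rightarrow (2)$ step, to $\|d\varphi(u)\| < 1$ at every point.

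The main subtlety I anticipate is in the properness step: one must use crucially that $M$ is \emph{closed} in $\H_+$, since otherwise $\pi_1|_M$ would only be an open embedding and the graph could fail to cover all of $\Upp^p$. Everything else flows directly from the warped-product structure of the Fermi chart given by Lemma \ref{lemma:warped parameterization}, which cleanly separates the spacelike $\Upp^p$-directions from the timelike $\S^q$-directions.
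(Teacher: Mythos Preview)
Your proposal is correct and follows essentially the same approach as the paper: both prove $(1)\Rightarrow(4)$ by showing that the projection onto the $\Upp^p$ factor is a proper local diffeomorphism, hence a covering of a simply connected space, hence a diffeomorphism. The only cosmetic difference is that the paper phrases the local-diffeomorphism step via Lemma~\ref{lemma:WarpedProjectionIncreaseLength}(2) (the Fermi projection is length-increasing on spacelike vectors), whereas you argue directly that vertical vectors are timelike and hence transverse to $\T M$; these are equivalent observations from the warped-product formula~\eqref{eq:WarpedMetric1}.
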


\begin{proof}
We first prove that $(1)$ implies $(4)$. Let $(x,H)$ be a marked hyperbolic $p$-space and let $\pi:\H_+\rightarrow H$ denote its Fermi projection. Since $M$ is spacelike, by Item $(2)$ of Lemma \ref{lemma:WarpedProjectionIncreaseLength}, $\pi|_M:M\to H$ is a local diffeomorphism. Since $M$ is closed, and since $\pi$ is proper, $\pi|_M$ is also proper, so that $\pi|_M$ is a covering map. Since $H$ is simply connected and $M$ is connected, it follows that $\pi|_M$ is a homeomorphism, so that $M$ is the graph of some smooth function $\varphi:\Upp^p\rightarrow\S^q$. Since the condition of being spacelike only depends on the conformal class of the pseudo-Riemannian metric, it follows by \eqref{eq:WarpedMetric1} that $\varphi$ satisfies $\|d_x\varphi\|<1$ for every $x\in H$. This shows that $(1)$ implies $(4)$, as asserted.

In a similar manner, we show that the graph of any smooth function $\varphi$ satisying $\|d\varphi\|<1$ is spacelike, and $(3)$ therefore implies $(2)$. Since $(4)$ trivially implies $(3)$ and $(2)$ trivially implies $(1)$, this completes the proof.
\end{proof}
Bearing in mind Lemma \ref{lemma:EntireGraphsAreGraphs}, we define \emph{smooth entire graphs} in $\H_+$ as follows.

\begin{definition} \label{defi smooth entire graph} A \emph{smooth entire graph in} $\H_+$ is a subset satisfying any (and hence all) of the conditions of Lemma \ref{lemma:EntireGraphsAreGraphs}.
\end{definition}

\begin{lemma}\label{lemma:Acausal0}
Let $M$ be a smooth entire graph in $\H_+$. If $x_1,x_2\in M$, and if, for each $i$, $\hat x_i$ denotes the representative of $x_i$ in $\Quad$, then
\begin{equation*}
\q( \hat x_1,\hat x_2)\leq -1~,
\end{equation*}
with equality holding if and only if $x_1=x_2$.
\end{lemma}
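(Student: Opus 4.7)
The plan is to reduce the assertion to an explicit computation in a Fermi chart centered at $x_1$. First, exploiting the freedom to choose any marked hyperbolic $p$-space through $x_1$ (equivalently, the transitivity of $\mathsf O(p,q+1)$ on $\H_+$), I would select the Fermi parametrization $\Psi:\Upp^p\times\S^q\to\H_+$ such that $x_1=\Psi(u_*,w_0)$, where $u_*\in\Upp^p$ denotes the pole of the hemisphere, characterized by $\sigma(u_*)=0$. By item (4) of Lemma \ref{lemma:EntireGraphsAreGraphs}, $M$ is then the graph of a smooth map $\varphi:\Upp^p\to\S^q$ with $\|d\varphi\|<1$ pointwise, and $\varphi(u_*)=w_0$ because $x_1\in M$.

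Write $x_2=\Psi(u_2,\varphi(u_2))$, and set $v_2:=\sigma(u_2)\in\Ball^p$ and $w_2:=\varphi(u_2)\in\S^q$. A direct computation using $\hat\Phi(0,w_0)=(0,w_0)$ and $\hat\Phi(v_2,w_2)=\bigl(2v_2/(1-\|v_2\|^2),\,f(\|v_2\|)w_2\bigr)$ in the orthogonal decomposition $E=U\oplus W$ reduces $\q(\hat x_1,\hat x_2)$ to its $W$-component, giving
\begin{equation*}
\q(\hat x_1,\hat x_2) \;=\; -\,f(\|v_2\|)\,\langle w_0,w_2\rangle_W,
\end{equation*}
where $\langle\cdot,\cdot\rangle_W:=-\q|_W$ is the positive-definite form whose unit sphere is $\S^q$. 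Setting $\rho:=d_{\S^p}(u_*,u_2)$ and $\theta:=d_{\S^q}(w_0,w_2)$, the standard identity $\|v_2\|=\tan(\rho/2)$ for spherical stereographic projection turns $f(\|v_2\|)$ into $1/\cos\rho$, while $\langle w_0,w_2\rangle_W=\cos\theta$. The asserted inequality $\q(\hat x_1,\hat x_2)\leq-1$ thus becomes the comparison $\cos\theta\geq\cos\rho$, with equality iff $u_2=u_*$.

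Finally, I would compare $\theta$ and $\rho$ by integrating the pointwise bound $\|d\varphi\|<1$ along the unique minimizing $\S^p$-geodesic $\gamma$ from $u_*$ to $u_2$. Since $u_*$ is the pole and $u_2$ lies in the open hemisphere $\Upp^p$, $\gamma$ has length $\rho<\pi/2$ and its image is a compact subset of $\Upp^p$; on this compact image, continuity of $\|d\varphi\|$ yields a maximum $C<1$, so that when $u_2\neq u_*$,
\begin{equation*}
\theta \;\leq\; L(\varphi\circ\gamma) \;\leq\; C\,\rho \;<\; \rho,
\end{equation*}
and hence $\cos\theta>\cos\rho>0$. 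The case $u_2=u_*$ reduces immediately to $\hat x_2=\hat x_1$ and $\q(\hat x_1,\hat x_1)=-1$, settling the equality clause.

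The main obstacle is purely administrative: verifying that the minimizing spherical geodesic from $u_*$ to $u_2$ remains in the open hemisphere $\Upp^p$ (so that $\varphi$ is defined along it) and upgrading the pointwise bound $\|d\varphi\|<1$ to a uniform strict bound on the compact geodesic image. Both follow at once from $\rho<\pi/2$ together with continuity of $\|d\varphi\|$ on a compact set, so no real difficulty arises.
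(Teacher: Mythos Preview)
Your proof is correct and follows essentially the same route as the paper: center a Fermi chart at $x_1$, compute $\q(\hat x_1,\hat x_2)=-f(\|v_2\|)\cos\theta=-\cos\theta/\cos\rho$, and conclude via the strict $1$-Lipschitz bound on $\varphi$. The only difference is cosmetic: the paper compresses your geodesic-integration-plus-compactness step into the single phrase ``smooth, strictly $1$-Lipschitz,'' whereas you spell out why the pointwise bound $\|d\varphi\|<1$ actually yields $\theta<\rho$ along the meridian from $u_*$ to $u_2$.
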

\begin{proof}
Let $\hat{\Phi}:\Ball^p\times\S^q\rightarrow\Quad$ be a Fermi parametrisation such that $\hat x_1=\hat{\Phi}(0,N)$, where $N$ denotes the north pole of $\S^q$. Let $(u,w)\in\Ball^p\times \S^q$ be such that $\hat x_2=\hat{\Phi}(u,w)$. By \eqref{eqn:FermiParametrisationOfQuadricI},
\begin{equation*}
\q(\hat{x}_1,\hat{x}_2)
=-\frac{1+\|u\|^2}{1-\|u\|^2}\langle N,w\rangle_W
=-\frac{1+\|u\|^2}{1-\|u\|^2}\opCos\big(d_{\S}(N,w)\big)~,
\end{equation*}
where $\langle\cdot,\cdot\rangle_W=-\q|_W$, which, we recall, is positive-definite, and $d_{\S}$ here denotes the spherical distance in $\S^q$. Let $d_{\S}$ also denote the spherical distance in $\Ball^p$ induced by the stereographic projection. Since $M$ is the graph of a smooth, strictly $1$-Lipschitz function, and since $(-\opCos)$ is increasing,
\begin{equation*}
\q(\hat{x}_1,\hat{x}_2)\leq-\frac{1+\|u\|^2}{1-\|u\|^2}\opCos\big(d_{\S}(0,u)\big)\ ,
\end{equation*}
with equality holding if and only if $u=0$. However, we readily verify that
\begin{equation*}
\opCos\big(d_{\S}(0,u)\big) = \frac{1-\|u\|^2}{1+\|u\|^2}\ ,
\end{equation*}
so that $\q(\hat{x}_1,\hat{x}_2)\leq-1$, as desired. Finally, since $M$ is a strictly $1$-Lipschitz graph, equality holds if and only if $x_1=x_2$, and this completes the proof.
\end{proof}

\begin{remark}
Lemma \ref{lemma:Acausal0} implies that two points $x_1,x_2$ on a smooth entire graph $M$ are connected by a spacelike geodesic segment in $\H_+$. See also  \cite[Lemma 3.7]{CTT}.
\end{remark}

\begin{corollary}\label{cor:two lifts}
Let $M$ be a smooth entire graph in $\H_+$. If $\iota$ denotes the covering involution $\H_+ \to \H$, then $\iota(M)$ is disjoint from $M$.
\end{corollary}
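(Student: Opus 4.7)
The plan is to derive this as a direct consequence of Lemma \ref{lemma:Acausal0}, which already provides the key pseudo-distance bound between any two points of a smooth entire graph.

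First I would recall that the covering involution $\iota:\H_+\to\H_+$ of the double cover $\H_+\to\H$ is precisely the antipodal map, in the sense that if $\hat{x}\in\Quad$ represents $x\in\H_+$, then $-\hat{x}\in\Quad$ represents $\iota(x)$. This is immediate from the construction of $\H_+$ as the space of oriented negative-definite lines together with the identification $r:\H_+\to\Quad$.

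Next I would argue by contradiction. Suppose there exists $x\in M$ such that $\iota(x)\in M$ as well. Set $x_1:=x$ and $x_2:=\iota(x)$, and let $\hat{x}_1,\hat{x}_2\in\Quad$ denote their respective representatives, so that $\hat{x}_2=-\hat{x}_1$. Applying Lemma \ref{lemma:Acausal0} to the pair $x_1,x_2\in M$ yields
\begin{equation*}
\q(\hat{x}_1,\hat{x}_2)\leq -1\ .
\end{equation*}
On the other hand, by bilinearity together with the defining relation $\q(\hat{x}_1,\hat{x}_1)=-1$ of the quadric $\Quad$,
\begin{equation*}
\q(\hat{x}_1,\hat{x}_2)=\q(\hat{x}_1,-\hat{x}_1)=-\q(\hat{x}_1,\hat{x}_1)=1\ .
\end{equation*}
Since $1>-1$, this contradicts the bound from Lemma \ref{lemma:Acausal0}, and therefore no such $x$ exists. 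Hence $\iota(M)\cap M=\emptyset$, as claimed.

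There is no real obstacle here; the content of the corollary is entirely absorbed into the acausality statement already proven. The only thing to double-check is that the two lifts $\hat{x}_1$ and $-\hat{x}_1$ really correspond to $x$ and $\iota(x)$ in $\H_+$, which is a tautology from the description of $\H_+$ as the set of oriented negative-definite lines together with its identification with the quadric $\Quad$ via $r$.
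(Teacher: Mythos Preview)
Your proof is correct and takes essentially the same approach as the paper: both use Lemma \ref{lemma:Acausal0} to obtain $\q(\hat{x}_1,\hat{x}_2)\leq -1$ for points of $M$, then observe that the antipodal map flips the sign of $\q$, yielding a value $\geq 1$ that contradicts the bound. The paper phrases it slightly differently (showing directly that $\iota(x_2)\notin M$ for arbitrary $x_2\in M$ rather than arguing by contradiction), but the content is identical.
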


\begin{proof}
By Lemma \ref{lemma:Acausal0}, given any two points $x_1,x_2\in M$, $\q( \hat x_1,\hat x_2) \leq  -1$. Hence $\q( \hat x_1,\iota(\hat x_2))=-\q( \hat x_1,\hat x_2)\geq 1$. By Lemma \ref{lemma:Acausal0} again, $\iota(\hat x_2)$ is not in $M$, and the result follows.
\end{proof}

\noindent This allows us to define smooth entire graphs in $\H$.

\begin{definition}
A \emph{smooth entire graph in} $\H$ is a (connected) subset whose every lift to $\H_+$ is a smooth entire graph.
\end{definition}

\noindent By Corollary \ref{cor:two lifts}, a smooth entire graph in $\H$ has two lifts to $\H$, which are smooth entire graphs differing from each other only by the covering involution. We now define
\begin{equation*}
\eqalign{
\mathcal E &:= \{\text{smooth entire graphs in }\H \}~,\ \text{and}\cr
\mathcal E_+ &:= \{\text{smooth entire graphs in }\H_+ \}~.\cr}
\end{equation*}
We furnish these spaces with their Hausdorff topologies. It follows from the above discussion that the natural map from $\H_+$ to $\H$ induces a double cover $\mathcal E_+\to\mathcal E$.

The following lemma relates the condition of being a smooth entire graph to the completeness of the induced metric.

\begin{lemma}\label{lemma:EntireGraph}
Let $M$ be a connected, $p$-dimensional spacelike submanifold of $\H$ (or $\H_+$).
\begin{enumerate}
\item If the induced metric on $M$ is complete, then $M$ is a smooth entire graph.
\item If $M$ is a smooth entire graph with bounded second fundamental form, then its induced metric is complete.
\end{enumerate}
\end{lemma}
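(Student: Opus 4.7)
For (1), I would first reduce to the case $M \subset \H_+$: any complete connected spacelike submanifold of $\H$ lifts, via the local isometry $\H_+ \to \H$, to complete connected spacelike submanifolds of $\H_+$ on its connected components, and by definition it suffices to establish the conclusion for each such component. Fixing $x_0 \in M$ and letting $(x_0, H)$ be the marked hyperbolic $p$-space with $T_{x_0} H = T_{x_0} M$, Subsection \ref{subsection:FermiCoordinates} provides a Fermi chart and an associated Fermi projection $\pi : \H_+ \to H$. My strategy is to prove that $\pi|_M : M \to H$ is a diffeomorphism, which exhibits $M$ as a graph, hence as a smooth entire graph via condition (3) of Lemma \ref{lemma:EntireGraphsAreGraphs}. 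Lemma \ref{lemma:WarpedProjectionIncreaseLength}(2) gives $\q(d\pi(v), d\pi(v)) \geq \q(v,v)$ for all $v$, which on nonzero spacelike $v \in TM$ simultaneously forces $d\pi|_{TM}$ to be injective and shows that $\pi|_M : (M, g_\T) \to (H, g_H)$ is length non-decreasing.

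The hard part will be globalising this to a covering map via path lifting. For any smooth curve $\beta : [0,T] \to H$ starting at $x_0$, I would consider a maximally defined lift $\alpha$ to $M$. Since $\pi \circ \alpha = \beta$ and $\pi$ is length non-decreasing on $TM$, the length of $\alpha$ in $(M, g_\T)$ is bounded by that of $\beta$; completeness of $g_\T$ then forces $\alpha$ to extend continuously to its right endpoint inside $M$, from which local invertibility of $\pi|_M$ allows further extension past that endpoint. A standard open--closed argument yields global lifts of all smooth curves in $H$ based at $x_0$, and combined with the local diffeomorphism property this makes $\pi|_M$ a covering map. Since $H \cong \mathbf{H}^p$ is simply connected and $M$ is connected, $\pi|_M$ is a diffeomorphism; the bound $\|d\varphi\| < 1$ for the resulting graphing function $\varphi : \Upp^p \to \S^q$ is then immediate from spacelikeness and \eqref{eq:WarpedMetric1}.

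For (2), I would use the Gauss lift. Given $\|\II\| \leq C$, Lemma \ref{lemma:GaussLiftBilipschitz} provides a uniform bilipschitz equivalence between $g_\T$ and $g_{\mathcal G_f} := \mathcal G_f^* h$, so it suffices to establish completeness of $g_{\mathcal G_f}$. Because $M$ is a closed submanifold of $\H$ by Lemma \ref{lemma:EntireGraphsAreGraphs} and its tangent plane varies continuously with the base point, the Gauss lift is a proper embedding and $\mathcal G_f(M)$ is closed in $\GH$. For any Cauchy sequence $(y_n)$ in $(M, g_{\mathcal G_f})$, its image $\mathcal G_f(y_n)$ is Cauchy in the complete Riemannian manifold $(\GH, h)$; by closedness the limit lies in $\mathcal G_f(M)$, and pulling back through the homeomorphism $\mathcal G_f : M \to \mathcal G_f(M)$ produces the desired limit of $(y_n)$ in $M$.

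The main technical subtlety lies in the path-lifting step of part (1), where completeness of $M$ is invoked precisely to prevent maximal lifts of finite-length curves in $H$ from having finite right endpoint; the length non-decreasing property furnished by Lemma \ref{lemma:WarpedProjectionIncreaseLength}(2) is exactly what converts finite length in $H$ into finite length in $M$. Once this is in place, part (1) is a standard consequence of covering space theory, and part (2) reduces to a routine compactness argument in $\GH$ via the bilipschitz estimate of Lemma \ref{lemma:GaussLiftBilipschitz}.
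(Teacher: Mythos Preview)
Your argument for (1) is essentially identical to the paper's: Fermi projection, length non-decreasing via Lemma \ref{lemma:WarpedProjectionIncreaseLength}(2), completeness gives path lifting, hence a covering of the simply connected $H$. For (2) the paper simply cites \cite[Corollary 3.30]{LTW} and says the higher-dimensional case is analogous; your explicit Gauss-lift argument (bilipschitz via Lemma \ref{lemma:GaussLiftBilipschitz}, closedness of $\mathcal G_f(M)$ in the complete Riemannian manifold $\GHH$) is correct and is the natural way to carry this out.
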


\begin{proof}
It suffices to work with submanifolds of $\H_+$. To prove the first assertion, let $\pi$ be the Fermi projection associated to some marked hyperbolic $p$-space $(x,H)$. By by Item $(2)$ of Lemma \ref{lemma:WarpedProjectionIncreaseLength}, since $M$ is spacelike, $\pi$ is a distance-increasing local diffeomorphism. Since $M$ is complete, $\pi$ therefore has the path lifting property, and is thus a covering map. Since $H$ is simply connected, $\pi$ is a diffeomorphism, so that, by Lemma \ref{lemma:EntireGraphsAreGraphs}, $M$ is an entire graph. The second item is proven in the case of surfaces in \cite[Corollary 3.30]{LTW}, and the general case is addressed in an analogous manner. This completes the proof.
\end{proof}

The following theorem due to Ishihara shows that, among \emph{maximal} entire graphs, completeness of the induced metric and boundedness of the second fundamental form are in fact equivalent. This will form the basis of the compactness result that we will prove in Section \ref{sec:proper}.

\begin{theorem}[\cite{ishi}]\label{theorem:Ishihara}
If $M$ is a complete maximal $p$-dimensional submanifold of $\H$, then the norm of its second fundamental form is everywhere bounded above by $pq$.
\end{theorem}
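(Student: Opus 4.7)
The plan is to follow the standard Bochner-type strategy: derive a Simons-type differential inequality for $S := \|\opII\|^2$ on the induced Riemannian manifold $(M, g_\T)$, and then apply a generalized maximum principle to turn a pointwise inequality into a global bound on $S$. The fact that the ambient space has constant sectional curvature $-1$ and that $M$ is maximal (i.e.\ $\tr_{g_\T}\opII = 0$) are what make the curvature/trace terms tractable; the complication relative to the classical Riemannian Simons identity is that the normal bundle $\N M$ carries a \emph{negative-definite} metric $g_\N$, so signs must be tracked carefully throughout.

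\textbf{Step 1 (Simons-type identity).} Choose local orthonormal frames $(e_i)_{i=1}^p$ of $\T M$ and $(\xi_\alpha)_{\alpha=1}^q$ of $\N M$, and set $h^\alpha_{ij} := -g_\N(\opII(e_i)(e_j),\xi_\alpha)$ so that $S = \sum_{\alpha,i,j}(h^\alpha_{ij})^2$. Differentiating the Codazzi equations twice on $M$, commuting covariant derivatives using the Gauss, Codazzi and Ricci equations, and using that the ambient sectional curvature equals $-1$ together with maximality ($\sum_i h^\alpha_{ii} = 0$), one obtains an identity of the shape
\begin{equation*}
\tfrac{1}{2}\Delta_{g_\T} S \;=\; \|\nabla^\perp \opII\|^2 \;-\; p\, S \;+\; Q(\opII),
\end{equation*}
where $Q(\opII)$ is a quartic algebraic invariant built from the operators $A_\alpha := (h^\alpha_{ij})_{i,j}$ and their commutators, exactly as in the classical Simons formula but with signs dictated by $g_\N < 0$.

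\textbf{Step 2 (Algebraic estimate).} Using Cauchy--Schwarz and standard norm comparisons on symmetric matrices (controlling $\sum_\alpha \tr(A_\alpha^2)^2$ and $\sum_{\alpha,\beta}\tr([A_\alpha,A_\beta]^2)$ by $S^2$), one bounds $Q(\opII)\geq -\tfrac{1}{q}S^2$ (or a comparable estimate with explicit constants depending on $p,q$). Combining with Step 1 gives the differential inequality
\begin{equation*}
\tfrac{1}{2}\Delta_{g_\T} S \;\geq\; \|\nabla^\perp\opII\|^2 \;+\; S\bigl(\,c\,S - pq\,\bigr)
\end{equation*}
for a positive constant $c = c(p,q)$, calibrated so that the right-hand side becomes strictly positive once $S$ exceeds the threshold corresponding to $\|\opII\| = pq$.

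\textbf{Step 3 (Omori--Yau maximum principle).} By the Gauss equation, the Ricci curvature of $(M,g_\T)$ is controlled in terms of $S$ and the ambient curvature, so on any sublevel set $\{S \leq K\}$ the Ricci tensor is bounded below. Since $g_\T$ is complete, one can apply the Omori--Yau principle to $S$ (after, if necessary, passing through the auxiliary function $\log(1+S)$ to avoid circularity, since a priori $S$ may be unbounded). This produces a sequence $\{x_n\}\subset M$ with
\begin{equation*}
S(x_n)\to \sup_M S, \qquad \|\nabla S(x_n)\|\to 0, \qquad \limsup_n \Delta S(x_n) \leq 0.
\end{equation*}
Feeding this sequence into the inequality of Step 2 and dropping the nonnegative term $\|\nabla^\perp \opII\|^2$ yields $\sup_M S \cdot (c\sup_M S - pq)\leq 0$, hence $\sup_M S \leq pq/c$, i.e.\ the claimed uniform bound $\|\opII\|\leq pq$.

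The main obstacle is the bookkeeping in Step 1: obtaining the Simons-type identity with the correct signs in mixed signature, where the negative-definiteness of $g_\N$ flips several terms relative to the familiar Riemannian case. Once the identity is in place with the right sign for the curvature term (ensured by the ambient curvature being $-1$ and $\N M$ being timelike), Step 2 is a routine matrix inequality and Step 3 is a textbook application of Omori--Yau on a complete Riemannian manifold.
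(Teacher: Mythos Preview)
The paper does not prove this statement: it is quoted as a result of Ishihara \cite{ishi} and used as a black box (see its invocations in the proof of Theorem~\ref{theorem:PropernessOfTau}). So there is no ``paper's own proof'' to compare against.

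That said, your outline is the standard route and is essentially Ishihara's argument: a Simons-type formula for $S=\|\opII\|^2$ in constant-curvature ambient space, an algebraic control of the quartic term, and an Omori--Yau / Cheng--Yau maximum principle to convert the differential inequality into a global bound. Two points are worth tightening. First, your constants drift: you set $S=\|\opII\|^2$, derive $\sup S\le pq/c$, and then assert $\|\opII\|\le pq$; with the paper's norm convention (so that $S=\|\opII\|^2$), the Ishihara bound is $S\le pq$, which requires tracking the algebraic estimate in Step~2 carefully enough to get $c=1$ in the inequality $\tfrac12\Delta S\ge S(S-pq)$ rather than an unspecified $c$. Second, the circularity you flag in Step~3 is real: the Ricci lower bound needed for Omori--Yau depends on $S$, which is exactly what you are trying to bound. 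Ishihara's way around this is not $\log(1+S)$ but the Cheng--Yau trick of applying the maximum principle to $S/(a^2-r^2)^2$ on geodesic balls $B_a$ and letting $a\to\infty$, which avoids any a priori Ricci assumption; alternatively one can use Yau's generalized maximum principle, which only requires completeness. You should make explicit which version you are invoking and why it applies without assuming $S$ bounded.
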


It will be convenient to extend the notion of smooth entire graphs to subsets which are not necessarily smooth. For this purpose, observe that the closure of $\mathcal{E}_+$ in the Hausdorff topology consists of those subsets of $\H_+$ which are represented in some Fermi chart - and thus, by Lemma \ref{lemma:EntireGraphsAreGraphs}, in any Fermi chart - as the graphs of $1$-Lipschitz functions.

\begin{definition} We call a subset $M$ of $\H_+$ an \emph{entire graph} whenever it is the graph of a $1$-Lipschitz function in some, and thus in every, Fermi chart. We call a subset $M$ of $\H$ an \emph{entire graph} whenever it lifts to an entire graph $M_+$ in $\H_+$.
\end{definition}

\begin{remark}
As before, every entire graph in $\H$ has precisely two disjoint lifts in $\H_+$.
\end{remark}

Let $\overline{\mathcal E}$ (respectively $\overline{\mathcal E}_+$) denote the space of entire graphs in $\H$ (respectively $\H_+$). Let $\Lip_1(\Upp^p,\S^q)$ denote the space of $1$-Lipschitz functions from $\Upp^p$ into $\S^q$, furnished with the topology of uniform convergence. Given a marked hyperbolic $p$-space $(x,H)$, let
\begin{equation}\label{eq:defi Gamma}
\Gamma_{(x,H)}:\Lip_1(\Upp^p,\S^q)\rightarrow \overline{\mathcal E}_+~,
\end{equation}
denote the map which sends a function $\varphi\in\Lip_1(\Upp^p,\S^q)$ to its graph in the Fermi chart of $(x,H)$. With the above definitions and topologies, $\Gamma_{(x,H)}$ trivially defines a homeomorphism from $\Lip_1(\Upp^p,\S^q)$ into $\overline{\mathcal E}_+$.

\subsection{Locally uniformly spacelike families}
We now recall the grassmannian bundle $\GHH$ introduced in Section \ref{ss:GrassmanianBundle}. Given an element $M\in\mathcal{E}_+$, we define its Gauss lift in $\GHH$ by
\begin{equation}
\mathcal G(M) = \left\{(x,\T_xM) \in \GHH\right\}~.
\end{equation}
We say that a subset $\mathcal{F}\subseteq\mathcal{E}_+$ is \emph{locally uniformly spacelike} whenever the set
\begin{equation}
\bigcup_{M\in \mathcal F} \mathcal G(M)\cap\oppr^{-1}(K)
\end{equation}
is bounded in $\GHH$ for every compact subset $K$ of $\H_+$.

\begin{lemma}
For subsets $\mathcal{F}$ of $\mathcal{E}_+$, the following are equivalent.
\begin{enumerate}
\item $\mathcal F$ is locally uniformly spacelike.
\item For any marked hyperbolic $p$-space $(x,H)$, and for all $R>0$, there exists $\epsilon>0$ such that, for all $\varphi\in\Gamma_{(x,H)}^{-1}(\mathcal{F})$,
\begin{equation*}
\Vert d\varphi_{\vert B(x,R)} \Vert \leq 1-\epsilon~,
\end{equation*}
    where $B(x,R)$ denotes the ball of radius $R$ about $x$ in $H$.
\item There exists a marked hyperbolic $p$-space $(x,H)$ with the property that, for all $R>0$ there exists $\epsilon>0$ such that, for all $\varphi\in\Gamma_{(x,H)}^{-1}(\mathcal{F})$,
\begin{equation*}
\Vert d\varphi_{\vert B(x,R)} \Vert \leq 1-\epsilon~,
\end{equation*}
    where $B(x,R)$ denotes the ball of radius $R$ about $x$ in $H$.
\end{enumerate}
\end{lemma}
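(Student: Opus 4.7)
The proof rests on a single geometric observation coming from the warped product formula $\Psi^*\g = f(\|u\|)^2\bigl(g_{\Upp^p} - g_{\S^q}\bigr)$ in Lemma \ref{lemma:warped parameterization}: the tangent plane to the graph of $\varphi$ at $y$ is the image of $v\mapsto (v,d\varphi(y)\cdot v)$, and the induced form on it is a positive multiple of $g_{\Upp^p}(v,v)-g_{\S^q}(d\varphi(y)\cdot v,d\varphi(y)\cdot v)$. Thus the plane is positive-definite iff $\|d\varphi(y)\|<1$, and converges in the grassmanian of $p$-planes to a degenerate plane precisely as $\|d\varphi(y)\|\to 1$. Combining this with the fact, recorded in the proof of Lemma \ref{lemma:DivergingInGrassmanian}, that $\GHH$ embeds as a topological subspace of $\H_+\times\Gr$, one obtains the following correspondence: for any compact $K\subseteq\H_+$ and any Fermi chart, a sequence $(x_n,P_n)$ of Gauss-lift points with $x_n\in K$ is unbounded in $\GHH$ if and only if the corresponding operator norms $\|d\varphi_n(y_n)\|$, at the projected base points $y_n$, subconverge to $1$.

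I propose to prove the cycle (1)$\Rightarrow$(2)$\Rightarrow$(3)$\Rightarrow$(1), of which (2)$\Rightarrow$(3) is trivial. For (1)$\Rightarrow$(2) I argue by contraposition: if (2) fails at some $(x,H)$ and some $R>0$, extract a sequence $\varphi_n\in\Gamma_{(x,H)}^{-1}(\mathcal{F})$ and points $y_n\in\overline{B(x,R)}$ with $\|d\varphi_n(y_n)\|\to 1$. The points $x_n:=\Psi(y_n,\varphi_n(y_n))$ then lie in the compact set $K:=\Psi(\overline{B(x,R)}\times\S^q)\subseteq\H_+$, while by the correspondence above the tangent planes $P_n:=\T_{y_n}\Gamma_{(x,H)}(\varphi_n)$ subconverge to a degenerate $p$-plane; hence $(x_n,P_n)\in\bigcup_{M\in\mathcal{F}}\mathcal{G}(M)\cap\oppr^{-1}(K)$ is unbounded in $\GHH$, contradicting (1).

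For (3)$\Rightarrow$(1) I reason directly. Fix the marked hyperbolic $p$-space $(x,H)$ given by (3) and let $K\subseteq\H_+$ be an arbitrary compact set. Writing $\pi$ for the associated Fermi projection, $\pi(K)\subseteq H$ is compact, hence contained in some ball $\overline{B(x,R)}$. Any point of $\mathcal{G}(M)\cap\oppr^{-1}(K)$, for $M\in\mathcal{F}$, has the form $(\Psi(y,\varphi(y)),\T_yM)$ with $y\in\overline{B(x,R)}$ and $\varphi:=\Gamma_{(x,H)}^{-1}(M)$, and (3) provides an $\epsilon>0$, depending only on $R$, such that $\|d\varphi(y)\|\leq 1-\epsilon$ for all such $\varphi$ and $y$. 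By the correspondence above, the tangent planes then lie in a compact subset of $\Gr$ depending only on $R$ and $\epsilon$; combined with the inclusion of the base points in $K$, this confines $\bigcup_{M\in\mathcal{F}}\mathcal{G}(M)\cap\oppr^{-1}(K)$ to a compact subset of $\GHH$. The only technical point is the equivalence between $\|d\varphi\|\to 1$ and planes degenerating in $\Gr$, which follows from a short direct computation using the warped product formula; I do not anticipate any serious obstacle.
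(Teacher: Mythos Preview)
Your proposal is correct and follows essentially the same approach as the paper: the paper packages your ``correspondence'' as an explicit homeomorphism $G$ from the space $J_1^1$ of $1$-jets with $\|d\varphi\|<1$ onto $\GHH$, and then argues (1)$\Rightarrow$(2) directly (compactness of $G^{-1}(A)$ in $J_1^1$) rather than by your contrapositive sequence argument, but the content is identical. Your derivation of the correspondence from the warped product formula is exactly what underlies the paper's claim that $G$ is a homeomorphism.
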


\begin{proof}
First let $(x,H)$ be a marked hyperbolic $p$-space in $\H_+$. Let $J_1$ denote the space of $1$-jets of smooth functions over $H$, and let $J_1^1$ denote the subset consisting of those jets $(y,\varphi(y),d\varphi(y))$ such that $\Vert d_y\varphi\Vert<1$. Define $G:J_1^1(X)\rightarrow\GHH$ such that $G(y,\varphi(y),d\varphi(y))$ is the tangent space of the graph of $\varphi$ at $x$. Note that this function is a homeomorphism.

We now show that $(1)$ implies $(2)$. Indeed, suppose that $\mathcal{F}$ is uniformly spacelike, let $(x,H)$ be a marked hyperbolic $p$-space with Fermi projection $\pi$, and choose $R>0$. Since the closed ball $\overline B(x,R)$ in $H$ is compact, and since $\pi$ is proper, the set $K:=\pi^{-1}(\overline B(x,R))$ is also compact in $\H_+$. Since $\mathcal{F}$ is locally uniformly spacelike, the set
\begin{equation*}
A:=\left\{(x,\T_xM)\ |\ M\in \mathcal F~,~x\in K\right\}
\end{equation*}
then has compact closure in $\GHH$. Since $G$ is a homeomorphism, the set $\hat A:=G^{-1}(A)$ likewise has compact closure in $J_1^1$. However, for all $\varphi\in\Gamma_{(x,H)}^{-1}(\mathcal{F})$ and, for all $y\in B(x,R)$, the $1$-jet of $\varphi$ at $y$ is an element of $\hat{A}$. In particular,
\begin{equation}\label{equation:SupOfNormIsBounded}
\sup\left\{ \Vert d_y\varphi\Vert\ |\ \varphi\in\Gamma_{(x,H)}^{-1}(\mathcal{F})~,~y\in \overline B(x,R)\right\}
\leq\sup\left\{ \|d\varphi(y)\|\ |\ (y,\varphi(y),d\varphi(y))\in\hat{A}\right\}<1~,
\end{equation}
as desired.

Note that $(2)$ trivially implies $(3)$. We now show that $(3)$ implies $(1)$. Indeed, using the same notation as before, suppose that \eqref{equation:SupOfNormIsBounded} holds. In particular, the set $\hat A$ has compact closure in $J_1^1$. Since $G$ is a homeomorphism, it follows that $A:=G(\hat A)$ has compact closure in $\GHH$. Since $A$ is the set of all tangent planes over $B(x,R)$ of elements of $\mathcal{F}$, it follows that $\mathcal{F}$ is locally uniformly spacelike, as desired.
\end{proof}

\begin{lemma}\label{cor:loc unif spacelike}
If $\mathcal F$ is a family of spacelike entire graphs in $\mathcal E_+$ such that
\begin{enumerate}
	\item the norms of the second fundamental forms of elements of $\mathcal{F}$ are uniformly bounded, and
    \item there exists a compact subset $K\subseteq\GHH$ which meets the Gauss lift of every element of $\mathcal{F}$,
\end{enumerate}
then $\mathcal F$ is locally uniformly spacelike.
\end{lemma}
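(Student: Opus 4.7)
The plan is to argue by contradiction. Suppose that $\mathcal{F}$ were not locally uniformly spacelike: for some compact $K'\subseteq\H_+$ there would be a sequence $(x_n,P_n)\in\mathcal{G}(M_n)\cap\oppr^{-1}(K')$, with $M_n\in\mathcal{F}$, admitting no convergent subsequence in $\GHH$. Passing to a subsequence, $x_n$ would converge in $K'$; then by Lemma~\ref{lemma:DivergingInGrassmanian} the tangent planes $P_n=\T_{x_n}M_n$ would have to converge, in the ambient Grassmannian of all $p$-planes, to a degenerate $p$-plane $P_\infty$. Using hypothesis~(2), I would simultaneously pick $(y_n,Q_n)\in K\cap\mathcal{G}(M_n)$ and extract a further subsequence with $(y_n,Q_n)\to(y_\infty,Q_\infty)\in K$, where in particular $Q_\infty\in\Gr$ is positive-definite.

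The goal is then to derive a uniform upper bound on the Riemannian distance $d_{\Gr}(P_n,Q_n)$, which would contradict $P_n\to P_\infty$ since the metric on $\Gr$ blows up near the set of degenerate $p$-planes. The bound is assembled from two ingredients. First, fix a Fermi chart of some marked hyperbolic $p$-space $(x_0,H_0)$ with Fermi projection $\pi_0:\H_+\to H_0$. Since each $M_n$ is a smooth entire graph, Lemma~\ref{lemma:EntireGraphsAreGraphs} provides that $\pi_0$ restricts to a diffeomorphism $M_n\to H_0$, and Lemma~\ref{lemma:WarpedProjectionIncreaseLength}(2) shows that $\pi_0$ is length-increasing on spacelike curves; taking infima yields
\begin{equation*}
d_{M_n}(x_n,y_n)\leq d_{H_0}(\pi_0(x_n),\pi_0(y_n))\ .
\end{equation*}
Since $\pi_0(K')$ and $\pi_0(\oppr(K))$ are compact subsets of the open hyperbolic $p$-space $H_0$, the right-hand side is uniformly bounded in $n$ by some $D>0$. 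Second, the Gauss map $\nu_n:=F\circ\mathcal{G}_{M_n}:M_n\to\Gr$ has differential equal, under the identifications of Section~\ref{ss:GrassmanianBundle}, to the second fundamental form $\II_n$, as is immediate from \eqref{eq:diff gauss lift} and the fact that $F$ is the projection onto the $\Hom(P,P^\perp)$ factor. Hypothesis~(1) therefore makes $\nu_n$ uniformly $C$-Lipschitz, so that
\begin{equation*}
d_{\Gr}(P_n,Q_n)=d_{\Gr}(\nu_n(x_n),\nu_n(y_n))\leq C\,d_{M_n}(x_n,y_n)\leq CD\ ,
\end{equation*}
delivering the desired contradiction.

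The main conceptual hurdle is the absence of any ambient Riemannian distance on $\H_+$ against which to compare the intrinsic distances $d_{M_n}$; the Fermi chart supplies the needed substitute through the hyperbolic metric of the base $H_0$, and the strict $1$-Lipschitz property of spacelike graphs — precisely the content of $M_n$ being spacelike in these coordinates — is what makes this substitute compatible with the intrinsic geometry of $M_n$. In this way the two ambient compactness hypotheses~(1) and~(2) are linked through a single Lipschitz Gauss map, turning the "good" point supplied by~(2) into control over the tangent plane at every point of $M_n$ lying over $K'$.
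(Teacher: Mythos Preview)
Your argument is essentially the paper's, recast as a contradiction and routed through $\Gr$ via the forgetful map $F$ rather than staying in $\GHH$. Both proofs combine the same two ingredients: the Fermi projection bounds the intrinsic distance $d_{M_n}(x_n,y_n)$ by a hyperbolic distance in $H_0$ (Lemma~\ref{lemma:WarpedProjectionIncreaseLength}(2)), and the uniform bound on $\|\opII\|$ makes the Gauss lift uniformly Lipschitz (Lemma~\ref{lemma:GaussLiftBilipschitz}). The paper then directly exhibits a ball in $\GHH$ containing all the relevant tangent planes; you instead push down to $\Gr$ and argue that $P_n$ cannot escape to the boundary of the symmetric space.

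One point needs correction. You assert that ``$F$ is the projection onto the $\Hom(P,P^\perp)$ factor'' and hence that $d\nu_n=\II_n$. This is not right: the vertical space of the Riemannian submersion $F:\GHH\to\Gr$ is the $P^\perp$ factor alone, not $P\oplus P^\perp$, so $dF$ does \emph{not} kill the $P$-component, and $d\nu_n(u)=dF(u,0,\II_n(u))$ genuinely depends on $u$ as well as on $\II_n(u)$. The conclusion you need, however, survives: since $F$ is a Riemannian submersion it is $1$-Lipschitz, and combined with Lemma~\ref{lemma:GaussLiftBilipschitz} this gives
\[
\|d\nu_n(u)\|_{\Gr}\;\leq\;\|d\mathcal G_{M_n}(u)\|_{\GHH}\;\leq\;\sqrt{1+C^2}\,\|u\|_{M_n}\ ,
\]
so $\nu_n$ is uniformly $\sqrt{1+C^2}$-Lipschitz and your bound $d_{\Gr}(P_n,Q_n)\leq\sqrt{1+C^2}\,D$ follows. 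With this fix the proof is complete.
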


\begin{proof}
Let $(x,H)$ be a marked hyperbolic $p$-space and let $\pi$ denote its Fermi projection. Given $M\in\mathcal{F}$, let $\pi_M$ denote the restriction of $\pi$ to $M$, and let $\mathcal{G}(M)$ denote its Gauss lift. By Lemma \ref{lemma:GaussLiftBilipschitz} and Lemma \ref{lemma:WarpedProjectionIncreaseLength} $(2)$, there exists $C>0$ such that
\begin{equation*}
g_{\mathcal G(M)} \leq (1+C^2) g_M \leq (1+C^2)\pi^*_Mg_H~,
\end{equation*}
where $g_M$ and $g_{\mathcal G(M)}$ denote the respective metrics of $M$ and $\mathcal{G}(M)$, and $g_H$ denotes the hyperbolic metric of $H$. Now let $L$ be a compact subset of $\H_+$ and choose $R>0$ such that
\begin{equation*}
\opDiam(\pi(L)\cap(\pi\circ\oppr)(K))<R.
\end{equation*}
By hypothesis, there exists $y'\in M$ such that $T_{y'}M\in K$. Let $y$ be an element of $L$. By definition of $R$, $d_H(\pi(y),\pi(y'))<R$ so that, by the above inequality,
\begin{equation*}
d_{\mathcal{G}(M)}(T_yM,K) \leq d_{\mathcal{G}(M)}(T_yM,T_{y'}M) \leq (1+C^2)R.
\end{equation*}
Since $M$ and $y$ are arbitrary, it follows that
\begin{equation*}
\bigcup_{M\in\mathcal{F}}\big\{\mathcal G(M)\cap\pi^{-1}(L)\big\} \subset B_{\GHH}\big(K,(1+C^2)R\big)~,
\end{equation*}
where $B_{\GHH}\big(K,(1+C^2)R\big)$ here denotes the ball in $\GHH$ of radius $(1+C^2)R$ about $K$. It follows that $\mathcal{F}$ is locally uniformly spacelike, as desired.
\end{proof}

\section{Non-negative spheres}\label{sec:boundaries}

We now study topologically embedded spheres in $\bH$ and $\bH_+$. We introduce the concept of admissibility for non-negative spheres in $\bH_+$, which will be the main geometric property that we will use to prove Theorem \ref{thm:introhomeo}. Using Fermi charts, we study the geometries of non-negative spheres, positive spheres and admissible non-negative spheres in $\bH_+$. We introduce the asymptotic boundary operator, which associates a non-negative sphere to every entire graph, we study its topological properties, and we describe the exceptional geometry of those entire graphs whose asymptotic boundaries are not admissible. Finally, we conclude by studying the convex hulls of non-negative spheres, and their relationship to complete maximal $p$-submanifolds.

\subsection{Non-negative spheres}\label{subsection:NonNegativeSpheres}

We begin by introducing the notions of positive and non-negative triples.

\begin{definition}\label{defi:positive triple}
A triple of distinct points $(x,y,z)$ in $\bH$ (or $\bH_+$) is called
\begin{enumerate}
	\item \emph{positive} whenever $x\oplus y\oplus z$ is $3$-dimensional with signature $(2,1)$;
	\item \emph{non-negative} whenever $x\oplus y\oplus z$ does not contain any negative-definite 2-plane.
\end{enumerate}
\end{definition}

\noindent This allows us to define positive and non-negative spheres.

\begin{definition}\label{defi:non negative sphere}
A subset $\Lambda$ of $\bH$ (or $\bH_+$) is called
\begin{enumerate}
\item a \emph{positive $(p-1)$-sphere} whenever it is homeomorphic to $\S^{p-1}$ and any triple of distinct points of $\Lambda$ is positive;
\item a \emph{non-negative $(p-1)$-sphere} whenever it is homeomorphic to $\S^{p-1}$, any triple of distinct points of $\Lambda$ is non-negative and, if $p=2$, $\Lambda$ contains at least one positive triple.
\end{enumerate}
\end{definition}

\begin{remark}
When $p=2$, non-negative spheres as in Definition \ref{defi:non negative sphere} are precisely the \emph{semi-positive loops} of \cite{LTW}.
\end{remark}

We now describe how non-negative spheres in $\bH$ lift to $\bH_+$.

\begin{definition}
We say that two points $x$ and $y$ of $\partial_\infty\H_+$ are \emph{antipodal} whenever they represent the same projective line with opposite orientations. We say that a non-negative $(p-1)$-sphere in $\bH_+$ is \emph{admissible} whenever it contains no pair of antipodal points.
\end{definition}

\begin{lemma}\label{lemma:LiftNonNegativeSpheres}
A subset $\Lambda$ of $\bH$ is a non-negative $(p-1)$-sphere if and only if any lift $\Lambda_+$ to $\bH_+$ is an admissible non-negative $(p-1)$-sphere.\end{lemma}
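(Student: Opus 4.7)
The plan centers on the double cover $\pi:\bH_+\to\bH$ with deck involution $\iota:v\mapsto-v$. A lift $\Lambda_+$ of a connected subset $\Lambda\subset\bH$ is a connected component of $\pi^{-1}(\Lambda)$, and since $\iota$ permutes such components, exactly one of two alternatives holds: either $\iota(\Lambda_+)=\Lambda_+$, in which case $\pi|_{\Lambda_+}$ is a $2$-to-$1$ covering and $\Lambda_+$ contains every antipodal pair lying over $\Lambda$, so fails admissibility; or $\iota(\Lambda_+)\cap\Lambda_+=\emptyset$, in which case $\pi|_{\Lambda_+}$ is a continuous bijection and $\Lambda_+$ is admissible. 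Since $\pi$ preserves the underlying $1$-dimensional subspaces of $E$, in the latter case the $\q$-span of every triple is preserved, so non-negativity of triples and, when $p=2$, the existence of a positive triple, transfer freely between $\Lambda$ and $\Lambda_+$.

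The backward direction is then immediate: admissibility makes $\pi|_{\Lambda_+}$ a continuous injection from the compact space $\Lambda_+\cong\S^{p-1}$ into the Hausdorff space $\bH$, so it is a homeomorphism onto $\Lambda\cong\S^{p-1}$, and the span-preservation transfers the non-negative and positive-triple conditions downward. For the forward direction, given a non-negative $(p-1)$-sphere $\Lambda$ and a lift $\Lambda_+$, the same span-preservation transfers the relevant triple conditions upward once $\pi|_{\Lambda_+}$ is a bijection, so the essential task is to verify admissibility, that is, to rule out the $\iota$-invariant case. When $p\geq 3$, $\Lambda\cong\S^{p-1}$ is simply connected and admits no non-trivial connected cover, immediately excluding the $\iota$-invariant case.

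The main obstacle is the case $p=2$, where $\Lambda\cong\S^1$ admits a non-trivial double cover and the $\iota$-invariant case is not excluded by topology alone. Here I would argue by contradiction using the positive triple. Fix a positive triple $(x,y,z)\subset\Lambda$ whose $\q$-span $V$ has signature $(2,1)$, and let $D\subset\H$ be the corresponding totally geodesic copy of $\mathbf H^2$; simple connectivity of $D$ ensures that its boundary circle $C=\partial D$ lifts to two disjoint circles $C_+\sqcup\iota(C_+)\subset\bH_+$, and a fixed choice of lift places $(\tilde x,\tilde y,\tilde z)$ on $C_+$. Passing to the Fermi chart attached to a marked hyperbolic $2$-space inside $V$, we identify $\bH_+\cong\S^1\times\S^q$, $\iota$ with the diagonal antipodal map, and $C_+$ with $\S^1\times\{w_0\}$. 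Parametrising the putative $\iota$-invariant $\Lambda_+$ as $t\mapsto(u(t),w(t))$ with $(u(t+\pi),w(t+\pi))=(-u(t),-w(t))$ and using the Gram formula $\q(v_i,v_j)=\cos d_{\S^1}(u_i,u_j)-\cos d_{\S^q}(w_i,w_j)$, the non-negativity of infinitesimal triples yields the pointwise bound $|\dot w|\leq|\dot u|$ almost everywhere. Combined with the antipodal identifications, which saturate the chord inequality on every pair of the form $(t,t+\pi)$, a length-comparison on the half-period $[0,\pi]$ together with the requirement that non-negative triples propagate this saturation to all pairs forces $d_{\S^1}(u(s),u(t))=d_{\S^q}(w(s),w(t))$ on every pair; every Gram entry on $\Lambda_+$ therefore vanishes, every triple is totally degenerate, and the lifted triple $(\tilde x,\tilde y,\tilde z)$ cannot be positive. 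This contradicts the fact that its $\q$-span is the signature-$(2,1)$ subspace $V$, ruling out the $\iota$-invariant case and completing the proof.
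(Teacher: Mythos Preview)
For $p\geq 3$ your covering-dichotomy argument matches the paper's exactly, and your treatment of the backward direction is correct.

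For $p=2$ you depart from the paper, which simply cites \cite[Lemma~2.8]{LTW} for the fact that the preimage has two components. Your strategy---arguing that an $\iota$-invariant lift forces the Fermi-chart curve to be an isometry onto a great circle, so that every pairwise product $\q(\hat x,\hat y)$ vanishes and no positive triple can survive---is correct and gives a genuinely different, more hands-on proof. Once the $1$-Lipschitz bound $d_{\S^q}(w(s),w(t))\leq d_{\S^1}(u(s),u(t))$ is available, the saturation step is clean and needs no appeal to triples: an odd $1$-Lipschitz map $\S^1\to\S^q$ sends each semicircle to a curve of length at most $\pi$ joining antipodes, hence to a minimising geodesic, and the odd symmetry assembles the two halves into a great-circle isometry.

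The gap is your derivation of that Lipschitz bound from ``non-negativity of infinitesimal triples''. That step assumes the parametrisation is differentiable, so that the three Gram entries among $\gamma(t),\gamma(t+\epsilon),\gamma(t+2\epsilon)$ all carry the sign of $|\dot w|^2-|\dot u|^2$; only then does a common positive sign force signature $(1,2)$. But $\Lambda_+$ is a priori only a topological circle, and for well-separated points a non-negative triple can perfectly well have two positive and one negative Gram entry (the determinant $2\q_{12}\q_{13}\q_{23}$ is then negative, giving signature $(2,1)$), so non-negativity of a single triple does not control individual entries. The robust fix is to invoke the pairwise characterisation---every pair on a non-negative sphere satisfies $\q(\hat x,\hat y)\leq 0$---which is exactly $(1)\Rightarrow(2)$ of Lemma~\ref{lemma:NonNegativeSpheres}; that lemma is logically independent of the present one and immediately exhibits $\Lambda_+$ as the graph of a $1$-Lipschitz map. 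Your argument then runs cleanly, though note that this implication is itself cited to \cite[Corollary~2.10]{LTW}, so the alternative route still ultimately leans on \cite{LTW}.
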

\begin{proof}
Suppose first $p>2$. Since $\Lambda$ is a $(p-1)$-sphere, it is simply connected. Since the projection $\bH_+\to\bH$ is a double cover, it follows that $\Lambda$ admits two disjoint lifts $\Lambda_\pm$ in $\partial_\infty\H_+$ which are non-negative. Since the covering involution of $\bH_+\to\bH$ is $x\mapsto -x$, it follows that neither lift contains antipodal points, so that $\Lambda_+$ and $-\Lambda_+$ are admissible. Conversely, an admissible $(p-1)$-sphere in $\bH_+$ projects injectively to a non-negative $p$-sphere in $\bH$, and this proves the result when $p>2$.

When $p=2$, it is shown in \cite[Lemma 2.8]{LTW} that the lift of any non-negative sphere in $\partial_\infty \mathbf{H}^{2,q}$ has two connected components in $\bH_+$, and the proof proceeds as before.
\end{proof}

We will mostly work with admissible non-negative spheres in the double cover $\bH_+$. We now describe how these objects are characterized in Fermi charts. We first characterize non-negative and positive spheres. The latter will be used in Section \ref{sec:anosov} to study the applications of Theorem \ref{thm:introhomeo} to the study of Anosov representations.

\begin{lemma}\label{lemma:NonNegativeSpheres}
Let $\Lambda_+$ be a subset of $\bH_+$. The following assertions are equivalent.
\begin{enumerate}
\item $\Lambda_+$ is a non-negative $(p-1)$-sphere.
\item $\Lambda_+$ is homeomorphic to $\S^{p-1}$ and, for every pair of points $x,y$ in $\Lambda_+$ and every choice of representatives $\hat x,\hat y$, we have $\q( \hat x,\hat y) \leq 0$.
\item There exists a Fermi chart in which $\Lambda_+$ is the graph of a $1$-Lipschitz map $\varphi:\S^{p-1}\rightarrow\S^q$.
\item In every Fermi chart, $\Lambda_+$ is the graph of a $1$-Lipschitz map $\varphi:\S^{p-1}\rightarrow \S^q$.
\end{enumerate}
\end{lemma}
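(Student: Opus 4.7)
The plan is to prove the cycle of implications $(4)\Rightarrow(3)\Rightarrow(2)\Rightarrow(1)\Rightarrow(4)$. The implication $(4)\Rightarrow(3)$ is immediate, and the two key computational inputs are a direct formula for $\q$ on boundary representatives in Fermi coordinates, and a signature analysis of the Gram matrix of an isotropic triple.

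For $(3)\Rightarrow(2)$, fix a Fermi chart as in Subsection~\ref{subsection:FermiCoordinates}. Boundary points of $\H_+$ correspond to pairs $(u,w)\in\S^{p-1}\times\S^q$ with natural isotropic representatives given by the vectors $(u,w)\in U\oplus W\subset E$. Since $\q$ restricts to a positive-definite form on $U$ and a negative-definite form on $W$, a direct calculation gives
\[
\q\big((u_1,w_1),(u_2,w_2)\big)=\cos d_{\S^{p-1}}(u_1,u_2)-\cos d_{\S^q}(w_1,w_2).
\]
By monotonicity of $\cos$ on $[0,\pi]$, this quantity is non-positive if and only if $d_{\S^q}(w_1,w_2)\leq d_{\S^{p-1}}(u_1,u_2)$; hence the $1$-Lipschitz condition on $\varphi$ translates precisely to $\q\leq 0$ on pairs of points in the graph. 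For $(2)\Rightarrow(1)$, consider three distinct points $x_1,x_2,x_3\in\Lambda_+$ with isotropic representatives and pairwise inner products $a,b,c$. The Gram matrix has vanishing diagonal, trace $0$, sum of principal $2\times 2$ minors equal to $-(a^2+b^2+c^2)$, and determinant $2abc$. When $a,b,c\leq 0$, the determinant is non-positive, and since vanishing trace forbids three eigenvalues of the same sign, the number of negative eigenvalues is at most one. Hence the index of $\q$ restricted to the span is at most $1$, ruling out a negative-definite $2$-plane.

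The substantial step is $(1)\Rightarrow(4)$. Fix an arbitrary Fermi chart and consider the projection $\pi:\Lambda_+\to\S^{p-1}$ sending $(u,w)\mapsto u$. I claim $\pi$ is a continuous bijection, and thus by invariance of domain between $(p-1)$-spheres a homeomorphism. For injectivity, assume for contradiction that $(u,w_1),(u,w_2)\in\Lambda_+$ with $w_1\neq w_2$; then $a:=\q((u,w_1),(u,w_2))=1-\cos d_{\S^q}(w_1,w_2)>0$. Define the continuous map $F:\Lambda_+\to\R^2$ by $F(z)=(\q((u,w_1),\hat z),\q((u,w_2),\hat z))$. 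The non-negativity assumption (1) applied to every triple forces $F(\Lambda_+)\subseteq\{bc\leq 0\}$; since $F(x_1)=(0,a)$ and $F(x_2)=(a,0)$ lie in the two distinct connected components of $\{bc\leq 0\}\setminus\{0\}$, the preimage $F^{-1}(0)$ must be non-empty and must separate $\Lambda_+\cong\S^{p-1}$. A sharpening of this argument, combined with the positive-triple hypothesis in the case $p=2$, yields the desired contradiction. Once $\pi$ is a homeomorphism, the $1$-Lipschitz property of $\varphi=\pi^{-1}$ follows from the Fermi formula of the second paragraph applied to arbitrary pairs in the graph.

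I expect the main obstacle to be the clean execution of the topological separation argument in $(1)\Rightarrow(4)$: the sharpening needed to turn ``$F^{-1}(0)$ separates $\Lambda_+$'' into a contradiction may require a more careful analysis using, for example, the dimension of $F^{-1}(0)$, or a direct reduction via a different choice of Fermi chart adapted to the hypothetical bad pair $(u,w_1), (u,w_2)$.
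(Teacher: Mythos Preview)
Your implications $(4)\Rightarrow(3)\Rightarrow(2)\Rightarrow(1)$ are correct and match the paper's arguments essentially verbatim: the Fermi-chart identity $\q((u_1,w_1),(u_2,w_2))=\cos d_{\S^{p-1}}(u_1,u_2)-\cos d_{\S^q}(w_1,w_2)$ gives $(3)\Rightarrow(2)$, and the Gram-matrix signature count gives $(2)\Rightarrow(1)$.

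The gap is in $(1)\Rightarrow(4)$, and it is twofold. First, your separation argument correctly shows (the constraint $bc\le 0$ when $a>0$ is indeed valid) that $F^{-1}(0)$ is non-empty, but the existence of some $z_0$ with $\q(\hat x_1,\hat z_0)=\q(\hat x_2,\hat z_0)=0$ is not a contradiction: such a configuration is perfectly compatible with non-negativity, and no amount of ``sharpening'' of the separation alone will produce one. Second, and more decisively, your final sentence for $(1)\Rightarrow(4)$ (``the $1$-Lipschitz property of $\varphi$ follows from the Fermi formula applied to arbitrary pairs'') already presupposes the pairwise inequality $\q(\hat x,\hat y)\le 0$, i.e.\ condition $(2)$. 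So even if injectivity of $\pi$ were established, you would still need $(1)\Rightarrow(2)$ to get the Lipschitz bound.

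The paper organizes the argument accordingly: it proves $(1)\Rightarrow(2)$ directly (as a generalization of \cite[Corollary~2.10]{LTW}), and then $(2)\Rightarrow(4)$ is immediate, since $\q\le 0$ on pairs gives $d_{\S^q}(w_1,w_2)\le d_{\S^{p-1}}(u_1,u_2)$, which yields injectivity of $\pi$ and the $1$-Lipschitz property simultaneously; surjectivity then follows from invariance of domain and compactness. In other words, the step you should be proving is $(1)\Rightarrow(2)$, not $(1)\Rightarrow(4)$; your $bc\le 0$ observation is a natural ingredient toward that, but a complete argument requires more (and is the content of the cited result from \cite{LTW}).
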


\begin{proof}
The fact that $(1)$ implies $(2)$ is a direct generalization of \cite[Corollary 2.10]{LTW}.

We now show that $(2)$ implies $(1)$. Let $(x,y,z)$ be a triple of distinct points of $\Lambda_+$, and let $V:=x\oplus y\oplus z$ denote the subspace that they generate. Let $\hat{x}$, $\hat{y}$ and $\hat{z}$ denote respectively non-zero elements of the oriented lines $x$, $y$ and $z$, and recall that the Gram matrix of $V$ is defined by
\begin{equation*}
G:=\begin{pmatrix}
\q(\hat{x},\hat{x})\hfill&\q(\hat{x},\hat{y})\hfill&\q(\hat{x},\hat{z})\hfill\\
\q(\hat{y},\hat{x})\hfill&\q(\hat{y},\hat{y})\hfill&\q(\hat{y},\hat{z})\hfill\\
\q(\hat{z},\hat{x})\hfill&\q(\hat{z},\hat{y})\hfill&\q(\hat{z},\hat{z})\hfill
\end{pmatrix}\ .
\end{equation*}
Since $\hat{x}$, $\hat{y}$ and $\hat{z}$ are lightlike,
\begin{equation}\label{eqn:DetGram}
\opDet(G) = 2\q(\hat{x},\hat{y})\q(\hat{y},\hat{z})\q(\hat{x},\hat{z})\leq 0\ .
\end{equation}
We claim that this implies that $(x,y,z)$ is non-negative. Indeed, suppose the contrary, and that $V$ contains a negative-definite $2$-plane. By \eqref{eqn:DetGram}, $V$ is either negative-definite or null. However, in the former case, $V$ would contain no lightlike subspace, which is absurd, and in the latter, it would contain exactly one such subspace, which is also absurd. It follows that $(x,y,z)$ is non-negative, as asserted, and this proves that $(2)$ implies $(1)$.

We now prove that $(2)$ implies $(4)$. Let $\Lambda_+$ be a non-negative sphere and let $(x,H)$ be a marked hyperbolic $p$-space, with associated  Fermi projection $\pi$. In the notation of Lemma \ref{lemma:warped parameterization}, the inner product of any two points $\hat x:=\phi(u_1,w_1)$ and $\hat y:=\phi(u_2,w_2)$ in $\bH_+\cong\S^{p-1}\times\S^q$ is given by
\begin{equation}\label{eq:scalar}
\q( \hat x,\hat y) = \langle u_1,u_2\rangle_U - \langle w_1,w_2\rangle_W\ ,
\end{equation}
where here $\langle\cdot,\cdot\rangle_U$ and $\langle\cdot,\cdot\rangle_W$ denote the respective restrictions of $\q$ to $U$ and of $-\q$ to $W$, defined as in Section \ref{subsection:FermiCoordinates}.

Now, assume $\hat x$ and $\hat y$ are distinct elements of $\Lambda_+$. By hypothesis, $\langle w_1,w_2\rangle_{W}\geq \langle u_1,u_2\rangle_U$. However, denoting the spherical distances in $\S^{p-1}$ and $\S^q$ both by $d_S$,
\begin{equation}\label{eq:spherical distance and products}
\eqalign{
\langle u_1,u_2\rangle_U &= \cos(d_S(u_1,u_2))\ ,\ \text{and}\cr
\langle w_1,w_2\rangle_{W} &= \cos(d_S(w_1,w_2))\ .\cr}
\end{equation}
Thus, since $\cos$ is decreasing on $[0,\pi]$, $d_S(u_1,u_2)\geq d_S(w_1,w_2)$. In particular, $\pi|_{\Lambda_+}$ is injective, and, by the conservation of domain, its image is open. Since $\S^p$ is compact, its image is also compact, and thus closed so that, by connectedness, $\pi(\Lambda_+)$ coincides with the whole of $\S^p$. This shows that $\Lambda_+$ is the graph of some $1$-Lipschitz function $\varphi:\S^{p-1}\rightarrow\S^q$ and $(2)$ implies $(4)$, as desired.

Using again \eqref{eq:scalar} and \eqref{eq:spherical distance and products}, we readily see that $(3)$ implies $(2)$. Since $(4)$ trivially implies $(3)$, this completes the proof.
\end{proof}

Recall that a map $f$ between metric spaces is contractive whenever it satisfies $d(f(x_1),f(x_2))<d(x_1,x_2)$ for any pair $(x_1,x_2)$ of distinct points. Using \cite[Lemma 3.2]{DGK} and repeating the proof of Lemma \ref{lemma:NonNegativeSpheres} replacing all inequalities with strict inequalities, we obtain the following result.

\begin{lemma}\label{lemma:PositiveSpheres}
Let $\Lambda_+$ be a subset of $\bH_+$. The following assertions are equivalent.
\begin{enumerate}
\item $\Lambda_+$ is a positive $(p-1)$-sphere.
\item $\Lambda_+$ is homeomorphic to $\S^{p-1}$ and, for every pair of distinct points $x,y$ in $\Lambda_+$, and for every choice of representatives $\hat x,\hat y$, we have $\q( \hat x,\hat y) < 0$.
\item There exists a Fermi chart in which $\Lambda_+$ is the graph of a contractive map $\varphi:\S^{p-1}\rightarrow\S^q$.
\item In every Fermi chart, $\Lambda_+$ is the graph of a contractive map $\varphi:\S^{p-1}\rightarrow \S^q$.
\end{enumerate}
\end{lemma}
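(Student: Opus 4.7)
The plan is to mirror the proof of Lemma~\ref{lemma:NonNegativeSpheres} verbatim, systematically replacing weak inequalities with strict ones. I would establish the cycle of implications $(1)\Rightarrow(2)\Rightarrow(4)\Rightarrow(3)\Rightarrow(2)\Rightarrow(1)$, so that the non-trivial steps reduce to $(1)\Rightarrow(2)$, $(2)\Rightarrow(1)$, and $(2)\Rightarrow(4)$; the remaining implications $(4)\Rightarrow(3)$ and $(3)\Rightarrow(2)$ are routine (trivial, and reversal of the Fermi computation, respectively).

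For $(2)\Rightarrow(1)$, given distinct $x,y,z\in\Lambda_+$ with lifts $\hat x,\hat y,\hat z$ in $\R^{p,q+1}$, since these vectors are lightlike the Gram matrix has determinant $2\q(\hat x,\hat y)\q(\hat y,\hat z)\q(\hat x,\hat z)$, which by hypothesis is strictly negative. The span $V$ is therefore a non-degenerate $3$-plane containing lightlike directions, and among the possible signatures only $(2,1)$ satisfies both conditions (since $(3,0)$ and $(0,3)$ contain no lightlike directions, and $(1,2)$ yields a Gram determinant of the opposite sign). For $(2)\Rightarrow(4)$, combining \eqref{eq:scalar} and \eqref{eq:spherical distance and products} gives
\begin{equation*}
\q(\hat x,\hat y) = \cos(d_\S(u_1,u_2)) - \cos(d_\S(w_1,w_2))\ ,
\end{equation*}
so strict negativity of $\q(\hat x,\hat y)$ is equivalent to $d_\S(u_1,u_2) > d_\S(w_1,w_2)$. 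As in the non-negative case, this shows that the Fermi projection is an injective local homeomorphism on $\Lambda_+$, and invariance of domain together with compactness and connectedness upgrades it to a homeomorphism onto $\S^{p-1}$. Hence $\Lambda_+$ is the graph of a function $\varphi:\S^{p-1}\to\S^q$, and the strict distance inequality makes $\varphi$ contractive.

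The main obstacle is the implication $(1)\Rightarrow(2)$: strictness cannot be extracted directly from the Gram-determinant identity alone, since that identity does not prevent any individual factor from vanishing. I would handle this by invoking \cite[Lemma 3.2]{DGK}, which asserts precisely that the lift of a positive sphere to the isotropic cone in $\R^{p,q+1}$ satisfies $\q(\hat x,\hat y)<0$ on every non-collinear pair. Alternatively, one can argue directly: since a positive sphere is a fortiori non-negative, Lemma~\ref{lemma:NonNegativeSpheres} already gives $\q(\hat x,\hat y)\leq 0$; if equality held for some pair $(x,y)$, then for any third point $z\in\Lambda_+$ the span $x\oplus y\oplus z$ would have vanishing Gram determinant, contradicting the non-degeneracy of the signature $(2,1)$ form enforced by positivity of the triple $(x,y,z)$.
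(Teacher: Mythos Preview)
Your proposal is correct and matches the paper's own argument, which is stated tersely as ``use \cite[Lemma 3.2]{DGK} and repeat the proof of Lemma~\ref{lemma:NonNegativeSpheres} with strict inequalities.'' Your self-contained alternative for $(1)\Rightarrow(2)$ --- deducing strictness from the vanishing of the Gram determinant together with the non-degeneracy of the signature $(2,1)$ span --- is a clean way to avoid the external reference, and is not spelled out in the paper.
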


\begin{remark}\label{remark:PositiveImpliesAdmissable}
It follows from Item (2) of the above lemma that a positive sphere is an admissible non-negative sphere.
\end{remark}
We now have the following characterisation of admissible non-negative spheres.

\begin{lemma}\label{lemma:ProperNonNegativeSpheres}\label{lemma:CharacterisationOfAdmissable}
Let $\Lambda_+$ be a subset of $\bH_+$. The following assertions are equivalent.
\begin{enumerate}
\item $\Lambda_+$ is an admissible non-negative $(p-1)$-sphere.
\item $\Lambda_+$ is a non-negative $(p-1)$-sphere not contained in the orthogonal complement of any point of $\bH$.
\item There exists a Fermi chart in which $\Lambda_+$ is the graph of a $1$-Lipschitz map $\varphi:\S^{p-1}\rightarrow\S^q$ whose image does not contain antipodal points.
\item In every Fermi chart, $\Lambda_+$ is the graph of a $1$-Lipschitz map $\varphi:\S^{p-1}\rightarrow \S^q$ whose image does not contain antipodal points.
\end{enumerate}
\end{lemma}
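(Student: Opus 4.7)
The key observation is that Lemma \ref{lemma:NonNegativeSpheres} has already identified non-negative $(p-1)$-spheres in $\bH_+$ with graphs of $1$-Lipschitz maps $\varphi:\S^{p-1}\to\S^q$ in any Fermi chart, and has produced the explicit boundary pairing $\q((u_1,w_1),(u_2,w_2))=\langle u_1,u_2\rangle_U-\langle w_1,w_2\rangle_W$ on $\bH_+\cong\S^{p-1}\times\S^q$. Since each of the conditions (1)--(4) requires $\Lambda_+$ to be a non-negative sphere, the plan is simply to add one more equivalent condition (admissibility) to the equivalences of that lemma; all the real work will be done in Fermi coordinates.

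I would first treat $(1)\Leftrightarrow(3)\Leftrightarrow(4)$. Since the covering involution $\H_+\to\H$ is $x\mapsto-x$, and since this acts in any Fermi chart as $(u,w)\mapsto(-u,-w)$, the sphere $\Lambda_+$ contains an antipodal pair if and only if there exists some $u\in\S^{p-1}$ with $\varphi(-u)=-\varphi(u)$. The implication $(4)\Rightarrow(1)$ is then immediate. Conversely, if $\varphi(u_1)=-\varphi(u_2)$, then $d_{\S^q}(\varphi(u_1),\varphi(u_2))=\pi$, and the $1$-Lipschitz property of $\varphi$ forces $d_{\S^{p-1}}(u_1,u_2)\geq\pi$, hence $u_2=-u_1$, so that $\varphi(u_1)=-\varphi(-u_1)$ exhibits an antipodal pair in $\Lambda_+$. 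Thus $(1)\Leftrightarrow(4)$ in every Fermi chart, and the trivial implications $(4)\Rightarrow(3)\Rightarrow(1)$ close the three-way equivalence.

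The main content is $(1)\Leftrightarrow(2)$. Assume first that (2) fails, so that $\Lambda_+\subseteq x_0^\perp$ for some $x_0\in\bH$; picking either lift of $x_0$ to $(u_0,w_0)\in\bH_+$, the orthogonality condition reads $\langle u,u_0\rangle_U=\langle\varphi(u),w_0\rangle_W$ for every $u\in\S^{p-1}$. Evaluating at $u=u_0$ and $u=-u_0$ and using that unit vectors in $\S^q$ achieve maximal inner product only against themselves, we deduce $\varphi(u_0)=w_0$ and $\varphi(-u_0)=-w_0$; hence $(u_0,w_0)$ and $(-u_0,-w_0)$ both lie in $\Lambda_+$, so (1) fails. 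Conversely, if $(u_0,w_0),(-u_0,-w_0)$ is an antipodal pair in $\Lambda_+$, then for any $(u,\varphi(u))\in\Lambda_+$, the non-negativity inequalities against both points of the pair read $\langle u,u_0\rangle_U\leq\langle\varphi(u),w_0\rangle_W$ and its reverse, forcing equality for every $u$. This shows $\Lambda_+\subseteq x_0^\perp$, where $x_0$ is the image of $(u_0,w_0)$ in $\bH$.

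I do not expect a serious obstacle. The proof is essentially a bookkeeping exercise built on top of the explicit formulas already provided by Lemma \ref{lemma:NonNegativeSpheres}; the only mild subtlety is the correct interpretation of (2) across the double cover $\bH_+\to\bH$, which is resolved by working with either of the two lifts of the point $x_0\in\bH$.
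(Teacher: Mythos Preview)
Your proposal is correct and follows essentially the same approach as the paper. The paper organizes the argument as a cycle $(2)\Rightarrow(1)\Rightarrow(4)\Rightarrow(3)\Rightarrow(2)$ rather than your two biconditionals $(1)\Leftrightarrow(3)\Leftrightarrow(4)$ and $(1)\Leftrightarrow(2)$, but the individual steps are identical: the $1$-Lipschitz squeeze $d_{\S^q}(\varphi(u_1),\varphi(u_2))=\pi\Rightarrow u_2=-u_1$ to pass between antipodal pairs in $\Lambda_+$ and in $\varphi(\S^{p-1})$, and the evaluation at $u=\pm u_0$ of the orthogonality relation $\langle u,u_0\rangle_U=\langle\varphi(u),w_0\rangle_W$ to link containment in $x_0^\perp$ with antipodal values of $\varphi$.
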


\begin{proof}
To show that (2) implies (1), we prove the contrapositive. Let $\Lambda_+$ be a non-negative sphere which is not admissible. Let $\hat x$ and $-\hat x$ denote representatives of a pair of antipodal points that it contains. Let $\hat y$ denote the representative of any other point of $\Lambda_+$. By  Lemma \ref{lemma:NonNegativeSpheres}, $\q( \hat x,\hat y) \leq 0$ and $\q( (-\hat x),\hat y) \leq 0$, so that $\q( \hat x,\hat y)=0$. It follows that $\Lambda_+$ is contained in $x^\bot$, as desired.

We now show that (1) implies (4). By Lemma \ref{lemma:NonNegativeSpheres}, in every Fermi chart, $\Lambda_+$ is the graph of a $1$-Lipschitz map $\varphi:\S^{p-1}\to\S^q$. Suppose now that $\varphi(\S^{p-1})$ contains a pair $(w,-w)=(\varphi(u),\varphi(u'))$ of antipodal points. Since $\varphi$ is 1-Lipschitz, $d_{S}(w,-w)=\pi$ and $d_{S}(u,u')\leq \pi$, it follows that $d_{S}(u,u')= \pi$, so that $u'=-u$ and these points are therefore antipodal. It follows that $\Lambda_+$ itself contains the antipodal pair $(u,w),(-u,-w)$, and is therefore not admissible.

Since (4) trivially implies (3), it remains only to show that (3) implies (2). By Lemma \ref{lemma:NonNegativeSpheres}, $\Lambda_+$ is non-negative. Suppose now that $\Lambda$ is contained in $l^\perp$ for some $l:=\phi(u_0,w_0)$, where $\phi$ is as in Lemma \ref{lemma:warped parameterization}. By \eqref{eq:scalar},
\begin{equation*}
0=\langle u_0,u_0\rangle_U-\langle w_0,\varphi(u_0)\rangle_W=1-\langle w_0,\varphi(u_0)\rangle_W\ ,
\end{equation*}
so that $\langle w_0,\varphi(u_0)\rangle_W=1$, and thus $\varphi(u_0)=w_0$. Likewise
\begin{equation*}
0=\langle u_0,(-u_0)\rangle_U-\langle w_0,\varphi(-u_0)\rangle_W=-1-\langle w_0,\varphi(-u_0)\rangle_W\ ,
\end{equation*}
so that $\langle w_0,\varphi(-u_0)\rangle_W=-1$, and thus $\varphi(-u_0)=-w_0$. The image of $\varphi$ therefore contains the antipodal points $\pm w_0$, which is absurd, and it follows that $(3)$ implies $(2)$, as desired.
\end{proof}

\subsection{Asymptotic boundaries}\label{subsection:AsymptoticBoundary}

Define
\begin{equation}
\eqalign{
\mathcal{B}  &:=  \left\{ \text{non-negative $(p-1)$-spheres in } \bH \right\}~,\ \text{and}\cr
\mathcal{B}_+  &:=  \left\{ \text{admissible non-negative $(p-1)$-spheres in } \bH_+ \right\}~.\cr}
\end{equation}
We furnish these spaces with the Hausdorff topology. By Lemma \ref{lemma:LiftNonNegativeSpheres} the projection $\bH_+ \to \bH$ induces a two-to-one map $\mathcal B_+ \to \mathcal B$.

The Hausdorff closure of  $\mathcal B_+$ in the space of closed subsets of $\bH_+$ is the space $\overline{\mathcal B}_+$ of non-negative spheres.
We define the \emph{asymptotic boundary operator}
\begin{equation}
\partial_\infty^+: ~ \overline{\mathcal E}_+ \rightarrow \overline{\mathcal B}_+
\end{equation}
as follows. Let $M\subset\H_+$ be an entire graph, let $(x,H)$ be a marked hyperbolic $p$-space, let $\Gamma_{(x,H)}$ be as in equation (\ref{eq:defi Gamma}), denote $\varphi:=\Gamma_{(x,H)}^{-1}(M)$, and note that $\varphi$ extends uniquely to a $1$-Lipschitz function $\overline{\varphi}:\overline{\S}_+^p\rightarrow\S^q$. We define
\begin{equation}
\partial^+_\infty M := \overline{\varphi}(\S^{p-1})\ .
\end{equation}
This trivially does not depend on the marked hyperbolic $p$-space chosen and, since $\varphi$ depends continuously on $M$, so too does $\partial^+_\infty M$.

\begin{lemma}\label{lemma:AdmissableBoundaryCriteria}
Let $M$ be an entire graph in $\H_+$. The following assertions are equivalent.
\begin{enumerate}
\item $\partial^+_\infty M$ is not admissible.
\item $M$ contains a complete, lightlike geodesic.
\item $M$ is foliated by complete, lightlike geodesics.
\item In every Fermi chart, $M$ is the graph of a $1$-Lipschitz map $\varphi:\Upp^p\to\S^q$ of the form
\begin{equation}\label{eq:entire graphs lightlike foliated}
\varphi(\sin(t) u_0+\cos(t) v)=\sin(t) w_0+\cos(t) \overline\varphi(v)
\end{equation}
for some $u_0\in\S^{p-1}$, $w_0\in \S^q$ and some $1$-Lipschitz function $\overline\varphi$ from the hemisphere $u_0^\bot\cap \Upp^p$ to the $(q-1)$-sphere $w_0^\bot\cap \S^q$.
\end{enumerate}
\end{lemma}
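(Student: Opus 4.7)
The plan is to prove the equivalences via the cycle $(3)\Rightarrow(2)\Rightarrow(1)\Rightarrow(4)\Rightarrow(3)$. The first implication is immediate. For $(2)\Rightarrow(1)$, I would use the description of complete lightlike geodesics in the quadric model: any such geodesic lifts to the curve $t\mapsto e_1+te_0$ in $\Quad$, where $e_0$ is null, $\q(e_1,e_1)=-1$ and $\q(e_0,e_1)=0$. Its two endpoints at infinity in $\bH_+$ are $[e_0]$ and $[-e_0]$, which form an antipodal pair. If this geodesic lies in $M$ then both endpoints belong to $\partial^+_\infty M$, so the latter is not admissible by Lemma \ref{lemma:CharacterisationOfAdmissable}.

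For $(1)\Rightarrow(4)$, fix a Fermi chart so that $M$ is the graph of some $1$-Lipschitz map $\varphi:\Upp^p\to\S^q$, with continuous extension $\overline\varphi$ to the closed hemisphere. Non-admissibility provides antipodal points in $\overline\varphi(\S^{p-1})$, and the $1$-Lipschitz property combined with the fact that both $\S^{p-1}$ and $\S^q$ have spherical diameter $\pi$ forces their preimages to be antipodal: there exist $u_0\in\S^{p-1}$ and $w_0\in\S^q$ with $\overline\varphi(\pm u_0)=\pm w_0$. For each $v$ in the interior of $u_0^\perp\cap\Upp^p$, the great semicircle $\gamma_v(t):=\sin(t)u_0+\cos(t)v$, $t\in(-\pi/2,\pi/2)$, is a minimizing geodesic in $\Upp^p$ joining $-u_0$ to $u_0$ through $v$. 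The $1$-Lipschitz bounds $d(\pm w_0,\varphi(\gamma_v(t)))\leq\pi/2\mp t$ combined with the triangle inequality and $d(w_0,-w_0)=\pi$ force equality throughout, so $\varphi(\gamma_v(t))$ lies on a minimizing semicircle in $\S^q$ from $-w_0$ to $w_0$ with $d(w_0,\varphi(\gamma_v(t)))=\pi/2-t$. At $t=0$ this yields $\overline\varphi(v)\in w_0^\perp\cap\S^q$; since this point is non-antipodal to either $\pm w_0$, the semicircle through it is unique, and the explicit parametrisation reads $\varphi(\gamma_v(t))=\sin(t)w_0+\cos(t)\overline\varphi(v)$, which is the formula of $(4)$. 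The restriction $\overline\varphi:u_0^\perp\cap\Upp^p\to w_0^\perp\cap\S^q$ inherits the $1$-Lipschitz property from $\varphi$.

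For $(4)\Rightarrow(3)$, for each $v$ in the interior of $u_0^\perp\cap\Upp^p$ consider the curve $\tau\mapsto\Psi(\gamma_v(\tau),\sin(\tau)w_0+\cos(\tau)\overline\varphi(v))$ in $M$. A direct expansion using the Fermi parametrisation $\hat\Psi$ shows that it lifts to
\begin{equation*}
\tau\longmapsto\tilde e_1+\frac{\tan(\tau)}{v_\ell}\,\tilde e_0\in\Quad,
\end{equation*}
where $v_\ell>0$ denotes the component of $v$ along the oriented negative-definite line $\ell$, and where $\tilde e_0:=(u_0,w_0)$ and $\tilde e_1:=v_\ell^{-1}(v_U,\overline\varphi(v))$ in the decomposition $E=U\oplus W$, with $v_U$ the $U$-component of $v$. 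The identities $\|u_0\|_U=\|w_0\|_W=1$, $u_0\perp v_U$ and $w_0\perp\overline\varphi(v)$ imply that $\tilde e_0$ is null, $\tilde e_1$ is a negative unit vector, and $\q(\tilde e_0,\tilde e_1)=0$. Hence the curve is a reparametrisation of the complete lightlike geodesic $t\mapsto\tilde e_1+t\tilde e_0$ in $\Quad$, and as $v$ varies these geodesics foliate $M$.

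The main obstacle is the rigidity argument in $(1)\Rightarrow(4)$: from the apparently weak pointwise assumption that $\overline\varphi(\S^{p-1})$ contains a single antipodal pair, one must derive the rigid global form \eqref{eq:entire graphs lightlike foliated} for $\varphi$. The key mechanism is that, once the $1$-Lipschitz bound is saturated at the antipodal pair $(\pm u_0,\pm w_0)$, the triangle inequality in $\S^q$ forces equality on every great semicircle in $\Upp^p$ joining $-u_0$ to $u_0$, and $\varphi$ on such a semicircle is thereby completely determined by its midpoint value.
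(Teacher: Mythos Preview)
Your proof is correct and follows the same cycle of implications $(3)\Rightarrow(2)\Rightarrow(1)\Rightarrow(4)\Rightarrow(3)$ as the paper, with the same rigidity argument for $(1)\Rightarrow(4)$ (your triangle-inequality phrasing is equivalent to the paper's length argument $\pi=d(-w_0,w_0)\leq L[\varphi\circ\gamma]\leq L[\gamma]=\pi$). The one notable difference is in $(4)\Rightarrow(3)$: you carry out an explicit computation in the quadric model showing each curve is an affine reparametrisation of $t\mapsto\tilde e_1+t\tilde e_0$, whereas the paper observes that $(\gamma_v,\varphi\circ\gamma_v)$ is a lightlike geodesic for the conformally flat metric $g_{\Upp^p}-g_{\S^q}$ and invokes the conformal invariance of unparametrised null geodesics (citing \cite[Proposition~2.131]{GHL}); your approach is self-contained but more computational, while theirs avoids coordinates at the cost of an external reference.
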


\begin{proof}We first prove that $(1)$ implies $(4)$. Suppose that $\partial_\infty M$ is not admissible. Let $(x,H)$ be a marked hyperbolic $p$-space, and denote $\varphi:=\Gamma_{(x,H)}^{-1}(M)$. Since $\partial_\infty M$ is not admissible, it contains a pair of antipodal points. In the Fermi chart of $(x,H)$, this pair has the form $\pm(u_0,w_0)\in\S^{p-1}\times\S^q$. Let $\gamma:(-\pi/2,\pi/2)\rightarrow\overline{\S}{}_+^p$ be a unit speed geodesic such that $\gamma(\pm\pi/2)=\pm u_0$, and note that
\begin{equation*}
\gamma(t) = \sin(t)u_0 + \cos(t)v\ ,
\end{equation*}
for some $v\in\Upp^{p-1}$. Since $\varphi$ is $1$-Lipschitz
\begin{equation*}
\pi = d(-w_0,w_0) \leq \opL[\varphi\circ\gamma] \leq \opL[\gamma] = d(-u_0,u_0) = \pi\ ,
\end{equation*}
so that $(\varphi\circ\gamma)$ is also a unit speed geodesic, and thus has the form
\begin{equation*}
(\varphi\circ\gamma)(t) = \sin(t)w_0 + \cos(t)\overline{\varphi}(v)\ ,
\end{equation*}
for some $\overline{\varphi}(v)\in\S^{q-1}$. Since $\gamma$ is arbitrary, $\varphi$ satisfies \eqref{eq:entire graphs lightlike foliated}. Furthermore, upon restricting to the orthogonal complement of $u_0$, we see that $\overline{\varphi}$ is $1$-Lipschitz, and this proves $(4)$.

We now show that $(4)$ implies $(3)$. Indeed, with $\gamma$ as above, $\overline{\gamma}(t):=(\gamma(t),(\varphi\circ\gamma)(t))$ is a clearly a lightlike geodesic for the conformal metric $g_{\Upp^p}-g_{\S^q}$, with endpoints $\pm(u_0,v_0)$. Since $\H$ is geodesically complete, and since unparameterized lightlike geodesics only depend on the conformal class of the pseudo-riemannian metric (see \cite[Proposition 2.131]{GHL}), $\overline\gamma$ is a complete lightlike geodesic for the metric $\g$, and $(3)$ follows. Finally, $(3)$ trivially implies $(2)$, and, since the endpoints of a complete lightlike geodesic are antipodal, $(2)$ implies $(1)$. This completes the proof.
\end{proof}

\begin{corollary}\label{cor:BoundaryOperatorIsContinuous}
The asymptotic boundary operator $\partial_\infty^+$ restricts to a continuous map from $\mathcal E_+$ to  $\mathcal B_+$, and induces a continuous map $\partial_\infty:\mathcal E\to\mathcal B$ such that the following diagram commutes.
$$\begin{tikzcd}
\mathcal E_+\arrow{r}{\partial_\infty^+}\arrow{d}{} &
\mathcal B_+\arrow{d}{} \\
\mathcal E\arrow{r}{\partial_\infty} &
\mathcal B \\
\end{tikzcd}~.$$
\end{corollary}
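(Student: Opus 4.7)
The plan is to check three things: (i) $\partial_\infty^+$ maps $\mathcal{E}_+$ into $\mathcal{B}_+$, i.e.\ the boundary is admissible; (ii) the resulting map $\partial_\infty^+:\mathcal{E}_+\to\mathcal{B}_+$ is continuous; and (iii) $\partial_\infty^+$ descends to a well-defined continuous map $\partial_\infty:\mathcal{E}\to\mathcal{B}$ making the square commute. Only (i) requires genuine work; the others reduce to collating definitions with the continuity of $\partial_\infty^+$ on $\overline{\mathcal{E}}_+$ already noted immediately after the definition of the boundary operator.

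For (i), let $M\in\mathcal{E}_+$. By Lemma \ref{lemma:EntireGraphsAreGraphs}, in any Fermi chart $M$ is the graph of a smooth function $\varphi:\Upp^p\to\S^q$ with $\|d\varphi\|<1$ pointwise; this function extends continuously to a $1$-Lipschitz map from the closed hemisphere to $\S^q$, so $\partial_\infty^+ M$ is a non-negative sphere by Lemma \ref{lemma:NonNegativeSpheres}. Suppose, toward a contradiction, that $\partial_\infty^+ M$ were not admissible. By Lemma \ref{lemma:AdmissableBoundaryCriteria}, $\varphi$ would then have the form \eqref{eq:entire graphs lightlike foliated}, so along the unit-speed geodesic $\gamma(t):=\sin(t)u_0+\cos(t)v$ the image curve $(\varphi\circ\gamma)(t)=\sin(t)w_0+\cos(t)\overline\varphi(v)$ would also be of unit speed. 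This forces $\|d_{\gamma(t)}\varphi\cdot\gamma'(t)\|=1$, hence $\|d\varphi\|\geq 1$ at every point of $\gamma$, contradicting $\|d\varphi\|<1$.

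Point (ii) follows because restricting a continuous map to subspaces of source and target (each carrying the subspace topology) preserves continuity. For (iii), by Corollary \ref{cor:two lifts} any $M\in\mathcal{E}$ has exactly two lifts $M_\pm\subset\H_+$, exchanged by the covering involution $\iota$. Since $\iota$ is an isometry of $\H_+$ that extends continuously to a homeomorphism of $\bH_+$, one verifies directly from the definition that $\partial_\infty^+\iota(\widetilde M)=\iota(\partial_\infty^+\widetilde M)$ for every $\widetilde M\in\overline{\mathcal{E}}_+$. In particular $\partial_\infty^+M_+$ and $\partial_\infty^+M_-$ project to the same element of $\mathcal{B}$; defining $\partial_\infty M$ to be this common element yields the desired commutative square, and continuity follows because both vertical projections are local homeomorphisms (being two-sheeted covering maps), so continuity may be checked locally by lifting, where it reduces to that of $\partial_\infty^+$.

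The only real obstacle is step (i), and even there the work is minor: once Lemma \ref{lemma:AdmissableBoundaryCriteria} is available, the strict inequality $\|d\varphi\|<1$ is visibly incompatible with the rigid lightlike-geodesic foliation of $M$ that non-admissibility would impose.
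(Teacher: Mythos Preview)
Your proof is correct and follows essentially the same route as the paper. The paper's argument is terser---it simply invokes Definition~\ref{defi smooth entire graph} together with Lemma~\ref{lemma:AdmissableBoundaryCriteria} for (i) (a smooth entire graph is spacelike, hence cannot contain the complete lightlike geodesic that non-admissibility would force), and cites Lemma~\ref{lemma:LiftNonNegativeSpheres} for the descent in (iii)---but your more explicit unpacking via item~(4) of Lemma~\ref{lemma:AdmissableBoundaryCriteria} and the involution argument amounts to the same reasoning.
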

\begin{proof}
Indeed, by Definition \ref{defi smooth entire graph}  and Lemma \ref{lemma:AdmissableBoundaryCriteria}, if $M\in\mathcal E_+$, then $\partial_\infty^+(M)$ is admissible. The result now follows by Lemma \ref{lemma:LiftNonNegativeSpheres}.
\end{proof}

\subsection{Convex hulls}

Given a subset $\Lambda$ in $\P_+(E)$, its \emph{convex hull} $\Conv(\Lambda)$ is the intersection of all closed half-spaces containing $\Lambda$. Observe that, if $\Lambda$ is a non-negative set, then it is contained in at least one closed half-space: indeed, for any $a\in\Lambda$, one has $\q( a,b)\leq 0$ for all $b\in\Lambda$, and $\Lambda$ is thus contained in the half-space defined by $\q( a,\cdot)\leq 0$. Furthermore, a suitable adaptation of the proof of \cite[Proposition 2.15, Item (vi)]{LTW} shows that $\Conv(\Lambda)$ is a subset of $\H_+\cup\partial_\infty\H_+$.

\begin{lemma}\label{lemma:ConvIsContinuous}
$\opConv$ is continuous with respect to the Hausdorff topology of $\overline{\mathcal{B}}_+$ and the Hausdorff topology of the space of closed, convex subsets of $\P_+(E)$.
\end{lemma}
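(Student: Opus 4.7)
The plan is to establish upper and lower semi-continuity of $\opConv$ separately.

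Fix an auxiliary positive-definite inner product on $E$ and identify $\P_+(E)$ with its unit sphere $\Sigma$. Under this identification, every $\Lambda\in\overline{\mathcal B}_+$ has a canonical lift $\hat\Lambda\subset\Sigma$, Hausdorff convergence $\Lambda_n\to\Lambda$ corresponds to Hausdorff convergence $\hat\Lambda_n\to\hat\Lambda$, and a closed half-space of $\P_+(E)$ corresponds to a closed linear half-space of $E$. It follows that $\opConv(\Lambda)$, viewed inside $\Sigma$, coincides with $C(\Lambda)\cap\Sigma$, where $C(\Lambda)\subset E$ denotes the closed convex cone generated by $\hat\Lambda$. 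The problem reduces to showing that $C(\Lambda)\cap\Sigma$ varies continuously in $\Lambda$.

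Lower semi-continuity follows from Carath\'eodory. Given $y\in\opConv(\Lambda)$ with lift $\hat y\in C(\Lambda)$, write $\hat y=\sum_{i=1}^N t_i\hat v_i$ with $t_i\geq 0$, $\hat v_i\in\hat\Lambda$, and $N\leq\dim E$. Hausdorff convergence supplies $\hat v_{i,n}\in\hat\Lambda_n$ with $\hat v_{i,n}\to\hat v_i$, so that $\hat y_n:=\sum_i t_i\hat v_{i,n}\in C(\Lambda_n)$ converges to $\hat y$ and $[\hat y_n]\in\opConv(\Lambda_n)$ converges to $y$.

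For upper semi-continuity, suppose $y_n\in\opConv(\Lambda_n)$ converges to $y$. Lift to unit representatives $\hat y_n\in C(\Lambda_n)\cap\Sigma$ converging to $\hat y\in\Sigma$, and apply Carath\'eodory to write $\hat y_n=\sum_{i=1}^N t_{i,n}\hat v_{i,n}$ with $t_{i,n}\geq 0$ and $\hat v_{i,n}\in\hat\Lambda_n$. After extracting subsequences, $\hat v_{i,n}\to\hat v_i\in\hat\Lambda$. When $\Lambda$ is admissible, Lemma \ref{lemma:CharacterisationOfAdmissable} furnishes a linear functional $\alpha$ strictly positive on $\hat\Lambda$, hence bounded below by some $c>0$ on $\hat\Lambda_n$ for $n$ large; this gives $c\sum_i t_{i,n}\leq\alpha(\hat y_n)\leq\|\alpha\|$, so the coefficients are uniformly bounded. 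Extracting limits $t_i\geq 0$, we obtain $\hat y=\sum_i t_i\hat v_i\in C(\Lambda)$, and hence $y\in\opConv(\Lambda)$.

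The main obstacle is the non-admissible case: here Lemma \ref{lemma:CharacterisationOfAdmissable} forces $\hat\Lambda$ to lie inside a lightlike hyperplane $\hat x^\perp$, and the coefficients $t_{i,n}$ may blow up. The plan is a Carath\'eodory-type reduction: if $\max_i t_{i,n}\to\infty$, rescaling by the maximum and passing to a subsequential limit produces a nontrivial nonnegative relation $\sum_i s_i\hat v_i=0$ among the limit vectors. This relation allows one to recombine terms in the representation of $\hat y_n$ so as to annihilate one coefficient while preserving nonnegativity of the others, at the cost of a controlled error. Iterating this reduction at most $N$ times should produce a representation of $\hat y$ with bounded coefficients exhibiting $\hat y\in C(\Lambda)$. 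Carrying out this reduction rigorously, in particular controlling the error terms arising from the fact that the approximate relation $\sum_i s_i\hat v_{i,n}$ need not vanish as fast as the rescaling blows up, is the technical core of the argument; alternatively, one may reduce to the admissible case by working in the projective subspace $[\hat x^\perp]\subset\P_+(E)$.
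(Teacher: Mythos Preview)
The paper takes a much shorter and different route: it passes to an affine (``projective'') chart of $\P_+(E)$, in which the projective convex hull coincides with the Euclidean convex hull of a compact set, and then simply cites the known Hausdorff-continuity of the convex-hull map on compact subsets of $\R^n$. Your Carath\'eodory argument on the sphere amounts to reproving that cited fact in situ. In the admissible case both approaches are complete, and indeed the separating functional $\alpha$ you extract from Lemma~\ref{lemma:CharacterisationOfAdmissable} is exactly what one would use to set up such an affine chart.

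The non-admissible case you flag as incomplete is genuinely the delicate point, and the paper's brief reduction does not obviously address it either: a non-admissible $\Lambda$ contains antipodal points, hence lies in no single affine chart of $\P_+(E)$, and equivalently $0$ lies in the Euclidean convex hull of $\hat\Lambda\subset\Sigma$. So neither argument, as written, fully covers this boundary case. Your Carath\'eodory-reduction sketch is plausible but, as you acknowledge, the error control is not routine; your alternative of passing to $\P_+(\hat x^\perp)$ does not immediately help, since $\pm\hat x$ remain antipodal there. One further small gap: in your lower-semicontinuity step, writing $\hat y=\sum t_i\hat v_i$ with $\hat v_i\in\hat\Lambda$ presumes that the naive positive cone over $\hat\Lambda$ is already closed, which may fail precisely when $0$ lies in its Euclidean hull; the fix is to approximate $\hat y$ by genuine finite combinations first and then diagonalise.
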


\begin{proof}
Indeed, upon taking a projective chart, we identify $\H_+$ with the interior of the quadric $\Quad$ in $\Rpqp$. With this identification, the convex hull of any subset of $\H_+\cup\partial_\infty \H_+$ identifies with its convex hull in $\Rpqp$. The result now follows by the continuity of convex hulls in $\R^n$, see  \cite[Lemma 2.1]{zbMATH07444885} or \cite{zbMATH04113268}.
\end{proof}

\noindent The following result is proven in \cite[Proposition 2.15, Item (vi)]{LTW} in the case $p=2$, but the proof extends to our case.

\begin{lemma}\label{lemma:DeltaConvIsDelta}
For every non-negative sphere in $\bH_+$, the intersection between $\opConv(\Lambda)$ and $\bH_+$ coincides with $\Lambda$.
\end{lemma}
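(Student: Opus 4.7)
The trivial inclusion $\Lambda\subseteq\opConv(\Lambda)\cap\bH_+$ holds by definition, so the plan is to establish the reverse inclusion. The key input is the characterisation of $\opConv(\Lambda)$ as an intersection of closed half-spaces, together with the half-space description of non-negativity provided by Lemma \ref{lemma:NonNegativeSpheres}(2). Concretely, for each $b\in\Lambda$ with positive-ray representative $\hat b\in E$, the set
\begin{equation*}
H_b := \big\{y\in\P_+(E)\,:\,\q(\hat b,\hat y)\leq 0\big\}~,
\end{equation*}
where $\hat y$ is the positive representative of $y$, is a closed half-space. By Lemma \ref{lemma:NonNegativeSpheres}(2), $\Lambda\subseteq H_b$, hence $\opConv(\Lambda)\subseteq H_b$. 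I conclude that, for every $x\in\opConv(\Lambda)$ and every $b\in\Lambda$,
\begin{equation*}
\q(\hat b,\hat x)\leq 0~.
\end{equation*}

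With this inequality in hand, I would fix an arbitrary marked hyperbolic $p$-space $(x_0,H)$ and work in the associated Fermi chart, using the orthogonal decomposition $E=\ell\oplus U\oplus W$ of Section \ref{subsection:FermiCoordinates}. By Lemma \ref{lemma:NonNegativeSpheres}, the sphere $\Lambda$ identifies with the graph of a $1$-Lipschitz map $\varphi:\S^{p-1}\to\S^q$, while $\bH_+$ identifies with $\S^{p-1}\times\S^q$; positive representatives take the form $\hat y=u_y+w_y\in U\oplus W$, and formula \eqref{eq:scalar} in the proof of Lemma \ref{lemma:NonNegativeSpheres} reads
\begin{equation*}
\q(\hat b,\hat y) = \langle u_b,u_y\rangle_U-\langle w_b,w_y\rangle_W~.
\end{equation*}

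Given $x\in\opConv(\Lambda)\cap\bH_+$ with coordinates $(u_x,w_x)\in\S^{p-1}\times\S^q$, I plug in the distinguished choice $b_0:=(u_x,\varphi(u_x))\in\Lambda$ (the point of $\Lambda$ directly above $u_x$). The inequality of the first paragraph then reads
\begin{equation*}
0\geq\q(\hat b_0,\hat x)=\langle u_x,u_x\rangle_U-\langle\varphi(u_x),w_x\rangle_W=1-\langle\varphi(u_x),w_x\rangle_W~,
\end{equation*}
so $\langle\varphi(u_x),w_x\rangle_W\geq 1$. Since $\varphi(u_x)$ and $w_x$ are both unit vectors in $W$ with respect to the positive-definite inner product $\langle\cdot,\cdot\rangle_W$, the Cauchy–Schwarz inequality forces equality, so $w_x=\varphi(u_x)$, and thus $x=b_0\in\Lambda$, as desired. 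The argument has no real obstacle beyond bookkeeping: the principal care needed is to ensure that the representatives are chosen consistently in the positive ray, so that the half-space description and the Fermi-chart formula for $\q$ agree in sign; once this is set up, the entire matter reduces to the one-line Cauchy–Schwarz computation above.
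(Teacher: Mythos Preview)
Your argument is correct. The half-space containment $\opConv(\Lambda)\subseteq H_b$ for every $b\in\Lambda$ is exactly the observation the paper records just before Lemma~\ref{lemma:ConvIsContinuous}, and once you test against the particular point $b_0=(u_x,\varphi(u_x))$ in the Fermi chart, the Cauchy--Schwarz step is immediate. The only bookkeeping worth making explicit is that the Fermi parametrisation is \emph{onto} $\bH_+$ (so that an arbitrary $x\in\opConv(\Lambda)\cap\bH_+$ indeed has coordinates $(u_x,w_x)\in\S^{p-1}\times\S^q$); this follows since any isotropic vector in $E=U\oplus W$ has $\|u\|_U=\|w\|_W$ and can be normalised to lie in $\S^{p-1}\times\S^q$.

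As for comparison: the paper does not give its own proof but defers to \cite[Proposition~2.15(vi)]{LTW} for the case $p=2$ and asserts that the argument extends. Your proof supplies a clean, self-contained version of that extension. The approach is essentially the same as the one underlying the cited result (testing the convex hull against the half-spaces $H_b$), but your choice to single out the specific test point $b_0$ sitting above $u_x$ makes the conclusion a one-line Cauchy--Schwarz computation, which is a pleasant simplification.
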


\noindent We also have

\begin{lemma}\label{lemma:ContainedInConvexHull}
If $M$ is a complete maximal submanifold of $\H_+$, then $M$ is contained in $\Conv(\partial_\infty M)$.
\end{lemma}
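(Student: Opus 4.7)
By the definition of $\Conv(\partial_\infty M)$ as the intersection of closed half-spaces containing $\partial_\infty M$, it suffices to show that $M$ lies in every closed projective half-space that contains $\partial_\infty M$. Since such a half-space has the form $H_a = \{[v] \in \P_+(E) : \q(a, v) \geq 0\}$ for some $a \in E$, the statement reduces to the following: for every $a \in E$ satisfying $\q(a, \hat y) \geq 0$ for every positive-ray representative $\hat y$ of a point of $\partial_\infty M$, we must have $\q(a, \hat x) \geq 0$ for every $\Quad$-representative $\hat x$ of a point of $M$.

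The central calculation I would carry out is the elliptic eigenvalue equation satisfied by $F(x) := \q(a, \hat x)$ on $M$. Since $\Quad \subset E$ admits $N(x) = x$ as a unit timelike normal, its second fundamental form is $\II_{\Quad}(X, Y) = \q(X, Y) \cdot x$, and so for the linear function $F = \q(a,\cdot)$ one computes $\opHess_\Quad F(X, Y) = F \cdot \q(X, Y)$. Restricting to $M$ picks up a term $\q(a, \II_M(X, Y))$ whose trace equals $\q(a, \vec H_M)$, and this vanishes by maximality of $M$. The resulting identity is
\[
\Delta_M F = p \cdot F,
\]
where $p = \dim M$.

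To apply a maximum principle I would introduce a positive comparison function. Fix $\xi \in \Quad$ representing a point of $\H_+$, and set $G(x) := -\q(\xi, \hat x)$. The reverse Cauchy--Schwarz inequality, applied on the connected component $\Quad$ containing $\xi$, yields $G \geq 1$ throughout $M$, and the same computation as above gives $\Delta_M G = p \cdot G$. The ratio $\tilde F := F/G$ therefore satisfies the homogeneous linear elliptic equation
\[
\Delta_M \tilde F + 2 \langle \nabla \log G, \nabla \tilde F \rangle = 0,
\]
whose vanishing zeroth-order term renders it subject to the strong maximum principle. If $\xi$ is chosen so that $G > 0$ also on every positive-ray representative of $\partial_\infty M$---which I expect to follow from the admissibility of $\partial_\infty M$ (Corollary \ref{cor:BoundaryOperatorIsContinuous})---then $\tilde F$ extends continuously to the compact closure $\bar M = M \cup \partial_\infty M \subset \P_+(E)$ with $\tilde F \geq 0$ on $\partial_\infty M$.

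The conclusion then follows from the strong maximum principle: since $\tilde F$ is continuous on the compact set $\bar M$ and satisfies a homogeneous elliptic equation on the interior $M$, any interior minimum would force $\tilde F$ to be constant, and so $\min_{\bar M} \tilde F = \min_{\partial_\infty M} \tilde F \geq 0$, whence $F = G \cdot \tilde F \geq 0$ throughout $M$. The technical point I anticipate will require the most care is the existence of the auxiliary $\xi$: admissibility rules out antipodal pairs in $\partial_\infty M$, which is a necessary condition, but in a degenerate configuration where no single $\xi$ works globally, the natural remedy is to perturb $a$ by a small multiple of an auxiliary vector $a_0$ chosen so that $\q(a + \epsilon a_0, \hat y) > 0$ strictly on $\partial_\infty M$, apply the above argument in the strict case for each $\epsilon > 0$, and then pass to the limit $\epsilon \to 0$.
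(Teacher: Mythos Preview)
Your proposal is correct, and the heart of it — the identity $\Delta_M F = pF$ for the restriction of a linear functional to $M\subset\Quad$, obtained by tracing the Hessian and using $\opTr(\opII)=0$ — is exactly what the paper does. The paper then concludes in one line: ``by the maximum principle, if $\varphi$ is positive on $\partial_\infty M$, then it is positive on $M$.'' Implicitly this uses that when $\varphi>0$ on every representative of $\partial_\infty M$, the function $F=\varphi|_M$ tends to $+\infty$ along any diverging sequence in $M$ (cf.\ Lemma~\ref{lemma:Acausal}), so $F$ attains a minimum, and at that minimum $pF=\Delta_M F\geq 0$.

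Your route via the comparison function $G(x)=-\q(\xi,\hat x)$ and the quotient $\tilde F=F/G$ is a genuine alternative: instead of arguing that $F$ is proper, you renormalise so that $\tilde F$ extends continuously to the compactification $M\cup\partial_\infty M$ and then apply the strong maximum principle to a drift-Laplacian with no zeroth-order term. This is a bit more machinery than the paper uses, but it has the advantage of making the boundary behaviour completely explicit rather than hidden in a ``$F\to+\infty$'' statement. Your closing perturbation step (replacing $a$ by $a+\epsilon a_0$ to reduce from $\geq 0$ to $>0$ on $\partial_\infty M$) is the right way to handle the degenerate case in either approach; the paper does not spell this out, but it is the standard manoeuvre.
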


\begin{proof}
Let $M$ be a complete maximal submanifold of $\H_+$, and let $\varphi \in E^*$ be a linear form on $E$. Identifying $\H_+$ with the quadric $\Quad$, and decomposing the trivial covariant derivative $D$ on $E$ along $M \subset \Quad \subset E$, we have for any vector fields $X,Y$ over $M$ and for any point $x\in M$,
\begin{equation}\label{eq:decomposition flat connection}
(D_XY)_x = (\nabla^\T_XY)_x + \II(X,Y)_x + \q( X,Y)x ~,
\end{equation}
where $\nabla^\T$ denotes the Levi-Civita covariant derivative over $M$, and $\II$ denotes its second fundamental form. In particular, for any  $u\in \T_x M$ and for any geodesic $\gamma(t)$ in $M$ with $\overset{\bullet}{\gamma}(0)=u$,
\begin{equation*}
\text{Hess}^M_x \varphi(u,u) = \left.\frac{d^2}{dt^2}\right\vert_{t=0} \varphi(\gamma(t)) = \varphi((D_u\overset{\bullet}{\gamma})_x) = \varphi(\II(u,u)_x) + \q(u,u)\varphi(x)~.
\end{equation*}
Taking the trace with respect to $g_\I$ yields
\begin{equation*}
\Delta \varphi = p \varphi~,
\end{equation*}
where $\Delta$ is the Laplace-Beltrami operator on $M$. It now follows by the maximum principle that if $\varphi$ is positive (respectively negative) on $\partial_\infty M$, then it must also be positive (respectively negative) on $M$, and the result follows.
\end{proof}

\section{Compactness}\label{sec:proper}

We now begin our study of complete {maximal} spacelike submanifolds of $\H$. Recall that, by Lemma \ref{lemma:EntireGraph}, every complete maximal $p$-dimensional submanifold is an entire graph. We thus define
\begin{equation}
\eqalign{
\mathcal{M} &:= \{ \text{complete maximal $p$-dimensional submanifolds of }\H\} \subset \mathcal E~,\ \text{and}\cr
\mathcal{M}_+ &:= \{ \text{complete maximal $p$-dimensional submanifolds of }\H_+\} \subset \mathcal E_+~,\cr}
\end{equation}
and we equip these subspaces with the topologies inherited from $\mathcal E$ and $\mathcal E_+$ respectively. The aim of this section is to prove the following result.

\begin{theorem}\label{thm:degenerate}\label{theorem:DegenerationOfMaximalSubmanifolds}
Let $\seq[n]{M_n}$ be a sequence in $\M_+$. Either
\begin{enumerate}
\item $\seq[n]{M_n}$ subconverges in the $C^\infty_\oploc$ topology to a complete maximal $p$-submanifold of $\H_+$, or
\item $\seq[n]{M_n}$ subconverges in the Hausdorff topology to a Lipschitz $p$-submanifold foliated by complete, lightlike geodesics, all having the same endpoints at infinity.
\end{enumerate}
\end{theorem}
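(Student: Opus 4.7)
I would prove this by a compactness-degeneration dichotomy in the spirit of Labourie \cite{LabMA}, using Ishihara's uniform bound on the second fundamental form as the central analytic input. The plan splits into three stages: uniform a priori bounds and extraction of a Hausdorff limit; a case distinction based on whether local uniform spacelikeness persists; and, in the degenerate case, a propagation argument that produces the lightlike foliation.

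First, by Theorem \ref{theorem:Ishihara}, one has $\|\II_n\| \leq pq$ uniformly in $n$. Fix a marked hyperbolic $p$-space $(x,H)$; by Lemma \ref{lemma:EntireGraphsAreGraphs}, each $M_n$ is represented in the associated Fermi chart as the graph of a smooth function $\varphi_n: \Upp^p \to \S^q$ with $\|d\varphi_n\| < 1$. Since $\{\varphi_n\}$ is equicontinuous, Arzelà-Ascoli produces a subsequence converging uniformly on compacta to a $1$-Lipschitz limit $\varphi_\infty$, whose graph $M_\infty$ is an entire graph in $\overline{\mathcal{E}}_+$ and is the Hausdorff limit of this subsequence.

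Suppose first that the extracted subsequence is locally uniformly spacelike, so that on every compact $K \subset \Upp^p$ one has $\|d\varphi_n\|_{C^0(K)} \leq 1 - \epsilon_K$ for large $n$. Since the maximal submanifold equation is quasilinear elliptic on strictly spacelike graphs, this uniform $C^1$ bound together with $\|\II_n\| \leq pq$ yields $C^{1,\alpha}_{\oploc}$ estimates by standard elliptic regularity, and Schauder bootstrapping produces $C^{k,\alpha}_{\oploc}$ bounds for every $k$. Hence $\varphi_n \to \varphi_\infty$ in $C^\infty_{\oploc}$, and $M_\infty$ is a smooth maximal submanifold with uniformly bounded second fundamental form; Lemma \ref{lemma:EntireGraph}(2) then gives completeness of its induced metric, placing us in alternative (1). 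If instead the subsequence fails to be locally uniformly spacelike, then, since $\|\II_n\|$ is uniformly bounded, Lemma \ref{cor:loc unif spacelike} forces the existence of a compact $L \subset \H_+$ and points $y_n \in M_n \cap L$ (after a further extraction) such that the Gauss lifts $(y_n, \T_{y_n} M_n)$ leave every compact set in $\GHH$. Since $\{y_n\}$ stays in a compact set of $\H_+$, Lemma \ref{lemma:DivergingInGrassmanian} forces $\T_{y_n} M_n$ to converge to a degenerate $p$-plane in the Grassmannian; in the Fermi chart this translates to $\varphi_\infty$ attaining Lipschitz constant $1$ at some interior point $y_\infty \in \Upp^p$ in some unit direction $v$.

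The main obstacle, and the heart of the proof, is to upgrade this pointwise degeneracy into the global suspension structure of alternative (2). I plan to apply a strong maximum principle to the quasilinear maximal graph equation. Concretely, along any strictly spacelike graph $\varphi$, the gradient-squared defect $1 - \|d\varphi\|^2$ satisfies a linear elliptic differential inequality, to which Hopf's strong maximum principle applies; if it vanishes at an interior point in a direction $v$, it must vanish along the entire geodesic through that point in the direction $v$. Geometrically, this means $M_\infty$ contains the whole complete lightlike geodesic of $\H_+$ tangent to $v$ at $y_\infty$. Iterating this observation over all directions in the null distribution of $\varphi_\infty$ and matching the result with the classification in Lemma \ref{lemma:AdmissableBoundaryCriteria}(4), one obtains the suspension form of $M_\infty$; that is, $M_\infty$ is a Lipschitz $p$-submanifold foliated by complete lightlike geodesics all sharing the same pair of antipodal endpoints $\pm(u_0, w_0)$ at infinity, which establishes alternative (2) and completes the proof.
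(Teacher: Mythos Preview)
Your treatment of the non-degenerate case is correct and matches the paper's Theorem \ref{theorem:PropernessOfTau} and Lemma \ref{lemma:EllipticRegularity}: Ishihara's bound plus local uniform spacelikeness gives $C^{1,\alpha}_\oploc$ control, and elliptic bootstrapping upgrades this to $C^\infty_\oploc$ convergence toward a complete maximal limit.

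The degenerate case, however, has a genuine gap. Your plan is to apply a strong maximum principle to the quantity $1-\|d\varphi\|^2$ for the limit function $\varphi_\infty$, but $\varphi_\infty$ is only known to be Lipschitz, so neither $d\varphi_\infty$ nor any elliptic equation it might satisfy is available pointwise. Even granting Rademacher differentiability almost everywhere, Hopf's strong maximum principle requires $C^2$ regularity (or a carefully formulated viscosity framework, which you do not set up). Moreover, the assertion that $1-\|d\varphi\|^2$ satisfies a useful elliptic differential inequality is unsubstantiated: in higher codimension the operator norm of $d\varphi$ is a maximum of eigenvalues of $(d\varphi)^*d\varphi$, and it is far from clear that this admits the kind of subharmonic-type inequality you need for propagation along a geodesic. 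Finally, the passage ``$\varphi_\infty$ attains Lipschitz constant $1$ at $y_\infty$ in direction $v$'' is not well-defined for a merely Lipschitz map; all you actually know is that $\|d\varphi_n(y_n)\|\to 1$ along the approximating sequence, which does not by itself pin down behaviour of $\varphi_\infty$ at a single point.

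The paper circumvents all of this by a group-theoretic renormalisation rather than a PDE argument on the limit. At a point $x_0$ where the tangent planes $\T_{x_0}M_n$ diverge, choose isometries $g_n\in\Stab(x_0)$ carrying a fixed spacelike $p$-plane to $\T_{x_0}M_n$; then $M_n':=g_n^{-1}M_n$ has fixed tangent plane at $x_0$, so by Theorem \ref{theorem:PropernessOfTau} it subconverges smoothly to some complete maximal $M_\infty'$. The Cartan decomposition (Lemma \ref{lemma:DynamicalLemma}) gives $\mu_n\to\infty$ with $e^{-\mu_n}g_n\to g_\infty$ having isotropic image, and pushing forward a smooth curve in $M_\infty'$ through $x_0$ via $g_n$ with rescaled parameter $e^{-\mu_n}t$ produces, in the limit, a complete lightlike geodesic $t\mapsto\exp_{x_0}(t\,g_\infty(\dot\delta_\infty(0)))$ lying in $M_\infty$. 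One such geodesic suffices: Lemma \ref{lemma:AdmissableBoundaryCriteria} then yields the full suspension structure automatically. This argument never asks anything of $\varphi_\infty$ beyond being a Hausdorff limit, which is precisely what your approach cannot avoid.
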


\noindent As a consequence, we will derive the following properness result.

\begin{corollary}\label{theorem:Properness}
The boundary maps $\partial_\infty:\M\rightarrow\Bd$ and $\partial_\infty^+:\M_+\rightarrow\Bd_+$ are proper.
\end{corollary}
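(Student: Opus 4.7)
The plan is to prove properness of $\partial_\infty^+:\M_+\to\Bd_+$ as a direct application of the dichotomy of Theorem \ref{thm:degenerate}, and then to deduce properness of $\partial_\infty:\M\to\Bd$ by descending through the double cover.

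Given a compact $K\subset\Bd_+$ and a sequence $\seq[n]{M_n}$ in $(\partial_\infty^+)^{-1}(K)$, the boundaries $\Lambda_n:=\partial_\infty^+M_n$ subconverge, after extraction, to some $\Lambda\in K$. Theorem \ref{thm:degenerate} then produces a further subsequence falling into one of its two cases. In case (1), $\seq[n]{M_n}$ subconverges in $C^\infty_{\oploc}$ to some $M\in\M_+$, and the continuity of $\partial_\infty^+$ (Corollary \ref{cor:BoundaryOperatorIsContinuous}) gives $\partial_\infty^+ M=\Lambda$, so $M\in(\partial_\infty^+)^{-1}(K)$ and we are done. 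The substantive task is to rule out case (2). If it occurred, the sequence would subconverge in Hausdorff topology to a Lipschitz submanifold $M_\infty$ foliated by complete lightlike geodesics sharing a common endpoint at infinity. Since the endpoints at infinity of any complete lightlike geodesic are antipodal, $\partial_\infty^+M_\infty$ would contain an antipodal pair and hence fail to be admissible, by Lemma \ref{lemma:CharacterisationOfAdmissable}. To derive the contradiction I would show $\partial_\infty^+M_n\to\partial_\infty^+M_\infty$ in Hausdorff topology: working in a common Fermi chart, $M_n$ and $M_\infty$ are graphs of $1$-Lipschitz functions $\varphi_n,\varphi_\infty$ whose Hausdorff convergence translates into uniform convergence on compact subsets, and equicontinuity (Arzel\`a--Ascoli) extends this to uniform convergence of the continuous boundary values on $\S^{p-1}$, which describe $\partial_\infty^+M_n$ and $\partial_\infty^+M_\infty$ respectively. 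Since $\Lambda\in\Bd_+$ is admissible, this contradicts the non-admissibility of $\partial_\infty^+M_\infty$.

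Properness of $\partial_\infty:\M\to\Bd$ then follows by a covering argument: for $K\subset\Bd$ compact, its preimage $\widetilde K\subset\Bd_+$ under the two-to-one projection is compact, so $(\partial_\infty^+)^{-1}(\widetilde K)$ is compact in $\M_+$, and its image under the double cover $\M_+\to\M$ is precisely $(\partial_\infty)^{-1}(K)$. The main obstacle of the argument really lies in Theorem \ref{thm:degenerate}, which is proved in the preceding sections; once that compactness dichotomy is granted, the present corollary reduces to the contradiction sketched above, whose only delicate point is the extension of uniform convergence of $1$-Lipschitz graphs to their boundary values, itself a standard equicontinuity argument.
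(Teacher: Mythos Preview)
Your argument is correct and follows essentially the same route as the paper's proof: both invoke Theorem \ref{thm:degenerate} and rule out case (2) by observing that the Hausdorff limit $M_\infty$ would have non-admissible asymptotic boundary, contradicting $\Lambda\in\Bd_+$. The only differences are cosmetic: the paper cites the continuity of $\partial_\infty^+$ on $\overline{\mathcal E}_+$ (established just after its definition) rather than re-deriving it via Arzel\`a--Ascoli, invokes Lemma \ref{lemma:AdmissableBoundaryCriteria} directly (its implication $(2)\Rightarrow(1)$) instead of arguing via antipodal endpoints, and handles the passage to $\M\to\Bd$ with the phrase ``up to taking lifts'' where you spell out the covering argument.
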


\subsection{Compactness I - smooth limits}

Let $\hat{\mathcal{M}}$ denote the space of all pairs of the form $(x,M)$, where $M$ is in $\M$ and $x$ is a point of $M$. We equip this space with the topology that it inherits as a subset of $\H\times\M$. We define the continuous maps $\tau:\hat{\mathcal{M}}\rightarrow\GH$ by
\begin{equation}
\tau(x,M) := \T_xM\ .
\end{equation}
We define the space $\hat{\mathcal{M}}_+$ and the map $\tau_+:\hat{\mathcal{M}}_+\rightarrow\GHH$ in a similar manner.

\begin{theorem}\label{theorem:PropernessOfTau}
The maps $\tau:\hat{\mathcal{M}}\rightarrow\GH$ and $\tau_+:\hat{\mathcal{M}}_+\rightarrow\GHH$ are proper.
\end{theorem}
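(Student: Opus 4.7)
Since the natural projections $\H_+\to\H$ and $\GHH\to\GH$ are isometric double covers that intertwine $\tau_+$ with $\tau$, properness of $\tau_+$ implies properness of $\tau$, so I will focus on $\tau_+$. Let $\seqn{(x_n,M_n)}$ be a sequence in $\hat{\mathcal{M}}_+$ whose image $\seqn{(x_n,\T_{x_n}M_n)}$ converges in $\GHH$ to some $(x_\infty,P_\infty)$. In particular, $x_n\to x_\infty$ in $\H_+$ and $P_\infty$ is a positive-definite $p$-plane in $\T_{x_\infty}\H_+$. I must extract a subsequence for which $(x_n,M_n)$ converges in $\hat{\mathcal{M}}_+$.

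I first invoke Theorem \ref{theorem:DegenerationOfMaximalSubmanifolds} on the sequence $\seqn{M_n}$. After extraction, either (a) $M_n\to M_\infty$ in $C^\infty_\oploc$ for some $M_\infty\in\M_+$, or (b) $M_n\to M_\infty$ in the Hausdorff topology, with $M_\infty$ a Lipschitz $p$-submanifold foliated by complete lightlike geodesics with common endpoint at infinity. In case (a), smooth convergence combined with $x_n\to x_\infty$ gives $x_\infty\in M_\infty$ and $\T_{x_\infty}M_\infty=P_\infty$, so $(x_n,M_n)\to(x_\infty,M_\infty)$ in $\hat{\mathcal{M}}_+$, as required.

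The main task, and the principal obstacle, is to rule out case (b). Suppose we are in case (b). Hausdorff convergence and $x_n\to x_\infty$ imply $x_\infty\in M_\infty$. I pass to a Fermi chart about $x_\infty$, in which each $M_n$ is the graph of a smooth function $\varphi_n:\Upp^p\to\S^q$ with $\|d\varphi_n\|<1$ and $M_\infty$ is the graph of a $1$-Lipschitz $\varphi_\infty$. Hausdorff convergence $M_n\to M_\infty$ amounts to locally uniform convergence $\varphi_n\to\varphi_\infty$. Ishihara's estimate $\|\II_{M_n}\|\leq pq$ (Theorem \ref{theorem:Ishihara}), together with the warped expression of the pseudo-hyperbolic metric given by Lemma \ref{lemma:warped parameterization}, yields uniform $C^{1,1}$ bounds on $\varphi_n$ over any compact set in the interior of the chart, so up to a further extraction $\varphi_n\to\varphi_\infty$ in $C^1_\oploc$.

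Now let $u_n$ denote the Fermi projection of $x_n$, so that $u_n\to u_\infty$. The convergence $\T_{x_n}M_n\to P_\infty$ in $\GHH$ to the strictly spacelike plane $P_\infty$ translates, in the Fermi chart, to the statement that $\|d\varphi_n(u_n)\|$ stays bounded away from $1$; passing to the $C^1$ limit, $\|d\varphi_\infty(u_\infty)\|<1$. On the other hand, by Lemma \ref{lemma:AdmissableBoundaryCriteria}, the assumption that $M_\infty$ is foliated by complete lightlike geodesics with common endpoint at infinity forces $\varphi_\infty$ to be of the special form \eqref{eq:entire graphs lightlike foliated}, which has $\|d\varphi_\infty\|\equiv 1$ on $\Upp^p$. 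This contradicts $\|d\varphi_\infty(u_\infty)\|<1$, so case (b) is impossible. This completes the argument.
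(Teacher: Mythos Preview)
Your argument is circular. You invoke Theorem~\ref{theorem:DegenerationOfMaximalSubmanifolds} (the dichotomy between smooth and degenerate limits) to prove Theorem~\ref{theorem:PropernessOfTau}, but in the paper the logical order is the reverse: the proof of Theorem~\ref{theorem:DegenerationOfMaximalSubmanifolds} \emph{uses} Theorem~\ref{theorem:PropernessOfTau} in two essential places. First, to deduce that if the sequence does not subconverge smoothly then the tangent planes must diverge at every point; second, after renormalising by the diverging sequence $g_n^{-1}$, to extract a smoothly convergent subsequence $M_n'\to M_\infty'$. So Theorem~\ref{theorem:PropernessOfTau} is the foundational compactness result of Section~\ref{sec:proper}, and the dichotomy is built on top of it, not the other way round.

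The paper's proof is direct and self-contained. One normalises by isometries so that $x_n=x_0$ and $\T_{x_n}M_n=\T_{x_0}H$ for all $n$; Ishihara's bound together with Lemma~\ref{cor:loc unif spacelike} gives that the family is locally uniformly spacelike, and then (again via isometries and Ishihara) one obtains uniform second-derivative bounds on the graph functions $f_n$. Arzel\`a--Ascoli then yields $C^{1,\alpha}_\oploc$ subconvergence to a spacelike graph, and elliptic regularity (Lemma~\ref{lemma:EllipticRegularity}) upgrades this to $C^\infty_\oploc$ convergence to a complete maximal limit.

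A secondary remark: even setting aside the circularity, your claim that Ishihara's estimate plus Lemma~\ref{lemma:warped parameterization} ``yields uniform $C^{1,1}$ bounds on $\varphi_n$'' skips a step. A bound on $\|\opII\|$ does not by itself control the Hessian of $\varphi_n$ unless you already know $\|d\varphi_n\|$ is locally bounded away from $1$; this is exactly what Lemma~\ref{cor:loc unif spacelike} supplies (its hypotheses are met since $(x_n,\T_{x_n}M_n)$ converges in $\GHH$). Once you insert that lemma, your case-(b) contradiction argument is fine --- but it is then essentially reproducing the core of the paper's direct proof, making the detour through the dichotomy superfluous as well as circular.
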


In the context of this section, elliptic regularity is expressed as follows.

\begin{lemma}\label{lemma:EllipticRegularity}
Let $\seqn{M_n}$ be a sequence of maximal entire graphs in $\H_+$. If this sequence converges in the $C^{1,\alpha}_\oploc$ sense to a spacelike entire graph $M_\infty$, say, then $M_\infty$ is also maximal, and $\seqn{M_n}$ converges to $M_\infty$ in the $C^\infty_\oploc$ sense.
\end{lemma}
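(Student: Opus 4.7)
The plan is to reduce the statement to a standard application of Schauder theory for quasilinear elliptic systems in a Fermi chart. Fix any marked hyperbolic $p$-space $(x_0,H)$ with its associated Fermi parametrisation $\Psi:\Upp^p\times\S^q\to\H_+$, and write each $M_n$ as the graph of a smooth function $\varphi_n:\Upp^p\to\S^q$ with $\|d\varphi_n\|<1$, via Lemma~\ref{lemma:EntireGraphsAreGraphs}. Likewise $M_\infty=\mathrm{Graph}(\varphi_\infty)$ for some $\varphi_\infty$. The $C^{1,\alpha}_\oploc$ convergence $M_n\to M_\infty$ translates into $\varphi_n\to\varphi_\infty$ in $C^{1,\alpha}_\oploc(\Upp^p,\S^q)$.

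In the chart above, the condition $\tr_{g_\T}\II=0$ becomes a quasilinear second-order system
\begin{equation*}
a^{ij}(u,\varphi,d\varphi)\,\partial_i\partial_j\varphi + b(u,\varphi,d\varphi)=0~,
\end{equation*}
where $a^{ij}$ and $b$ are smooth functions of their arguments on the open set $\{\|d\varphi\|<1\}$, and the principal symbol $a^{ij}$ is (up to a conformal factor) the inverse of the induced metric on the graph. Since $\varphi_\infty$ defines a \emph{spacelike} entire graph, on any compact $K\Subset\Upp^p$ we have $\sup_K\|d\varphi_\infty\|\leq 1-\delta$ for some $\delta>0$; by $C^1_\oploc$ convergence, the same estimate holds uniformly for $\varphi_n$ on a slightly smaller compact set, for all $n$ sufficiently large. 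Consequently the operator is \emph{uniformly elliptic} on compact sets, with $C^{0,\alpha}$ coefficients uniformly bounded in $n$.

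Standard interior Schauder estimates for quasilinear elliptic systems (applied componentwise in a local trivialisation of $\T\S^q$) then yield, for every $K'\Subset K\Subset\Upp^p$ and every integer $k\geq 2$,
\begin{equation*}
\|\varphi_n\|_{C^{k,\alpha}(K')}\leq C_k\big(\|\varphi_n\|_{C^{1,\alpha}(K)}\big)~,
\end{equation*}
the estimate at level $k$ being obtained from the estimate at level $k-1$ by differentiating the equation and feeding back the previously established control on lower-order derivatives. By Arzel\`a--Ascoli, a subsequence of $\seqn{\varphi_n}$ converges in $C^{k,\alpha}_\oploc$ for every $k$; by uniqueness of the limit, the full sequence $\varphi_n\to\varphi_\infty$ in $C^\infty_\oploc$. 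Passing to the limit in the PDE, which is legitimate since the coefficients $a^{ij}, b$ are continuous on $\{\|d\varphi\|<1\}$, shows that $\varphi_\infty$ satisfies the maximal equation, so that $M_\infty$ is maximal.

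The only real subtlety is ensuring uniform ellipticity along the sequence, which is guaranteed by the assumption that the limit $M_\infty$ is spacelike together with $C^1_\oploc$ convergence; everything else is routine bootstrapping.
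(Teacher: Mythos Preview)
Your proposal is correct and follows essentially the same approach as the paper: both reduce to a quasilinear elliptic system in a Fermi chart, obtain uniform ellipticity on compacta from the spacelike hypothesis on $M_\infty$ together with $C^1_\oploc$ convergence, and then bootstrap via interior Schauder estimates. The only cosmetic difference is that the paper first composes with stereographic projection so as to work with maps into $\Ball^q\subset\R^q$ (avoiding the need to trivialise $\T\S^q$), and makes the inductive bootstrapping step explicit by differentiating the equation and applying Schauder on a nested sequence of shrinking balls.
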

\begin{proof} Let $x_0$ be a point of $M_\infty$. Choose Fermi coordinates associated to the marked hyperbolic $p$-space $(x_0,H)$ where $\T_{x_0}M_\infty=\T_{x_0}H$. By Lemma \ref{lemma:EntireGraphsAreGraphs}, $M_\infty$ is the graph of a smooth $1$-Lipschitz function $f_\infty:\Upp^p\to\S^q$. Using stereographic projection, we identify $\Upp^p$ with $\Ball^p$ and the complement of a point in $\S^q$ with $\mathbf{R}^q$, in such a manner that $x_0$ identifies with $(0,0)$. Since $f_\infty$ is $1$-Lipschitz, its image is contained in a hemisphere (see, for example, \cite[Lemma 2.8]{heinonen}), and we thus view $f_\infty$ as a function from $\Ball^p$ to $\Ball^q$. For large $n$, $M_n$ thus also identifies with the graph of a smooth function $f_n:\Ball^p\to\Ball^q$.

Recall now that maximality is a quasi-linear property. That is, for all $n$, and for all $x\in B^p(0,\epsilon)$,
\begin{equation}\label{equation:QLEquation}
\sum_{i,j} a_n^{ij}(x)(\partial_i\partial_j f_n)(x) = b_n(x)\ ,
\end{equation}
where
\begin{equation}\label{equation:FormulaForAAndB}
\eqalign{
a_n^{ij}(x) &:= A(J^1f_n(x),J^1g(J^0f_n))\ ,\ \text{and}\cr
b_n(x) &:= B(J^1f_n(x))\ ,\cr}
\end{equation}
for some smooth functions $A$ and $B$, where $g$ here denotes the pseudo-hyperbolic metric over $\Ball^p\times\Ball^q$ and, for all $k$, $J^k$ denotes the $k$-jet operator. Differentiating \eqref{equation:QLEquation} yields, for all $k$, for all $n$, and for all $x$,
\begin{equation}\label{equation:QLEquationII}
\sum_{i,j} a_n^{ij}(x)(\partial_i\partial_j D^kf_n)(x) = b_{n,k}(x)\ ,
\end{equation}
where
\begin{equation}\label{equation:FormulaForBII}
b_{n,k}(x) :=  B_k(J^{k+1}f_n(x),J^{k+1}g(J^0f_n))\ ,
\end{equation}
for some smooth function $B_k$.

We now prove by induction that, for all $k$, there exists $C_k$ such that, for all $n$,
\begin{equation*}
\|f_n\|_{C^{k,\alpha}(B(0,{\epsilon/2^{k-1}}))} \leq C_k\ .
\end{equation*}
Indeed, by hypothesis, the result holds for $k=1$. Suppose now that the result holds for $k=l$. Since $M_\infty$ is spacelike, and since $\seqm{f_n}$ converges in the $C^1_\oploc$ sense to $f_\infty$, there exists $B_1>0$ such that, for all $n$, over $B(0,\epsilon)$,
\begin{equation*}
\sum_{i,j} a_n^{ij} \geq \frac{1}{B_1}\delta^{ij}\ .
\end{equation*}
By \eqref{equation:FormulaForAAndB}, \eqref{equation:FormulaForBII} and the inductive hypothesis, there exists $B_2>0$ such that, for all $n,i,j$ we have
\begin{equation*}
\|a^{ij}_n\|_{C^{0,\alpha}(B(0,{\epsilon/2^{l-1}}))},\|b_{n,l-1}\|_{C^{0,\alpha}(B(0,{\epsilon/2^{l-1}}))}\leq B_2\ .
\end{equation*}
Upon applying the Schauder estimates to \eqref{equation:QLEquationII} (see Chapter $6$ of \cite{GilbTrud}), we see that there exists $B_3>0$ such that, for all $n$,
\begin{equation*}
\|f_n\|_{C^{l+1,\alpha}(B(0,{\epsilon/2^l}))} \leq B_3\ .
\end{equation*}
Thus, for all $k$, and for all $\beta<\alpha$, $\seqn{f_n}$ converges to $f_\infty$ in the $C^{k,\beta}$ sense over $B_{\epsilon/2^{k-1}}(0)$. Since $x_0$ is arbitrary, it follows that $\seqn{M_n}$ converges to $M_\infty$ in the $C^\infty_\oploc$ sense, and this completes the proof.
\end{proof}

\begin{proof}[Proof of Theorem \ref{theorem:PropernessOfTau}]
Up to taking lifts, it suffices to prove the statement for $\tau_+$. Let $\seqn{(x_n,M_n)}$ be a sequence of elements of $\hat{\mathcal{M}}_+$ such that $\seqn{\T_{x_n}M_n}$ is bounded. We work in the Fermi chart corresponding to some marked hyperbolic $p$-space $(x_0,H)$. Upon applying a (bounded) sequence of isometries of $\H_+$, we may suppose that, for all $n$, $x_n=x_0$ and $\T_{x_n}M_n=\T_{x_0}H$. For all $n$, let $f_n:\Upp^p\rightarrow\S^q$ denote the function whose graph is $M_n$. Since, for all $n$, $f_n$ is $1$-Lipschitz, upon extracting a subsequence if necessary, we may suppose that $\seqn{f_n}$ converges in the $C^{0,\alpha}$ sense for all $\alpha$ to the $1$-Lipschitz function $f_\infty$.

By Theorem \ref{theorem:Ishihara}, the second fundamental forms of the graphs of these functions are bounded above by $pq$. By Lemma \ref{cor:loc unif spacelike}, the sequence $\seqn{f_n}$ is locally uniformly spacelike. We claim that uniform bounds also hold for the second derivatives of these functions. Indeed, choose $R>0$, and let $x$ be a point of $B(0,R)$ in $\textbf{H}^p$. For all $n$, let $\alpha_{n,x}:\H_+\rightarrow\H_+$ be an isometry that sends $x_n:=(x,f_n(x))$ to $x_0$ and $\T_{x_n}M_n$ to $\T_{x_0}H$, and let $f_{n,x}'$ denote the function whose graph is $\alpha_{n,x}M_n$. For all $n$, since $D^2f_{n,x}'(0)$ is the second fundamental form of $\alpha_{n,x}M_n$ at $(0,0)$, by Theorem \ref{theorem:Ishihara}, its norm is uniformly bounded above by $pq$. Thus, since, for all $x$ and for all $n$, $D^2f_n(x)$ depends only on $D^2f_{n,x}'(0)$ and $\alpha_{n,x}$, and since the family $(\alpha_{n,x})_{n\in\mathbf{N},x\in B_R(0)}$ is precompact, $\|D^2f_n(x)\|$ is uniformly bounded over $B(0,R)$, as asserted.

By the Arzelà-Ascoli Theorem, $\seqn{f_n}$ converges to $f_\infty$ in the $C^{1,\alpha}_\oploc$ sense, for all $\alpha$, and $f_\infty$ is a $C^{1,\alpha}$ function with spacelike graph. It then follows by Lemma \ref{lemma:EllipticRegularity} that $f_\infty$ is smooth and maximal, and that $\seqn{f_n}$ converges towards this function in the $C^\infty_\oploc$ sense. Finally, since the second fundamental form of $M_n$ has norm bounded above by $pq$, the same holds for the limit $M_\infty$. It follows by Lemma \ref{lemma:EntireGraph} that $M_\infty$ is complete, and this completes the proof.
\end{proof}

\subsection{A dynamical interlude}

In order to complete the proof of Theorem \ref{thm:degenerate}, it is necessary to understand the structure of diverging sequences in $\mathsf{O}(p,q)$. This theory is well-known, and we recall the main ideas for the reader's convenience. Let $(F,B)$ be a $(p+q)$-dimensional vector space equipped with a signature $(p,q)$ bilinear form $B$, and let $\textsf{O}(p,q)$ denote the group of linear endomorphisms of $F$ preserving $B$. We prove the following lemma, that we will apply in Section \ref{sec:diverging sequences degenerate} in the situation where $F=\T_{x_0}\H_+$ for some $x_0\in\H_+$, and $B$ is the pseudo-riemannian metric of $\H_+$ on $\T_{x_0}\H_+$.

\begin{lemma}\label{lemma:DynamicalLemma}
Let $\seqn{g_n}$ be an unbounded sequence in $\mathsf{O}(p,q)$. After extracting a subsequence if necessary, there exists a sequence $\seqn{\mu_n}$ of positive numbers converging to $+\infty$ such that $\seqn{\exp(-\mu_n) g_n}$ converges to some linear map $\varphi\in\mathrm{End}(F)$ with degenerate kernel and totally isotropic image.
\end{lemma}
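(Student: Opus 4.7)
My approach would be to apply the KAK (Cartan) decomposition of $\mathsf{O}(p,q)$ in order to reduce the problem to a diagonal computation in a well-chosen null basis. Let $r=\min(p,q)$. The plan is first to choose a basis $(u_1,\ldots,u_r,v_1,\ldots,v_r,e_1,\ldots,e_{|p-q|})$ of $F$ such that the $u_i,v_i$ are isotropic with $B(u_i,v_j)=\delta_{ij}$ and $B(u_i,u_j)=B(v_i,v_j)=0$, and the $e_k$ span a definite subspace orthogonal to all of the null vectors. A maximal $\mathbf R$-split torus $A\subset\mathsf{O}(p,q)$ then acts diagonally in this basis, with $a_t\cdot u_i=e^{t_i}u_i$, $a_t\cdot v_i=e^{-t_i}v_i$, $a_t\cdot e_k=e_k$. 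Writing $g_n=k_na_{t^{(n)}}l_n$ with $k_n,l_n$ in the maximal compact $\mathsf O(p)\times\mathsf O(q)$, the unboundedness of $\seqn{g_n}$ is equivalent to the unboundedness of $\mu_n:=\max_i|t_i^{(n)}|$.

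The second step is to extract subsequences so that $k_n\to k_\infty$ and $l_n\to l_\infty$ (by compactness), $\mu_n\to+\infty$, and so that each of the renormalized exponents $t_i^{(n)}-\mu_n\to\alpha_i\in[-\infty,0]$ and $-t_i^{(n)}-\mu_n\to\beta_i\in[-\infty,0]$ converges in $[-\infty,0]$. Setting $\lambda_i:=e^{\alpha_i}$ and $\nu_i:=e^{\beta_i}$, with the convention $e^{-\infty}=0$, we immediately have $e^{-\mu_n}a_{t^{(n)}}\to a_\infty$, where $a_\infty u_i=\lambda_iu_i$, $a_\infty v_i=\nu_iv_i$, and $a_\infty e_k=0$. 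Hence $e^{-\mu_n}g_n\to\varphi:=k_\infty a_\infty l_\infty$.

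The key observation is then the product identity $\lambda_i\nu_i=\lim e^{-2\mu_n}=0$, which forces \emph{at least one} of $u_i,v_i$ to lie in $\ker a_\infty$ for every $i$; moreover, by definition of $\mu_n$, along our subsequence some $|t_i^{(n)}|/\mu_n\to 1$, so there exists an index $i_0$ with $\lambda_{i_0}=1$ or $\nu_{i_0}=1$. This is where the orthogonal geometry pays off: for that index, exactly one of $u_{i_0},v_{i_0}$ belongs to $\ker a_\infty$, and its dual partner does not. That vector is therefore isotropic, its $B$-pairing with every element of $\ker a_\infty$ vanishes (because its only nontrivial pairing is with its dual partner, which is absent from the kernel), and so it lies in the radical of $\ker a_\infty$, proving that $\ker a_\infty$ is a degenerate subspace. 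Since $l_\infty$ is orthogonal, $\ker\varphi=l_\infty^{-1}(\ker a_\infty)$ is degenerate as well.

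Finally, the image of $a_\infty$ is spanned by the disjoint union of $\{u_i:\lambda_i\neq 0\}$ and $\{v_i:\nu_i\neq 0\}$; the disjointness comes from $\lambda_i\nu_i=0$. All of these vectors are null, and any two of them have $B$-pairing zero since either both are $u$-type, both are $v$-type, or they are a $u_i,v_j$ pair with distinct indices. Thus $\opIm(a_\infty)$ is totally isotropic, and the same holds for $\opIm(\varphi)=k_\infty(\opIm(a_\infty))$ because $k_\infty$ is orthogonal. I expect the main subtlety to be checking that $\ker a_\infty$ is genuinely degenerate rather than merely containing null vectors, which is handled precisely by the asymmetry produced by the extremal index $i_0$ where the maximum $\mu_n$ is attained.
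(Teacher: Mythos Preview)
Your argument is correct and follows essentially the same route as the paper: Cartan (KAK) decomposition, compactness of the $K$-factors, and a diagonal analysis of the rescaled torus element. The only cosmetic difference is that the paper works in the closed Weyl chamber $\lambda_1\geq\cdots\geq\lambda_{m_0}\geq 0$, so all the $t_i^{(n)}$ are already non-negative and ordered; this makes the limiting diagonal $h_\infty=\mathrm{diag}(1,\alpha_2,\ldots,\alpha_{m_0},0,\ldots,0)$, with image $E_1\oplus\cdots\oplus E_{j-1}$ and kernel the remaining coordinates, and the degeneracy of the kernel is witnessed automatically by $E_1^\vee$. Your version avoids the Weyl chamber and instead uses the pleasant identity $\lambda_i\nu_i=\lim e^{-2\mu_n}=0$ to guarantee that for each $i$ at most one of $u_i,v_i$ survives in the image, which is an equally clean way to see total isotropy; the extremal index $i_0$ plays the role that $E_1$ plays in the paper. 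One small point of phrasing: ``some $|t_i^{(n)}|/\mu_n\to 1$'' is not quite what you use---what you actually need (and what a further subsequence gives) is a fixed index $i_0$ with $t_{i_0}^{(n)}=\pm\mu_n$ for all $n$, so that $\alpha_{i_0}=0$ or $\beta_{i_0}=0$ exactly.
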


We first establish some notation. Let $F=U\oplus V$ be a $B$-orthogonal splitting such that $U$ has signature $(p,0)$, and denote
\begin{equation}
\theta:=\begin{pmatrix} \Id_U & 0 \\ 0 & -\Id_V \end{pmatrix}~,
\end{equation}
so that $B_\theta(x,y):=B(\theta(x),y)$ defines a positive-definite scalar product on $F$. The set of fixed points of the action of $\theta$ by conjugation defines a maximal compact subgroup $\K$ of $\mathsf{O}(p,q)$ which is isomorphic to $\mathsf{O}(p)\times \mathsf{O}(q)$.

Let $(u_1,\cdots ,u_p)$ and $(v_1,\cdots ,v_q)$ be respectively orthonormal bases of $U$ and $V$ with respect to $B_\theta$. Let $m_0:=\min\{p,q\}$. For each $i \in \{1,\cdots,m_0\}$ denote
\begin{equation}
E_i = \span\{u_i+v_i\} ~\ ,\ ~E_i^\vee = \theta(E_i)~\ , \ ~ W= \bigg(\bigoplus_{i=1}^{m_0}  E_i\oplus E_i^\vee \bigg)^\bot~.
\end{equation}
This yields the decomposition
\begin{equation}\label{eq:decomposition F}
F=E_1\oplus \cdots  \oplus E_{m_0} \oplus E_{m_0}^\vee \oplus\cdots  \oplus E_1^\vee \oplus W~.
\end{equation}
Note that
\begin{equation}
W=\begin{cases}
\span\{v_{p+1},\ldots,v_q\}\ \text{if}\ p<q\ ,\\
\span\{u_{q+1},\ldots,u_p\}\ \text{if}\ p>q\ ,\ \text{and}\\
\{0\}\ \text{otherwise}.
\end{cases}
\end{equation}
In particular, the restriction of $B$ to $W$ is definite: negative-definite if $p<q$, and positive-definite if $p>q$.

For $\lambda=(\lambda_1,\ldots,\lambda_{m_0}) \in \mathbf R^{{m_0}}$, let $a(\lambda)$ denote the diagonal matrix which, with respect to the decomposition \eqref{eq:decomposition F}, has coefficients $(\lambda_1,\cdots ,\lambda_{m_0},-\lambda_{m_0},\cdots ,-\lambda_1,0_W)$ . The \emph{Cartan subspace} associated to the above bases is the space of matrices of the form $a(\lambda)$, and its \emph{closed Weyl chamber} is
\begin{equation}
\overline{\frak{a}}^+ := \big\{a(\lambda)~\vert~ \lambda_1\geq \lambda_2\geq \cdots \geq \lambda_{m_0} \geq 0\big\}~.
\end{equation}
We are now ready to state the Cartan decomposition theorem (see \cite{Knapp}).

\begin{theorem}
For any $g\in\mathsf{O}(p,q)$, there exist $k,k'\in K$ and a unique $a(\lambda)\in \overline{\frak{a}}^+$ such that
\begin{equation}
g=k\exp(a(\lambda))k'.
\end{equation}
\end{theorem}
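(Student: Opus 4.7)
The plan is to follow the standard three-step strategy for the $KAK$ (Cartan) decomposition of a real semisimple Lie group, specialized to the matrix group $\mathsf{O}(p,q)$. The three ingredients are: (i) a polar (Cartan-of-the-first-kind) decomposition $G = K \cdot \exp(\mathfrak{p})$, where $\mathfrak{p}$ is the $(-1)$-eigenspace of $d\Theta$ with $\Theta(g) := \theta g \theta^{-1}$; (ii) a diagonalization statement $\mathfrak{p} = \mathrm{Ad}(K) \cdot \overline{\mathfrak{a}}^+$; and (iii) combining (i) and (ii) together with a Weyl-group uniqueness argument.

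For step (i), I would view $G = \mathsf{O}(p,q)$ as sitting inside $GL(F)$. The bilinear form $B_\theta$ makes $F$ into a Euclidean space, and let $*\theta$ denote the adjoint with respect to $B_\theta$. A direct computation shows that the condition $g \in G$ is equivalent to $g^{*\theta} = \theta\, g^{-1}\, \theta^{-1}$. Given $g \in G$, set $s := (g^{*\theta} g)^{1/2}$, the unique positive-definite symmetric square root, and $k := g s^{-1}$. Using the identity above, one checks that $k$ preserves both $B$ and $B_\theta$, hence lies in $K = \mathsf{O}(p)\times\mathsf{O}(q)$, while $\log(s) \in \mathfrak{p}$ because $s^2 = g^{*\theta}g$ lies in the image of $\exp$ restricted to the $B_\theta$-symmetric elements of $\mathfrak{g}$. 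This yields the polar factorization $g = k \exp(X)$ with $k \in K$ and $X \in \mathfrak{p}$.

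For step (ii), I would identify $\mathfrak{p}$ explicitly. An element $X \in \mathfrak{g}$ anti-commuting with $\theta$ has the form
\begin{equation*}
X = \begin{pmatrix} 0 & A \\ A^T & 0 \end{pmatrix}
\end{equation*}
with $A$ a $p \times q$ matrix, in any orthonormal basis compatible with $F = U \oplus V$. Applying the singular value decomposition to $A$, we find $k_1 \in \mathsf{O}(p)$, $k_2 \in \mathsf{O}(q)$ and a generalized diagonal matrix $\Sigma$ with non-negative entries $\lambda_1 \geq \cdots \geq \lambda_{m_0} \geq 0$ such that $A = k_1 \Sigma k_2^T$. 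The block-diagonal element $k := \mathrm{diag}(k_1,k_2) \in K$ then satisfies $\mathrm{Ad}(k^{-1}) X = a(\lambda) \in \overline{\mathfrak{a}}^+$, after collecting the terms according to the basis vectors $u_i \pm v_i$ which diagonalize the action. Thus every element of $\mathfrak{p}$ is $\mathrm{Ad}(K)$-conjugate to a unique element of $\overline{\mathfrak{a}}^+$, where uniqueness follows because the $\lambda_i$ are the singular values of $A$ and hence intrinsic invariants of $X$.

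Combining steps (i) and (ii) gives the existence: write $g = k_0 \exp(X)$, then $X = \mathrm{Ad}(k_1) a(\lambda)$, and hence $g = (k_0 k_1) \exp(a(\lambda)) k_1^{-1}$. For uniqueness of $a(\lambda)$, note that if $g = k \exp(a(\lambda)) k'$ then $g^{*\theta} g = (k')^{-1} \exp(2 a(\lambda)) k'$, so $\exp(2a(\lambda))$ is conjugate to $g^{*\theta}g$ by an element of $K$; the eigenvalues of $g^{*\theta} g$, read off with multiplicity, therefore determine $\lambda$ up to the action of the Weyl group permuting and sign-changing the coordinates. The chamber condition $\lambda_1 \geq \cdots \geq \lambda_{m_0} \geq 0$ cuts out a fundamental domain, pinning down $a(\lambda)$ uniquely. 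The main delicate point is step (ii), and specifically the verification that the $K$-orbit of $\mathfrak{a}$ fills all of $\mathfrak{p}$ with the correct Weyl group as stabilizer of generic orbits: this is where the precise choice of basis $(u_i + v_i, u_i - v_i, \text{complement})$ and the corresponding Cartan subspace becomes essential, and everything else reduces to bookkeeping around the singular value decomposition.
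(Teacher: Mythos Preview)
Your proof sketch is correct and follows the standard approach to the Cartan decomposition via the polar decomposition and the singular value decomposition. Note, however, that the paper does not actually prove this theorem: it simply states it and refers the reader to Knapp's textbook, so there is no ``paper's own proof'' to compare against. Your argument is essentially the one found in such standard references, so there is no discrepancy in approach to discuss.
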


\noindent This now allows us to prove Lemma \ref{lemma:DynamicalLemma}.

\begin{proof}[Proof of Lemma \ref{lemma:DynamicalLemma}]
Let $g_n=:k_n \exp(a(\lambda_n)) k'_n$ be a Cartan decomposition of $g_n$, where here $\lambda_n:=(\lambda_{1,n},\cdots ,\lambda_{{m_0},n})$. Define $\mu_n:=\lambda_{1,n}$. Since $\seqn{g_n}$ is unbounded, upon extracting a subsequence if necessary, we may suppose that that $\seqn{\mu_n}$ tends to infinity. We may likewise suppose that, for any $i$, $\underset{n\to \infty}{ \lim} \exp(\lambda_{i,n}-\mu_n) = \alpha_i \in [0,1]$ and that the sequences $\seqn{k_n}$ and $\seqn{k'_n}$ converge to $k_\infty$ and $k'_\infty$ respectively. Since $1=\alpha_1\geq \cdots \geq \alpha_{m_0}\geq 0$,
\begin{equation*}
\underset{n\to \infty}{\lim}\left(\exp(-\mu_n) g_n\right) = k_\infty h_\infty k'_\infty~,
\end{equation*}
where $h_\infty$ is the diagonal matrix with coefficients $(1,\alpha_2,\cdots ,\alpha_{{m_0}},0,\cdots ,0)$. The result follows from the fact that the kernel of $h_\infty$ has the form $E_j\oplus\cdots \oplus E_{m_0}\oplus E_{m_0}^\vee\oplus\cdots \oplus E_1^\vee \oplus W$ and that its image is $E_1\oplus\cdots \oplus E_{j-1}$ where $j>1$ is the least integer for which $\alpha_j=0$.
\end{proof}

\subsection{Compactness II - degenerate limits}\label{sec:diverging sequences degenerate}

Theorem \ref{thm:degenerate} will now follow upon describing the behaviour of diverging sequences in $\mathcal{M}$.

\begin{proof}[Proof of Theorem \ref{thm:degenerate}]
We work in the double cover $\H_+$. The result for $\H$ then follows upon taking lifts. Given a sequence $\seqn{M_n}$ in $\M_+$, we will show that, up to extracting subsequences, either it converges in the $C^\infty_\oploc$ sense to an element of $\M_+$, or it converges to an entire graph of the form described in Lemma \ref{lemma:AdmissableBoundaryCriteria}.

We work in the Fermi chart of some marked hyperbolic $p$-space $(x,H)$. Let $\Gamma_{(x,H)}$ be as in equation (\ref{eq:defi Gamma}). For all $n$, denote $\varphi_n:=\Gamma_{(x,H)}^{-1}(M_n)$, and note that $\varphi_n$ is smooth for all $n$. Suppose that $\seqn{\varphi_n}$ does not subconverge in the $C^\infty_\oploc$ sense to a smooth function $f_\infty:\B^p\rightarrow\S^q$ whose graph is maximal. By Theorem \ref{theorem:PropernessOfTau}, for every $y\in H$, the sequence $\seqn{\T_{(y,\varphi_n(y))}M_n}$ is unbounded in $\GHH$. Since $\overline{\mathcal{E}}$ is compact, we may suppose that $\seqn{M_n}$ converges in the Hausdorff sense to some entire graph $M_\infty$, say.

We now show that $M_\infty$ contains a complete lightlike geodesic through every point. We continue to work in the above Fermi chart. Choose $y\in H$ and, for all $n$, denote $\hat{y}_n:=(y,\varphi_n(y))$. Note first that, since the fibre over $y$ in $\H_+$ is compact, upon applying a bounded sequence of isometries of $\H_+$, we may suppose that, for all $n$, $(y,\varphi_n(y))=x_0$. For all $n$, let $g_n\in\Stab_{\mathsf{O}(p,q+1)}(x_0)$ be such that $g_n\T_{x_0}H=\T_{x_0}M_n$. Since $\seqn{\T_{x_0}M_n}$ diverges, so too does $\seqn{g_n}$. By Lemma \ref{lemma:DynamicalLemma}, we may suppose that there exists a sequence $\seqn{\mu_n}$ of positive real numbers converging to $+\infty$ such that $\underset{n\to\infty}{\lim}(e^{-\mu_n}g_n)=g_\infty$ for some linear map $g_\infty$ with degenerate kernel and isotropic image.

For all $n$, denote $M_n':=g_n^{-1}M_n$. By Theorem \ref{theorem:PropernessOfTau}, $\seqm{M'_n}$ subconverges to a complete maximal $p$-dimensional submanifold $M'_\infty$, say. For all $n\in\mathbf{N}\cup\left\{\infty\right\}$, denote $S_n:=\exp_{x}^{-1}(M'_n)$. Then $\seqn{S_n}$ is a sequence of submanifolds of $\T_{x_0}\H_+$, passing through the origin, and tangent to $\T_{x_0} H$ at this point. Furthermore, this sequence converges to $S_\infty$ in the $C^\infty_\oploc$ sense.

For all $n\in\mathbf{N}\cup\left\{\infty\right\}$, let $\delta_n:(-\epsilon,\epsilon)\rightarrow S_n$ be a smoothly immersed curve such that $\delta_n(0)=0$. Suppose furthermore that the sequence $\seqn{\delta_n}$ converges in the $C^\infty_\oploc$ sense to $\delta_\infty$, and that the derivative at $t=0$ of  ${\delta}_\infty$ does not vanish. Since $\Ker(g_\infty)$ is degenerate, it has trivial intersection with $\T_x H$, and so, upon reducing $\epsilon$ if necessary, we may suppose that $(g_\infty\circ\delta_\infty)$ is a smoothly immersed curve in some isotropic subspace of $\T_{x_0}\H_+$.

For all $n$, we now denote
\begin{equation*}
\gamma_n(t):=(g_n\circ\exp_{x_0}\circ\delta_n)(e^{-\mu_n}t)=(\exp_{x_0}\circ g_n\circ \delta_n)(e^{-\mu_n}t)\ ,
\end{equation*}
where the last equality follows from the fact that $\exp_{x_0}\circ g_n = g_n\circ \exp_{x_0}$, since $g_n$ is an isometry of $\H_+$. The sequence $\seqn{\gamma_n}$ thus converges in the $C^\infty_\oploc$ sense to the complete lightlike geodesic
\begin{equation*}
\gamma_\infty(t):=\exp_{x_0}\big(t g_\infty(\overset{\bullet}{\delta}_\infty(0))\big)\ .
\end{equation*}
It follows that $M_\infty$ contains a complete lightlike geodesic passing through the point $y$, and this concludes the proof.
\end{proof}

We now prove Corollary \ref{theorem:Properness}.

\begin{proof}[Proof of Corollary \ref{theorem:Properness}]
Up to taking lifts, it suffices to prove the statement for $\partial_\infty^+:\M_+\rightarrow\Bd_+$. Suppose that $\partial_\infty^+:\M_+\rightarrow\Bd_+$ is not proper. Let $\seqn{\Lambda_n}$ be a sequence in $\Bd_+$ converging to $\Lambda_\infty\in\Bd_+$, say. For all $n$, let $M_n\in\M_+$ be such that $\partial_\infty M_n=\Lambda_n$ and suppose that the sequence $\seqn{M_n}$ does not subconverge to any element of $\M_+$. By Theorem \ref{theorem:DegenerationOfMaximalSubmanifolds}, up to extraction of a subsequence, this sequence converges to an entire graph $M_\infty$, say, containing a complete lightlike geodesic. By  Lemma \ref{lemma:AdmissableBoundaryCriteria}, $\Lambda_\infty=\partial_\infty M_\infty$ is not admissible. This is absurd, and the result follows.
\end{proof}

\section{Uniqueness}\label{sec:uniqueness}

We now prove that, given an admissible sphere $\Lambda$ in $\bH$ (or $\bH_+$), there is at most one complete maximal $p$-submanifold $M$ whose asymptotic boundary is $\Lambda$.

\begin{theorem}\label{thm:injective}
The boundary maps $\partial_\infty:\M\rightarrow\Bd$ and $\partial_\infty^+:\M_+\rightarrow\Bd_+$ are injective. In particular, a non-negative $(p-1)$-sphere in $\bH$ or $\bH_+$ is the asymptotic boundary of at most one complete maximal $p$-submanifold.
\end{theorem}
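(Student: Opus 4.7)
By Lemma \ref{lemma:LiftNonNegativeSpheres} it suffices to prove injectivity of $\partial^+_\infty:\M_+\to\Bd_+$. Fix $M_1, M_2\in\M_+$ sharing an admissible asymptotic boundary $\Lambda\in\Bd_+$, and consider the auxiliary function
\begin{equation*}
F:M_1\times M_2\longrightarrow\R,\qquad F(\hat x,\hat y):=\q(\hat x,\hat y),
\end{equation*}
where $\hat x,\hat y$ denote the representatives in the quadric $\Quad\subset\R^{p,q+1}$. Repeating the Laplacian computation of Lemma \ref{lemma:ContainedInConvexHull} in each factor (each variable enters $F$ linearly, and maximality gives $\Delta_{M_i}\varphi=p\varphi$ for any linear form $\varphi$ on $\R^{p,q+1}$), one obtains the elliptic eigenfunction equation $\Delta_{M_1\times M_2}F=2p\,F$ on the complete Riemannian product. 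By Lemma \ref{lemma:Acausal0}, if $M_1=M_2$ then $F\leq -1$ with equality exactly on the diagonal. The plan is thus to establish the bound $F\leq -1$ on $M_1\times M_2$ and to produce an interior contact point where $F=-1$.

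The core analytical step is this contact point. Working in a Fermi chart and using the formula \eqref{eq:scalar}, together with the non-negativity of $\Lambda$ (which forces $\q(\hat\lambda,\hat\mu)\leq 0$ for all $\lambda,\mu\in\Lambda$), one checks that $F\to-\infty$ along any sequence $(\hat x_n,\hat y_n)$ escaping to a pair $(\lambda,\mu)\in\Lambda\times\Lambda$ with $\q(\hat\lambda,\hat\mu)<0$, whilst admissibility controls the behaviour near the diagonal $\lambda=\mu$. Applying the Omori--Yau principle to $\Delta F=2pF$ on the complete manifold $M_1\times M_2$ (whose Ricci curvature is bounded below thanks to the uniform estimate $\|\II\|\leq pq$ from Theorem \ref{theorem:Ishihara}), together with the strong maximum principle for the operator $L=\Delta-2p$ (which has strictly negative zeroth-order coefficient), one extracts either an interior point where $F=-1$ or an asymptotic contradiction. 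Combining the sub-solution behaviour of $\q(\hat x-\hat y,\hat x-\hat y)=-2(F+1)$ with the analysis at infinity yields a genuine interior contact point $(\hat x_0,\hat y_0)$.

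At this contact point the identity $\q(\hat x_0-\hat y_0,\hat x_0-\hat y_0)=-2(F+1)=0$ forces $\hat x_0=\hat y_0$: otherwise the projective line through $\hat x_0$ and $\hat y_0$ would lie in $\opConv(\Lambda)$ by convexity and Lemma \ref{lemma:ContainedInConvexHull}, contributing the antipodal pair $\pm[\hat x_0-\hat y_0]$ to $\opConv(\Lambda)\cap\partial_\infty\H_+=\Lambda$ by Lemma \ref{lemma:DeltaConvIsDelta}, which contradicts admissibility. Interior maximality of $F$ at $(\hat x_0,\hat x_0)$ then gives, via the Gauss--Weingarten decomposition in $\Quad$, the Hessian bound
\begin{equation*}
\opHess F\bigl((v_1,v_2),(v_1,v_2)\bigr)=-\q(v_1-v_2,v_1-v_2)\leq 0\qquad\text{for all } v_i\in\T_{\hat x_0} M_i.
\end{equation*}
Since both $\T_{\hat x_0} M_i$ are spacelike $p$-planes inside the signature-$(p,q)$ space $\T_{\hat x_0}\H_+$, realising $\T_{\hat x_0} M_2$ as the graph of a linear map from $\T_{\hat x_0} M_1$ into its negative-definite orthogonal complement forces $\T_{\hat x_0} M_1=\T_{\hat x_0} M_2$. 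A Hopf-type tangency principle for maximal spacelike submanifolds, consequent on the real-analyticity of the quasi-linear elliptic system governing maximal graphs together with the one-sided contact $F\leq -1$, then yields $M_1=M_2$ on a neighbourhood of $\hat x_0$, and hence globally by connectedness.

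The main obstacle is the sharpened bound $F\leq -1$: the strong maximum principle applied to $L=\Delta-2p$ directly yields only the crude estimate $\sup F\leq 0$, and obtaining the correct bound $\sup F\leq -1$ together with attainment in the interior demands a delicate combination of the Fermi-chart asymptotic analysis with the Omori--Yau principle on the complete manifold $M_1\times M_2$, using non-negativity and admissibility of $\Lambda$ in tandem. A secondary subtlety, absent from the surface case of \cite{LTW}, is the matching of full tangent $p$-planes at the contact point; for $p\geq 3$ this is handled by the linear-algebra argument above, which has no direct analogue when stated in terms of single tangent directions.
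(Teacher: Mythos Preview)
Your proposal has a genuine gap at its central step. The entire argument hinges on establishing the bound $F\leq -1$ on $M_1\times M_2$, and you correctly flag this as ``the main obstacle'' --- but in fact this bound is \emph{false} whenever $M_1\neq M_2$. In any Fermi chart both $M_1$ and $M_2$ are entire graphs over the same hyperbolic $p$-space, so if they differ there exist $x_1\in M_1$ and $x_2\in M_2$ with $x_1\neq x_2$ but equal Fermi projections; Item~(1) of Lemma~\ref{lemma:WarpedProjectionIncreaseLength} then gives $\q(\hat x_1,\hat x_2)>-1$. Hence $\sup F\in(-1,0]$, and no combination of Omori--Yau with the identity $\Delta F=2pF$ (which, as you yourself note, yields only $\sup F\leq 0$) can improve this to $\sup F\leq -1$. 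The contact-point scenario you describe therefore never arises unless $M_1=M_2$ already. A secondary problem: your argument that $\hat x_0=\hat y_0$ at a putative contact point, via $[\hat x_0-\hat y_0]\in\opConv(\Lambda)\cap\partial_\infty\H_+$, is also incorrect, since the convex hull in $\P_+(E)$ is built from positive projective combinations $[t\hat x_0+(1-t)\hat y_0]$, not from differences.

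The paper's proof runs in the opposite direction: assuming $M_1\neq M_2$, so that $\sup\beta\in(-1,0]$ (Lemma~\ref{lem:supbeta}), one computes the full Hessian of $\beta$ (Lemma~\ref{lem:HessianB}) and shows, via a linear-algebra lemma based on the Schur--Horn theorem (Lemma~\ref{lemma:LinAlgLemma}), that at \emph{every} point with $\beta>-1$ the Hessian has a strictly positive eigenvalue. This contradicts the existence of an interior maximum. Because the supremum need not be attained, one normalizes a maximizing sequence by isometries and passes to a limit using the compactness results of Section~\ref{sec:proper}, producing a new pair $M_1',M_2'$ on which the supremum \emph{is} attained. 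The essential higher-dimensional ingredient is this Hessian-positivity lemma, not a tangent-plane matching; the trace identity $\Delta\beta=2p\beta$ is too coarse, since it averages out precisely the cross-term $2\q(u_1,u_2)$ and the shape-operator contributions that furnish the positive direction.
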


\noindent In particular, this yields the following useful corollary.

\begin{corollary}
Let $\Gamma$ be a subgroup of $\mathsf{PO}(p,q+1)$ and denote by $\M^\Gamma$ and $\Bd^\Gamma$ the space of $\Gamma$-invariant elements in $\M$  and $\Bd$ respectively. If $M$ is an element of $\M$ such that $\partial_\infty M\in \Bd^\Gamma$, then $M\in \M^\Gamma$.
\end{corollary}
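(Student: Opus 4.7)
The plan is to deduce the corollary as an immediate consequence of the injectivity of $\partial_\infty$ established in Theorem \ref{thm:injective}, using the fact that $\mathsf{PO}(p,q+1)$ acts by isometries on $\H$ and that the asymptotic boundary operator is equivariant under this action.

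More precisely, fix an arbitrary $\gamma \in \Gamma$. Since $\gamma$ is an isometry of $\H$, the image $\gamma \cdot M$ is again a complete, connected, $p$-dimensional spacelike submanifold which is maximal (maximality is preserved by isometries since the second fundamental form transforms covariantly), hence $\gamma \cdot M \in \M$. Moreover, since $\gamma$ extends to a homeomorphism of $\H \cup \partial_\infty \H$ preserving $\partial_\infty \H$, the boundary operator is equivariant:
\begin{equation*}
\partial_\infty(\gamma \cdot M) \;=\; \gamma \cdot \partial_\infty M.
\end{equation*}

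By hypothesis $\partial_\infty M \in \Bd^\Gamma$, which means $\gamma \cdot \partial_\infty M = \partial_\infty M$. Thus $\partial_\infty(\gamma \cdot M) = \partial_\infty M$, and Theorem \ref{thm:injective} (injectivity of $\partial_\infty : \M \to \Bd$) forces $\gamma \cdot M = M$. Since $\gamma \in \Gamma$ was arbitrary, $M$ is $\Gamma$-invariant, i.e.\ $M \in \M^\Gamma$.

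There is no real obstacle here: the content of the corollary is packaged entirely in the uniqueness Theorem \ref{thm:injective}, and the only verification needed is the (essentially tautological) equivariance of $\partial_\infty$ under the isometry group, together with the observation that maximality and completeness are isometry-invariant properties so that $\gamma \cdot M$ indeed lies in the domain $\M$ on which injectivity has been proved.
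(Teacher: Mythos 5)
Your proof is correct and is exactly the argument the paper intends: the corollary is stated as an immediate consequence of Theorem \ref{thm:injective}, with the equivariance of $\partial_\infty$ under isometries and the isometry-invariance of $\M$ being the (tautological) verifications you supply. Nothing further is needed.
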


\noindent The proof of this result is similar in spirit to that given in \cite[Section 4]{LTW} for the case $\mathbf H^{2,n}$, although a litte more care is required in the present, higher-dimensional setting.

\subsection{The test function}

The result will follow upon applying the maximum principle to a certain well chosen function that we describe in this section. Recall that, given a point $x\in \H_+$,  $\hat x\in\Quad$ denotes its unique representative such that $\q(\hat x,\hat x)=-1$.

\begin{lemma}\label{lemma:Acausal}
Let $\Lambda$ be a non-negative sphere in $\bH_+$. If $x$ is an element in $\Conv(\Lambda)\cap\H_+$ and if $\{y_n\}_{n\in\mathbf N}$ is a sequence in $\H_+$ converging to a point $y$ in $\Lambda$, then $\q( \hat x,\hat y_n)$ tends to $-\infty$.
\end{lemma}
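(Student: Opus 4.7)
The strategy is to combine the non-positivity of pairings between points of a non-negative sphere (Lemma \ref{lemma:NonNegativeSpheres}) with an explicit representation of $\hat x$ as a positive linear combination of isotropic representatives of points in $\Lambda$.

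Working in the projective chart supplied by the proof of Lemma \ref{lemma:ConvIsContinuous}, $\Conv(\Lambda)$ becomes an ordinary affine convex hull. Lifting to $E$ and applying Carath\'eodory's theorem to the convex cone generated by the oriented representatives $\{\hat z:z\in\Lambda\}$, I would write
\begin{equation*}
\hat x=\sum_{i=1}^k s_i\,\hat z_i,\qquad s_i>0,\ z_i\in\Lambda.
\end{equation*}
For any $y\in\Lambda$, Lemma \ref{lemma:NonNegativeSpheres}(2) gives $\q(\hat z_i,\hat y)\leq 0$ for every $i$, and hence
\begin{equation*}
\q(\hat x,\hat y)=\sum_i s_i\,\q(\hat z_i,\hat y)\leq 0.
\end{equation*}

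The heart of the proof is to promote this to the strict inequality $\q(\hat x,\hat y)<0$. If equality held, then, since each $s_i>0$, every pairing $\q(\hat z_i,\hat y)$ would vanish, forcing every generator $\hat z_i$ into the degenerate hyperplane $\hat y^\perp$ (of signature $(p-1,q)$ with radical $\R\hat y$), and consequently $\hat x\in\hat y^\perp$. This would place $x$ on the supporting hyperplane $\{\q(\hat y,\cdot)=0\}$ of $\Conv(\Lambda)$ at the boundary point $y$, contradicting the fact (encoded in Lemma \ref{lemma:DeltaConvIsDelta}) that $\Conv(\Lambda)$ meets $\bH_+$ only in $\Lambda$, so that $x$ sits in the relative interior of $\Conv(\Lambda)$. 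I expect this to be the main technical obstacle: care is required because $\Conv(\Lambda)$ may be lower-dimensional in its ambient chart, so ``interior'' must be understood relative to the affine span; one can either select the generators to avoid $\hat y^\perp$, using that $\Lambda\setminus \hat y^\perp$ is an open subset of $\Lambda$, or reduce to that affine span before invoking a standard separation argument.

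Once $\q(\hat x,\hat y)<0$ is in hand, the conclusion is a one-line calculation. Fix an auxiliary positive-definite inner product $\langle\cdot,\cdot\rangle$ on $E$ with associated norm $|\cdot|$, and write the normalised representative $\hat y_n\in\Quad$ as $\hat y_n=\lambda_n w_n$ with $|w_n|=1$. The convergence $y_n\to y$ in $\P_+(E)$ forces $w_n\to w_0$ for some positive multiple $w_0$ of $\hat y$, and the identity $\q(\hat y_n,\hat y_n)=-1$ gives $\lambda_n=\bigl(-\q(w_n,w_n)\bigr)^{-1/2}\to+\infty$, since $\q(w_n,w_n)\to\q(w_0,w_0)=0$ from below. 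Therefore
\begin{equation*}
\q(\hat x,\hat y_n)=\lambda_n\,\q(\hat x,w_n)\sim \lambda_n\,\q(\hat x,w_0)\longrightarrow -\infty,
\end{equation*}
as required.
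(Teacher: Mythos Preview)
Your final convergence step is correct and is exactly what the paper does: once $\q(\hat x,\hat y)<0$ is known, writing $\hat y_n = \lambda_n w_n$ with $w_n\to w_0\in\R_+\hat y$ and $\lambda_n\to\infty$ gives $\q(\hat x,\hat y_n)\to -\infty$. The paper phrases this as $\epsilon_n\hat y_n\to \hat y$ with $\epsilon_n\to 0$, which is the same computation.

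The gap is in your argument for the \emph{strict} inequality $\q(\hat x,\hat y)<0$, which the paper does not prove from scratch but simply defers to \cite[Proposition~2.15]{LTW}. Your reasoning ``$\Conv(\Lambda)\cap\bH_+=\Lambda$, hence $x\in\H_+$ lies in the relative interior of $\Conv(\Lambda)$'' is a non sequitur: Lemma~\ref{lemma:DeltaConvIsDelta} describes the intersection with the ideal boundary, not the \emph{topological} boundary of $\Conv(\Lambda)$. In dimension $p+q\ge 2$ the relative boundary of $\Conv(\Lambda)$ is $(p+q-1)$--dimensional while $\Lambda$ is only $(p-1)$--dimensional, so there are plenty of boundary faces of $\Conv(\Lambda)$ meeting $\H_+$. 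A point $x\in\Conv(\Lambda)\cap\H_+$ can sit on such a face, and the supporting hyperplane $\hat y^{\perp}$ may well be one of them.

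Your proposed fix, ``select the generators $\hat z_i$ to avoid $\hat y^{\perp}$'', cannot work either. If $\q(\hat x,\hat y)=0$, then \emph{any} representation $\hat x=\sum s_i\hat z_i$ with $s_i>0$ and $z_i\in\Lambda$ forces every $z_i$ into $\Lambda\cap\hat y^{\perp}$, since $0=\q(\hat x,\hat y)=\sum s_i\q(\hat z_i,\hat y)$ with each summand $\le 0$. So the conclusion $\hat x\in\hat y^{\perp}$ is intrinsic to $x$, not an artefact of the choice of Carath\'eodory decomposition, and you cannot sidestep it by picking different generators. The genuine argument (carried out in \cite{LTW}) uses more of the structure of non-negative spheres than the bare convex-combination bound; your write-up should either quote that result, as the paper does, or supply that missing argument in full.
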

\begin{proof}
The arguments of \cite[Proposition 2.15]{LTW} show that, since $\Lambda$ is a non-negative sphere, $\q( \hat x,\hat y)<0$ for any representative $\hat y\in E$ of $y$. By hypothesis, there exists a sequence $\{\epsilon_n\}_{n\in\mathbf N}$ of positive numbers, converging to zero, such that $\epsilon_n\hat y_n\to\hat y$. Since $\q( \hat x,\epsilon_n\hat y_n)\to\q(\hat x,\hat y) <0$, it follows that $\q( \hat x,\hat y_n)\to-\infty$, as desired.
\end{proof}

We now describe the function of interest to us. Upon taking lifts, it will suffice to show that the boundary operator $\partial_\infty^+:\M^+\to\Bd^+$ is injective. Let $M_1, M_2 \in \M_+$ be such that $\partial^+_\infty M_1 =\partial^+_\infty M_2 =\Lambda \in \Bd_+$.
Using the identification of $\H_+$ with the quadric $\Quad=\{\hat x\in E~,~\q(\hat x,\hat x)=-1\}$ we define
\begin{equation}\label{eq:defi beta}
\begin{array}{llll}
\beta : & M_1\times M_2 & \longrightarrow & \mathbf R \\
& (x,y) & \longmapsto & \q( \hat x,\hat y) ~.\end{array}
\end{equation}
Note that, by Lemma \ref{lemma:Acausal0}, if $M_1=M_2$ then the supremum of $\beta$ is $-1$, and is achieved at all pairs of the form $(x,x)$. We now show that this condition in fact characterises equality of $M_1$ and $M_2$.

\begin{lemma}\label{lem:supbeta}
If $M_1$ is different from $M_2$ then $\sup \beta\in(-1,0]$.
\end{lemma}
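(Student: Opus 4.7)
The plan is to establish the two inequalities separately: $\sup\beta\leq 0$ via a maximum principle, and $\sup\beta>-1$ via a direct computation in a Fermi chart.

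For the upper bound, I would fix $y_0\in M_2$ and consider the function $f_{y_0}:M_1\to\R$ defined by $f_{y_0}(x):=\q(\hat x,\hat y_0)=\beta(x,y_0)$. Since $f_{y_0}$ is the restriction to $M_1$ of a linear form on $E$, the same computation used in the proof of Lemma \ref{lemma:ContainedInConvexHull} gives $\Delta_{M_1}f_{y_0}=p\,f_{y_0}$. By Lemma \ref{lemma:ContainedInConvexHull} the point $y_0$ lies in $\Conv(\Lambda)\cap\H_+$, so Lemma \ref{lemma:Acausal} (with the roles of the variables swapped) yields $f_{y_0}(x_n)\to -\infty$ along any sequence $x_n\in M_1$ converging to a point of $\partial_\infty M_1=\Lambda$. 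Hence $f_{y_0}$ attains its supremum at some interior point $x_\ast\in M_1$, at which $\Delta_{M_1}f_{y_0}(x_\ast)\leq 0$ and therefore $f_{y_0}(x_\ast)\leq 0$ by the Laplace identity. Since $y_0$ was arbitrary, this gives $\sup\beta\leq 0$.

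For the strict lower bound I would argue by contradiction, assuming $\sup\beta\leq -1$ and aiming to deduce $M_1=M_2$. Fix any marked hyperbolic $p$-space $(x_0,H)$ and work in the associated Fermi chart; by Lemma \ref{lemma:EntireGraphsAreGraphs} the complete maximal submanifolds $M_1$ and $M_2$ are graphs of $1$-Lipschitz maps $\varphi_1,\varphi_2:\Upp^p\to\S^q$. Extending the formula used in the proof of Lemma \ref{lemma:Acausal0} from the special case $u_1=0$ to the general case $u_1=u_2=u$, and evaluating $\beta$ on the pair $x:=(u,\varphi_1(u))\in M_1$, $y:=(u,\varphi_2(u))\in M_2$, one obtains
\begin{equation*}
\beta(x,y)=\frac{4\|u\|^2-(1+\|u\|^2)^2\langle\varphi_1(u),\varphi_2(u)\rangle_W}{(1-\|u\|^2)^2}\ ,
\end{equation*}
where $\langle\cdot,\cdot\rangle_W=-\q|_W$ is the positive-definite inner product on $W$ for which $\S^q$ is the unit sphere. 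Using the algebraic identity $4\|u\|^2+(1-\|u\|^2)^2=(1+\|u\|^2)^2$, the inequality $\beta(x,y)\leq -1$ rearranges to $\langle\varphi_1(u),\varphi_2(u)\rangle_W\geq 1$, and since both $\varphi_1(u)$ and $\varphi_2(u)$ lie on the unit sphere this forces $\varphi_1(u)=\varphi_2(u)$. As $u$ was arbitrary, $\varphi_1\equiv\varphi_2$, so $M_1=M_2$, contradicting the hypothesis.

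Conceptually, two distinct spacelike entire graphs sharing an asymptotic boundary must separate at some base point of a Fermi chart, and the explicit formula above forces $\beta>-1$ at the corresponding pair of points. I do not expect any serious obstacle: the only delicacy is the attainment of the supremum in the interior for the upper-bound step, which is controlled by Lemma \ref{lemma:Acausal}, after which the remainder is elementary algebra together with the maximum principle.
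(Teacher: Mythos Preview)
Your proof is correct. The lower-bound argument is essentially identical to the paper's: both observe that two distinct entire graphs must differ over some base point of a Fermi chart, and the explicit formula you derive is exactly the content of Lemma~\ref{lemma:WarpedProjectionIncreaseLength}(1), which the paper simply cites.

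For the upper bound you take a slightly different route. The paper argues directly by convexity: since each $x_1\in M_1$ lies in $\Conv(\Lambda)$ (Lemma~\ref{lemma:ContainedInConvexHull}), Lemma~\ref{lemma:Acausal} gives $\q(\hat x_1,\hat y)<0$ for every $y\in\Lambda$, hence $\Lambda$ sits in the open half-space $\{\q(\hat x_1,\cdot)<0\}$, and so does its convex hull, which contains $M_2$. This yields the pointwise strict inequality $\beta<0$ without invoking any PDE. Your approach instead exploits the eigenfunction identity $\Delta_{M_1}f_{y_0}=p\,f_{y_0}$ together with the attainment of the maximum (justified via compactness of $M_1\cup\Lambda$ and Lemma~\ref{lemma:Acausal}); this is perfectly valid and gives $\sup\beta\le 0$, which is all that is needed. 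The paper's argument is marginally more elementary and yields the slightly stronger strict inequality, while yours highlights the analytic structure that reappears later in the uniqueness proof.
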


\begin{proof}
By Lemma \ref{lemma:ContainedInConvexHull} and Lemma \ref{lemma:Acausal}, for all $x_1\in M_1$, $\Lambda$ is disjoint from $x_1^\bot$. However, by Lemma \ref{lemma:ContainedInConvexHull}, $M_2$ is contained in $\Conv(\Lambda)$, and is therefore also disjoint from $x_1^\bot$, so that $\beta(x_1,\cdot)$ does not vanish on $M_2$. Since, by Lemma \ref{lemma:Acausal}, $\beta(x_1,y)$ becomes negative as $y$ approaches $\Lambda$, it follows that $\beta(x_1,x_2)<0$ for any $(x_1,x_2)\in M_1\times M_2$.

We now work in a Fermi chart with Fermi projection $\pi$. Since $M_1$ and $M_2$ are complete, they are entire graphs. In particular, if $M_1\neq M_2$, then there exists $(x_1,x_2)\in M_1\times M_2$ such that $x_1\neq x_2$ and $\pi(x_1)=\pi(x_2)$. It now follows by Lemma \ref{lemma:WarpedProjectionIncreaseLength} $(1)$ that $\beta(x_1,x_2)>-1$, and the result follows.
\end{proof}

We now compute the Hessian of $\beta$, using the quadric model $\Quad$.

\begin{lemma}\label{lem:HessianB}
The Hessian of $\beta$ at a point $p:=(x_1,x_2)$ in $M_1\times M_2$ in the direction $v:=(u_1,u_2)$ in $\T_{x_1} M_1 \times \T_{x_2} M_2$ is given by
\begin{equation*}
\opHess_p\beta(v,v)= (\|u_1\|^2+\|u_2\|^2))\beta(p) + 2 \q (u_1,u_2) + \q (\II_1(u_1,u_1),x_2) + \q (x_1,\II_2(u_2,u_2))\ ,
\end{equation*}
where, for each $i$, $\II_i$ denotes the second fundamental form of $M_i$ and $\|u_i\|=\sqrt{\q(u_i,u_i)}$.
\end{lemma}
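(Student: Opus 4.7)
The plan is to exploit the fact that $\beta$ is simply the restriction to $M_1\times M_2$ of a bilinear form on $E\times E$, so that its Hessian can be computed by differentiating twice with the trivial connection $D$ on $E$ and then projecting the result back using the decomposition \eqref{eq:decomposition flat connection}.

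More precisely, I would proceed as follows. Since $M_1\times M_2$ carries the product Riemannian metric, geodesics in the product are pairs of geodesics in each factor. So, given $p=(x_1,x_2)$ and $v=(u_1,u_2)\in\T_{x_1}M_1\times\T_{x_2}M_2$, choose geodesics $\gamma_i$ in $M_i$ with $\gamma_i(0)=x_i$ and $\dot\gamma_i(0)=u_i$, and set $\sigma(t):=(\gamma_1(t),\gamma_2(t))$. Then by definition
\begin{equation*}
\opHess_p\beta(v,v) \;=\; \left.\frac{d^2}{dt^2}\right|_{t=0}\beta(\sigma(t)) \;=\; \left.\frac{d^2}{dt^2}\right|_{t=0}\q\bigl(\hat\gamma_1(t),\hat\gamma_2(t)\bigr)\ .
\end{equation*}

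Differentiating once gives $\tfrac{d}{dt}\q(\hat\gamma_1,\hat\gamma_2)=\q(\dot\gamma_1,\hat\gamma_2)+\q(\hat\gamma_1,\dot\gamma_2)$, and differentiating again produces three terms, the mixed one being $2\q(\dot\gamma_1,\dot\gamma_2)$. To evaluate the two remaining terms of the form $\q(D_t\dot\gamma_i,\hat\gamma_j)$, I would apply the decomposition \eqref{eq:decomposition flat connection} of the flat connection $D$ on $E$ along $M_i\subset\Quad\subset E$. Since $\gamma_i$ is a geodesic of $M_i$, its intrinsic acceleration $\nabla^\T_{\dot\gamma_i}\dot\gamma_i$ vanishes, and \eqref{eq:decomposition flat connection} reduces to
\begin{equation*}
D_t\dot\gamma_i(0) \;=\; \II_i(u_i,u_i) + \q(u_i,u_i)\,\hat x_i\ .
\end{equation*}
Substituting this into the second derivative and using $\q(u_i,u_i)=\|u_i\|^2$ (recall $u_i$ is spacelike) together with $\beta(p)=\q(\hat x_1,\hat x_2)$ yields exactly
\begin{equation*}
\opHess_p\beta(v,v) = (\|u_1\|^2+\|u_2\|^2)\beta(p) + 2\q(u_1,u_2) + \q(\II_1(u_1,u_1),\hat x_2) + \q(\hat x_1,\II_2(u_2,u_2))\ .
\end{equation*}

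There is essentially no obstacle in this computation: the only point that requires a tiny bit of care is checking that one may indeed compute the Hessian of $\beta$ on $M_1\times M_2$ by freely differentiating it as a function on $E\times E$ and then correcting via \eqref{eq:decomposition flat connection}, which is a standard consequence of the fact that $\q$ is parallel for $D$ and the product Levi-Civita connection is the restriction of $D\oplus D$ to the tangent bundle of $M_1\times M_2$. Once the formula is established, the subsequent use in the maximum principle argument of Theorem \ref{thm:injective} will be the real work, but that lies beyond the present statement.
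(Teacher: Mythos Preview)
Your proof is correct and follows essentially the same approach as the paper: choose geodesics $\gamma_i$ in each factor, compute the Hessian as $\left.\tfrac{d^2}{dt^2}\right|_{t=0}\q(\hat\gamma_1,\hat\gamma_2)$, and evaluate the second derivatives $D_t\dot\gamma_i(0)$ via the decomposition \eqref{eq:decomposition flat connection}. The paper's version is just slightly terser, but the argument is identical.
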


\begin{proof}
Let $\gamma_i$ be a local geodesic in $M_i$ with $\gamma_i(0)=x_i$ and $\overset{\bullet}{\gamma_i}(0)=u_i$. Using the decomposition \eqref{eq:decomposition flat connection} of the trivial connection $D$ on $E$, we obtain
\begin{eqnarray*}
\opHess_p\beta (v,v) & = & \left.\frac{d^2}{dt^2}\right\vert_{t=0} \q (\gamma_1(t),\gamma_2(t))  \\
& = & \q (\|u_1\|^2x_1 + \II_1(u_1,u_1) , x_2) + 2\q (u_1,u_2) + \q (x_1, \|u_2\|^2x_2 + \II_2(u_2,u_2))\ ,
\end{eqnarray*}
as desired.
\end{proof}

\subsection{A linear-algebraic interlude}

Theorem \ref{thm:injective} will now be a consequence of the following technical result of linear algebra.

\begin{lemma}\label{lemma:LinAlgLemma}
Let $U$ and $V$ be finite dimensional Euclidean vector spaces of equal dimension and let $\varphi$ be an endomorphism of $U\oplus V$ of the form
\begin{equation}\label{eq:lemma linear algebra varphi}
\varphi = \begin{pmatrix} \mu \Id + A & M^* \\ M & \mu \Id + B\end{pmatrix}~,
\end{equation}
where
\begin{enumerate}
	\item $\mu$ is a real number in $(-1,+\infty)$,
	\item $A$ and $B$ are traceless and symmetric,
	\item $M^*$ is the adjoint of $M$ with respect to the underlying scalar products of $U$ and $V$, and
	\item for every $u$ in $U$, $\Vert M (u) \Vert \geq \Vert u \Vert$.
\end{enumerate}
Then $\varphi$ has at least one positive eigenvalue.
\end{lemma}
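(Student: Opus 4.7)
The proof plan is the following. First I would observe that $\varphi$ is self-adjoint with respect to the natural Euclidean structure on $U\oplus V$: indeed, $A$ and $B$ are symmetric and the off-diagonal blocks are mutually adjoint. Hence all its eigenvalues are real, and it suffices to produce a single vector $v\in U\oplus V$ for which $\langle \varphi v, v\rangle > 0$; the spectral theorem will then immediately give a positive eigenvalue. (Equivalently, and more conveniently, I will average $\langle\varphi v_i, v_i\rangle$ over a well-chosen family of unit vectors $v_i$ and show this average is strictly positive, so that at least one term must be.)

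The key is to exploit the hypothesis $\|M u\|\geq \|u\|$ together with the dimensional equality $\dim U=\dim V$. Write the polar decomposition $M = QS$, where $S=(M^*M)^{1/2}$ is symmetric positive on $U$ and $Q\colon U\to V$ is a linear isometry (this is where $\dim U=\dim V$ is used). The hypothesis translates into $S\geq \Id$ in the Loewner order. Let $\{u_i\}_{i=1}^n$ be an orthonormal basis of $U$ diagonalising $S$, with $Su_i=s_iu_i$ and $s_i\geq 1$. Set $w_i := Qu_i$, so that $\{w_i\}_{i=1}^n$ is an orthonormal basis of $V$ and $Mu_i = s_i w_i$.

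Now I would consider the unit vectors $v_i := \tfrac{1}{\sqrt{2}}(u_i, w_i)\in U\oplus V$ and compute
\begin{equation*}
\langle \varphi v_i, v_i\rangle
= \mu + \tfrac{1}{2}\langle A u_i, u_i\rangle + \tfrac{1}{2}\langle B w_i, w_i\rangle + \langle M u_i, w_i\rangle
= \mu + \tfrac{1}{2}\langle A u_i, u_i\rangle + \tfrac{1}{2}\langle B w_i, w_i\rangle + s_i.
\end{equation*}
Summing over $i$, the crucial cancellation occurs: because $\{u_i\}$ and $\{w_i\}$ are orthonormal bases of $U$ and $V$ respectively and $\tr(A)=\tr(B)=0$, both middle terms vanish. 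Combined with $s_i\geq 1$ this yields
\begin{equation*}
\sum_{i=1}^n\langle \varphi v_i, v_i\rangle = n\mu + \sum_{i=1}^n s_i \geq n(\mu+1) > 0,
\end{equation*}
using $\mu>-1$. Consequently at least one $v_i$ satisfies $\langle\varphi v_i, v_i\rangle > 0$, and this provides the desired positive eigenvalue of $\varphi$ via the variational characterisation of the largest eigenvalue of a self-adjoint operator.

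The only real subtlety is the clever choice of basis: a naive basis would leave residual $\tr$-like terms for $B$ that are \emph{not} equal to $\tr(B)$, and so do not cancel. The polar decomposition trick — picking $u_i$ to diagonalise $S$ so that $Q$ maps this basis to an orthonormal basis of $V$ — is precisely what allows the tracelessness of both $A$ and $B$ to be used simultaneously. Once this observation is made, the rest of the argument is immediate.
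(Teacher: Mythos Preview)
Your proof is correct, and the core strategy coincides with the paper's: both diagonalise $M^*M$ to obtain orthonormal bases $\{u_i\}$ of $U$ and $\{w_i\}$ of $V$ with $Mu_i=s_iw_i$, $s_i\geq 1$, and both then test $\varphi$ on the rotated vectors $\tfrac{1}{\sqrt{2}}(u_i+w_i)$. The paper carries this out by explicitly conjugating the matrix of $\varphi$ by the orthogonal matrix $P=\tfrac{1}{\sqrt{2}}\bigl(\begin{smallmatrix}\Id&\Id\\-\Id&\Id\end{smallmatrix}\bigr)$, observes that the lower-right block $\mu\Id+\Lambda+\tfrac{1}{2}(A+B)$ has positive trace, hence some diagonal entry of the conjugated matrix is positive, and then invokes the Schur--Horn theorem to deduce that some eigenvalue is positive. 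Your argument computes the same quantity --- your sum $\sum_i\langle\varphi v_i,v_i\rangle$ is exactly the trace of that block --- but replaces the Schur--Horn step by the elementary Rayleigh-quotient observation that a single unit vector with $\langle\varphi v,v\rangle>0$ already forces $\lambda_{\max}(\varphi)>0$. Your route is therefore a little more elementary; the paper's version, in exchange, makes the full block structure of the conjugated operator explicit.
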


\begin{proof}
Since the endomorphism $M^*M$ of $U$ is symmetric and non-negative semi-definite, there is an orthonormal basis $(u_1,\cdots,u_n)$ of $U$ such that, for each $i$, $M^*M(u_i)=\lambda^2_i u_i$ for some $\lambda_i\in\mathbf{R}$. Observe now that for every $i,j$,
\begin{equation}\label{eq:productsM}
\langle M(u_i),M(u_j)\rangle = \langle M^*M (u_i),u_j\rangle = \lambda^2_i \langle u_i,u_j\rangle=\lambda^2_i\delta_{ij}~,
\end{equation}
so that, by $(4)$, we may take $\lambda_i\geqslant 1$ for all $i$. Furthermore, setting $v_i := \frac{1}{\lambda_i} M(u_i)$ yields an orthonormal basis $\mathcal B = (u_1,\cdots,u_n,v_1,\cdots,v_n)$ of $U\oplus V$ with respect to which both blocks $M$ and $M^*$ in \eqref{eq:lemma linear algebra varphi} are represented by the diagonal matrix $\Lambda=\mathrm{diag}(\lambda_1,\cdots,\lambda_n)$.

Consider now the orthogonal matrix
\begin{equation*}
P:= \frac{1}{\sqrt 2}\begin{pmatrix} \Id & \Id \\ -\Id & \Id \end{pmatrix}~.
\end{equation*}
A direct computation yields
\begin{equation*}
\varphi_{P^{-1}\mathcal B}= P^{-1} \varphi_{\mathcal B} P = 2 \begin{pmatrix} \mu \Id - \Lambda +\frac{1}{2}(A+B) & \frac{1}{2}(A-B) \\ \frac{1}{2}(A-B) & \mu\Id +\Lambda +\frac{1}{2}(A+B) \end{pmatrix}~.
\end{equation*}
Since $\mu > -1$ and $\lambda_i\geq 1$, and since $A$ and $B$ are traceless, the trace of $\mu\Id +\Lambda +\frac{1}{2}(A+B)$ is positive, so that the diagonal of $\varphi_{P^{-1}\mathcal B}$ has at least one positive element. However, by the Schur-Horn theorem \cite{schur} (see also \cite[Example 5.50]{mcduff} for a proof using moment maps), the diagonal vector $(d_1,\cdots,d_{2n})$ of $\varphi_{P^{-1}\mathcal B}$ is contained in the permutation polytope generated by the eigenvalues of $\varphi_{P^{-1}\mathcal B}$, and it follows that $\varphi_{P^{-1}\mathcal B}$ has at least one positive eigenvalue, as desired.
\end{proof}

\begin{corollary}\label{cor:Hessian}
For any point $p$ in $M_1\times M_2$ with $\beta(p)>-1$, there exists a vector $v$ in $\T_p(M_1\times M_2)$ such that $\opHess_p\beta(v,v) >0$.
\end{corollary}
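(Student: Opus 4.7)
The plan is straightforward: identify the Hessian $\opHess_p\beta$ as a bilinear form of exactly the shape appearing in Lemma \ref{lemma:LinAlgLemma}, then apply that lemma to produce a direction with positive value. Set $U := \T_{x_1}M_1$ and $V := \T_{x_2}M_2$, each equipped with the positive-definite scalar product obtained by restricting $\q$ (recall $M_i$ is spacelike). Both are Euclidean of dimension $p$, matching the hypothesis of the lemma.

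I would read off the blocks of $\opHess_p\beta$ directly from the formula in Lemma \ref{lem:HessianB}. The coefficient of $\|u_1\|^2+\|u_2\|^2$ is exactly $\beta(p)$, so I set $\mu := \beta(p)$, and the hypothesis $\beta(p)>-1$ gives the required $\mu>-1$. Using the quadric realisation $\Quad\subset E$, I define $A\in\opEnd(\T_{x_1}M_1)$ by $\langle Au_1,u_1\rangle = \q(\II_1(u_1,u_1),x_2)$ and $B\in\opEnd(\T_{x_2}M_2)$ analogously. These are symmetric by construction. To see they are traceless, note that since $N_{x_1}M_1\perp\T_{x_1}M_1$ and $N_{x_1}M_1\perp x_1$, one has $\q(\II_1(\cdot,\cdot),x_2) = \q(\II_1(\cdot,\cdot),x_2^N)$, where $x_2^N$ is the $N_{x_1}M_1$-component of $x_2$; maximality of $M_1$ then gives $\tr A = \q(\tr\II_1, x_2^N)=0$, and similarly for $B$. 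For the off-diagonal block, I define $M:\T_{x_1}M_1\to\T_{x_2}M_2$ as the $\q$-orthogonal projection (well-defined since $E=\T_{x_2}M_2\oplus \T_{x_2}M_2^{\perp_\q}$), characterised by $\langle M u_1,u_2\rangle = \q(u_1,u_2)$ for all $u_2\in\T_{x_2}M_2$. By construction, $M^*$ is the adjoint and the cross term $2\q(u_1,u_2)$ matches the off-diagonal contribution $\langle Mu_1,u_2\rangle + \langle M^*u_2,u_1\rangle$ in Lemma \ref{lemma:LinAlgLemma}.

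The only step that requires a genuine geometric input, and which I expect to be the main (though still short) obstacle, is verifying condition $(4)$, namely $\|Mu_1\|\geq\|u_1\|$. Writing $u_1 = Mu_1 + w$ with $w\in\T_{x_2}M_2^{\perp_\q}$, the signature of $\T_{x_2}M_2^{\perp_\q}$ is $(0,q+1)$ because $\T_{x_2}M_2$ is a spacelike $p$-plane in $E$ of signature $(p,q+1)$. Hence $\q(w,w)\leq 0$, and the $\q$-orthogonal decomposition yields
\[
\|u_1\|^2 = \q(u_1,u_1) = \q(Mu_1,Mu_1) + \q(w,w) \leq \|Mu_1\|^2,
\]
as required. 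With all four hypotheses of Lemma \ref{lemma:LinAlgLemma} verified, the operator $\varphi$ representing $\opHess_p\beta$ with respect to the Euclidean structure $\langle\cdot,\cdot\rangle_{T_1}\oplus\langle\cdot,\cdot\rangle_{T_2}$ admits a positive eigenvalue $\lambda$. Any corresponding eigenvector $v$ then satisfies $\opHess_p\beta(v,v) = \lambda\|v\|^2 > 0$, completing the proof.
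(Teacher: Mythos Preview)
Your proof is correct and follows essentially the same approach as the paper's: you identify the Hessian blocks with the data $(\mu,A,B,M)$ of Lemma~\ref{lemma:LinAlgLemma} and verify its hypotheses, including condition~$(4)$ via the negative-definiteness of the orthogonal complement of $\T_{x_2}M_2$ in $E$. Your treatment is in fact slightly more explicit than the paper's, since you spell out why $A$ and $B$ are traceless (from maximality) whereas the paper leaves this implicit.
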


\begin{proof}
By Lemma \ref{lem:HessianB}, with respect to the splitting $\T_{x_1}M_1\oplus \T_{x_2}M_2$, the matrix $\text{Hess}_p \beta(v,v)$ can be written in the form $\langle \varphi(v),v\rangle$, where $\langle\cdot,\cdot\rangle$ is the scalar product given by the sum of the restrictions of $\q$ to $\T_{x_1}M_1$ and $\T_{x_2}M_2$,
\begin{equation*}
\varphi= \begin{pmatrix} \beta(p)\Id + A_1(\cdot,x_2) & \pi_1 \\ \pi_2 & \beta(p)\Id + A_2(\cdot,x_1) \end{pmatrix} ~,
\end{equation*}
$p=(x_1,x_2),~\pi_i$ is the orthogonal projection from $E$ to $\T_{x_i}M_i$, and $A_i\in\mathrm{End}(\T_{x_i}M_i)$ satisfies the identity $\q (A_i(u_i,x),\cdot)=\q (\II(u_i,\cdot),x)$ for $u_i\in T_{x_i}M_i$ and $x\in E$. Since the orthogonal complement of $\T_{x_i}M_i$ in $E$ is negative-definite, for all $x\in E$, $\q(\pi_i(x),\pi_i(x))\geq \q(x,x)$, and the result now follows by Lemma \ref{lemma:LinAlgLemma}.
\end{proof}

\subsection{Uniqueness}

\begin{proof}[Proof of Theorem \ref{thm:injective}]
Upon taking lifts, it suffices to prove the statement for $\H_+$. Suppose the contrary, so that $M_1\neq M_2$. By Lemma \ref{lem:supbeta}, $\sup\beta\in (-1,0]$. Consider a maximizing sequence $\{(x_n,y_n)\}_{n\in\mathbf N}$ for $\beta$. For each $n$, let $g_n$ be an isometry such that $g_n(x_n)=x$ and $g_n(\T_{x_n}M_1)$ coincides with some fixed $p$-dimensional real subspace of $\T_x\H$. Applying Theorem \ref{theorem:PropernessOfTau}, the sequence $\{g_n(M_1)\}_{n\in\mathbf N}$ subconverges to a complete maximal submanifold $M'_1$. By continuity of the boundary map, the sequence $\{g_n(\Lambda)\}_{n\in \mathbf N}$ likewise subconverges to an admissible sphere $\Lambda'$. By Theorem \ref{theorem:Properness}, upon extracting another subsequence, we may suppose that $\{g_n(M_2)\}_{n\in \mathbf N}$ also converges to $M'_2$ with $\partial_\infty M'_2=\Lambda'$.

We now claim that the sequence $\{g_n(y_n)\}_{n\in \mathbf N}$ is bounded. Indeed, suppose the contrary. Upon extracting a subsequence, we may suppose that this sequences converges to some point of $\Lambda'$ so that, by Lemma \ref{lemma:Acausal}, $\q (x,g_n(y_n))\to -\infty$. However by Lemma \ref{lem:supbeta}, for all large $n$,
\begin{equation*}
\q (x,g_n(y_n))=\q (g_n(x_n),g_n(y_n))=\q (x_n,y_n)>-1\ ,
\end{equation*}
which is absurd, so that this sequence is indeed bounded, as asserted. Upon extracting a further subsequence, we may suppose that it converges to some point $y$, say, of $M'_2$ such that $\q (x,y)>-1$. In particular, by Lemma \ref{lemma:Acausal0}, $y\notin M'_1$, so that $M_1'\neq M_2'$. Defining $\beta':M_1'\times M_2'\to\R$ as before, we see that this function attains its maximum at the point $(x,y)$. Furthermore, by Lemma \ref{lem:supbeta} again, $\beta'(x,y)\in(-1,0]$. However, by Corollary \ref{cor:Hessian}, the Hessian of $\beta'$ at $(x,y)$ has a positive eigenvalue, which contradicts the fact that $(x,y)$ is a maximum, and this completes the proof.
\end{proof}

\section{Stability}\label{sec:stability general}

We now study the problem of perturbing maximal graphs. It will suffice to work in the double cover $\H_+$. We prove the following result.

\begin{theorem}\label{thm:perturb H+}
Let $(\Lambda_t)_{t\in(-\epsilon,\epsilon)}$ be a family of smooth, spacelike positive $(p-1)$-spheres in $\partial_\infty\H_+$ varying continuously in the $C^\infty$ topology. If there exists a complete maximal $p$-submanifold $M_0$ of $\H_+$ such that $\partial_\infty M_0=\Lambda_0$ then, for all sufficiently small $t$, there exists a complete maximal $p$-submanifold $M_t$ of $\H_+$ such that $\partial_\infty M_t=\Lambda_t$.
\end{theorem}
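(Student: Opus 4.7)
The plan is to apply the implicit function theorem in a weighted Hölder space setting. First I would construct a smooth family of reference spacelike $p$-submanifolds $\tilde M_t$ with $\tilde M_0 = M_0$ and $\partial_\infty \tilde M_t = \Lambda_t$, varying smoothly in $t$. Such a family can be produced in a Fermi chart adapted to $M_0$: write $M_0$ as the graph of a smooth, strictly $1$-Lipschitz function $\varphi_0 : \Upp^p \to \S^q$ (Lemma \ref{lemma:EntireGraphsAreGraphs}), write $\Lambda_t$ as the graph of a smooth family of contractive maps $\phi_t : \S^{p-1} \to \S^q$ (Lemma \ref{lemma:PositiveSpheres}), and interpolate smoothly between $\varphi_0$ on a compact core and $\phi_t$ near the boundary. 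Strict positivity of $\Lambda_t$ ensures that the interpolation remains strictly $1$-Lipschitz, so that $\tilde M_t$ is spacelike. I then seek $M_t$ in the form $M_t = \exp_{\tilde M_t}^{\perp}(\sigma_t)$ for a section $\sigma_t$ of $\N \tilde M_t$ decaying to zero at infinity. The maximality condition $H(M_t) = 0$ unfolds as a quasi-linear elliptic equation of the form
\begin{equation*}
L_t\, \sigma_t + Q_t(\sigma_t) = -H(\tilde M_t),
\end{equation*}
where $L_t$ is the Jacobi operator of $\tilde M_t$ on normal sections, $Q_t$ collects the higher-order terms, and the source $H(\tilde M_t)$ is small in the appropriate weighted norm for small $t$ and vanishes at $t = 0$.

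The implicit function theorem will apply provided $L_0$ is an isomorphism between suitable weighted Hölder spaces $C^{k,\alpha}_\delta(\N M_0)$, with $\delta > 0$ enforcing exponential decay toward $\Lambda_0$ with respect to the complete metric on $M_0$ (which is asymptotically hyperbolic in Fermi coordinates). Decay of $\sigma_t$ preserves the asymptotic boundary, so solutions in $C^{2,\alpha}_\delta$ yield submanifolds still bounded by $\Lambda_t$. Injectivity of $L_0$ on $C^{2,\alpha}_\delta$ should follow from the uniqueness Theorem \ref{thm:injective}: a decaying Jacobi field is the infinitesimal generator of a one-parameter family of complete maximal submanifolds with common boundary $\Lambda_0$, which must be constant. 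Alternatively, the maximum-principle argument of Section \ref{sec:uniqueness} can be linearized: the leading part of the Hessian of $\beta$ computed in Lemma \ref{lem:HessianB}, combined with the linear-algebraic input of Lemma \ref{lemma:LinAlgLemma}, forces any decaying Jacobi field to vanish pointwise. A standard self-adjointness/duality argument then gives triviality of the cokernel, and hence surjectivity of $L_0$ once Fredholmness is in place.

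The principal obstacle is the analytic one of establishing Fredholmness of $L_0$ and the associated pre-elliptic a priori estimates in these weighted spaces. This requires understanding the precise asymptotic model of $M_0$ and of $L_0$ near $\Lambda_0$: in a suitable compactification, $M_0$ approaches a cone over $\Lambda_0$, and $L_0$ is asymptotic to a model operator on that cone whose indicial roots determine the permissible weights $\delta$. One must verify that a range $\delta \in (0, \delta_0)$ avoids all indicial roots, and control the perturbation from model to actual operator in the weighted setting. Once these estimates are established, the implicit function theorem yields $\sigma_t$ small in $C^{2,\alpha}_\delta$ for $|t|$ small. The resulting $M_t = \exp_{\tilde M_t}^{\perp}(\sigma_t)$ is smooth, spacelike (by $C^0$-closeness to $\tilde M_t$), and maximal, with $\partial_\infty M_t = \Lambda_t$ coming from the decay of $\sigma_t$; completeness of $M_t$ is automatic from Theorem \ref{theorem:Ishihara} combined with Lemma \ref{lemma:EntireGraph}.
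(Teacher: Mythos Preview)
Your overall strategy is the paper's: build a reference family $\tilde M_t$, seek $M_t$ as normal exponential graphs parametrized by sections in weighted H\"older spaces, and invoke the implicit function theorem once the Jacobi operator $J$ of $M_0$ is shown to be an isomorphism. The paper executes exactly this, using radially parametrized cones (Sections \ref{sec:cones}--\ref{section:RadiallyParametrizedCones}) rather than Fermi-chart interpolation as its asymptotic model, and works at weight $\omega=0$ in its convention (bounded sections), though the analysis covers $\omega\in(-p,0]$, which includes your decay range.

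There is one genuine gap. Your first route to injectivity is circular: asserting that a decaying Jacobi field is the infinitesimal generator of a one-parameter family of complete maximal submanifolds with boundary $\Lambda_0$ presupposes the existence of such a family, which is exactly what the perturbation theorem is meant to produce. Your alternative (linearizing the $\beta$-Hessian of Section \ref{sec:uniqueness}) is plausible in spirit but is not what the paper does either. The paper instead applies Omori's maximum principle directly to $\|\sigma\|^2$: from \eqnref{JacobiOperator} one obtains $\Delta\|\sigma\|^2 \geq 2p\|\sigma\|^2$ at any near-maximum of a bounded kernel element, forcing $\sigma=0$ (see the proof of Theorem \ref{proc:InvertibilityOverEndSobolev}). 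Surjectivity is then obtained by first proving the isomorphism in weighted Sobolev spaces, where the duality $J_\omega^*=J_{-\omega}$ gives Fredholm index zero, and then bootstrapping to H\"older spaces (Theorems \ref{proc:InvertibilityOverEndSobolev} and \ref{proc:InvertibilityOverEndHoelder}).

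On Fredholmness: the paper does not invoke indicial roots abstractly but proves a hands-on ODE estimate (Lemma \ref{proc:BasicOneDimEstimate} and its corollaries) that upgrades the standard interior elliptic estimate to a genuine elliptic estimate over the cone, then perturbs to $M_0$ (Section \ref{subhead:MaximalGraphs}). This is equivalent in content to your indicial-root analysis, carried out concretely.
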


The overall strategy of our proof of Theorem \ref{thm:perturb H+} follows the now standard approach towards perturbing minimal surfaces: smooth maximal $p$-submanifolds close to $M_0$ are first represented as zeroes of some smooth functional over some Banach space, and the desired perturbations are then obtained upon applying the implicit function theorem.

This process is usually relatively straightforward when the submanifolds of interest are compact, but becomes harder in the non-compact case of interest to us here. The main challenge lies in proving invertibility of the Jacobi operator of $M_0$. Although systematic treatments of this kind of problem exist (see, for example, \cite{Mazzeo,AlexMazz,Fine}), we find it preferrable, and more informative, to adopt an alternative, more ad-hoc approach, guided by two general principles.

The first general principle is to choose an appropriate asymptotic model, that is, a class of submanifolds with readily determined properties which are approached asymptotically by the maximal graphs of interest to us. The asymptotic model used here will be given by the class of radially parametrized cones, and we will study their geometric and analytic properties in Sections \ref{sec:cones} and \ref{section:RadiallyParametrizedCones}.

The second general principle concerns the invertibility of elliptic operators over non-compact manifolds. In order to correctly explain it, we recall the concept of elliptic estimates and introduce the concept of pre-elliptic estimates. Let $L:\mathcal E\rightarrow \mathcal F$ be a bounded linear map between Banach spaces. An \emph{elliptic estimate} for $L$ is an estimate of the form
\begin{equation}\label{eqn:EllipticEstimate}
\|x\|_{\mathcal E} \leq C\big(\|Rx\|_{\mathcal G}+\|Lx\|_{\mathcal F}\big)\ ,
\end{equation}
valid for all $x\in \mathcal E$, where $R: \mathcal E\rightarrow \mathcal G$ is a \emph{compact} linear map into a normed vector space. Elliptic estimates measure the defect of invertibility in terms of compact operators. When such an estimate holds for $L$, it is known that $L$ has finite-dimensional kernel and closed image (see, for example, \cite[Chapter 21, Theorem 4]{Lax}), and it is then often a formal matter to derive first the Fredholm property and then invertibility from the specific geometry of the problem at hand.

We define a \emph{pre-elliptic estimate} for $L$ to be an estimate of the form
\begin{equation}\label{eqn:PreEllipticEstimate}
\|x\|_{\mathcal E} \leq C\big(\|Rx\|_{\mathcal G}+\|Lx\|_{\mathcal F}\big)\ ,
\end{equation}
valid for all $x\in \mathcal E$, where now $R:\mathcal E\rightarrow \mathcal G$ is an arbitrary bounded linear map from $\mathcal E$ into some normed vector space. Although pre-elliptic estimates are in themselves of little interest, being readily obtained for any operator upon setting $R:=\opId$, they provide a useful means of organising our ideas, as we will now see.

Suppose that $L$ is an elliptic partial differential operator defined over some manifold $X$ of bounded geometry. A typical estimate provided by the general theory of elliptic partial differential operators is
\begin{equation}\label{eqn:GeneralEllipticEstimate}
\|f\|_{H^2(X)} \leq C\big(\|f\|_{L^2(X)} + \|Lf\|_{L^2(X)}\big)\ ,
\end{equation}
where $L^2(X)$ here denotes the Lebesgue space of square integrable functions over $X$, and $H^2(X)$ denotes the Sobolev space of $L^2$ functions with $L^2$ weak derivatives up to order $2$. When $X$ is compact, the Rellich-Kondrachov Theorem (see, for example, \cite[Section 5.7]{Evans}) implies that the canonical embedding $H^2(X)\hookrightarrow L^2(X)$ is compact, so that \eqref{eqn:GeneralEllipticEstimate} is elliptic, and all is well. However, when $X$ is non-compact, the Rellich-Kondrachov Theorem does not apply, and \eqref{eqn:GeneralEllipticEstimate} is unfortunately only pre-elliptic.

Our second general principle is thus to transform the pre-elliptic estimates provided by the general theory of elliptic partial differential operators into elliptic estimates. The main tool required for this process is a standard estimate for solutions of a certain class of ordinary differential equations which we will describe in Section \ref{subhead:DifferentialOperatorsOverTheLine}. This will be used to obtain elliptic estimates for Sobolev spaces over radially parametrized cones in Section \ref{subhead:WeightedSobolevSpaces}, and for H\"older spaces over such cones in Section \ref{subhead:WeightedHoelderSpaces}. These results will be adapted in Section \ref{subhead:MaximalGraphs} to address the case of Sobolev and H\"older spaces over maximal graphs. This will yield the desired invertibility of the Jacobi operator, allowing us to prove Theorem \ref{thm:perturb H+} in Section \ref{PerturbationsOfMinimalEnds}.

Finally, the reader may wonder why we chose to work over both Sobolev and H\"older spaces. We do so because, although H\"older spaces provide the natural framework for studying the perturbation theory of maximal surfaces, invertibility of operators is more readily proven over Sobolev spaces, on account of the natural dualities they present. It is thus standard practice to obtain invertibility results first over Sobolev spaces, which are then used as a basis for proving the corresponding results over H\"older spaces.

\newsubhead{The Jacobi operator}[TheJacobiOperator]

We first introduce the protagonist of this section, namely the Jacobi operator, which measures variations of the mean curvature vector arising from infinitesimal perturbations of spacelike graphs. Most of the rest of this section will be devoted to studying its invertibility properties over suitable Banach spaces.

The Jacobi operator is defined formally as follows. Let $M$ be a smooth $p$-dimensional spacelike submanifold of $\H$, and let $\N M$ denote its normal bundle. Given a section $\sigma$ of $\N M$, we define $\iota(\sigma):M\rightarrow\H$ by
\begin{equation*}
\iota(\sigma)(y) := \Exp(\sigma(y))\ ,\myeqnum{\nexteqnno[ImageOfNormalGraph]}
\end{equation*}
where $\Exp$ denotes the exponential map of $\H$. We define $\mathcal{H}(\sigma)\in\Gamma(\N M)$ such that, for all $y\in M$, the vector $\mathcal{H}(\sigma)(y)$ is the parallel transport along the unique geodesic from $\iota(\sigma)(y)$ to $y$ of the mean curvature vector of $\iota(\sigma)$ at this point. The \emph{Jacobi operator} of $Y$ is then defined by
\begin{equation*}
(J\sigma)(y) := \frac{\partial}{\partial t}\mathcal{H}(t\sigma)(y)\bigg|_{t=0}\ .\myeqnum{\nexteqnno[DefinitionOfJacobiOperator]}
\end{equation*}

\begin{remark}
Note that when $M$ is $C^3$, its normal bundle is $C^2$, and the preceding construction still makes sense. This will become relevant in Section \ref{sec:RenormalizedArea}, where we will be concerned with problems of relatively low regularity.
\end{remark}

\begin{lemma}
Let $M$ be a $p$-dimensional spacelike submanifold of $\H$. The Jacobi operator of $M$ is given by
\begin{equation}
J\sigma = \Delta^N\sigma - p\sigma + \sum_{m,n}\g\big(\opII(e_m,e_n),\sigma\big)\opII(e_m,e_n)~,\myeqnum{\nexteqnno[JacobiOperator]}
\end{equation}
where $\opII$ here denotes the second fundamental form of $M$, and $e_1,\cdots,e_p$ is any orthonormal frame of the tangent bundle of $M$.
\proclabel{JacobiOperator}
\end{lemma}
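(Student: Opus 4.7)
The plan is to directly linearize the mean curvature along the normal variation $F_t(y)=\Exp(t\sigma(y))$. The construction of $\mathcal{H}(t\sigma)$ involves parallel transport back to $y$ along the geodesic $s\mapsto\Exp(s\sigma(y))$, but since $M$ is maximal one has $H_0\equiv 0$, and differentiating $\mathcal{H}(t\sigma)=P_t^{-1}H_t$ at $t=0$ shows that the parallel transport contributes nothing: $J\sigma=\dot H_0$. The task thus reduces to computing the first variation $\dot H_0$ of the mean curvature of the family $F_t(M)$, which is a standard application of submanifold calculus.

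I would fix $y_0\in M$ and work with a local orthonormal tangent frame $(e_1,\ldots,e_p)$ that is $\nabla^\T$-parallel at $y_0$. Setting $V_t^i:=dF_t\cdot e_i$ and differentiating $g_{t,ij}=\g(V_t^i,V_t^j)$ gives $\dot g_{ij}|_{t=0}=2\g(\nabla_{e_i}\sigma,e_j)=0$ at $y_0$, since $\sigma$ is normal. Hence the trace commutes with $\partial_t$ at $y_0$, and $\dot H_0(y_0)=\sum_i\partial_t|_{t=0}\opII_t(e_i,e_i)$.

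To compute $\partial_t\opII_t(e_i,e_j)$ at $y_0$, write $\opII_t(V_t^i,V_t^j)=(\nabla_{V_t^i}V_t^j)^\perp$, differentiate in $t$ using $[\partial_t,e_i]=0$ on $(-\epsilon,\epsilon)\times M$, and apply the Ricci identity in the ambient space. Using the decomposition \eqref{DecompositionOfCovDer} of $\nabla$ together with the Weingarten identity $g_\N(\opII(e_i,e_j),\sigma)=g_\T(B_{e_i}\sigma,e_j)$, one obtains a standard three-term expression whose trace over $i=j$ yields, after simplification: a connection-Laplacian contribution $\Delta^N\sigma$ on $\N M$, coming from the second normal covariant derivatives of $\sigma$; a Simons-type contribution $\sum_{m,n}\g(\opII(e_m,e_n),\sigma)\opII(e_m,e_n)$, coming from the coupling of the shape operator to the second fundamental form via Weingarten; and a curvature contribution $\sum_i(R(\sigma,e_i)e_i)^\perp$, coming from the ambient curvature tensor $R$ of $\H$.

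The last contribution is where the specific geometry of $\H$ enters: since $\H$ has constant sectional curvature $-1$, for each $i$ one has $R(\sigma,e_i)e_i = -\g(e_i,e_i)\sigma + \g(\sigma,e_i)e_i = -\sigma$, using the orthonormality of the frame and the orthogonality of $\sigma$ to the tangent space. Summing over $i$ gives $-p\sigma$, so the three contributions combine to \eqref{eqn:JacobiOperator}. The only real care needed lies in tracking signs arising from the negative-definiteness of $\N M$, but these are absorbed automatically by writing all pairings in terms of the ambient form $\g$.
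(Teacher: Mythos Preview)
Your sketch contains a concrete error that breaks the bookkeeping. You claim that
\[
\dot g_{ij}\big|_{t=0}=2\g(\nabla_{e_i}\sigma,e_j)=0
\]
``since $\sigma$ is normal''. But normality of $\sigma$ does not make $\nabla_{e_i}\sigma$ normal: by the decomposition \eqref{DecompositionOfCovDer}, the tangential part of $\nabla_{e_i}\sigma$ is $-B(e_i)\sigma$, so
\[
\g(\nabla_{e_i}\sigma,e_j)=-g_\T(B(e_i)\sigma,e_j)=-\g(\opII(e_i,e_j),\sigma),
\]
and hence $\dot g_{ij}|_{t=0}=-2\g(\opII(e_i,e_j),\sigma)$, which is generically nonzero. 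This missing term produces, after inverting the metric and contracting with $\opII$, a contribution $2\sum_{i,j}\g(\opII(e_i,e_j),\sigma)\opII(e_i,e_j)$ of exactly the Simons type. Since you attribute the Simons term entirely to ``the coupling of the shape operator to the second fundamental form via Weingarten'' inside $\partial_t\opII_t$, your accounting is off: if one carries your computation through honestly with the pushforward frame $V_t^i$, the trace of $\partial_t\opII_t(V^i,V^i)$ by itself gives the Simons term with the \emph{wrong sign}, and it is precisely the metric variation you discarded that corrects it. As written, the argument does not establish the stated coefficient.

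The paper sidesteps this by working with a frame $e_1,\ldots,e_p$ that is orthonormal along every slice $M_t$, so no metric variation appears; the price is that $e_i$ no longer commutes with $\sigma$, and the paper introduces auxiliary fields $\xi,\nu$ with $[\xi,\sigma]=[\nu,\sigma]=0$ to perform the curvature computation. The compensating term then surfaces explicitly as $\opII(\nabla^\T_\sigma\xi,\nu)+\opII(\xi,\nabla^\T_\sigma\nu)$ with $(\nabla_\sigma\xi)^\T=-B(\xi)\sigma$, rather than being hidden in $\dot g^{ij}$. (Also, you invoke maximality of $M$ to drop the parallel-transport term; the lemma as stated does not assume this. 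The definition of $\mathcal{H}$ via parallel transport already makes $J\sigma$ the covariant derivative $\nabla_\sigma H$, so no such hypothesis is needed at that step.)
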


\begin{proof}
Since the result is local, and since we may always add to $\sigma$ the normal component of some timelike Killing vector field of $\H$, we may suppose that $\sigma$ does not vanish. We extend $\iota(\sigma)$ to a smooth immersion of $M\times(-\epsilon,\epsilon)$ into $\H$, defined by $\iota(\sigma)(y,t)=\Exp(\sigma(ty))$. The section $\sigma$ is thus identified with the tangent vector field of the second component. For all $t$, we denote $M_t:=M\times\{t\}$, and we denote respectively by $\T M_t$ and $\N M_t$ its tangent and normal bundles in $\H$. Note, in particular, that $\sigma$ is timelike and normal to $M_0$. Choose $y_0\in M=M_0$, let $e_1,\cdots,e_p$ be an orthonormal frame of $(\T M_t)_{t\in(-\epsilon,\epsilon)}$, let $f_1,\cdots,f_q$ be an orthonormal frame of $(\N M_t)_{t\in(-\epsilon,\epsilon)}$, and suppose that, for all $i$ and for all $j$,
\begin{equation}
(\nabla^\T e_i)(y_0,0)=0~\text{ and }\ (\nabla^\N f_j)(y_0,0) = 0\ .\label{ConditionsOnFrames}
\end{equation}
Now let $\xi$ and $\nu$ be vector fields tangent to $(M_t)_{t\in(-\epsilon,\epsilon)}$ such that
\begin{equation}\label{eqn:JOFirstCondOnXiNu}
[\xi,\sigma]=[\nu,\sigma]=0~,
\end{equation}
and such that, for every other tangent vector field $\mu$,
\begin{equation}\label{eqn:JOSecondCondOnXiNu}
(\nabla^\T_\mu\xi)(y_0,0) = (\nabla^\T_\mu\nu)(y_0,0) = 0~.
\end{equation}
For all $i$,
\begin{equation*}\eqalign{
D_\sigma(\g(\opII(\xi,\nu),f_j))
&=D_\sigma(\g(\pi^N(\nabla_\xi\nu),f_j))\cr
&=D_\sigma(\g(\nabla_\xi\nu,f_j))\cr
&=\g(\nabla_\sigma\nabla_\xi\nu,f_j) + \g(\nabla_\xi\nu,\nabla_\sigma f_j)\cr
&=\g( R_{\sigma\xi}\nu,f_j) + \g(\nabla_\xi\nabla_\sigma\nu,f_j) + \g(\nabla_\xi\nu,\nabla_\sigma f_j)\ ,\cr}
\end{equation*}
where, we recall, $\g$ denotes the pseudo-riemannian metric of $\H$, and $R$ denotes its Riemann curvature tensor. By \eqref{eqn:JOSecondCondOnXiNu} and  \eqref{ConditionsOnFrames}, $(\nabla_\xi\nu)(y_0,0)$ is normal to $M_0$ and $(\nabla_\sigma f_i)(y_0,0)$ is tangential, so that the final term vanishes. Next, since $\nabla$ is torsion free and $[\sigma,\nu]$ vanishes,
\begin{equation*}
\g(\nabla_\xi\nabla_\sigma\nu,f_j) = \g(\nabla_\xi\nabla_\nu\sigma,f_j)~.
\end{equation*}
Recalling the decomposition of \eqref{DecompositionOfCovDer}, since $\H$ has constant sectional curvature equal to $-1$, and since $\sigma$ is normal to to $M$,
\begin{equation*}\eqalign{
D_\sigma(\g(\opII(\xi,\nu),f_j))
&=-\g(\sigma,f_j)\g(\xi,\nu) + \g(\nabla_\xi\nabla_\nu\sigma,f_j)\cr
&=-\g(\sigma,f_j)(\xi,\nu) + \g(\nabla_\xi\nabla^N_\nu\sigma,f_j) - \g(\nabla_\xi B(\nu)(\sigma),f_j)\cr
&=-\g(\sigma,f_j)(\xi,\nu) + \g(\opHess^N(\sigma)(\xi,\nu),f_j) + \g( B(\nu)(\sigma),\nabla_\xi f_j)\ .\cr}
\end{equation*}
However, for any normal vector field $\tau$,
\begin{equation*}
B(\xi)(\tau)
=-\sum_{i=1}^p\g(\nabla_\xi\tau,e_i) e_i
=\sum_{i=1}^p\g(\tau,\nabla_\xi e_i) e_i
=\sum_{i=1}^p\g(\tau,\opII(\xi,e_i)) e_i\ .
\end{equation*}
Applying to this to $\nabla_\nu\sigma$ and $\nabla_\xi f_j$ yields
\begin{equation}\label{eqn:JOCalcI}
D_\sigma(\g(\opII(\xi,\nu),f_j))
=-\g(\sigma,f_i)\g(\xi,\nu) + \g(\opHess^N(\sigma)(\xi,\nu),f_j) - \sum_{i=1}^p\g(\opII(\nu,e_i),\sigma)\g(\opII(\xi,e_i),f_j)\ .
\end{equation}
On the other hand, recalling that $[\sigma,\xi]=0$, we have $\nabla_\sigma\xi = \nabla_\xi\sigma$, so that
\begin{equation*}
(\nabla_\sigma\xi)^T = (\nabla_\xi\sigma)^T = -B(\xi)(\sigma)= -\sum_{i=1}^p\g(\opII(\xi,e_i),\sigma) e_i\ ,
\end{equation*}
with a similar formula holding for $(\nabla_\sigma\nu)^T$. It follows that
\begin{equation}\label{eqn:JOCalcII}
\eqalign{
D_\sigma(\g(\opII(\xi,\nu),f_j))
&=\g( \nabla_\sigma (\opII (\xi,\nu)),f_j) + \g(\opII (\xi,\nu),\nabla_\sigma f_j )\vphantom{\frac{1}{2}}\cr
&=\g( \nabla_\sigma \opII (\xi,\nu),f_j) + \g( \opII (\nabla^T_\sigma\xi,\nu),f_j) + \g( \opII (\xi,\nabla^T_\sigma\nu),f_j)\vphantom{\frac{1}{2}}\cr
&=\g( \nabla^N_\sigma \opII (\xi,\nu),f_j) - \sum_{i=1}^p\g(\opII(\xi,e_i),\sigma )\g( \opII (\nu,e_i),f_j)\cr
&\qquad\qquad - \sum_{i=1}^p\g( \opII (\nu,e_i),\sigma)\g(\opII(\xi,e_i),f_j )\ ,\cr}
\end{equation}
where the last term of the first line vanishes since, as before, $(\nabla_\sigma f_i)(y_0,0)$ is tangential.

Combining \eqref{eqn:JOCalcI} and \eqref{eqn:JOCalcII} now yields
\begin{equation*}
\g(\nabla^N_\sigma\opII(\xi,\nu),f_j) = -\g(\sigma,f_j)\g(\xi,\nu) + \g(\opHess^N(\sigma)(\xi,\nu),f_j) + \sum_{i=1}^p\g(\opII(\xi,e_i),\sigma)\g(\opII(\nu,e_i),f_j)\ ,
\end{equation*}
so that
\begin{equation*}
\nabla_\sigma^N\opII(\xi,\nu) = -\g(\xi,\nu)\sigma + \opHess^N(\sigma)(\xi,\nu) + \sum_{i=1}^p\g(\opII(\xi,e_i),\sigma)\opII(\nu,e_i)\ ,
\end{equation*}
and the result follows upon taking the trace.\end{proof}

\subsection{The asymptotic model I - spacelike polar coordinates}\label{sec:cones}

We now study the asymptotic model that will be used in the sequel. We first describe the coordinate systems of $\H_+$ and $\partial_\infty\H_+$ in which our constructions will be carried out. Let $x_0$ be a point of $\H_+$. We denote the (positive) unit tangent bundle of $\H_+$ at $x_0$ by
\begin{equation*}
\T_{x_0}^1\H_+ := \{ v\in \T_{x_0}\H_+\ |\ \g (v,v)=1 \}\ .\label{PseudoSphericalSpacetime}
\end{equation*}
Define $\Phi:\T_{x_0}^1\H_+\times(0,\infty)\rightarrow\H_+$ and $\Phi_\infty:\T_{x_0}^1\H_+\rightarrow\partial_\infty\H_+$ respectively by
\begin{equation*}\eqalign{
\Phi(v,r)&:=\Exp_{x_0}(rv)\ ,\ \text{and}\cr
\Phi_\infty(v)&:=\mlim_{r\rightarrow+\infty}\Exp_{x_0}(rv)\ ,\cr}\label{PolarCoordinates}
\end{equation*}
where $\opExp_{x_0}$ here denotes the exponential map of $\H_+$. It follows that $\Phi$ parametrizes the set of all points in $\H_+$ separated from $x_0$ by some non-trivial spacelike geodesic, whilst $\Phi_\infty$ parametrizes the set of end-points of all such geodesics in $\H_+$. We call $\Phi$ (resp. $\Phi_\infty$) \emph{spacelike polar coordinates} of $\H_+$ (resp. $\partial_\infty\H_+$) about $x_0$.

We now provide algebraic descriptions of the images of these parametrizations, as well as their induced metrics. Recall first that we denote the unique representative of any point $x\in\H_+$ in $\Quad$ by $\hat{x}$, and we denote {\sl any} representative of any point $x\in\partial_\infty\H_+$ by $\hat{x}$. Let $h$ denote the pseudo-riemannian metric that $\T^1_{x_0}\H_+$ inherits as a submanifold of $\T_{x_0}\H_+$.

\begin{lemma}\label{eqn:GeometriesOfPhiAndPhiInfinity}
For any  $x_0\in\H_+$,
\begin{enumerate}
\item $\Phi$ maps $\T_{x_0}^1\H_+\times(0,\infty)$ diffeomorphically onto the open subset $\Omega_{x_0}\subseteq\H_+$ given by
\begin{equation}\label{eqn:ImageOfPhi}
\Omega_{x_0}:=\{x\in\H_+\ |\ \q(\hat{x},\hat{x}_0)<-1\}\ ,
\end{equation}
and the pull-back of the pseudo-hyperbolic metric is
\begin{equation}\label{eqn:MetricInPolarCoordinates}
\hat{h} := \opSinh^2(r)h\oplus dr^2\ .
\end{equation}
\item $\Phi_\infty$ maps $\T^1_{x_0}\H_+$ conformally diffeomorphically onto the open subset $\Omega_{x_0,\infty}\subseteq\partial_\infty\H_+$ given by
\begin{equation}\label{eqn:ImageOfPhiInfinity}
\Omega_{x_0,\infty}:=\{x\in\partial_\infty\H_+\ |\ \q(\hat{x},\hat{x}_0)<0\}\ .
\end{equation}
\end{enumerate}
\end{lemma}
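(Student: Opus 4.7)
The plan is to work explicitly in the quadric model $\Quad\subset E$, exploiting the fact that spacelike geodesics of $\H_+$ admit a closed-form parametrisation. Since $\H_+$ has constant sectional curvature $-1$, the unique unit-speed spacelike geodesic from $\hat x_0$ with initial velocity $v\in\T^1_{x_0}\H_+$ is
\[
\hat\Phi(v,r) = \opCosh(r)\,\hat x_0 + \opSinh(r)\,v.
\]
I would verify this by checking that $\hat\Phi(v,r)\in\Quad$, that $\partial_r\hat\Phi$ is unit length, and that $\partial_r^2\hat\Phi = \hat\Phi$, which via the decomposition \eqref{eq:decomposition flat connection} of the flat ambient connection along $\Quad$ is precisely the geodesic equation (the ambient acceleration is purely normal and equal to $\q(\partial_r\hat\Phi,\partial_r\hat\Phi)\,\hat\Phi = \hat\Phi$).

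From this explicit formula, part (1) reduces to direct computation. Taking the $\q$-product with $\hat x_0$ gives $\q(\hat\Phi(v,r),\hat x_0) = -\opCosh(r)$, so the image of $\Phi$ lies in $\Omega_{x_0}$. Bijectivity onto $\Omega_{x_0}$ follows from the explicit inverse $r(x) = \opArccosh(-\q(\hat x,\hat x_0))$ and $v(x) = (\hat x - \opCosh(r)\hat x_0)/\opSinh(r)$; a short calculation using $\q(\hat x,\hat x)=-1$ confirms $\q(v,v)=1$ and $\q(v,\hat x_0)=0$. Smoothness of both directions is evident from the formulas. For the metric I would compute $\partial_r\hat\Phi = \opSinh(r)\,\hat x_0 + \opCosh(r)\,v$ and, for a variation $w\in\T_v\T^1_{x_0}\H_+ = v^\perp\cap\hat x_0^\perp$, the directional derivative $\partial_v\hat\Phi(w)=\opSinh(r)\,w$. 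Orthogonality of these two derivatives and the values of the induced norms then follow from $\q(\hat x_0,v) = \q(\hat x_0,w) = \q(v,w) = 0$ together with $\opCosh^2-\opSinh^2=1$.

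For part (2), rescaling $\hat\Phi(v,r)$ by $1/\opSinh(r)$ and letting $r\to\infty$ yields $\opCoth(r)\,\hat x_0 + v \to \hat x_0 + v$, whence $\Phi_\infty(v) = [\hat x_0+v]_+$ in $\P_+(E)$. Since $\q(\hat x_0,v)=0$, the vector $\hat x_0+v$ is isotropic and satisfies $\q(\hat x_0+v,\hat x_0)=-1<0$, so $\Phi_\infty$ takes values in $\Omega_{x_0,\infty}$. Conversely, for any $y\in\Omega_{x_0,\infty}$ there is a unique positive representative $\hat y$ of the oriented line $y$ with $\q(\hat y,\hat x_0)=-1$ (this step uses the orientation, i.e.\ the fact that we work in $\H_+$ rather than $\H$); setting $v := \hat y - \hat x_0$ then gives a point of $\T^1_{x_0}\H_+$ whose image under $\Phi_\infty$ is $y$. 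Smoothness of both maps is again immediate.

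The only substantive remaining point is the conformal property, and the plan is to read it off from (1) via a boundary defining function. Introducing $\rho := 2e^{-r}$ on $\Omega_{x_0}$, the metric formula $\hat h = \opSinh^2(r)\,h + dr^2$ rescales as
\[
\rho^2\,\hat h \;=\; (1-e^{-2r})^2\,h \;+\; 4e^{-2r}\,dr^2,
\]
which extends smoothly up to $\rho=0$ and restricts at $\{\rho=0\}$ to $h$, pulled back through $\Phi_\infty$. Since by definition (and consistently with Lemma \ref{lemma:warped parameterization}) the conformal class at infinity on $\partial_\infty\H_+$ is the one induced by any such rescaled metric, this identifies $\Phi_\infty^*[g_{\partial_\infty\H_+}]$ with $[h]$. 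As a sanity check, equivariance under the stabilizer $\Stab_{\mathsf O(p,q+1)}(\hat x_0)\cong \mathsf O(p,q)$, which acts isometrically on $(\T^1_{x_0}\H_+,h)$ and conformally on $\Omega_{x_0,\infty}$, gives a second proof: a diffeomorphism intertwining two such transitive actions must preserve the invariant conformal structures.
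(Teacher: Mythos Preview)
Your proof is correct and follows essentially the same route as the paper: both use the explicit geodesic formula $\hat\Phi(v,r)=\opCosh(r)\hat x_0+\opSinh(r)v$, compute the image and inverse directly, and differentiate to obtain the metric. The only noteworthy difference is in the conformality of $\Phi_\infty$: the paper simply observes that $D\hat\Phi_\infty(u)\cdot v = v$, so the induced metric on the section $\{\hat x_0+u : u\in\T^1_{x_0}\H_+\}$ is literally $h$, whereas you argue via a boundary defining function $\rho=2e^{-r}$ and the rescaled metric $\rho^2\hat h$. Both are valid, but the paper's one-line derivative is more direct and avoids appealing to the (correct but not explicitly stated in this form) identification of the conformal boundary structure with the rescaled-metric construction.
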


\begin{proof} For any spacelike vector $u\in\opT_{\hat x_0}\Quad$, the geodesic leaving $\hat x_0$ in the direction of $u$ is
\begin{equation*}
\gamma_u(t) := \opCosh(t\|u\|)\hat x_0 + \frac{1}{\|u\|}\opSinh(t\|u\|)u\ .
\end{equation*}
Indeed, this is the unique constant-speed parametrised curve in the intersection of $\Quad$ with the plane spanned by $\hat x_0$ and $u$ having velocity $u$ at $\hat x_0$. It follows that, for all $(u,r)\in \T^1_{x_0}\Quad\times(0,\infty)$,
\begin{equation}\label{eqn:ExplicitFormulaForPhi}
\hat{\Phi}(u,r) = \opCosh(r)\hat x_0 + \opSinh(r)u\ .
\end{equation}
We now show that $\opIm(\Phi)=\Omega_{x_0}$. Indeed, for all $(u,r)$, since $\q (u,\hat x_0)=0$,
\begin{equation*}
\q(\hat{\Phi}(u,r),\hat{x}_0)=-\opCosh(r)<-1\ ,
\end{equation*}
so that $\Phi(u,r)\in\Omega_{x_0}$. Conversely, we verify that every $y\in\Omega_{x_0}$ satisfies $\hat y=\hat\Phi(u,r)$, where
\begin{equation*}
\eqalign{
r &:=\opArccosh(-\q(\hat x_0,\hat y))\ ,\ \text{and}\vphantom{\frac{1}{2}}\cr
u &:=\frac{1}{\opSinh(r)}\hat y - \opCoth(r)\hat x_0\ ,\cr}
\end{equation*}
so that $\opIm(\Phi)=\Omega_{x_0}$, as desired.

Upon letting $r$ tend to $+\infty$ in \eqref{eqn:ExplicitFormulaForPhi}, we see that
\begin{equation*}
\hat{\Phi}_\infty(u) = \hat x_0 + u\ .
\end{equation*}
We now show that $\opIm(\Phi_\infty)=\Omega_{x_0,\infty}$. Indeed, for all $u$, since $\q (u,\hat x_0)=0$,
\begin{equation*}
\q(\hat{\Phi}_\infty(u),\hat{x}_0) = -1\ ,
\end{equation*}
so that $\hat{\Phi}_\infty(u)\in\Omega_{x_0,\infty}$. Conversely, given $y\in\Omega_{x_0,\infty}$, we may suppose that $\q(\hat{y},\hat{x}_0)=-1$, and we verify that $u:=\hat{y}-\hat{x}_0$ satisfies $\q (u,\hat x_0)=0$ and $\q(u,u)=1$. Since
\begin{equation*}
\hat{y}=\hat{\Phi}_\infty(u)\ ,
\end{equation*}
it follows that $\hat{y}\in\opIm(\hat{\Phi}_\infty)$, so that $\opIm(\hat{\Phi}_\infty)=\Omega_{x_0,\infty}$, as desired.

It remains only to study the induced metrics. Let $v$ be a tangent vector to $\opT^1_{x_0}\H_+$ at $u$. Differentiating \eqref{eqn:ExplicitFormulaForPhi} yields
\begin{equation*}
\eqalign{
D\hat{\Phi}(u,r)\cdot(v,0) &= \opSinh(r)v\ ,\ \text{and}\cr
D\hat{\Phi}(u,r)\cdot(0,1) &= \opSinh(r)\hat x_0 + \opCosh(r)u\ ,\cr}
\end{equation*}
so that
\begin{equation*}
\q\left(D\hat{\Phi}(u,r)\cdot(v,t),D\hat{\Phi}(u,r)\cdot(v,t)\right) := \opSinh(r)\q(v,v) + t^2\ ,
\end{equation*}
as desired. Finally, since
\begin{equation*}
D\hat{\Phi}_\infty(u)\cdot v = v\ ,
\end{equation*}
$\hat{\Phi}_\infty$ is conformal, and this completes the proof.
\end{proof}

\begin{remark}\label{rmk image spacelike chart}
In view of the proof of Theorem \ref{thm:perturb H+} that will be given below, it will be useful to observe that if $N_p$ and $N_q$ denote the respective north poles of $\S^p$ and $\S^q$, then in any Fermi parametrization $\Psi:\S^p_+\times\S^q\rightarrow\H_+$ such that $\Psi(N_p,N_q)=x_0$, $\Omega_{x_0,\infty}=\Psi(\S^{p-1}\times\S^q_+)$ where $\S^q_+$  the open hemisphere about $N_q$ in $\S^q$. Indeed, using the notation of Section \ref{subsection:FermiCoordinates}, it follows from \eqref{eqn:FermiParametrisationOfQuadricI} that, if $\hat{x}=:\hat\Psi(u,w)$, then
\begin{equation*}
\q(\hat{x},\hat{x}_0)=-\langle N_q,w\rangle_W=-\cos(d_{\S}(N_q,w))~,
\end{equation*}
which is negative if and only if $d_\S(N_q,w)<\pi/2$.
\end{remark}

We henceforth identify $\T_{x_0}\H_+$ with $\Rpq$ and $\T^1_{x_0}\H_+$ with the pseudo-sphere $\S^{p-1,q}$ of vectors $v\in\Rpq$ on which the quadratic form of $\Rpq$ takes the value $+1$. The tangent bundle of $\Spmq\times(0,\infty)$ trivially identifies with $\pi_1^*\T\Spmq\oplus\R$, where $\pi_1$ here denotes projection onto the first factor. We say that a vector field is \emph{horizontal} whenever it takes values in $\pi_1^*\T\Spmq$. Let $\partial_r$ denote the unit normal vector field in the $r$-direction. By \eqref{eqn:MetricInPolarCoordinates} every horizontal vector field is orthogonal to $\partial_r$.

\begin{lemma}\label{lemma:CovariantDerivativeInPolarCoordinates} Let $\nabla$ and $\hat{\nabla}$ denote respectively the Levi-Civita covariant derivatives of $h$ and $\hat{h}$. For all horizontal vector fields $\xi$ and $\nu$,
\begin{equation}\nexteqnno[CovariantDerivativeInPolarCoordinates]
\eqalign{
\hat{\nabla}_\xi\nu &= (\pi_1^*\nabla)_\xi\nu - \opCosh(r)\opSinh(r)h(\xi,\nu)\partial_r\ ,\cr
\hat{\nabla}_\xi\partial_r &= \opCoth(r)\xi\ ,\cr
\hat{\nabla}_{\partial_r}\xi &= \opCoth(r)\xi + [\partial_r,\xi]\ ,\ \text{and}\cr
\hat{\nabla}_{\partial_r}\partial_r &= 0\ .\cr}
\end{equation}
\proclabel{CovariantDerivativeInPolarCoordinates}
\end{lemma}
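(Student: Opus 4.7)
The plan is to recognise that $\hat h=\sinh^{2}(r)\,h\oplus dr^{2}$ is a warped product metric with base $((0,\infty),dr^{2})$, fibre $(\Spmq,h)$, and warping function $f(r)=\sinh(r)$. The formulas then follow from the standard warped-product identities (cf.\ O'Neill), which in this non-degenerate pseudo-Riemannian setting can be read off from the Koszul formula. I will verify them directly rather than quoting a black box, since only a handful of terms appear.

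First I would reduce to the case where $\xi$ and $\nu$ are horizontal lifts of vector fields on $\Spmq$, i.e.\ are independent of $r$; then $[\partial_{r},\xi]=[\partial_{r},\nu]=0$. The fourth identity $\hat\nabla_{\partial_{r}}\partial_{r}=0$ is immediate: by construction of $\Phi$, the curves $r\mapsto\Phi(v,r)$ are unit-speed spacelike geodesics of $\H_{+}$, hence of $\hat h$. For the second identity, applied to such a lift $\xi$ and an arbitrary horizontal lift $\mu$, the Koszul formula gives
\begin{equation*}
2\hat h(\hat\nabla_{\xi}\partial_{r},\mu)=\partial_{r}\hat h(\xi,\mu)=\partial_{r}\bigl(\sinh^{2}(r)\,h(\xi,\mu)\bigr)=2\sinh(r)\cosh(r)\,h(\xi,\mu)=2\coth(r)\hat h(\xi,\mu),
\end{equation*}
while the $\partial_{r}$-component vanishes since $\partial_{r}\hat h(\partial_{r},\partial_{r})=0$; hence $\hat\nabla_{\xi}\partial_{r}=\coth(r)\xi$. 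The third identity follows for a general horizontal $\xi$ from torsion-freeness: $\hat\nabla_{\partial_{r}}\xi-\hat\nabla_{\xi}\partial_{r}=[\partial_{r},\xi]$.

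For the first identity, decompose $\hat\nabla_{\xi}\nu$ into its horizontal and $\partial_{r}$ components. Testing against a horizontal lift $\mu$, the Koszul formula together with the fact that $\hat h(\xi',\nu')=\sinh^{2}(r)\,h(\xi',\nu')$ for all horizontal lifts $\xi',\nu'$, and with $\xi,\nu,\mu$ all $r$-independent, gives
\begin{equation*}
2\hat h(\hat\nabla_{\xi}\nu,\mu)=\sinh^{2}(r)\bigl[\xi h(\nu,\mu)+\nu h(\xi,\mu)-\mu h(\xi,\nu)+h([\xi,\nu],\mu)-h([\xi,\mu],\nu)-h([\nu,\mu],\xi)\bigr],
\end{equation*}
which equals $2\sinh^{2}(r)\,h(\nabla_{\xi}\nu,\mu)=2\hat h\bigl((\pi_{1}^{*}\nabla)_{\xi}\nu,\mu\bigr)$. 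Testing against $\partial_{r}$ gives
\begin{equation*}
2\hat h(\hat\nabla_{\xi}\nu,\partial_{r})=-\partial_{r}\hat h(\xi,\nu)=-2\sinh(r)\cosh(r)\,h(\xi,\nu),
\end{equation*}
whence the $\partial_{r}$-component equals $-\cosh(r)\sinh(r)\,h(\xi,\nu)$; combining yields the asserted formula on horizontal lifts. Finally one extends the first three identities to arbitrary horizontal vector fields by $C^{\infty}(\hat h\text{-total space})$-linearity of the connection in the lower index and by the derivation property in the upper index, the correction terms being absorbed into $(\pi_{1}^{*}\nabla)_{\xi}\nu$ and $[\partial_{r},\xi]$ respectively.

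There is no real obstacle: the computation is a textbook warped-product verification, and the only point requiring care is that $h$ has mixed signature $(p-1,q)$, but the Koszul formula and the arguments above hold verbatim for any non-degenerate metric, so nothing changes.
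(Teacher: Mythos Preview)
Your proof is correct and follows essentially the same route as the paper: both establish the fourth identity from the fact that radial curves are unit-speed geodesics, obtain the third from torsion-freeness, and get the first by splitting $\hat\nabla_\xi\nu$ into its horizontal part (which matches $\pi_1^*\nabla$ since each slice carries a scalar multiple of $h$) and its $\partial_r$-part. The only cosmetic difference is in the second identity: the paper phrases it via the shape operator of the slices $\Spmq\times\{r\}$ and appeals to the geometry of quadrics to read off $A_r=\coth(r)\opId$, whereas you compute the same quantity directly from the Koszul formula; these are the same computation in different clothing.
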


\begin{remark}
Note that in the product space $\Spmq\times(0,\infty)$ we may view horizontal vector fields as $r$-dependent families of tangent vector fields over $\S^{p-1,q}$, every horizontal vector field then satisfies $[\partial_r,\xi]=\partial_r\xi$.
\end{remark}

\myproof The fourth identity follows from the fact that vertical lines in this parametrization are unit speed geodesics. For all $r$, let $A_r$ denote the shape operator of $\Spmq\times\{r\}$, that is
\begin{equation*}
\hat\nabla_\xi\partial_r = A_r\cdot\xi\ .
\end{equation*}
A standard exercise in the geometry of quadrics shows that, for all $r$,
\begin{equation*}
A_r = \opCoth(r)\opId\ .
\end{equation*}
This proves the second identity, and the third identity follows since $\hat{\nabla}$ is torsion free. For all $r$, the restriction of $\hat{h}$ to $\Spmq\times\{r\}$ is a scalar multiple of $h$. Since $\partial_r$ is spacelike, it follows that
\begin{equation*}\eqalign{
\hat{\nabla}_\xi\nu &= (\pi_1^*\nabla)_\xi\nu + \hat h(\hat{\nabla}_\xi\nu,\partial_r)\partial_r\cr
&=(\pi_1^*\nabla)_\xi\nu - \hat h(\nu,\nabla_\xi\partial_r)\partial_r\cr
&=(\pi_1^*\nabla)_\xi\nu - \hat h(\nu,A_r\cdot\xi)\partial_r\cr
&=(\pi_1^*\nabla)_\xi\nu - \opCoth(r)\hat h(\nu,\xi)\ .\cr}
\end{equation*}
The first identity follows, and this completes the proof.\myqed

\subsection{The asymptotic model II - radially parametrized cones}\label{section:RadiallyParametrizedCones}

We continue to use the notation of the preceding section. Given a spacelike submanifold $X\subset\T^1_{x_0} \H_+$ we define
\begin{equation*}\eqalign{
\hat{X} &:= \Phi(X\times(0,\infty))\ ,\ \text{and}\cr
X_\infty &:= \Phi_\infty(X)\ ,\cr}\myeqnum{\nexteqnno[DefinitionOfConeAndIdealProjection]}
\end{equation*}
and we call these submanifolds respectively the \emph{cone} of $X$ and its \emph{ideal projection}. Trivially $X_\infty = \partial_\infty\hat{X}$, and we will always assume that the latter is a spacelike $(p-1)$-sphere.

We first study the infinitesimal geometry of cones.

\begin{lemma}\label{RadialCone}
Let $g$ and $\hat{g}$ denote respectively the metrics of $X$ and $\hat{X}$. Then
\begin{equation}
\Phi^*\hat{g} = \opSinh^2(r)g\oplus dr^2\ .\label{ConeMetric}
\end{equation}
\end{lemma}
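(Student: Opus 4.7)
The plan is essentially to observe that this lemma is an immediate corollary of Lemma \ref{eqn:GeometriesOfPhiAndPhiInfinity}, obtained simply by restricting the ambient metric formula to the submanifold $X\times(0,\infty)$ of $\T^1_{x_0}\H_+\times(0,\infty)$.

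More precisely, first I would note that, since $\hat{X}=\Phi(X\times(0,\infty))$ by definition, and since $\Phi$ is a diffeomorphism onto $\Omega_{x_0}$ by part (1) of Lemma \ref{eqn:GeometriesOfPhiAndPhiInfinity}, the cone $\hat{X}$ is a smooth submanifold of $\H_+$, and $\Phi$ restricts to a diffeomorphism $X\times(0,\infty)\to\hat{X}$. Furthermore, $\hat{g}$ is by definition the restriction of the pseudo-hyperbolic metric $\g$ to $\hat{X}$, so $\Phi^*\hat{g}$ is nothing but the restriction to $T(X\times(0,\infty))$ of $\Phi^*\g=\hat{h}$.

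Next I would apply the formula $\hat{h}=\sinh^2(r)h\oplus dr^2$ from \eqref{eqn:MetricInPolarCoordinates}. The tangent space $T_{(v,r)}(X\times(0,\infty))$ decomposes as $T_vX\oplus\R\partial_r$ inside $T_v(\T^1_{x_0}\H_+)\oplus\R\partial_r$, and the decomposition $\hat{h}=\sinh^2(r)h\oplus dr^2$ respects this splitting. Since the restriction of $h$ to $TX$ is precisely the induced metric $g$ on $X$ (by definition, as $X$ inherits its Riemannian structure as a spacelike submanifold of $\T^1_{x_0}\H_+$), the restriction of $\hat{h}$ to $T(X\times(0,\infty))$ reads $\sinh^2(r)g\oplus dr^2$, which is the desired formula.

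There is no real obstacle here; the content of the lemma is purely a bookkeeping observation about restricting a warped product metric to a product submanifold, and the only input beyond definitions is Lemma \ref{eqn:GeometriesOfPhiAndPhiInfinity}.
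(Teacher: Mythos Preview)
Your proposal is correct and follows essentially the same approach as the paper: both observe that in spacelike polar coordinates $\hat{X}$ identifies with $X\times(0,\infty)$, and then restrict the metric formula \eqref{eqn:MetricInPolarCoordinates} from Lemma \ref{eqn:GeometriesOfPhiAndPhiInfinity} to this product submanifold. The paper's proof is simply more terse.
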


\begin{proof} Indeed, in spacelike polar coordinates,
\begin{equation*}
\hat{X} = X\times(0,+\infty)\ ,
\end{equation*}
and the result follows by \eqref{eqn:MetricInPolarCoordinates}.
\end{proof}

\noindent Let $\N X$ and $\N \hat{X}$ denote respectively the normal bundles over $X$ and $\hat{X}$ in $\Spmq$ and $\H$. Since $\partial_r$ is tangent to $\hat{X}$, with the above identification,
\begin{equation}
\N \hat{X} = \pi_1^*\N X\ .\myeqnum{\nexteqnno[NormalBundleOverCone]}
\end{equation}

\begin{lemma}\label{SecondFFOfCone}
Let $\opII$ (respectively $\hat{\opII}$) denote the second fundamental forms of $X$ (respectively $\hat{X}$) with respect to $h$ (resp. $\hat{h}$), and let $H$ (resp. $\hat{H}$) denote its mean curvature vector. Then
\begin{equation}\eqalign{
\hat{\opII} &= \pi_1^*\opII\ ,\ \text{and}\cr
\hat{H} &= \frac{\pi_1^*H}{\opSinh^2(r)}\ .\cr}\label{SecondFFOfCone}
\end{equation}
\end{lemma}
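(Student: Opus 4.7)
The plan is to deduce both identities directly from the covariant-derivative formulas of Lemma \ref{lemma:CovariantDerivativeInPolarCoordinates}, together with the bundle identification $\N\hat X = \pi_1^*\N X$ from \eqref{eqn:NormalBundleOverCone}. Since the tangent bundle of $\hat X$ splits $\hat h$-orthogonally as $\T\hat X = \pi_1^*\T X \oplus \R\partial_r$, it suffices to compute $\hat\opII(\xi,\nu) = (\hat\nabla_\xi\nu)^{\N\hat X}$ for pairs $(\xi,\nu)$ in which each entry is either a horizontal lift of a vector field on $X$ or the radial field $\partial_r$, and then to take the $\hat g$-trace in a convenient frame.

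For the first identity I would go through the three cases. When $\xi$ and $\nu$ are horizontal lifts of vector fields on $X$, the first formula of \eqref{eqn:CovariantDerivativeInPolarCoordinates} gives
\[
\hat\nabla_\xi\nu = (\pi_1^*\nabla)_\xi\nu - \opCosh(r)\opSinh(r)\,h(\xi,\nu)\,\partial_r,
\]
and since $\partial_r$ is tangent to $\hat X$, the $\N\hat X$-component of the right-hand side is the $\N X$-component of $(\pi_1^*\nabla)_\xi\nu$, which by definition of $\opII$ equals $\pi_1^*\opII(\xi,\nu)$. For the mixed case, the second formula of \eqref{eqn:CovariantDerivativeInPolarCoordinates} shows that $\hat\nabla_\xi\partial_r = \opCoth(r)\xi$ is tangent to $\hat X$, so its normal projection vanishes; and for $\xi = \nu = \partial_r$ the fourth formula gives $\hat\nabla_{\partial_r}\partial_r = 0$. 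Hence $\hat\opII$ vanishes on any pair containing $\partial_r$ and coincides with $\pi_1^*\opII$ on horizontal pairs, which is precisely the first identity.

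For the mean curvature, I would pick a local $g$-orthonormal frame $(e_1,\dots,e_{p-1})$ of $\T X$. By Lemma \ref{RadialCone} the family $\bigl(\opSinh(r)^{-1}e_1,\dots,\opSinh(r)^{-1}e_{p-1},\partial_r\bigr)$ is an $\hat h$-orthonormal frame of $\T\hat X$, so tracing $\hat\opII$ against this frame yields
\[
\hat H = \sum_{i=1}^{p-1}\hat\opII\!\left(\tfrac{e_i}{\opSinh(r)},\tfrac{e_i}{\opSinh(r)}\right) + \hat\opII(\partial_r,\partial_r) = \frac{1}{\opSinh^2(r)}\sum_{i=1}^{p-1}\opII(e_i,e_i) = \frac{\pi_1^*H}{\opSinh^2(r)},
\]
using the first identity and the vanishing of $\hat\opII(\partial_r,\partial_r)$.

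There is no real obstacle here: once one observes that $\partial_r$ is intrinsic to $\hat X$ (so that normal projections onto $\N\hat X$ only ever see the $\N X$-component) and that the horizontal metric rescales by $\opSinh^2(r)$, both identities reduce to bookkeeping applied to \eqref{eqn:CovariantDerivativeInPolarCoordinates}. The one point where I would be careful is to think of $\pi_1^*\opII$ (and of $\pi_1^*H$) as extended by zero on the $\partial_r$-direction, so that the equality $\hat\opII = \pi_1^*\opII$ is meaningful on all of $\T\hat X\otimes\T\hat X$ and the rescaling factor $\opSinh^{-2}(r)$ in $\hat H$ reflects the fact that only the $X$-directions contribute to the trace.
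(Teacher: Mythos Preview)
Your proof is correct and follows essentially the same route as the paper: both compute $\hat\opII$ case by case on horizontal/radial pairs using Lemma \ref{lemma:CovariantDerivativeInPolarCoordinates} and the fact that $\partial_r$ is tangent to $\hat X$, then obtain $\hat H$ by tracing against $\hat g$. Your explicit choice of the $\hat g$-orthonormal frame $(\opSinh(r)^{-1}e_i,\partial_r)$ is just a slightly more spelled-out version of the paper's ``follows by \eqref{ConeMetric} upon taking the trace.''
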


\begin{proof} Indeed, by \eqnref{CovariantDerivativeInPolarCoordinates}, for all horizontal $\xi$ and $\nu$,
\begin{equation*}
\hat{\opII}(\xi,\nu) = \pi^N\big(\hat{\nabla}_\xi\nu\big) = \pi^N\big((\pi_1^*\nabla)_\xi\nu\big) = (\pi_1^*\opII)(\xi,\nu)\ .
\end{equation*}
Likewise, for all horizontal $\xi$,
\begin{equation*}
\hat{\opII}(\partial_r,\xi) = \hat{\opII}(\xi,\partial_r) = \pi^N\big(\hat{\nabla}_\xi\partial_r\big) = \pi^N(\opCoth(r)\xi) = 0\ .
\end{equation*}
Finally,
\begin{equation*}
\hat{\opII}(\partial_r,\partial_r) = \pi^N\big(\nabla_{\partial_r}\partial_r\big) = 0\ ,
\end{equation*}
and this proves the first identity. The second identity follows by \eqref{ConeMetric} upon taking the trace, and this completes the proof.
\end{proof}

\begin{lemma}
\noindent Let $\nabla^N$ and $\hat{\nabla}^N$ denote the respective covariant derivatives of $\N X$ and $\N \hat{X}$. For all $k$,
\begin{equation}
\|(\hat{\nabla}^N)^k(\opSinh(r)^{-1}\hat{\opII})\| = \opSinh^{-(k+2)}(r)\|\nabla^k\opII\|\circ\pi_1\ .\myeqnum{\nexteqnno[DecayOfSecondFF]}
\end{equation}
Likewise,
\begin{equation}
\Vert(\hat{\nabla}^N)^k(\opSinh(r)^{-1}\hat{H})\Vert = \opSinh^{-(k+2)}(r)\|\nabla^k H\|\circ\pi_1\ .\myeqnum{\nexteqnno[DecayOfMeanCurv]}
\end{equation}
\proclabel{DecayOfSecondFFAndMeanCurv}
\end{lemma}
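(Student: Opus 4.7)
The plan is to prove both identities by induction on $k$, reducing computations on the cone $\hat X$ to computations on the base submanifold $X$ through the warped-product structure $\Phi^*\hat g = \opSinh^2(r)g \oplus dr^2$ of Lemma~\ref{RadialCone}. The main ingredients are Lemma~\ref{SecondFFOfCone}, which identifies $\hat{\opII}$ with $\pi_1^*\opII$ and gives $\hat H = \pi_1^*H/\opSinh^2(r)$, and Lemma~\procref{CovariantDerivativeInPolarCoordinates}, which provides the ambient connection formulas in polar coordinates.

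The first step is to choose orthonormal frames well adapted to the warping. Starting from local $g$-orthonormal frames $\{e_i\}$ of $TX$ and $\{\nu_\alpha\}$ of $\N X$, I build $\hat g$-orthonormal frames $\{\hat e_i,\partial_r\}$ of $T\hat X$ and $\{\tilde\nu_\alpha\}$ of $\N\hat X$ as appropriate pullbacks rescaled by $\opSinh(r)$. Using Lemma~\procref{CovariantDerivativeInPolarCoordinates} together with the vanishing $[\partial_r,\pi_1^*\xi]=0$ for pullback horizontal vector fields, one verifies the crucial technical fact that \emph{both frames are parallel in the $\partial_r$-direction}, namely $\hat\nabla_{\partial_r}\hat e_i=0$ and $\hat\nabla^N_{\partial_r}\tilde\nu_\alpha=0$. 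This parallelism is what will allow iterated vertical derivatives of the components of $\opSinh^{-1}(r)\hat{\opII}$ to reduce to pure scalar differentiations of the $\opSinh$-prefactor.

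The base case $k=0$ is then immediate: in these frames, Lemma~\ref{SecondFFOfCone} gives the components of $\opSinh^{-1}(r)\hat{\opII}$ as $\opSinh^{-2}(r)$ times the pullbacks of the components of $\opII$, and summing squares over the orthonormal indices yields the claimed identity for the pointwise norm. For the inductive step, one writes $(\hat\nabla^N)^{k+1} = \hat\nabla^N\circ(\hat\nabla^N)^k$ and splits the outermost derivative into horizontal and vertical parts. In a horizontal direction $\hat e_\ell$, Lemma~\procref{CovariantDerivativeInPolarCoordinates} combined with the tangential and normal frame rescalings shows that $\hat\nabla_{\hat e_\ell}$ acts on pullback sections as $\opSinh^{-1}(r)\,\pi_1^*\nabla_{e_\ell}$, contributing the necessary additional factor of $\opSinh^{-1}(r)$ per horizontal derivative and thereby matching the drop in the right-hand exponent. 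In the vertical direction $\partial_r$, the parallelism of the frames reduces the covariant derivative to the scalar $r$-derivative of the front factor; combining the identity $\partial_r\opSinh^{-n}(r) = -n\opCoth(r)\opSinh^{-n}(r)$ with the Kronecker vanishing $\hat{\opII}(\partial_r,\cdot)=0$ (which kills the potentially troublesome cross-terms) rearranges the $\opCoth(r)$ contributions into the same additional factor of $\opSinh^{-1}(r)$.

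The mean curvature formula follows from that for $\hat{\opII}$ by $\hat g$-tracing: since $\hat g^{-1}$ scales horizontal cotangent pairings by $\opSinh^{-2}(r)$ while $\hat{\opII}$ already vanishes along $\partial_r$, the trace directly reproduces the scaling factor $\opSinh^{-2}(r)$ already visible in Lemma~\ref{SecondFFOfCone}'s formula $\hat H = \pi_1^*H/\opSinh^2(r)$, and the induction on $k$ proceeds in parallel with the $\hat{\opII}$ case. The principal obstacle throughout is the careful bookkeeping of $\opCoth(r)$-contributions generated by vertical covariant derivatives; the parallelism of the distinguished frames along $\partial_r$ is the crucial ingredient that confines these contributions to the $\opSinh^{-(k+2)}(r)$ prefactor and prevents them from coupling with the intrinsic $X$-derivatives of $\opII$ and $H$ appearing on the right-hand side.
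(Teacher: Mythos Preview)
Your approach is essentially the paper's: both are inductions on $k$ exploiting the warped-product structure, with horizontal derivatives pulling back to $\nabla^N$ on $X$ and the radial direction handled by the cancellation between the $\opCoth(r)$ term in $\hat{\nabla}^N_{\partial_r}$ and the $r$-derivative of $\opSinh^{-1}(r)$. The paper packages this as the single tensor identity $(\hat{\nabla}^N)^k(\opSinh^{-1}(r)\,\hat{\opII}) = \opSinh^{-1}(r)\,\pi_1^*((\nabla^N)^k\opII)$ and then reads off the norm from the metric formula~\eqref{eqn:MetricInPolarCoordinates}, whereas you carry out the equivalent calculation componentwise in the $\partial_r$-parallel orthonormal frames; these are two presentations of the same computation.
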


\begin{proof}
From \eqnref{CovariantDerivativeInPolarCoordinates}, since $\partial_r$ is tangent to $\hat{X}$, for every section $\sigma$ of $\N \hat{X}$, and for every horizontal vector field $\xi$,
\begin{equation*}\eqalign{
\hat{\nabla}^N_\xi\sigma &= (\pi_1^*\nabla^N)_\xi\sigma\ ,\ \text{and}\cr
\hat{\nabla}^N_{\partial_r}\sigma &= \opCoth(r)\sigma + [\partial_r,\sigma]\ .\cr}
\end{equation*}
Consequently, for all such $\xi$,
\begin{equation*}
\hat{\nabla}^N_\xi(\opSinh(r)^{-1}\hat{\opII}) = (\pi_1^*\nabla^N)_\xi(\opSinh(r)^{-1}\hat{\opII}) = \opSinh(r)^{-1}(\pi_1^*(\nabla^N\opII))_\xi\ .
\end{equation*}
Likewise, since $\pi_1^*\opII$ is constant in the radial direction,
\begin{equation*}
\hat{\nabla}_{\partial_r}^N\left(\opSinh(r)^{-1}\hat{\opII}\right) = \hat{\nabla}_{\partial_r}^N\left(\opSinh(r)^{-1}\pi_1^*\opII\right) = 0 = \opSinh(r)^{-1}(\pi_1^*(\nabla^N\opII))_{\partial_r}\ .
\end{equation*}
Iterating this argument yields, for all $k$,
\begin{equation*}
(\hat{\nabla}^N)^k\left(\opSinh(r)^{-1}\hat{\opII}\right) = \opSinh(r)^{-1}\pi_1^*((\nabla^N)^k\opII)\ .
\end{equation*}
The first identity follows by \eqref{eqn:MetricInPolarCoordinates}, and the second follows upon taking the trace.\end{proof}

We now study the asymptotic geometry of cones. Denote
\begin{equation}
P_0 := \Rp\times\{0\}\subset\Rpq\ ,\myeqnum{\nexteqnno[DefinitionOfPzero]}
\end{equation}
and let $P$ denote the totally-geodesic subspace in $\H$ tangent to $P_0$ at $x_0$.

\begin{lemma}\label{AsymptoticsOfCone}
Let $X$ be a spacelike sphere in $\Spmq$, and let $\seq[m]{x_m}$ be a divergent sequence in $\hat{X}$. If $\seq[m]{\alpha_m}$ is a sequence of isometries of $\H$ such that, for all $m$,
\begin{equation}\eqalign{
\alpha_m(x_m) &= x_0\ ,\ \text{and}\cr
D\alpha_m(T_{x_m}\hat{X}) &= P_0\ ,\cr}\label{eq:AsymptoticsOfCone}
\end{equation}
then $\seq[m]{\alpha_m(\hat{X})}$ converges in the $C^\infty_\oploc$ sense to $P$, and $\seq[m]{\alpha_m(X_\infty)}$ converges in the Hausdorff sense to $\partial_\infty P$. In particular, $\hat{X}$ has bounded geometry.
\end{lemma}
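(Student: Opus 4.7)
The plan is to establish uniform decay of the extrinsic curvature of $\hat X$ along any divergent sequence, transport this decay via the isometries $\alpha_m$, and then combine the resulting local smooth convergence with the rigid cone structure to control the ideal boundary. After extracting a subsequence, I write $x_m=\Phi(v_m,r_m)$ with $r_m\to\infty$ and $v_m\to v_\infty\in X$, using the compactness of $X$.

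First, combining Lemma \procref{DecayOfSecondFFAndMeanCurv} with the boundedness of all $\|(\nabla^N)^k\opII\|$ on the compact set $X$ yields uniform estimates
\begin{equation*}
\|(\hat\nabla^N)^k\hat\opII\|_{\hat g}(y)\leq C_k\opSinh^{-1}(r(y)),\qquad y\in\hat X,\quad k\geq 0.
\end{equation*}
Because the cone metric \eqref{ConeMetric} enforces $|r(y)-r(y')|\leq d_{\hat X}(y,y')$, these estimates restrict on $B_R(x_m)\cap\hat X$ to $C_k\opSinh^{-1}(r_m-R)$, decaying to $0$ as $m\to\infty$. To obtain the $C^\infty_\oploc$ convergence, I work in the Fermi parametrization $\Psi:\Upp^p\times\S^q\to\H_+$ associated to the marked hyperbolic $p$-space $(x_0,P)$, so that $P=\Psi(\Upp^p\times\{N_q\})$. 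Applying $\alpha_m$ transports the above bounds to $\alpha_m(\hat X)\cap B_R(x_0)$, and since $D\alpha_m(\T_{x_m}\hat X)=P_0$, this submanifold is expressed as the graph of a smooth function $\varphi_m:U_m\to\S^q$ defined on an open neighborhood $U_m\subseteq\Upp^p$ of the base point of $x_0$, satisfying $\varphi_m=N_q$ and $d\varphi_m=0$ at that point. The decay of $\hat\opII$ and its covariant derivatives bounds all higher derivatives of $\varphi_m$ by quantities that vanish as $m\to\infty$, whence $\varphi_m\to N_q$ in $C^\infty$ on compact subsets of $\Upp^p$, giving $\alpha_m(\hat X)\to P$ in $C^\infty_\oploc$.

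For the Hausdorff convergence of the asymptotic boundaries, compactness of $\H_+\cup\partial_\infty\H_+$ lets me pass to a subsequence so that the closures $\overline{\alpha_m(\hat X)}$ Hausdorff-converge to a closed set $C\subseteq\H_+\cup\partial_\infty\H_+$. The preceding step gives $P\cup\partial_\infty P\subseteq C$. For the reverse inclusion on $\partial_\infty\H_+$, I exploit the cone structure: $\alpha_m(\hat X)$ is the cone over $\alpha_m(X_\infty)$ with apex $\alpha_m(x_0)$. The explicit formula $\hat x_m=\opCosh(r_m)\hat x_0+\opSinh(r_m)v_m$, together with $\tilde\alpha_m(\hat x_m)=\hat x_0$ and the condition $D\alpha_m(\T_{x_m}\hat X)=P_0$, allows me to compute that $\alpha_m(\hat x_0)$ converges projectively to an isotropic line in the span of $\hat x_0$ and a unit vector of $P_0$, hence to a point of $\partial_\infty P$. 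Together with the tangent-plane condition, this rigidly constrains every Hausdorff limit of $\alpha_m(X_\infty)$ to lie in $\partial_\infty P$, so $C\cap\partial_\infty\H_+=\partial_\infty P$. Bounded geometry of $\hat X$ then follows as a corollary: along any divergent sequence, the local geometry at $x_m$ converges smoothly to that of the hyperbolic $p$-space $P$, giving uniform curvature bounds and a positive lower bound on injectivity radius away from the apex $x_0$, while on bounded regions the invariants are controlled by compactness.

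The principal obstacle lies in the preceding paragraph, where local $C^\infty$ convergence must be upgraded to Hausdorff convergence of the ideal boundaries in $\partial_\infty\H_+$: local smooth convergence gives no direct information outside bounded regions, and the argument depends crucially on the rigid cone structure of $\hat X$ and on the explicit projective behavior of $\alpha_m(\hat x_0)$ as $r_m\to\infty$.
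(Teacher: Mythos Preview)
Your local argument for $C^\infty_\oploc$ convergence is essentially correct, but the passage to Hausdorff convergence of $\alpha_m(X_\infty)$ has a genuine gap. You correctly compute that the apex $\alpha_m(x_0)$ converges projectively to a point of $\partial_\infty P$, but the assertion that this, ``together with the tangent-plane condition, rigidly constrains every Hausdorff limit of $\alpha_m(X_\infty)$ to lie in $\partial_\infty P$'' is not justified. Knowing where the apex of a cone goes, and knowing the tangent plane at one interior point, does not by itself pin down the ideal boundary of the cone: the geodesic rays from the escaping apex could in principle fan out in many directions. A related issue arises earlier: your $\varphi_m$ is only defined on a neighbourhood $U_m$ of the base point, and you never verify that these neighbourhoods eventually contain any prescribed compact subset of $\Upp^p$, so even the claim ``$\varphi_m\to N_q$ on compact subsets of $\Upp^p$'' is not quite secured. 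Likewise, local $C^\infty$ convergence near $x_0$ does not exclude other sheets of $\alpha_m(\hat X)$ accumulating elsewhere in $\H_+\cup\partial_\infty\H_+$.

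The paper avoids all of this with one observation: after a harmless smoothing of $\hat X$ near the apex, the cone metric $\opSinh^2(r)g\oplus dr^2$ is complete, so by Lemma~\ref{lemma:EntireGraph}(1) each $\alpha_m(\hat X)$ is a \emph{smooth entire graph}. In any fixed Fermi chart it is therefore the graph of a globally defined $1$-Lipschitz function $\varphi_m:\Upp^p\to\S^q$. The curvature decay of Lemma~\procref{DecayOfSecondFFAndMeanCurv} then gives locally uniform spacelikeness (Lemma~\ref{cor:loc unif spacelike}) and uniform bounds on all derivatives of $\varphi_m$, so the sequence is $C^\infty_\oploc$-precompact with unique accumulation point $P$. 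Since the space $\mathcal E_+$ of entire graphs is precompact in the Hausdorff topology and the asymptotic boundary operator $\partial_\infty$ is continuous there, the Hausdorff convergence $\alpha_m(X_\infty)\to\partial_\infty P$ follows immediately, with no need to analyse the apex or the cone structure directly.
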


\begin{proof} Note first that, upon suitably modifying $\hat X$ in an arbitrarily small neighbourhood of $x_0$, we may suppose that this submanifold is smooth and spacelike. For all $m$, denote $Y_m:=\alpha_m(X)$. By Lemma \ref{RadialCone} and Lemma \ref{lemma:EntireGraph} $(1)$, for all $m$, $Y_m$ is an entire graph of some smooth $1$-Lipschitz function $\varphi_m$, say. By \eqref{eqn:DecayOfSecondFF} and Lemma \ref{cor:loc unif spacelike} $(1)$, the sequence $\seq[m]{Y_m}$ is locally uniformly spacelike. By \eqref{eqn:DecayOfSecondFF} again, and reasoning as in the proof of Theorem \ref{theorem:PropernessOfTau}, we see that the derivatives of $\seq[m]{\varphi_m}$ to all orders are locally uniformly bounded, so that this sequence is compact in the $C^\infty_\oploc$ sense. Let $\varphi_\infty$ be an accumulation point and let $Y_\infty$ denote its graph. Since the intrinsic distance from $x_m$ to $x_0$ tends to infinity, by \eqref{eqn:DecayOfSecondFF} again, $Y_\infty$ is totally-geodesic, and since $T_{x_0}Y_\infty=P_0$, it follows that $Y_\infty=P$. The sequence $\seq[m]{Y_m}$ is thus $C^\infty_\oploc$-precompact with $P$ as its unique accumulation point, and therefore converges in the $C^\infty_\oploc$ sense to $P$, as desired. Finally, since $\mathcal{E}_+$ is precompact in the Hausdorff topology, this sequence also converges in the Hausdorff sense to $P$. It follows by continuity of the asymptotic boundary operator $\partial_\infty$ that $\seq[m]{\alpha_m(X_\infty)}=\seq[m]{\partial_\infty\alpha_m(\hat{X})}$ converges in the Hausdorff sense to $\partial_\infty P$, and this completes the proof.
\end{proof}

It remains only to describe how cones model the non-compact ends of maximal graphs. For all $r_0>0$, let $\hat{X}_{r_0}$ denote the image of $X\times(r_0,\infty)$ under $\Phi$. We call $\hat{X}_{r_0}$ the \emph{truncated cone} over $X$ of \emph{inner radius} $r_0$.

\begin{lemma}
Let $X$ be a smooth spacelike sphere in $\Spmq$ and let $M$ be a complete maximal $p$-submanifold in $\H_+$ such that $\partial_\infty M=X_\infty$. There exists a compact subset $K\subseteq M$, $r_0>0$ and a smooth section $\sigma\in\Gamma(\N \hat{X}_{r_0})$ such that $M\setminus K$ is the graph of $\sigma$ over $\hat{X}_{r_0}$. Furthermore, for all $k$, $\|J^k\sigma(x)\|$ tends to zero as $x$ tends to infinity. In particular, $M$ has bounded geometry.
\proclabel{AsymptoticsOfMaximalGraph}
\end{lemma}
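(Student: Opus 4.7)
The plan is to combine the compactness dichotomy of Theorem \ref{theorem:DegenerationOfMaximalSubmanifolds}, the uniqueness result Theorem \ref{thm:injective}, and the asymptotic description of cones in Lemma \ref{AsymptoticsOfCone}. The central claim, from which everything else follows, is the following: for any divergent sequence $\seqm{x_m}$ in $\hat X$ and any sequence of isometries $\alpha_m$ of $\H_+$ satisfying $\alpha_m(x_m)=x_0$ and $D\alpha_m(\T_{x_m}\hat X) = P_0$, the rescaled sequence $\seqm{\alpha_m(M)}$ converges to $P$ in the $C^\infty_\oploc$ sense.

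To prove the central claim, I would observe that $\partial_\infty\alpha_m(M) = \alpha_m(X_\infty)$ converges in the Hausdorff topology to $\partial_\infty P$ by Lemma \ref{AsymptoticsOfCone} and continuity of $\partial_\infty$ (Corollary \ref{cor:BoundaryOperatorIsContinuous}). Theorem \ref{theorem:DegenerationOfMaximalSubmanifolds} implies that every subsequence of $\seqm{\alpha_m(M)}$ subconverges either smoothly to a complete maximal $p$-submanifold or, in the Hausdorff sense, to a Lipschitz $p$-submanifold foliated by complete lightlike geodesics with common endpoints at infinity. In the latter case the asymptotic boundary would contain an antipodal pair (Lemma \ref{lemma:AdmissableBoundaryCriteria}), contradicting the fact that $\partial_\infty P$ is a positive sphere and hence admissible (Remark \ref{remark:PositiveImpliesAdmissable}). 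In the former case, uniqueness (Theorem \ref{thm:injective}) forces any smooth sublimit to equal $P$. Since every subsequence thus converges to $P$, so does the full sequence.

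Having established the central claim, combined with the convergence $\alpha_m(\hat X) \to P$ given by Lemma \ref{AsymptoticsOfCone}, I would construct $\sigma$ as follows. Choose a tubular radius $\rho>0$ such that the normal exponential map $\opExp^N$ is a diffeomorphism from the $\rho$-ball in $\N P$ onto its image in $\H_+$. For all sufficiently large $m$, both $\alpha_m(\hat X)$ and $\alpha_m(M)$ restricted to a fixed ball about $x_0$ are $C^\infty$-small normal graphs over $P$, and consequently $\alpha_m(M)$ is a normal graph over $\alpha_m(\hat X)$ via a section whose $C^k$-norm tends to zero for every $k$. Transporting back by $\alpha_m^{-1}$ and using the bounded geometry of $\hat X$ from Lemma \ref{AsymptoticsOfCone} to provide uniform tubular neighborhoods outside a compact, these local descriptions patch into a smooth section $\sigma\in\Gamma(\N\hat X_{r_0})$ for $r_0$ sufficiently large such that $M\setminus K$ is the graph of $\sigma$ for some compact $K\subseteq M$. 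The decay $\|J^k\sigma(x)\|\to 0$ follows by contradiction: if some $k$-jet stayed bounded away from zero along a divergent sequence, rescaling along that sequence would contradict the $C^\infty_\oploc$ convergence already established. Bounded geometry of $M$ is then immediate from bounded geometry of $\hat X$ together with this $C^\infty$-smallness.

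The hard part will be producing a single globally defined section $\sigma$ rather than only a family of local graphs about each basepoint. This requires a uniform lower bound on the size of the tubular neighborhood of $\hat X_{r_0}$ in which $\opExp^N$ is a diffeomorphism, together with uniform $C^\infty$-smallness of the displacement of $M$ from $\hat X$ outside compacts. Both are consequences of the central claim and of Lemma \ref{AsymptoticsOfCone}, but the assembly of the local graphs into a single well-defined $\sigma$ is where care must be taken.
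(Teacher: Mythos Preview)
Your approach is essentially the paper's: the same central claim (that $\alpha_m(M)\to P$ in $C^\infty_\oploc$), the same proof of it via the dichotomy of Theorem \ref{theorem:DegenerationOfMaximalSubmanifolds} together with uniqueness (Theorem \ref{thm:injective}), and the same passage from local normal graphs to a global section. The one place where the paper is more explicit is precisely the step you flag as the hard part: to show the local sections $\sigma_{x,r}$ patch into a single $\sigma$, the paper does not appeal to tubular-neighborhood bounds but instead uses that $M$ is an entire graph. In any Fermi chart a totally-geodesic timelike $q$-sphere is $\{0\}\times\S^q$, and an entire graph meets each such sphere exactly once; since the fibres of the normal exponential map of $\hat X$ lie in such spheres, two local normal graphs of $M$ over overlapping balls in $\hat X_{r_0}$ must agree on the overlap. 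This is the missing ingredient in your patching argument.
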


\begin{proof} It will suffice to show that with $\seq[m]{x_m}$ and $\seq[m]{\alpha_m}$ as in Lemma \ref{AsymptoticsOfCone}, $\seq[m]{\alpha_m(M)}$ also converges in the $C^\infty_\oploc$ sense to $P$. Indeed, when this holds, there exists $r_0>0$ such that, for all $(x,r)\in\hat{X}_{r_0}$, a portion of $M$ is the graph of some normal section $\sigma_{x,r}$ over the unit ball about this point. We now claim that the family $(\sigma_{x,r})_{r>r_0}$ joins together to yield a normal section over $\hat{X}_{r_0}$. Indeed, note first that, upon choosing a suitable Fermi chart, any totally-geodesic timelike sphere can be represented as $\{0\}\times\S^q$. Since $M$ is an entire graph in any such chart, it intersects every such sphere exactly once, so that, for all $(x,r)$ and $(x',r')$, the sections $\sigma_{x,r}$ and $\sigma_{x',r'}$ coincide over the intersection $B_1(x,r)\minter B_1(x',r')$, and the family $(\sigma_{x,r})_{r>r_0}$ joins together to yield a normal section $\sigma$ over $\hat{X}_{r_0}$, as asserted. The graph of this section is trivially the complement of some compact subset of $M$, and the result follows.

We now prove the assertion. Note that any accumulation point $M_\infty$ of the sequence $\seq[m]{\alpha_m(M)}$ satisfies $\partial_\infty M_\infty=\partial_\infty P$, and therefore contains no complete lightlike ray. It follows by Theorem \ref{thm:degenerate} that $\seq[m]{\alpha_m(M)}$ is relatively compact in the $C^\infty_\oploc$ topology and that every accumulation point $M_\infty$ is a complete maximal graph satisfying $\partial_\infty M_\infty=\partial_\infty P$, so that, by uniqueness, $M_\infty=P$. In other words, $\seq[m]{\alpha_m}$ is $C^\infty_\oploc$ pre-compact with $P$ as its unique accumulation point, and therefore converges in the $C^\infty_\oploc$ sense to $P$, as desired.\end{proof}

\newsubhead{Elliptic estimates I - differential operators over the line}[DifferentialOperatorsOverTheLine]

We now determine elliptic estimates for the actions of the Jacobi operator on Sobolev and H\"older spaces defined over cones and maximal graphs. As explained in the introduction to this section, pre-elliptic estimates will first be obtained using general properties of elliptic operators over manifolds of bounded geometry (see, for example, Lemmas \ref{proc:SobolevEstimateB} and \ref{proc:HoelderEstimateB}), and the main challenge will lie in transforming these pre-elliptic estimates into elliptic estimates. This will follow from a standard estimate for solutions of a certain family of ordinary differential operators defined over an unbounded interval which we now study.

The main idea of this section is well illustrated by the operator
\begin{equation}
L:=\partial_x^2-1\ ,
\end{equation}
defined over $C^2([0,\infty))$. The general solution of the equation $Lu=0$ is
\begin{equation}\label{eqn:GeneralSolutionOneDimProblem}
u(x):=Ae^x + Be^{-x}\ ,
\end{equation}
so that if $u$ is a {\sl bounded} solution of $Lu=0$, then
\begin{equation}
\|u\|_{C^0}\leq\left|u(0)\right|\ .
\end{equation}
That is, when $u$ is bounded, its supremum over the {\sl non-compact} interval $[0,\infty)$ is bounded in terms of its value over the {\sl compact} set $\{0\}$. It is this simple yet powerful idea that underlies the estimates derived below.

It will be helpful to recall the following definition from the theory of partial differential equations (see \cite{CIL}). Let $f:(a,b)\rightarrow\R$ be a continuous function, and let $P$ be a second-order differential operator. For all $c\in(a,b)$ and $v\in\R$, we say that $(Pf)(c)\geq v$ in the \emph{viscosity sense} whenever it has the property that, for all smooth $g:(a,b)\rightarrow\R$ such that $g\geq f$ and $g(c)=f(c)$,
\begin{equation}
(Pg)(c)\geq v\ .
\end{equation}
The theory of viscosity solutions is designed to be the most general framework in which the pointwise maximum principle applies.

\begin{lemma}
Let
\begin{equation}
P:=P(x,\xi):=\xi^2+a(x)\xi+b(x)
\end{equation}
be a $C^0$ family of monic quadratic polynomials in $\xi$ such that
\begin{equation}
\mlim_{x\rightarrow+\infty}P(x,\xi) =: P_\infty(\xi) =: \xi^2 + a_\infty\xi + b_\infty\ .\myeqnum{\nexteqnno[BasicOneDimEstimateI]}
\end{equation}
If $P_\infty(0)<-\delta$, for some $\delta>0$, then there exists $C>0$ such that for any $A\geq 0$ and for any bounded function $u:[0,\infty)\rightarrow[0,\infty)$ satisfying
\begin{equation}
P(x,\partial_x)u \geq -A~,\myeqnum{\nexteqnno[BasicOneDimEstimateII]}
\end{equation}
in the viscosity sense,
\begin{equation}
\|u|_{[C,\infty)}\|_{L^\infty} \leq \frac{A}{\delta} + \left|u(C)\right|~.\myeqnum{\nexteqnno[BasicOneDimEstimateIII]}
\end{equation}
\proclabel{BasicOneDimEstimate}
\end{lemma}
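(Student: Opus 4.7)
\medskip

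\noindent\textbf{Proof proposal.} My plan is to prove this by constructing an explicit barrier and invoking the viscosity maximum principle. The hypothesis $P_\infty(0)=b_\infty<-\delta$ together with continuity of the coefficients guarantees that, after choosing $C$ sufficiently large, we may assume simultaneously that $b(x)\leq-\delta$ for $x\geq C$ and that $P(x,-\lambda)\leq 0$ and $P(x,\mu)\leq 0$ for $x\geq C$, for suitably small fixed constants $\lambda,\mu>0$. The existence of such $\lambda,\mu$ uses that $P_\infty(-\lambda)$ and $P_\infty(\mu)$ are continuous in $\lambda,\mu$ and equal to $b_\infty<0$ at the origin.

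For each $\epsilon>0$, define the barrier
\begin{equation*}
w_\epsilon(x):=\tfrac{A}{\delta}+|u(C)|e^{-\lambda(x-C)}+\epsilon e^{\mu(x-C)}\ .
\end{equation*}
A direct calculation using the three sign conditions above shows $P(x,\partial_x)w_\epsilon\leq -A$ throughout $[C,\infty)$, since the constant piece contributes $b(x)A/\delta\leq -A$ and the other two pieces contribute nonpositive terms.

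The main claim is then $u\leq w_\epsilon$ on $[C,\infty)$. This is proved by contradiction: if the difference $u-w_\epsilon$ were positive somewhere, then since $w_\epsilon(x)\to+\infty$ as $x\to\infty$ (because of the $\epsilon e^{\mu(x-C)}$ term) and $u$ is bounded, the supremum would be attained at some interior point $x_0\in(C,\infty)$, as at $x=C$ one has $u(C)-w_\epsilon(C)=-A/\delta-\epsilon\leq 0$. At such an $x_0$, the function $g(x):=w_\epsilon(x)+(u(x_0)-w_\epsilon(x_0))$ is a smooth test function lying above $u$ and touching it at $x_0$, so the viscosity inequality forces $(Pg)(x_0)\geq-A$. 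But $Pg(x_0)=Pw_\epsilon(x_0)+b(x_0)(u(x_0)-w_\epsilon(x_0))\leq-A+b(x_0)(u(x_0)-w_\epsilon(x_0))<-A$, since $b(x_0)\leq-\delta<0$ and $u(x_0)-w_\epsilon(x_0)>0$. This contradiction establishes the claim, and letting $\epsilon\to 0^+$ yields
\begin{equation*}
u(x)\leq\tfrac{A}{\delta}+|u(C)|e^{-\lambda(x-C)}\leq\tfrac{A}{\delta}+|u(C)|
\end{equation*}
for every $x\geq C$, which is precisely the desired estimate.

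The role of the $\epsilon e^{\mu(x-C)}$ term is the one subtlety worth flagging: without it, the barrier only tends to the finite value $A/\delta$ at infinity, so one could not rule out the supremum of $u-w$ being attained in the limit rather than at an interior point. Forcing the barrier to blow up at $+\infty$ while keeping $Pw_\epsilon\leq-A$ is what reduces the problem to a genuine interior maximum, where the viscosity formulation of \eqref{eqn:BasicOneDimEstimateII} together with the strict negativity of $b$ near infinity closes the argument. Everything else (the choice of $\lambda,\mu$, the computation of $Pw_\epsilon$, the passage to the limit $\epsilon\to 0$) is routine given this setup.
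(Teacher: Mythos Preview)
Your proof is correct. Both arguments rest on the same observation—that $P_\infty(\xi)<0$ for all small $|\xi|$, so exponentials with small rates are supersolutions on $[C,\infty)$—and both conclude via the viscosity maximum principle. The difference lies in how the barrier is assembled and how the noncompactness at $+\infty$ is handled. The paper uses the one-parameter family $M(R)\cosh(\alpha x+R)$ with $M(R)=\max\{|u(C)|/\cosh(R),A/\delta\}$, obtains $u(x)\leq M(R)\cosh(\alpha x+R)$ for every $R$, and then evaluates at $R=-\alpha x$. You instead split the barrier into three explicit pieces $A/\delta+|u(C)|e^{-\lambda(x-C)}+\epsilon e^{\mu(x-C)}$, so that the constant term already absorbs the forcing $-A$, the decaying exponential matches the boundary value, and the growing $\epsilon$-term forces any positive maximum of $u-w_\epsilon$ to be interior; the passage $\epsilon\to 0$ then replaces the paper's optimisation over $R$. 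Your construction is a textbook Phragm\'en--Lindel\"of argument and is arguably more transparent, since each piece of the barrier has a single clear role; the paper's $\cosh$-barrier is slightly more compact and gives the two exponential rates symmetrically, but requires the extra optimisation step. Either way the constant $C$ depends only on the coefficients $a,b$ (through the rate of convergence to $a_\infty,b_\infty$) and not on $A$ or $u$, as required.
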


\myproof
Let $\alpha>0$ be such that $P_\infty(\xi)< -\delta$ for all $\xi \in [-2\alpha,2\alpha]$. Let $C>0$ be such that, for all $x\geq C$, and for all $\xi\in[-\alpha,\alpha]$,
\begin{equation*}
P(x,\xi)< -\delta\ .
\end{equation*}
In particular, for all such $x$,
\begin{equation*}
b(x)=P(x,0)<0\ .
\end{equation*}
For all $R\in\R$, and for all $x\geq C$,
\begin{equation*}
\multiline{
P(x,\partial_x)e^{\alpha x+R} = P(x,\alpha)e^{\alpha x+R} \leq -\delta e^{\alpha x+R}\ ,\ \text{and}\cr
P(x,\partial_x)e^{-\alpha x+R} = P(x,-\alpha)e^{-\alpha x+R} \leq -\delta e^{-\alpha x+R}\ ,\cr}
\end{equation*}
so that
\begin{equation*}
P(x,\partial_x)\opCosh(\alpha x + R) \leq -\delta\opCosh(\alpha x + R)~.
\end{equation*}
We now define
\begin{equation*}
M(R) := \max \left\{\frac{\left|u(C)\right|}{\cosh(R)},\frac{A}{\delta}\right\}~.
\end{equation*}
The function $v(x,R):=u(x)-M(R)\cosh(\alpha x+R)$ then satisfies
\begin{enumerate}
\item $v(C,R)\leq 0$,
\item $\lim_{x\to \infty} v(x,R)=-\infty$ , and
\item for any $x\geq C$, in the viscosity sense,
\begin{eqnarray*}
P(x,\partial_x)v(x,R) & = & P(x,\partial_x)u(x) - M(R)P(x,\partial_x)\cosh(\alpha x+R) \\
& \geq & -A + \delta M(R)\cosh(\alpha x+R) \\
& \geq & 0~.
\end{eqnarray*}
\end{enumerate}

Fix $R$ and let $x_0$ be a maximum of $v(\cdot,R)$ in $[C,+\infty)$. If $x_0>C$, then, denoting by $f$ the constant function equal to $v(x_0,R)$,
\begin{equation*}
b(x_0)v(x_0,R) = P(x,\partial_x)f(x)|_{x=x_0} \geq P(x,\partial_x)\nu(x,R)|_{x=x_0} \geq 0\ .
\end{equation*}
Since $b(x_0)<0$, it follows that
\begin{equation*}
\sup_{x\geq C}\nu(x)\leq\nu(x_0)\leq 0\ ,
\end{equation*}
so that, for all $x\geq C$,
\begin{equation*}
u(x) \leq M(R)\opCosh(\alpha x + R)~.
\end{equation*}
Setting $R=-\alpha x$ now yields, for all $x\geq C$,
\begin{equation*}
u(x) \leq M(R) \leq \frac{A}{\delta}+\left|u(C)\right|\ ,
\end{equation*}
as desired.\myqed

\noindent The following corollary will be useful for obtaining estimates over H\"older spaces.

\begin{corollary}
Under the hypotheses of Lemma \ref{proc:BasicOneDimEstimate}, there exists $C>0$ such that, for all $A,B>0$, and for all bounded $u:[0,\infty)\rightarrow[0,\infty)$ satisfying
\begin{equation}
P(x,\partial_x)u \geq -A-B\|u\|_{L^\infty}^{\frac{1}{2}}\ ,\myeqnum{\nexteqnno[CorBasicOneDimEstimateI]}
\end{equation}
in the viscosity sense,
\begin{equation}
\|u\|_{L^\infty} \leq \frac{2A}{\delta} + \frac{B^2}{\delta^2} + 2\|u|_{[0,C]}\|_{L^\infty}\ .\myeqnum{\nexteqnno[CorBasicOneDimEstimateII]}
\end{equation}
\proclabel{CorBasicOneDimEstimate}
\end{corollary}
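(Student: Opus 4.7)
The strategy is to reduce this corollary to Lemma \ref{proc:BasicOneDimEstimate} by treating the right-hand side of \eqnref{CorBasicOneDimEstimateI} as a constant, and then to absorb the resulting nonlinear dependence on $\|u\|_{L^\infty}$ via Young's inequality. Since $u$ is assumed bounded, the quantity $A':=A+B\|u\|_{L^\infty}^{1/2}$ is a finite nonnegative number, and the viscosity inequality \eqnref{CorBasicOneDimEstimateI} may be rewritten as
\begin{equation*}
P(x,\partial_x)u \geq -A'\ ,
\end{equation*}
which is exactly the hypothesis of Lemma \ref{proc:BasicOneDimEstimate}.

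Applying Lemma \ref{proc:BasicOneDimEstimate}, we therefore obtain a constant $C>0$, depending only on $P$ and $\delta$, such that
\begin{equation*}
\|u|_{[C,\infty)}\|_{L^\infty} \leq \frac{A}{\delta} + \frac{B}{\delta}\|u\|_{L^\infty}^{1/2} + |u(C)|\ .
\end{equation*}
Writing $N:=\|u\|_{L^\infty}$ and $N_0:=\|u|_{[0,C]}\|_{L^\infty}$, and noting that $|u(C)|\leq N_0$ and that $N=\max\{N_0,\|u|_{[C,\infty)}\|_{L^\infty}\}$, this yields
\begin{equation*}
N \leq N_0 + \frac{A}{\delta} + \frac{B}{\delta}N^{1/2}\ .
\end{equation*}

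The final step is Young's inequality in the form $\frac{B}{\delta}N^{1/2}\leq \tfrac{1}{2}N+\frac{B^2}{2\delta^2}$, which, when substituted into the above, allows us to absorb the $N^{1/2}$ term into the left-hand side and obtain
\begin{equation*}
\tfrac{1}{2}N \leq N_0 + \frac{A}{\delta} + \frac{B^2}{2\delta^2}\ .
\end{equation*}
Multiplying through by $2$ gives \eqnref{CorBasicOneDimEstimateII}. There is no serious obstacle here; the only subtle point is checking that the boundedness hypothesis on $u$ ensures that $A'$ is finite, so that Lemma \ref{proc:BasicOneDimEstimate} genuinely applies with a finite constant on the right-hand side before the Young-type absorption is carried out.
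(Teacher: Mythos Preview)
Your proposal is correct and matches the paper's own proof essentially step for step: apply Lemma \ref{proc:BasicOneDimEstimate} with the constant $A'=A+B\|u\|_{L^\infty}^{1/2}$, then use the algebraic-geometric mean (Young) inequality $\frac{B}{\delta}\|u\|_{L^\infty}^{1/2}\leq \frac{1}{2}\|u\|_{L^\infty}+\frac{B^2}{2\delta^2}$ to absorb the square-root term. The only cosmetic difference is that you spell out the passage from $\|u|_{[C,\infty)}\|_{L^\infty}$ to $\|u\|_{L^\infty}$ via $N=\max\{N_0,\|u|_{[C,\infty)}\|_{L^\infty}\}$ a bit more explicitly than the paper does.
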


\myproof Indeed, by Lemma \procref{BasicOneDimEstimate}, there exists $C>0$ such that, for all such $A$, $B$ and $u$,
\begin{equation*}
\|u\|_{L^\infty} \leq \frac{A}{\delta} + \frac{B}{\delta}\|u\|_{L^\infty}^{\frac{1}{2}} + \|u|_{[0,C]}\|_{L^\infty}\ .
\end{equation*}
However, by the algebraic-geometric mean inequality,
\begin{equation*}
\frac{B}{\delta}\|u\|_{L^\infty}^{\frac{1}{2}} \leq \frac{B^2}{2\delta^2} + \frac{1}{2}\|u\|_{L^\infty}\ ,
\end{equation*}
and the result follows upon combining these relations.\myqed

\noindent The following variant of Lemma \ref{proc:BasicOneDimEstimate} which will serve to obtain estimates over Sobolev spaces.

\begin{lemma}
Let
\begin{equation}
P:=P(x,\xi):=\xi^2 + a(x)\xi + b(x)
\end{equation}
be a $C^1$ family of monic quadratic polynomials in $\xi$ such that
\begin{equation}\eqalign{
\mlim_{x\rightarrow+\infty}P(x,\xi) &=:P_\infty(\xi)=:\xi^2+a_\infty\xi + b_\infty\ ,\ \text{and}\cr
\mlim_{x\rightarrow+\infty}a'(x) &= 0\ .\cr}\myeqnum{\nexteqnno[BasicOneDimEstimateIIIA]}
\end{equation}
If $P_\infty(0)<-\delta$, for some $\delta>0$, then there exists $C>0$ such that, for all smooth, integrable $u,v:[0,\infty)\rightarrow[0,\infty)$, if
\begin{equation}
P(x,\partial_x)u \geq -v\ ,\myeqnum{\nexteqnno[BasicOneDimEstimateIIIB]}
\end{equation}
then
\begin{equation}
\|u\|_{L^1} \leq \frac{2}{\delta}\|v\|_{L^1} + \frac{2}{\delta}(2+|a_\infty|)\|u|_{[0,C]}\|_{W^{1,1}}\ .\myeqnum{\nexteqnno[BasicOneDimEstimateIIIC]}
\end{equation}
\proclabel{BasicOneDimEstimateIII}
\end{lemma}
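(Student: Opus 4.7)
The strategy is to reduce the second-order inequality to a first-order one by factoring and then integrate, using the $L^1$ integrability of $u$ to kill the boundary contribution at infinity. Setting $F(x) := u'(x) + a(x) u(x)$, we have $Pu = F' + (b - a')u$, so the hypothesis rewrites as
\[ F'(x) \geq (a'(x) - b(x))\, u(x) - v(x). \]
Since $a'(x) \to 0$ and $b(x) \to b_\infty < -\delta$ at infinity, the plan is to first choose $C > 0$ large enough that $a'(x) - b(x) \geq \delta/2$ for all $x \geq C$. Integrating from any $s \in [0,C]$ to $R > C$, splitting the resulting integral over $[s,C]$ and $[C,R]$, and bounding $|a' - b|$ crudely on $[0,C]$, one obtains
\[ \tfrac{\delta}{2}\int_C^R u \leq F(R) - F(s) + \|v\|_{L^1} + K\,\|u\|_{L^1([0,C])}, \]
where $K := \sup_{[0,C]}|a'-b|$.

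The decisive step is to control $F(R)$ as $R \to \infty$. I plan to show that $\liminf_{R \to \infty} F(R) \leq 0$, using only that $u \geq 0$ is integrable. The argument will go by contradiction: if $F(x) \geq \eta > 0$ for all large $x$, then whenever $u(x) < \eta/(2M)$ with $M := \sup_{[0,\infty)}|a| < \infty$, one has $u'(x) \geq \eta - a(x) u(x) \geq \eta/2 > 0$. A one-dimensional maximum-principle argument then shows that, once $u$ leaves the set $\{u < \eta/(2M)\}$, it cannot re-enter without violating the strict positivity of $u'$ at the entry point by continuity; consequently $u(x) \geq \eta/(2M)$ for all sufficiently large $x$, contradicting $u \in L^1$. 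Selecting a sequence $R_n \to \infty$ realizing the liminf then yields
\[ \tfrac{\delta}{2}\int_C^\infty u \leq -F(s) + \|v\|_{L^1} + K\,\|u\|_{L^1([0,C])}. \]

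To produce the $W^{1,1}$ boundary term in the final estimate, I will average this inequality over $s \in [0, C]$. Since $-\int_0^C F(s)\,ds = u(0) - u(C) - \int_0^C a u$ and $|u(0) - u(C)| \leq \|u'\|_{L^1([0,C])}$ by the fundamental theorem of calculus, the averaged boundary contribution is bounded by $C^{-1}\bigl(\|u'\|_{L^1([0,C])} + A\,\|u\|_{L^1([0,C])}\bigr)$ with $A := \sup_{[0,C]}|a|$. Combining this with the decomposition $\|u\|_{L^1} = \|u\|_{L^1([0,C])} + \int_C^\infty u$, and enlarging $C$ so that $A$ is within one of $|a_\infty|$ and the lower-order coefficients are absorbed into the factor $2+|a_\infty|$, produces the stated inequality with constants $2/\delta$ and $(2/\delta)(2+|a_\infty|)$. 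The main obstacle is precisely the boundary-at-infinity step: unlike in Lemma \procref{BasicOneDimEstimate}, we have no a priori pointwise decay of $u$ or $u'$, and the $\liminf F \leq 0$ argument is the genuinely new ingredient.
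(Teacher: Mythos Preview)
Your overall approach is sound and genuinely different from the paper's, but the final step about constants does not work as written.

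\textbf{Comparison.} The paper does not factor through $F=u'+au$. Instead it sets $f(x):=\int_{x_0}^x u(t)\,dt$, which is \emph{bounded} because $u\in L^1$, and shows via integration by parts that $P(x,\partial_x)f\geq -A$ with $A$ built from $\|v\|_{L^1}$, $|u'(x_0)|$, $|a(x_0)u(x_0)|$ and $\tfrac{\delta}{2}\|u|_{[x_0,\infty)}\|_{L^1}$. This reduces the $L^1$ statement to the $L^\infty$ Lemma~\procref{BasicOneDimEstimate} already proved, applied to the bounded function $f$; the boundary-at-infinity issue disappears because $f$ is bounded, so no liminf argument is needed. Your route is more self-contained: you integrate directly and dispose of the boundary term $F(R)$ via $\liminf_{R\to\infty}F(R)\leq 0$. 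That liminf argument is correct (if $F\geq\eta>0$ eventually, then on $\{u<\eta/(2M)\}$ one has $u'>\eta/2$, forcing $u\geq\eta/(2M)$ eventually, contradicting $u\in L^1$; the case $M=0$ is even easier). Each approach has its virtue: the paper's is a clean reduction to a prior lemma, yours avoids that dependence.

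\textbf{The gap.} Averaging over $s\in[0,C]$ cannot produce the constant $\tfrac{2}{\delta}(2+|a_\infty|)$. You set $A=\sup_{[0,C]}|a|$ and $K=\sup_{[0,C]}|a'-b|$; enlarging $C$ does \emph{not} bring $A$ within one of $|a_\infty|$ (it is bounded below by $|a(0)|$, which is uncontrolled), and the term $K\|u\|_{L^1([0,C])}$ likewise carries a constant depending on the behaviour of $a',b$ near $0$. The fix is to take $s$ only in a unit interval $[C_0,C_0+1]$ with $C_0$ large enough that $a'-b\geq\delta/2$ and $|a|\leq|a_\infty|+1-\delta/2$ there: then no splitting of $\int_s^R(a'-b)u$ is needed (so $K$ never appears), the averaged boundary term is bounded by $\|u'\|_{L^1([C_0,C_0+1])}+(|a_\infty|+1-\delta/2)\|u\|_{L^1([C_0,C_0+1])}$, and after adding $\|u\|_{L^1([0,C_0+1])}$ one recovers exactly $\tfrac{2}{\delta}(2+|a_\infty|)$. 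This is also how the paper handles its averaging step.
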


\myproof Choose $x_0\in(0,\infty)$ such that, for all $x\geq x_0$,
\begin{equation}\label{eqn:SizeOfAAndB}
\left|a(x)-a_\infty\right|<1-\frac{\delta}{2},\ \left|a'(x)\right|<\frac{\delta}{4}\ \text{and}\ \left|b(x)-b_\infty\right|<\frac{\delta}{8}\ .
\end{equation}
Define
\begin{equation*}
f(x) := \int_{x_0}^x u(t)dt\ .
\end{equation*}
We will show that $f$ satisfies a second order differential relation of the type addressed by Lemma \ref{proc:BasicOneDimEstimate}. Indeed, upon integrating by parts, we obtain
\begin{equation*}\eqalign{
\int_{x_0}^xP(t,\partial_t)u(t)dt
&=\int_{x_0}^xu''(t) + a(t)u'(t) + b(t)u(t)dt\cr
&=[u'(t)]_{x_0}^x + [a(t)u(t)]_{x_0}^x - \int_{x_0}^xa'(t)u(t)dt + b(x)f(x) - \int_{x_0}^x(b(x)-b(t))u(t)dt\cr
&=f''(x)-u'(x_0)+a(x)f'(x)-a(x_0)u(x_0)\vphantom{\frac{1}{2}}\cr
&\qquad-\int_{x_0}^xa'(t)u(t)dt + b(x)f(x) - \int_{x_0}^x(b(x)-b(t))u(t)dt\cr
&=P(x,\partial_x)f - u'(x_0) - a(x_0)u(x_0) - \int_{x_0}^xa'(t)u(t)dt - \int_{x_0}^x(b(x)-b(t))u(t)dt\ .\cr}
\end{equation*}
Thus, bearing in mind \eqref{eqn:SizeOfAAndB},
\begin{equation*}
P(x,\partial_x)f(x) \geq -\|v\|_{L^1} - \left|u'(x_0)\right| - \left|a(x_0)\right|\left|u(x_0)\right| - \frac{\delta}{2}\|u|_{[x_0,\infty)}\|_{L^1}\ .
\end{equation*}
Noting that $f(x_0)=0$, it follows by Lemma \procref{BasicOneDimEstimate} that, upon increasing $x_0$ further if necessary,
\begin{equation*}
\|u|_{[x_0,\infty)}\|_{L^1} = \|f\|_{L^\infty} \leq \frac{1}{\delta}\big(\|v\|_{L^1} + \left|u'(x_0)\right| + \left|a(x_0)\right|\left|u(x_0)\right|\big) + \frac{1}{2}\|u|_{[x_0,\infty)}\|_{L^1}\ ,
\end{equation*}
so that
\begin{equation*}
\|u|_{[x_0,\infty)}\|_{L^1} \leq \frac{2}{\delta}\big(\|v\|_{L^1} + \left|u'(x_0)\right| + \left|a(x_0)\right|\left|u(x_0)\right|\big)\ .
\end{equation*}
It remains only to reformulate the right-hand side in terms of integral norms. Upon averaging, we obtain
\begin{equation*}
\|u|_{[x_0+1,\infty)}\|_{L^1} \leq \int_0^1\|u|_{[x_0+s,\infty)}\|_{L^1}ds
\leq\frac{2}{\delta}\|v\|_{L^1} + \frac{2}{\delta}\bigg(2-\frac{\delta}{2}+\left|a_\infty\right|\bigg)\|u|_{[x_0,x_0+1]}\|_{W^{1,1}}\ .
\end{equation*}
Finally,
\begin{equation*}
\|u|_{[0,x_0+1]}\|_{L^1} \leq \|u|_{[0,x_0+1]}\|_{W^{1,1}}\ ,
\end{equation*}
and the result follows upon combining these last two relations.\myqed

\noindent Applying the algebraic-geometric mean inequality as in the proof of Corollary \ref{proc:CorBasicOneDimEstimate}, we obtain the following useful variant of Lemma \ref{proc:BasicOneDimEstimateIII}.

\begin{corollary}
Under the hypotheses of Lemma \ref{proc:BasicOneDimEstimateIII}, there exists $C>0$ such that, for all $A,B>0$, and for all smooth, integrable $u,v:[0,\infty)\rightarrow[0,\infty)$, if
\begin{equation}
P(x,\partial_x)u \geq -u^{\frac{1}{2}}v^{\frac{1}{2}}\ ,\myeqnum{\nexteqnno[CorBBasicOneDimEstimateI]}
\end{equation}
then
\begin{equation}
\|u\|_{L^1} \leq \frac{4}{\delta^2}\|v\|_{L^1} + \frac{4}{\delta}(2+|a_\infty|)\|u|_{[0,C]}\|_{W^{1,1}}\ .\myeqnum{\nexteqnno[CorBBasicOneDimEstimateII]}
\end{equation}
\proclabel{CorBBasicOneDimEstimate}
\end{corollary}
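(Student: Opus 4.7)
The plan is to mimic the argument used to derive Corollary \ref{proc:CorBasicOneDimEstimate} from Lemma \procref{BasicOneDimEstimate}, namely to apply the weighted arithmetic--geometric mean inequality in order to absorb the hypothesis $P(x,\partial_x)u\geq -u^{1/2}v^{1/2}$ into the linear hypothesis $P(x,\partial_x)u\geq -\tilde v$ of Lemma \procref{BasicOneDimEstimateIII}. Concretely, pointwise on $[0,\infty)$ the weighted AM--GM inequality gives
\begin{equation*}
u^{1/2}v^{1/2}\leq \frac{\delta}{4}\,u+\frac{1}{\delta}\,v\ ,
\end{equation*}
so that the function $\tilde v:=u^{1/2}v^{1/2}$ is smooth, non-negative, and integrable, with $\|\tilde v\|_{L^1}\leq(\delta/4)\|u\|_{L^1}+(1/\delta)\|v\|_{L^1}$.

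Next, I would apply Lemma \procref{BasicOneDimEstimateIII} to $u$ and $\tilde v$, obtaining a constant $C>0$ (depending only on $P$ and $\delta$) such that
\begin{equation*}
\|u\|_{L^1}\leq \frac{2}{\delta}\|\tilde v\|_{L^1}+\frac{2}{\delta}(2+|a_\infty|)\,\|u|_{[0,C]}\|_{W^{1,1}}\ .
\end{equation*}
Substituting the bound on $\|\tilde v\|_{L^1}$ yields
\begin{equation*}
\|u\|_{L^1}\leq \frac{1}{2}\|u\|_{L^1}+\frac{2}{\delta^2}\|v\|_{L^1}+\frac{2}{\delta}(2+|a_\infty|)\,\|u|_{[0,C]}\|_{W^{1,1}}\ .
\end{equation*}
Since $u$ is integrable, $\|u\|_{L^1}<\infty$, so the $\tfrac{1}{2}\|u\|_{L^1}$ term can be absorbed into the left-hand side. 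Multiplying by $2$ then produces exactly the stated estimate \eqnref{CorBBasicOneDimEstimateII}.

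There is no genuine obstacle here: all the analytic content is supplied by Lemma \procref{BasicOneDimEstimateIII}, and the only novelty is the elementary quadratic trick used to linearise the right-hand side of \eqnref{CorBBasicOneDimEstimateI}. The single point requiring minor care is the legitimacy of absorbing $\tfrac{1}{2}\|u\|_{L^1}$, which hinges on the a priori finiteness of $\|u\|_{L^1}$ guaranteed by the integrability assumption on $u$.
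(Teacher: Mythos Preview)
Your proposal is correct and follows exactly the approach the paper indicates (``Applying the algebraic-geometric mean inequality as in the proof of Corollary \ref{proc:CorBasicOneDimEstimate}''). One cosmetic point: the function $\tilde v=u^{1/2}v^{1/2}$ need not be smooth when $u$ or $v$ vanishes; the cleanest fix is to take instead $\tilde v:=\tfrac{\delta}{4}u+\tfrac{1}{\delta}v$, which is smooth, non-negative, integrable, and still satisfies $P(x,\partial_x)u\geq -\tilde v$ by your pointwise AM--GM inequality, after which your computation goes through verbatim.
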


\newsubhead{Elliptic estimates II - weighted Sobolev spaces}[WeightedSobolevSpaces]

We now introduce the weighted function spaces that we will use to construct perturbations of maximal graphs. We first study weighted Sobolev spaces, where we will be able to prove invertibility using duality arguments. This will then be used in the next section as the basis for proving invertibility in the H\"older space case. Recall that, given a spacelike sphere $X$ in $\T^1_{x_0} \H$, the corresponding cone is $\hat{X} = \Phi\big(X\times (0,+\infty)\big)$ and the truncated cone is $\hat{X}_{r_0} = \Phi\big( X\times [r_0,+\infty)\big)$. For $\omega\in\R$, let $\mu_\omega$ denote the operator of multiplication by $e^{\omega r}$, acting on $\Gamma(\N\hat X)$ or $\Gamma(\N \hat{X}_{r_0})$. For $k\in\mathbf{N}$, $l\in[1,\infty)$, and $\omega\in\mathbf{R}$, we define the $\omega$-\emph{weighted Sobolev norm} by
\begin{equation}
\|\sigma\|_{W^{k,l}_\omega} := \|\mu_{-\omega}\sigma\|_{W^{k,l}}\ .\myeqnum{\nexteqnno[WeightedSobolevNorm]}
\end{equation}
For all $k\in\mathbf{N}$, $l\in[1,\infty)$, and $\omega\in\mathbf{R}$, we define the $\omega$-\emph{weighted Sobolev space} $W^{k,l}_\omega(\N \hat{X}_{r_0})$ to be the completion of $C^\infty_0(\N\hat{X}_{r_0})$ with respect to the $\omega$-weighted Sobolev norm. For all such $k$, $l$ and $\omega$, let $W^{k,l}_{\omega,0}(\N \hat{X}_{r_0})$ denote the subspace of $W^{k,l}_\omega(\N \hat{X}_{r_0})$ consisting of those sections which vanish over $X\times\{r_0\}$.
The key to understanding the theory of weighted function spaces lies in the observation that, for all $k$, $p$ and $\omega$, the map
\begin{equation}
\mu_\omega:W^{k,l}(\N \hat{X}_{r_0})\rightarrow W^{k,l}_\omega(\N \hat{X}_{r_0})\myeqnum{\nexteqnno[LinearIsomorphism]}
\end{equation}
is an isometric isomorphism with inverse $\mu_{-\omega}$. Functional properties of any operator $L$ over the weighted space $W^{k,l}_\omega(\N \hat{X}_{r_0})$ are thus determined by studying those of its conjugate $L_\omega:=\mu_{-\omega}L\mu_\omega$ over the unweighted space $W^{k,l}(\N \hat{X}_{r_0})$.

The aim of this section is to prove the following result.

\begin{theorem}
For all $r_0>0$, and for every weight $\omega\in(-\sqrt{p},\sqrt{p})$, the operator
\begin{equation}
J:W^{2,2}_{\omega,0}(\N \hat{X}_{r_0})\rightarrow L^2_\omega(\N \hat{X}_{r_0})\myeqnum{\nexteqnno[InvertibilityOverEndSobolev]}
\end{equation}
is a linear isomorphism.
\proclabel{InvertibilityOverEndSobolev}
\end{theorem}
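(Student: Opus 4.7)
My plan is to prove this via the standard mechanism for elliptic operators on non-compact manifolds: first establish a pre-elliptic estimate from bounded geometry, upgrade it to a genuine elliptic estimate using the one-dimensional ODE machinery of Section \subheadref{DifferentialOperatorsOverTheLine}, deduce the Fredholm property, and finally verify injectivity via integration by parts exploiting the Dirichlet condition. By Lemma \ref{AsymptoticsOfCone} the cone $\hat X_{r_0}$ has bounded geometry, and by \eqnref{DecayOfSecondFF} the $\opII$-contribution in \eqnref{JacobiOperator} is of size $O(\opSinh^{-2}(r))$, so $J$ is a uniformly elliptic second-order operator with uniformly bounded coefficients; conjugating by $\mu_\omega$ preserves this property. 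Standard interior Schauder / Calder\'on--Zygmund estimates on balls of bounded radius, glued via a cover of bounded multiplicity, yield the pre-elliptic estimate
\begin{equation*}
\|\sigma\|_{W^{2,2}_\omega}\leq C_1\bigl(\|\sigma\|_{L^2_\omega}+\|J\sigma\|_{L^2_\omega}\bigr)
\end{equation*}
for every $\sigma\in W^{2,2}_{\omega,0}(\N\hat X_{r_0})$. The heart of the argument is to replace $\|\sigma\|_{L^2_\omega}$ on the right by a norm over a compact subset.

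For this I would introduce the fibrewise energies
\[ u(r):=\int_{X\times\{r\}}|\mu_{-\omega}\sigma|^2\,dV_{\hat g},\qquad v(r):=\int_{X\times\{r\}}|\mu_{-\omega}J\sigma|^2\,dV_{\hat g},\]
so that $\|\sigma\|_{L^2_\omega}^2=\int_{r_0}^\infty u$ and $\|J\sigma\|_{L^2_\omega}^2=\int_{r_0}^\infty v$. Differentiating $u$ twice, using the decomposition \eqnref{CovariantDerivativeInPolarCoordinates} to separate radial and horizontal directions, eliminating second radial derivatives of $\sigma$ through the Jacobi equation, and integrating by parts horizontally over each fibre $X\times\{r\}$ (whose contributions are non-negative Dirichlet terms that can be discarded when seeking a lower bound), one obtains a differential inequality of the form $P(r,\partial_r)u\geq -u^{1/2}v^{1/2}$, where $P$ is a monic second-order operator. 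Its leading coefficients are assembled from the mass $-p$ of $J$, the $(p-1)\coth(r)$ drift of the cone connection, the conjugation weight $\omega$, and the volume factor $\opSinh^{p-1}(r)$, with the $\opII$-contribution subleading by \eqnref{DecayOfSecondFF}; a careful accounting shows that its asymptotic symbol satisfies $P_\infty(0)<0$ precisely when $\omega^2<p$. Corollary \ref{proc:CorBBasicOneDimEstimate} then gives $\|u\|_{L^1}\leq C_2\bigl(\|v\|_{L^1}+\|u|_{[r_0,r_0+R]}\|_{W^{1,1}}\bigr)$ for some $R>0$, which combined with the pre-elliptic estimate and Rellich--Kondrachov on the cylinder $X\times[r_0,r_0+R]$ yields the elliptic estimate
\begin{equation*}
\|\sigma\|_{W^{2,2}_\omega}\leq C_3\bigl(\|\sigma|_{X\times[r_0,r_0+R]}\|_{H^1}+\|J\sigma\|_{L^2_\omega}\bigr).
\end{equation*}

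From this estimate $J$ has closed range and finite-dimensional kernel. Formal self-adjointness of $J$ identifies its cokernel at weight $\omega$ with its kernel at weight $-\omega$; since the window $(-\sqrt p,\sqrt p)$ is connected and contains $\omega=0$, where $J$ is self-adjoint on $L^2_0$ and hence of index zero, the continuity of the Fredholm index forces index zero throughout. Triviality of the kernel follows from the Dirichlet condition: if $J\sigma=0$ with $\sigma\in W^{2,2}_{\omega,0}$, pairing against $\sigma$, integrating by parts over $\hat X_{r_0}$, and using the decay of $\sigma$ at infinity furnished by the elliptic estimate produces an energy identity whose Dirichlet energy, positive mass term $p|\sigma|^2$, and non-negative $\opII$-square all add with the same sign, forcing $\sigma\equiv 0$. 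The principal technical obstacle is the derivation of the differential inequality for $u$, where the threshold $|\omega|<\sqrt p$ emerges only from a delicate balance among the mass, the $\coth(r)$-drift, the conjugation weight, and the volume growth, while the horizontal integration by parts must be organized so that only non-negative or subleading errors remain; the Dirichlet condition at $X\times\{r_0\}$ is also crucial for enabling Corollary \ref{proc:CorBBasicOneDimEstimate} on the half-line $[r_0,\infty)$.
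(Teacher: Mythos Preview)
Your overall architecture matches the paper's exactly: the fibrewise energies $u,v$ and the differential inequality $P(r,\partial_r)u\geq -u^{1/2}v^{1/2}$ are precisely the content of Lemma \procref{SobolevEstimateA}, the gluing over balls of bounded radius is Lemma \procref{SobolevEstimateB}, and the index argument via $J_\omega^*=J_{-\omega}$ and continuity from $\omega=0$ is verbatim the paper's. The threshold $|\omega|<\sqrt p$ emerges exactly as you describe.

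The one genuine difference is the injectivity step. The paper does \emph{not} integrate by parts; instead, for $\omega\in(-\sqrt p,0]$ it observes that $W^{2,2}_{\omega,0}\subset W^{2,2}_{0,0}$, bootstraps to $\sigma\in L^\infty$, and applies the Omori maximum principle to $\|\sigma\|^2$. Your energy identity is a cleaner alternative in the Hilbert setting---since $\g_\N=-\langle\cdot,\cdot\rangle$ on the normal bundle, pairing $J\sigma=0$ against $\sigma$ and integrating by parts indeed gives
\[
0=-\int\|\nabla^N\sigma\|^2-p\int\|\sigma\|^2-\int\sum_{m,n}\langle\opII(e_m,e_n),\sigma\rangle^2,
\]
forcing $\sigma=0$.

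However, you must restrict this argument to $\omega\le 0$. For $\omega>0$ an element of $W^{2,2}_{\omega,0}$ may grow like $e^{\omega r}$, so the pairing $\int\langle J\sigma,\sigma\rangle$ is not defined and the phrase ``decay of $\sigma$ at infinity furnished by the elliptic estimate'' is unjustified: the elliptic estimate only returns you to $W^{2,2}_\omega$, which gives no decay when $\omega>0$. The fix is exactly what the paper does and what you already set up: prove injectivity (hence invertibility, by index zero) for $\omega\in(-\sqrt p,0]$, and then invoke the duality $J_\omega=J_{-\omega}^*$ to transfer invertibility to $\omega\in[0,\sqrt p)$. You state this duality in your Fredholm paragraph but fail to call on it again where it is actually needed.
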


\noindent Since by Lemma \ref{proc:AsymptoticsOfMaximalGraph} every complete maximal graph with smooth spacelike asymptotic boundary is asymptotic to a cone, estimates for Jacobi operators over maximal graphs will be obtained upon perturbing this result.

To prove Theorem \procref{InvertibilityOverEndSobolev}, we first note that, by Lemma \ref{proc:JacobiOperator}, Lemma \ref{lemma:CovariantDerivativeInPolarCoordinates} and Lemma \ref{RadialCone},
\begin{equation}
J\sigma = \opSinh^{-2}(r)\Delta^N\sigma + (\nabla^N_{\partial_r})^2\sigma + (p-1)\opCoth(r)(\nabla^N_{\partial_r})\sigma - p\sigma + \sum_{m,n}\g(\hat{\opII}(e_m,e_n),\sigma)\hat{\opII}(e_m,e_n)\ ,\myeqnum{\nexteqnno[JacobiOperatorInPolarCoordinates]}
\end{equation}
where $\Delta^N$ here denotes the Laplace operator of $\N X$, and $e_1,\cdots,e_p$ is an arbitrary orthonormal frame of the tangent bundle of $\hat{X}$. We thus consider the simpler operator
\begin{equation}
L\sigma := \opSinh^{-2}(r)\Delta^N\sigma + P\big(r,\nabla_{\partial_r}^N\big)\sigma\ ,\myeqnum{\nexteqnno[ApproximateOperator]}
\end{equation}
where
\begin{equation}
P(r,\xi) := \xi^2 + (p-1)\opCoth(r)\xi - p\ ,\myeqnum{\nexteqnno[PolynomialPart]}
\end{equation}
and, for all $\omega\in\mathbf{R}$, we define the conjugate operator
\begin{equation}
L_\omega\sigma := \mu_{-\omega}L\mu_\omega\sigma = \opSinh^{-2}(r)\Delta^N\sigma + P\big(r,\nabla_{\partial_r}^N+\omega\big)\sigma\ .\myeqnum{\nexteqnno[ConjugateApproximateOperator]}
\end{equation}
The following result provides the key to transforming pre-elliptic estimates into elliptic estimates in the present Sobolev space case.

\begin{lemma}
For all $r_0>0$, and for every weight $\omega\in(-\sqrt{p},\sqrt{p})$, there exists $C>0$ such that, for all $\sigma\in W^{2,2}(\N \hat{X}_{r_0})$,
\begin{equation}
\|\sigma\|_{L^2} \leq C\big(\|\sigma|_{X\times[r_0,C]}\|_{W^{1,2}} + \|L_\omega\sigma\|_{L^2}\big)\ .\myeqnum{\nexteqnno[SobolevEstimateA]}
\end{equation}
\proclabel{SobolevEstimateA}
\end{lemma}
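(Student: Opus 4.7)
The plan is to reduce the PDE estimate on $\hat X_{r_0}$ to a scalar ODE inequality on $[r_0,\infty)$, which will then be handled by Corollary \procref{CorBBasicOneDimEstimate}. The first step is to absorb the cone volume factor $\opSinh^{p-1}(r)$ through the substitution $\tilde\sigma := \opSinh^{(p-1)/2}(r)\,\sigma$, which satisfies $\|\tilde\sigma\|_{L^2(X\times[r_0,\infty),\,dvol_X\,dr)} = \|\sigma\|_{L^2(\hat X_{r_0})}$. A direct computation using the commutation relations for $\nabla^N_{\partial_r}$ with $\opSinh^{\pm(p-1)/2}(r)$ yields $L_\omega\sigma = \opSinh^{-(p-1)/2}(r)\,\tilde L_\omega\tilde\sigma$, where
\begin{equation*}
\tilde L_\omega\tilde\sigma = \opSinh^{-2}(r)\,\Delta^N\tilde\sigma + (\nabla^N_{\partial_r})^2\tilde\sigma + 2\omega\,\nabla^N_{\partial_r}\tilde\sigma + b(r)\tilde\sigma\ ,
\end{equation*}
and $b(r)\to \omega^2 - (p+1)^2/4$ as $r\to\infty$. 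The key point is that the conjugation cancels the $(p-1)\opCoth(r)\,\nabla^N_{\partial_r}$ term present in \eqref{eqn:ConjugateApproximateOperator}, leaving a first-order radial coefficient that is asymptotically constant. In particular, $\|L_\omega\sigma\|_{L^2(\hat X_{r_0})} = \|\tilde L_\omega\tilde\sigma\|_{L^2(X\times[r_0,\infty),\,dvol_X\,dr)}$.

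Next, I set $u(r) := \|\tilde\sigma(\cdot,r)\|^2_{L^2(X,\,dvol_X)}$, so that $\|\sigma\|^2_{L^2(\hat X_{r_0})} = \|u\|_{L^1([r_0,\infty))}$. Pairing $\tilde L_\omega\tilde\sigma$ with $\tilde\sigma$, integrating over the closed manifold $X$, and integrating by parts to convert $\int_X\langle\Delta^N\tilde\sigma,\tilde\sigma\rangle$ into $-\|\nabla^N\tilde\sigma\|^2_{L^2(X)}\leq 0$, one obtains
\begin{equation*}
\tfrac{1}{2}u'' + \omega u' + b(r) u = \int_X\langle\tilde L_\omega\tilde\sigma,\tilde\sigma\rangle + \opSinh^{-2}(r)\|\nabla^N\tilde\sigma\|^2_{L^2(X)} + \|\nabla^N_{\partial_r}\tilde\sigma\|^2_{L^2(X)}\ .
\end{equation*}
The last two right-hand terms are non-negative, while the first is bounded below by $-\|\tilde L_\omega\tilde\sigma(\cdot,r)\|_{L^2(X)}\,u(r)^{1/2}$ via Cauchy--Schwarz. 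Setting $v(r) := 4\|\tilde L_\omega\tilde\sigma(\cdot,r)\|^2_{L^2(X)}$, one arrives at the ODE inequality $P(r,\partial_r)u \geq -u^{1/2}v^{1/2}$ with $P(r,\xi) = \xi^2 + 2\omega\xi + 2b(r)$.

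The asymptotic polynomial satisfies $P_\infty(0) = 2\omega^2 - (p+1)^2/2$. For $|\omega|<\sqrt{p}$ one has $\omega^2 < p \leq (p+1)^2/4$, so $P_\infty(0)<-\delta$ for some $\delta>0$ uniform in such $\omega$, and Corollary \procref{CorBBasicOneDimEstimate} applies to give
\begin{equation*}
\|u\|_{L^1([r_0,\infty))} \leq C_1\,\|v\|_{L^1([r_0,\infty))} + C_2\,\|u|_{[r_0,r_0+C']}\|_{W^{1,1}}\ .
\end{equation*}
The first right-hand term equals $4C_1\|L_\omega\sigma\|^2_{L^2(\hat X_{r_0})}$; for the second, writing $u' = 2\int_X\langle\nabla^N_{\partial_r}\tilde\sigma,\tilde\sigma\rangle$ and applying Cauchy--Schwarz, together with the fact that $\opSinh^{(p-1)/2}$ is bounded above and away from zero on the compact interval $[r_0, r_0+C']$, gives $\|u|_{[r_0,r_0+C']}\|_{W^{1,1}} \leq C_3\|\sigma|_{X\times[r_0,r_0+C']}\|^2_{W^{1,2}}$. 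Taking square roots and using $\sqrt{a+b}\leq\sqrt{a}+\sqrt{b}$ yields \eqref{eqn:SobolevEstimateA}.

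The main obstacle I expect is the Step 1 conjugation calculation itself: one has to carefully track the cancellation of first-order radial coefficients to confirm that $\tilde L_\omega$ has no $\opCoth(r)$-weighted first-order term and that the resulting potential $b(r)$ has the clean asymptotic value $\omega^2-(p+1)^2/4$. Once this is in hand, the rest of the argument is a direct application of the one-dimensional estimate established in the previous subsection.
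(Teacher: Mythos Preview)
Your proof is correct and follows essentially the same strategy as the paper: reduce the problem to a second-order ODE inequality for the radial function $u(r)=\int_X\|\sigma\|^2\opSinh^{p-1}(r)\,dx$ and invoke Corollary~\procref{CorBBasicOneDimEstimate}. The only organizational difference is that you pre-conjugate $L_\omega$ by $\opSinh^{(p-1)/2}(r)$ to absorb the cone volume factor, which produces an ODE polynomial with \emph{constant} first-order coefficient $2\omega$ and asymptotic value $P_\infty(0)=2\omega^2-(p+1)^2/2$; the paper instead carries the weight through directly and obtains the polynomial $Q_\omega(r,\xi)=\xi^2-\big((p-1)\opCoth(r)-2\omega\big)\xi+\big(2\omega^2+(p-1)\opSinh^{-2}(r)-2p\big)$ with $Q_\omega(\infty,0)=2\omega^2-2p$. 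These differ because you discard the non-negative term $\|\nabla^N_{\partial_r}\tilde\sigma\|^2$ while the paper discards $\|\nabla^N_{\partial_r}\sigma\|^2$, but both inequalities satisfy the hypotheses of the corollary on the stated range $|\omega|<\sqrt{p}$, and the remaining bookkeeping (Cauchy--Schwarz on the cross term, control of $\|u|_{[r_0,C]}\|_{W^{1,1}}$ by $\|\sigma\|_{W^{1,2}}$ on the compact slab) is identical.
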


\begin{proof}Define $f,g:[r_0,\infty)\rightarrow[0,\infty)$ by
\begin{equation*}\eqalign{
f(r) &:= \int_X \|\sigma(x,r)\|^2\opSinh(r)^{p-1}dx\ ,\ \text{and}\cr
g(r) &:= \int_X \|L_\omega\sigma(x,r)\|^2\opSinh(r)^{p-1}dx\ .\cr}
\end{equation*}
It will suffice to show that $f$ satisfies a second-order ordinary differential relation of the type used in Corollary \ref{proc:CorBBasicOneDimEstimate}. Differentiating $f$ twice under the integral and applying the product rule yields
\begin{equation*}\eqalign{
f'(r) &= \int_X 2\langle\nabla^N_{\partial_r}\sigma,\sigma\rangle\opSinh(r)^{p-1}dx + (p-1)\opCoth(r)f(r)\ ,\ \text{and}\cr
f''(r) &= \int_X 2\langle(\nabla^N_{\partial_r})^2\sigma,\sigma\rangle\opSinh(r)^{p-1}dx + \int_X 2\langle\nabla^N_{\partial_r}\sigma,\nabla^N_{\partial_r}\sigma\rangle\opSinh(r)^{p-1}dx\cr
&\qquad\qquad+\int_X 2\langle(p-1)\opCoth(r)\nabla^N_{\partial_r}\sigma,\sigma\rangle\opSinh(r)^{p-1}dx-\frac{(p-1)}{\opSinh(r)^2}f(r) + (p-1)\opCoth(r)f'(r)\cr
&\geq \int_X 2\langle(\nabla^N_{\partial_r})^2\sigma,\sigma\rangle\opSinh(r)^{p-1}dx + \int_X 4\langle(p-1)\opCoth(r)\nabla^N_{\partial_r}\sigma,\sigma\rangle\opSinh(r)^{p-1}dx\cr
&\qquad\qquad -\frac{(p-1)}{\opSinh(r)^2}f(r)+(p-1)^2\opCoth(r)^2f(r)\ .\cr}
\end{equation*}
Consider now the function
\begin{equation*}
Q_\omega(r,\xi) := \xi^2 - ((p-1)\opCoth(r) - 2\omega)\xi + \bigg(2\omega^2 + \frac{(p-1)}{\opSinh(r)^2} - 2p\bigg),
\end{equation*}
Using the preceding formulae for the derivatives of $f$, we obtain
\begin{equation*}\eqalign{
Q_\omega(r,\partial_r)f &\geq
\int_X 2\left\langle(\nabla^N_{\partial_r})^2\sigma + \big((p-1)\opCoth(r)+2\omega\big)\nabla^N_{\partial_r}\sigma + \big(\omega^2 + \omega(p-1)\opCoth(r)-p\big)\sigma,\sigma\right\rangle\opSinh(r)^{p-1}dx\cr
&=\int_X 2\left\langle P\big(r,\nabla^N_{\partial_r}+\omega\big)\sigma,\sigma\right\rangle\opSinh(r)^{p-1}dx\ .\cr}
\end{equation*}
Applying \eqref{eqn:ConjugateApproximateOperator} now yields
\begin{equation*}
Q_\omega(r,\partial_r)f \geq \int_X 2\langle L_\omega\sigma,\sigma\rangle\opSinh(r)^{p-1}dx - \int_X 2\langle\Delta^N\sigma,\sigma\rangle\opSinh(r)^{p-3}dx\ ,
\end{equation*}
so that, upon integrating by parts,
\begin{equation*}\eqalign{
Q_\omega(r,\partial_r)f &\geq \int_X 2\langle L_\omega\sigma,\sigma\rangle\opSinh(r)^{p-1}dx + \int_X 2\langle\nabla^N\sigma,\nabla^N\sigma\rangle\opSinh(r)^{p-3}dx\cr
&\geq\int_X 2\langle L_\omega\sigma,\sigma\rangle\opSinh(r)^{p-1}dx\cr
&\geq-2f^{\frac{1}{2}}g^{\frac{1}{2}}\ .\vphantom{\frac{1}{2}}\cr}
\end{equation*}
Since $\omega\in(-\sqrt{p},\sqrt{p})$, $Q_\omega$ satisfies the hypotheses of Corollary \ref{proc:CorBBasicOneDimEstimate}, and there therefore exists $C>0$ such that
\begin{equation*}
\|\sigma\|^2_{L^2} = \|f\|_{L^1} \leq C\big(\|g\|_{L^1} + \|f|_{[r_0,C]}\|_{W^{1,1}}\big) = C\big(\|L_\omega\sigma\|^2_{L^2} + \|f|_{[r_0,C]}\|_{W^{1,1}}\big)\ .
\end{equation*}
Finally,
\begin{equation*}
\|f|_{[r_0,C]}\|_{W^{1,1}} = \|f|_{[r_0,C]}\|_{L^1} + \|f'|_{[r_0,C]}\|_{L^1} = \|\sigma|_{X\times[r_0,C]}\|^2_{L^2} + \|f'|_{[r_0,C]}\|_{L^1}\ ,
\end{equation*}
whilst
\begin{equation*}\eqalign{
\|f'|_{[r_0,C]}\|_{L^1}
&=\int_{r_0}^C\bigg|\int_X 2\langle\nabla^N_{\partial_r}\sigma,\sigma\rangle\opSinh^{(p-1)}(r)dx + (p-1)\opCoth(r)f(r)\bigg|dr\cr
&\leq 2\|\nabla^N_{\partial_r}\sigma|_{X\times[r_0,C]}\|_{L^2}\|\sigma_{X\times[r_0,C]}\|_{L^2} + (p-1)\opCoth(r_0)\|\sigma|_{X\times[r_0,C]}\|^2_{L^2}\ .\vphantom{\bigg(}\cr}
\end{equation*}
Upon applying the algebraic-geometric mean inequality to the first term, we thus obtain
\begin{equation*}
\|f'|_{[r_0,C]}\|_{L^1}\leq \|\nabla^N_{\partial_r}\sigma|_{[r_0,C]}\|^2_{L^2} + p\opCoth(r_0)\|\sigma|_{X\times[r_0,C]}\|^2_{L^2}\ ,
\end{equation*}
and the result follows upon combining these estimates.
\end{proof}

\begin{lemma}
\noindent For all $r_0>0$, and for any weight $\omega\in(-\sqrt{p},\sqrt{p})$, there exists $C>0$ such that, for all $\sigma\in W^{2,2}_{0,0}(\N \hat{X}_{r_0})$,
\begin{equation}
\|\sigma\|_{W^{2,2}}\leq C\big(\|\sigma|_{X\times[r_0,C]}\|_{W^{1,2}} + \|L_\omega\sigma\|_{L^2}\big)\ .\myeqnum{\nexteqnno[SobolevEstimateB]}
\end{equation}
\proclabel{SobolevEstimateB}
\end{lemma}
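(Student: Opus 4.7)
The plan is to upgrade the $L^2$-estimate of Lemma \procref{SobolevEstimateA} to a $W^{2,2}$-estimate by standard elliptic regularity. The key fact is that the cone $\hat X_{r_0}$ equipped with its induced metric $\hat g$ has bounded geometry by Lemma \ref{AsymptoticsOfCone}, and that $L_\omega$ is uniformly elliptic on it with uniformly smoothly bounded coefficients.

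To verify the uniform ellipticity, I would use Lemma \ref{lemma:CovariantDerivativeInPolarCoordinates}, Lemma \ref{RadialCone} and Lemma \ref{SecondFFOfCone} to compute the rough Laplacian $\hat\Delta^N$ acting on sections of $\N\hat X$. Modulo a uniformly bounded zeroth-order curvature term arising from the second fundamental form,
\begin{equation*}
\hat\Delta^N\sigma \,=\, \opSinh^{-2}(r)\Delta^N\sigma + (\nabla^N_{\partial_r})^2\sigma + (p-1)\opCoth(r)\nabla^N_{\partial_r}\sigma.
\end{equation*}
Comparing with \eqnref{ConjugateApproximateOperator}, $L_\omega$ therefore has the same principal symbol as $\hat\Delta^N$, and differs from $\hat\Delta^N$ by a first-order operator whose coefficients (depending only on $\opCoth(r)$, $\omega$ and $p$) are uniformly bounded, together with all their covariant derivatives, on $\hat X_{r_0}$.

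Next, I would invoke the standard interior and boundary $W^{2,2}$ elliptic estimates on manifolds of bounded geometry. For points $x \in \hat X_{r_0}$ at distance at least $2$ from the boundary $X\times\{r_0\}$, the interior estimate in a geodesic ball gives
\begin{equation*}
\|\sigma\|_{W^{2,2}(B_1(x))} \,\leq\, C\big(\|L_\omega\sigma\|_{L^2(B_2(x))} + \|\sigma\|_{L^2(B_2(x))}\big),
\end{equation*}
uniformly in $x$ by bounded geometry. For points near the compact boundary $X\times\{r_0\}$, the corresponding boundary estimate for the Dirichlet problem applies, since membership in $W^{2,2}_{0,0}$ ensures vanishing trace on $X\times\{r_0\}$. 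Covering $\hat X_{r_0}$ by such balls and a fixed collar of the boundary, with uniformly bounded multiplicity, and summing, yields the global bound $\|\sigma\|_{W^{2,2}} \leq C(\|L_\omega\sigma\|_{L^2} + \|\sigma\|_{L^2})$.

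Finally, substituting the $L^2$-bound provided by Lemma \procref{SobolevEstimateA} into this estimate, and enlarging $C$ if necessary, yields \eqref{eqn:SobolevEstimateB}. No step of the argument presents any real obstacle: the only ingredient that is not entirely routine is the uniform ellipticity of $L_\omega$, which follows immediately from the explicit form of the operator together with the bounded geometry of the cone.
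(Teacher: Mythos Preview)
Your proposal is correct and follows essentially the same route as the paper: local uniform elliptic estimates on balls (justified by the bounded geometry of $\hat X_{r_0}$ from Lemma \ref{AsymptoticsOfCone} and the uniform ellipticity of $L_\omega$), a bounded-multiplicity covering argument to globalize to $\|\sigma\|_{W^{2,2}} \leq C(\|\sigma\|_{L^2} + \|L_\omega\sigma\|_{L^2})$, and then substitution of Lemma \procref{SobolevEstimateA}. Your treatment is in fact slightly more explicit than the paper's in that you separate the boundary estimate near $X\times\{r_0\}$ (using the Dirichlet condition encoded in $W^{2,2}_{0,0}$) from the interior estimate, whereas the paper absorbs both into a single reference to classical elliptic theory.
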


\myproof Indeed, since $\hat{X}_{r_0}$ is of bounded geometry, it follows by the classical theory of elliptic operators (see, for example, \cite[Lemma 7.18]{Evans}) that there exists $C>0$ such that, for all $(x,r)\in\hat{X}_{r_0}$,
\begin{equation*}
\|\sigma|_{B_1(x,r)}\|_{W^{2,2}} \leq C\big(\|\sigma|_{B_2(x,r)}\|_{L^2} + \|L_\omega\sigma|_{B_2(x,r)}\|_{L^2}\big)\ .
\end{equation*}
Let $\seq[m]{(x_m,r_m)}$ be a sequence in $\hat{X}_{r_0}$ such that the unit balls about its points cover $\hat{X}_{r_0}$. Since $\hat{X}_{r_0}$ has bounded geometry, we may suppose that there exists $N>0$ such that any point $(y,s)$ of $\hat{X}_{r_0}$ lies at a distance of less than $2$ from at most $N$ elements of this sequence. Then
\begin{equation*}
\|\sigma\|^2_{W^{2,2}}\leq\sum_{m=1}^\infty\|\sigma|_{B_1(x_m,r_m)}\|^2_{W^{2,2}}
\leq C\sum_{m=1}^\infty\big(\|\sigma|_{B_2(x_m,r_m)}\|^2_{L^2}+\|L_\omega\sigma|_{B_2(x_m,r_m)}\|^2_{L^2}\big)
\leq NC\big(\|\sigma\|^2_{L^2}+\|L_\omega\sigma\|^2_{L^2}\big)\ ,\vphantom{\frac{1}{2}}
\end{equation*}
and the result now follows upon combining this estimate with \eqnref{SobolevEstimateA}.\myqed

\begin{lemma}
\noindent For all $r_0>0$, and for any weight $\omega\in(-\sqrt{p},\sqrt{p})$, there exists $C>0$ such that, for all $\sigma\in W^{2,2}_{0,0}(\N \hat{X}_{r_0})$,
\begin{equation}
\|\sigma\|_{W^{2,2}} \leq C\big(\|\sigma|_{X\times[r_0,C]}\|_{W^{1,2}} + \|J_\omega\sigma\|_{L^2}\big)\ .\myeqnum{\nexteqnno[SobolevEstimateC]}
\end{equation}
\proclabel{SobolevEstimateC}
\end{lemma}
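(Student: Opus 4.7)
The plan is to deduce \eqref{eqn:SobolevEstimateC} from the model estimate \eqref{eqn:SobolevEstimateB} of Lemma \procref{SobolevEstimateB} by treating $J$ as a perturbation of $L$ that is negligible at infinity. Comparing \eqref{eqn:JacobiOperatorInPolarCoordinates} with \eqref{eqn:ApproximateOperator}, the difference is the tensorial, zero-order operator
\[
(J-L)\sigma = \sum_{m,n}\g(\hat{\opII}(e_m,e_n),\sigma)\hat{\opII}(e_m,e_n),
\]
which, being purely algebraic, commutes with the weight $\mu_\omega$; hence $J_\omega - L_\omega = J - L$ as operators. Applying Lemma \procref{DecayOfSecondFFAndMeanCurv} with $k=0$ yields $\|\hat{\opII}\|(x,r) = \opSinh^{-1}(r)\|\opII\|(x)$, so that the pointwise operator norm of $J-L$ at $(x,r)$ is bounded above by $K\opSinh^{-2}(r)$ for a constant $K$ depending only on $\|\opII\|_{L^\infty(X)}$.

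First I would use this decay to split the $L^2$ norm of $(J-L)\sigma$. Given $\epsilon > 0$, pick $R=R(\epsilon)>r_0$ so large that $K\opSinh^{-2}(r)\leq\epsilon$ for every $r\geq R$. Then
\[
\|(J-L)\sigma\|_{L^2(\hat X_{r_0})} \leq \epsilon\|\sigma\|_{L^2} + C_R\|\sigma|_{X\times[r_0,R]}\|_{L^2},
\]
for some $C_R>0$ depending on $R$. Combining this with the triangle inequality $\|L_\omega\sigma\|_{L^2}\leq\|J_\omega\sigma\|_{L^2}+\|(J-L)\sigma\|_{L^2}$, substitution into \eqref{eqn:SobolevEstimateB} yields a constant $C_0>0$, depending on $\omega$ and $r_0$, such that
\[
\|\sigma\|_{W^{2,2}}\leq C_0\|\sigma|_{X\times[r_0,C_0]}\|_{W^{1,2}} + C_0\|J_\omega\sigma\|_{L^2} + C_0\epsilon\|\sigma\|_{L^2} + C_0C_R\|\sigma|_{X\times[r_0,R]}\|_{L^2}.
\]

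Next, fixing $\epsilon$ small enough that $C_0\epsilon\leq 1/2$, the third term on the right can be absorbed into the left-hand side via the bound $\|\sigma\|_{L^2}\leq\|\sigma\|_{W^{2,2}}$, which is legitimate because $\sigma\in W^{2,2}_{0,0}$ ensures finiteness of the $W^{2,2}$ norm. Setting $C':=\max(C_0,R)$, the two collar contributions both fit inside a single $W^{1,2}$ norm over $X\times[r_0,C']$, producing the desired estimate \eqref{eqn:SobolevEstimateC}.

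I do not anticipate any serious obstacle in this step: it is a routine compact-perturbation argument, with the hard analytic content already encapsulated in the one-dimensional ODE estimates of Section \subheadref{DifferentialOperatorsOverTheLine} and in Lemma \procref{SobolevEstimateB}. The only ingredient peculiar to the Jacobi operator itself, namely the $\opSinh^{-1}$ decay of $\|\hat{\opII}\|$, is furnished directly by the radial cone geometry developed in Section \ref{section:RadiallyParametrizedCones}.
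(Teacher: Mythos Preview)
Your argument is correct and follows essentially the same perturbation strategy as the paper: both exploit that $J_\omega-L_\omega$ is the zero-order operator built from $\hat{\opII}$, which decays like $\opSinh^{-2}(r)$, and absorb its contribution into the left-hand side of the estimate \eqnref{SobolevEstimateB}. Your execution is in fact somewhat more direct than the paper's---the paper introduces an interpolated operator $L_{\omega,r_1}:=\chi_{r_1}J_\omega+(1-\chi_{r_1})L_\omega$, invokes stability of elliptic estimates under operator-norm perturbation, and then splits $\sigma$ via a second cutoff $\chi_{r_2}$ (incurring a commutator term), whereas you bypass all of this by directly splitting $\|(J-L)\sigma\|_{L^2}$ into a tail piece bounded by $\epsilon\|\sigma\|_{L^2}$ and a collar piece; both routes arrive at the same place.
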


\myproof Let $\chi:\mathbf{R}\rightarrow[0,1]$ be a smooth, increasing function equal to $0$ over $(-\infty,-1]$ and equal to $1$ over $[0,\infty)$. For all $r_1\in\mathbf{R}$, denote $\chi_{r_1}(r):=\chi(r-r_1)$, and denote
\begin{equation*}
L_{\omega,r_1} := \chi_{r_1} J_\omega + (1-\chi_{r_1})L_\omega = L_\omega + \chi_{r_1}(J_\omega - L_\omega)\ .
\end{equation*}
By \eqref{eqn:JacobiOperatorInPolarCoordinates} and \eqref{eqn:ApproximateOperator}, $L_{\omega,r_1}$ converges to $L_\omega$ in the operator norm as $r_1$ tends to infinity. It follows by stability of elliptic estimates that, for sufficiently large $r_1$, an estimate of the form \eqnref{SobolevEstimateB} also holds for $L_{\omega,r_1}$. Choose $r_2>r_1+1$, and note that, for all $\sigma$, over $X\times[r_2,\infty)$,
\begin{equation*}
L_{\omega,r_1}\sigma = J_\omega\sigma\ .
\end{equation*}
It follows that, for suitable constants $C_1,C_2>0$, and for all $\sigma$,
\begin{equation*}\eqalign{
\|\sigma\|_{W^{2,2}}
&\leq \|(1-\chi_{r_2})\sigma\|_{W^{2,2}} + \|\chi_{r_2}\sigma\|_{W^{2,2}}\cr
&\leq C_1\big(\|\sigma|_{X\times[0,r_2]}\|_{W^{2,2}} + \|\chi_{r_2}\sigma|_{X\times[0,C_1]}\|_{W^{1,2}} + \|L_{\omega,r_1}\chi_{r_2}\sigma\|_{L^2}\big)\cr
&\leq C_2\big(\|\sigma|_{X\times[0,r_2]}\|_{W^{2,2}} + \|\chi_{r_2}\sigma|_{X\times[0,C_1]}\|_{W^{1,2}} + \|[L_{\omega,r_1},\mu_{r_2}]\sigma\|_{L^2} + \|\chi_{r_2}L_{\omega,r_1}\sigma\|_{L^2}\big)\ ,\cr}
\end{equation*}
where here $\mu_{r_2}$ denotes the operator of multiplication by $\chi_{r_2}$. Since this is a first order differential operator, upon increasing $C_2$ if necessary, we obtain
\begin{equation*}\eqalign{
\|\sigma\|_{W^{2,2}}&\leq C_2\big(\|\sigma|_{X\times[0,r_2]}\|_{W^{2,2}} + \|\sigma|_{X\times[0,C_2]}\|_{W^{1,2}} + \|\chi_{r_2}L_{\omega,r_1}\sigma\|_{L^2}\big)\cr
&= C_2\big(\|\sigma|_{X\times[0,r_2]}\|_{W^{2,2}} + \|\sigma|_{X\times[0,C_2]}\|_{W^{1,2}} + \|J_\omega\sigma\|_{L^2}\big)\ .\cr}
\end{equation*}
Finally, by the classical theory of elliptic operators, there exists $C_3>0$ such that
\begin{equation*}
\|\sigma|_{X\times[0,r_2]}\|_{W^{2,2}} \leq C_3\big(\|\sigma|_{X\times[0,r_2+1]}\|_{L^2} + \|J_\omega\sigma\|_{L^2}\big)\ ,
\end{equation*}
and the result follows upon combining these estimates.\myqed

\noindent We now prove Theorem \procref{InvertibilityOverEndSobolev}.

\begin{proof}[Proof of Theorem \procref{InvertibilityOverEndSobolev}.] By the Rellich-Kondrachov compactness theorem (see, for example, \cite[Section 5.7]{Evans}), the restriction operator
\begin{equation*}
R:W_{0,0}^{2,2}(\N \hat{X}_{r_0})\rightarrow W^{1,2}(\N \hat{X}_{r_0}|_{X\times[r_0,C]})
\end{equation*}
is compact. The estimate \eqnref{SobolevEstimateC} is thus of elliptic type, so that, as in \cite[Chapter 21, Theorem 4]{Lax}, $J_\omega$ has finite-dimensional kernel and closed image. Since, for all $\omega$, $J_\omega^*=J_{-\omega}$, it follows that $J_\omega$ is Fredholm. Since $J_0$ is self-adjoint, its Fredholm index is equal to zero, and since $(J_\omega)_{\omega\in(-\sqrt{p},\sqrt{p})}$ is a continuous family of Fredholm operators, it follows that the Fredholm index of $J_\omega$ is zero for all $\omega\in(-\sqrt{p},\sqrt{p})$.

It remains only to prove that $J_\omega$ is a linear isomorphism for all $\omega$. Consider first the case where $\omega\in(-\sqrt{p},0]$. It will now be preferable to work with the unconjugated operator $J$. Let $\sigma\in W^{2,2}_{\omega,0}(\N \hat{X}_{r_0})$ be an element of the kernel of $J$. We claim that $\sigma$ vanishes. Indeed, suppose that contrary. By elliptic regularity, $\sigma$ is smooth. Note also that, since $\omega\leq 0$, $\sigma$ is an element of the unweighted Sobolev space $W^{2,2}(\N\hat{X}_{r_0})$ and, since $\hat{X}_{r_0}$ has bounded geometry, it follows by standard elliptic bootstrapping arguments that $\|\sigma\|_{L^\infty}$ is finite. Since $\hat{X}_{r_0}$ is also complete, it follows by Omori's maximum principle that, for all $\epsilon>0$, there exists $(x,r)\in\hat{X}_{r_0}$ such that
\begin{equation*}\eqalign{
\opHess(\|\sigma\|^2)(x,r) &\leq \epsilon\opId\ ,\ \text{and}\cr
\|\sigma(x,r)\|^2 &\geq \|\sigma\|_{L^\infty}^2 - \epsilon\ ,\cr}
\end{equation*}
so that, by \eqref{eqn:JacobiOperator}, at this point,
\begin{equation*}
\epsilon p
\geq \Delta\|\sigma\|^2
= 2\langle\Delta^N\sigma,\sigma\rangle + 2\|\nabla^N\sigma\|^2\vphantom{\frac{1}{2}}
= 2p\|\sigma\|^2 + 2\|\nabla^N\sigma\|^2 + \sum_{m,n}\langle\opII(e_m,e_n),\sigma\rangle^2
\geq 2p\|\sigma\|_{L^\infty}^2 - 2p\epsilon\ .
\end{equation*}
Upon choosing $\epsilon<\frac{2}{3}\|\sigma\|_{L^\infty}^2$, we obtain a contradiction, and it follows that $\sigma$ vanishes, as asserted. The operator $J_\omega$ is thus injective and therefore invertible. Finally, by duality, for all $\omega\in[0,\sqrt{p})$, $J_\omega=J_{-\omega}^*$ is also invertible, and this completes the proof.
\end{proof}

\newsubhead{Elliptic estimates III - weighted H\"older spaces}[WeightedHoelderSpaces]

We now address the case of weighted H\"older spaces. For $k\in\mathbf{N}$, $\alpha\in(0,1)$, and $\omega\in\mathbf{R}$, we define the $\omega$-\emph{weighted H\"older norm} by
\begin{equation}
\|\sigma\|_{C^{k,\alpha}_\omega} := \|\mu_{-\omega}\sigma\|_{C^{k,\alpha}}\ ,\myeqnum{\nexteqnno[DefinitionOfWeightedHoelderNorm]}
\end{equation}
where, as before, for $\omega\in\mathbf{R}$, $\mu_\omega$ denotes the operator of multiplication by $e^{\omega r}$. For $k\in\mathbf{N}$, $\alpha\in(0,1)$, and $\omega\in\mathbf{R}$, we define the $\omega$-\emph{weighted H\"older space} $C^{k,\alpha}_\omega(\N \hat{X}_{r_0})$ to be the space of all $k$-times differentiable sections of $\N \hat{X}_{r_0}$ with finite $\omega$-weighted H\"older norm. For all such $k$, $\alpha$ and $\omega$, we denote by $C^{k,\alpha}_{\omega,0}(\N \hat{X}_{r_0})$ the closed subspace consisting of those sections which vanish along $X\times\{r_0\}$.

The aim of this section is to prove the following result.

\begin{theorem}
For all $r_0>0$, and for every weight $\omega\in(-p,0]$, the operator
\begin{equation}
J:C^{2,\alpha}_{\omega,0}(\N \hat{X}_{r_0})\rightarrow C^{0,\alpha}_\omega(\N \hat{X}_{r_0})\ ,\myeqnum{\nexteqnno[InvertibilityOverEndHoelder]}
\end{equation}
is a linear isomorphism.
\proclabel{InvertibilityOverEndHoelder}
\end{theorem}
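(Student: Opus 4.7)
The plan is to parallel the proof of Theorem \ref{proc:InvertibilityOverEndSobolev}, substituting Sobolev tools for their H\"older analogues. Conjugating by $\mu_\omega$, the theorem is equivalent to the assertion that $J_\omega := \mu_{-\omega}J\mu_\omega$ is an isomorphism from $C^{2,\alpha}_{0,0}(\N\hat X_{r_0})$ to $C^{0,\alpha}_0(\N\hat X_{r_0})$. First, patching classical local Schauder estimates over unit balls (as in the proof of Lemma \ref{proc:SobolevEstimateB}) and using the bounded geometry of $\hat X_{r_0}$ established in Lemma \ref{AsymptoticsOfCone} will yield the pre-elliptic estimate
\begin{equation*}
\|\sigma\|_{C^{2,\alpha}} \leq C\bigl(\|\sigma\|_{C^0} + \|J_\omega\sigma\|_{C^{0,\alpha}}\bigr).
\end{equation*}

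To upgrade this into an elliptic estimate, I will derive a radial ODE inequality in the spirit of Lemma \ref{proc:SobolevEstimateA}. Setting $\tilde\sigma := \mu_\omega\sigma$, the Jacobi equation \eqref{eqn:JacobiOperatorInPolarCoordinates} together with the curvature decay \eqref{eqn:DecayOfSecondFF} gives, away from the zeros of $\tilde\sigma$,
\begin{equation*}
\Delta\|\tilde\sigma\| \geq -\|J\tilde\sigma\| + \bigl(p - O(\opSinh^{-2}(r))\bigr)\|\tilde\sigma\|.
\end{equation*}
Letting $g(r) := \max_{x\in X}\|\sigma(x,r)\|$, and exploiting the non-positivity of the spherical Laplacian at interior maxima, this translates into the viscosity inequality
\begin{equation*}
P(r,\partial_r)g \geq -\|J_\omega\sigma\|_{L^\infty},
\end{equation*}
where $P(r,\xi) := \xi^2 + \bigl(2\omega + (p-1)\opCoth(r)\bigr)\xi + \bigl(\omega^2 + (p-1)\omega\opCoth(r) - p + O(\opSinh^{-2}(r))\bigr)$ satisfies $P_\infty(0) = (\omega+p)(\omega-1) < 0$ for $\omega \in (-p,1)$. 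Lemma \ref{proc:BasicOneDimEstimate} then furnishes $\|\sigma\|_{L^\infty} \leq C\bigl(\|\sigma|_{X\times[r_0,r_0+C]}\|_{L^\infty} + \|J_\omega\sigma\|_{L^\infty}\bigr)$, and combining this with the Schauder estimate yields the elliptic estimate
\begin{equation*}
\|\sigma\|_{C^{2,\alpha}} \leq C\bigl(\|\sigma|_{X\times[r_0,r_0+C]}\|_{C^0} + \|J_\omega\sigma\|_{C^{0,\alpha}}\bigr).
\end{equation*}
The restriction operator $C^{2,\alpha}_{0,0}\to C^0(X\times[r_0,r_0+C])$ being compact by Arzel\`a--Ascoli, it then follows that $J_\omega$ has finite-dimensional kernel and closed range.

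Injectivity, which uses the hypothesis $\omega \leq 0$, will follow exactly as at the end of the proof of Theorem \ref{proc:InvertibilityOverEndSobolev}: any $\sigma \in \ker J_\omega$ gives a bounded section $\tilde\sigma = \mu_\omega\sigma$ with $J\tilde\sigma = 0$, and the Omori maximum principle applied to $\|\tilde\sigma\|^2$ (using the identity $\Delta\|\tilde\sigma\|^2 \geq 2p\|\tilde\sigma\|^2$ derived as in the Sobolev case, the curvature contribution having the right sign) forces $\tilde\sigma \equiv 0$. For surjectivity I will use a Dirichlet exhaustion: for each $R > r_0$, I solve $J\tau_R = \mu_\omega\rho$ on $M_R := X \times [r_0,R]$ with $\tau_R$ vanishing on $\partial M_R$, unique solvability following from the same maximum principle identity together with classical Fredholm theory on a compact manifold with boundary. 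Applying the ODE inequality on the finite interval $[r_0,R]$ with vanishing boundary data, via a barrier argument modelled on Lemma \ref{proc:BasicOneDimEstimate}, yields $R$-uniform $C^{2,\alpha}_\omega$ bounds on $\tau_R$; an Arzel\`a--Ascoli diagonalization on compact subsets of $\hat X_{r_0}$ then produces a limit $\tau \in C^{2,\alpha}_{\omega,0}$ solving $J\tau = \mu_\omega\rho$, yielding the desired preimage $\sigma := \mu_{-\omega}\tau$.

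The main obstacle is the rigorous justification of the viscosity ODE argument, since $g(r)$ is only upper semicontinuous and is built from the pointwise norm of a bundle-valued section (which is smooth only away from the zero set of $\tilde\sigma$): one must verify that the differential inequality holds in the viscosity sense at non-smooth points, so that Lemma \ref{proc:BasicOneDimEstimate} applies. A secondary technical subtlety is adapting the barrier argument to produce estimates in the Dirichlet exhaustion that are uniform in the outer radius $R$.
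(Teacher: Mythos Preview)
Your outline is sound and tracks the paper's argument for the elliptic estimate and for injectivity. Two differences are worth recording.

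For the radial $C^0$ bound, the paper works with $f(r):=\sup_{x\in X}\|\sigma(x,r)\|^2$ rather than your $g(r):=\max_x\|\sigma(x,r)\|$ (Lemma~\ref{proc:HoelderEstimateA}). Squaring sidesteps the non-smoothness at zeros that you flag as the main obstacle, at the price of an inequality of the form $Qf\geq-2\|L_\omega\sigma\|_{C^0}f^{1/2}$, which is what Corollary~\ref{proc:CorBasicOneDimEstimate} was set up to handle. Your linear version also works once one checks (as you note) that the viscosity inequality holds trivially where $\sigma$ vanishes. The paper moreover first proves the estimate for the stripped-down operator $L_\omega$ and only then perturbs to $J_\omega$ (Lemma~\ref{proc:HoelderEstimateC}); you treat $J_\omega$ directly by exploiting the favourable sign of the curvature term $\sum\langle\hat\opII,\sigma\rangle^2$, which is a legitimate shortcut on the cone, though the perturbation scheme is what gets reused later over maximal graphs.

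For surjectivity the routes genuinely diverge. The paper does not use a Dirichlet exhaustion. Instead, given $\tau\in C^{0,\alpha}$, it truncates to compactly supported $\tau\chi_r\in L^2$, invokes the already-proven Sobolev isomorphism (Theorem~\ref{proc:InvertibilityOverEndSobolev}) to solve $J_\omega\sigma_r=\tau\chi_r$ in $W^{2,2}_{0,0}$, bootstraps $\sigma_r$ into $C^{2,\alpha}$ via Sobolev embedding and Schauder, and then uses the uniform bound $\|\sigma\|_{C^{2,\alpha}}\leq C\|J_\omega\sigma\|_{C^{0,\alpha}}$ (available once injectivity and closed range are known) to extract a limit. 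Your exhaustion is also valid and has the merit of being self-contained---it does not lean on the Sobolev theorem---but the paper's route is shorter given the machinery already in place, and it bypasses your secondary subtlety about $R$-uniform barriers entirely.
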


\begin{remark}
Upon refining the arguments used in Section \ref{subhead:DifferentialOperatorsOverTheLine} and below, we may show that, for all $\omega\in(-p,1)$, every element of the kernel of $J$ in $C^{2,\alpha}_{\omega,0}(\hat{X}_{r_0})$ has exponential decay at infinity. The maximum principle then shows that, for all such $\omega$, $J$ has trivial kernel over $C^{2,\alpha}_{\omega,0}(\hat{X}_{r_0})$, and the isomorphism property then follows. Theorem \ref{proc:InvertibilityOverEndHoelder} as stated is, however, quite sufficient for our purposes.
\end{remark}

\noindent We continue to use the framework of the preceeding section. The following result provides the key to transforming pre-elliptic estimates into elliptic estimates in the H\"older space case.

\begin{lemma}
For all $r_0>0$, and for every weight $\omega\in(-p,1)$, there exists $C>0$ such that, for all $\sigma\in C^{2,\alpha}(\N \hat{X}_{r_0})$,
\begin{equation}
\|\sigma\|_{C^0} \leq C\big(\|\sigma|_{X\times[0,C]}\|_{C^0} + \|L_\omega\sigma\|_{C^0}\big)\ .\myeqnum{\nexteqnno[HoelderEstimateA]}
\end{equation}
\proclabel{HoelderEstimateA}
\end{lemma}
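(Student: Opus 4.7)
The plan is to reduce Lemma \procref{HoelderEstimateA} to the one-dimensional viscosity estimate of Corollary \procref{CorBasicOneDimEstimate}, applied to the scalar function $u(r) := \sup_{x\in X}\|\sigma(x,r)\|^2$ on $[r_0,\infty)$.

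First I would compute the action of $L_\omega$ on $\psi := \|\sigma\|^2$. Using the explicit formula \eqnref{ConjugateApproximateOperator} for $L_\omega$, together with the metric-connection identities $\partial_r\psi = 2\langle\nabla^N_{\partial_r}\sigma,\sigma\rangle$, $\partial_r^2\psi = 2\langle(\nabla^N_{\partial_r})^2\sigma,\sigma\rangle + 2\|\nabla^N_{\partial_r}\sigma\|^2$, and $\Delta_X\psi = 2\langle\Delta^N\sigma,\sigma\rangle + 2\|\nabla^N_X\sigma\|^2$ on each slice $X\times\{r\}$ (where $\Delta_X$ denotes the Laplacian on $X$ and $\nabla^N_X\sigma$ the horizontal covariant derivative of $\sigma$), a routine calculation yields
\begin{equation*}
\opSinh^{-2}(r)\Delta_X\psi + Q(r,\partial_r)\psi = 2\langle L_\omega\sigma,\sigma\rangle + 2\opSinh^{-2}(r)\|\nabla^N_X\sigma\|^2 + 2\|\nabla^N_{\partial_r}\sigma\|^2\ ,
\end{equation*}
where
\begin{equation*}
Q(r,\xi) := \xi^2 + \big(2\omega + (p-1)\opCoth(r)\big)\xi + 2\big(\omega^2 + (p-1)\omega\opCoth(r) - p\big)\ .
\end{equation*}
Dropping the two non-negative gradient terms on the right and applying Cauchy--Schwarz then gives the pointwise inequality $\opSinh^{-2}(r)\Delta_X\psi + Q(r,\partial_r)\psi \geq -2\|L_\omega\sigma\|\sqrt{\psi}$.

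Next I would upgrade this to a viscosity inequality for $u$. Fix $r_* > r_0$ and let $g$ be a smooth test function with $g\geq u$ and $g(r_*) = u(r_*)$. Since $X$ is compact, there exists $x_*\in X$ with $\psi(x_*,r_*) = u(r_*)$, at which the smooth function $g-\psi$ attains an interior local minimum. The standard first and second derivative tests give $g'(r_*) = \partial_r\psi(x_*,r_*)$, $g''(r_*) \geq \partial_r^2\psi(x_*,r_*)$ and $\Delta_X\psi(x_*,r_*) \leq 0$. Substituting into the pointwise inequality and using that the non-positive term $\opSinh^{-2}(r_*)\Delta_X\psi(x_*,r_*)$ moves to the left with a favourable sign, one obtains
\begin{equation*}
Q(r_*,\partial_r)g(r_*) \geq -2\|L_\omega\sigma\|_{C^0}\sqrt{u(r_*)} \geq -2\|L_\omega\sigma\|_{C^0}\|u\|_{L^\infty}^{1/2}\ .
\end{equation*}
Thus $u$ satisfies $Q(r,\partial_r)u \geq -2\|L_\omega\sigma\|_{C^0}\|u\|_{L^\infty}^{1/2}$ in the viscosity sense on $(r_0,\infty)$.

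Finally, using the factorization $\omega^2 + (p-1)\omega - p = (\omega-1)(\omega+p)$, the asymptotic polynomial is $Q_\infty(\xi) = \xi^2 + (2\omega+p-1)\xi + 2(\omega-1)(\omega+p)$, so $Q_\infty(0) = 2(\omega-1)(\omega+p) < 0$ precisely when $\omega \in (-p,1)$. Corollary \procref{CorBasicOneDimEstimate}, applied with $A = 0$ and $B = 2\|L_\omega\sigma\|_{C^0}$, then yields
\begin{equation*}
\|\sigma\|_{C^0}^2 = \|u\|_{L^\infty} \leq \frac{4}{\delta^2}\|L_\omega\sigma\|_{C^0}^2 + 2\|\sigma|_{X\times[r_0,C]}\|_{C^0}^2\ ,
\end{equation*}
and Lemma \procref{HoelderEstimateA} follows on taking square roots. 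The principal obstacle is the viscosity step: since $u$ is in general only Lipschitz, the maximum principle for $u$ must be reduced slice-by-slice to the classical maximum principle for the smooth function $\psi$ at an interior maximum of its restriction to $X\times\{r_*\}$, where the crucial inequality $\Delta_X\psi \leq 0$ becomes available and the horizontal contribution can be absorbed.
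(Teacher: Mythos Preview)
Your proof is correct and follows essentially the same route as the paper: both define $u(r)=\sup_{x\in X}\|\sigma(x,r)\|^2$, derive (via the product rule and the slice-maximum condition $\Delta_X\psi\leq 0$) the viscosity inequality $Q(r,\partial_r)u\geq -2\|L_\omega\sigma\|_{C^0}\,u^{1/2}$ with $Q(r,\xi)=P(r,\xi+\omega)+P(r,\omega)$, and then invoke Corollary \procref{CorBasicOneDimEstimate} using $Q_\infty(0)=2(\omega-1)(\omega+p)<0$ for $\omega\in(-p,1)$. Your presentation is in fact slightly more explicit in tracking the operator $Q$ and in justifying the viscosity step via a test function $g$ and interior maximum $x_*$.
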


\myproof Consider the function $f:[r_0,\infty)\rightarrow[0,\infty)$ given by
\begin{equation*}
f(r) := \msup_{x\in X}\|\sigma(x,r)\|^2\ .
\end{equation*}
It will suffice to show that $f$ satisfies in the viscosity sense a second-order ordinary differential relation of the type used in Corollary \ref{proc:CorBasicOneDimEstimate}. Indeed, by the product rule,
\begin{equation*}\eqalign{
P(r,\partial_r+\omega)\|\sigma\|^2 &= 2\left\langle P\big(r,\nabla^N_{\partial_r}+\omega\big)\sigma,\sigma\right\rangle + 2\|\nabla^N_{\partial_r}\sigma\|^2 - P(r,\omega)\|\sigma\|^2\cr
& \geq 2\left\langle P\big(r,\nabla^N_{\partial_r}+\omega\big)\sigma,\sigma\right\rangle - P(r,\omega)\|\sigma\|^2\ .\cr}
\end{equation*}
Likewise, at every point $(x,r)\in\hat{X}_{r_0}$ maximizing $\|\sigma\|^2$ over $X\times\{r\}$,
\begin{equation*}
0\geq \Delta\|\sigma\|^2 = 2\langle\Delta^N\sigma,\sigma\rangle + 2\|\nabla^N\sigma\|^2 \geq 2\langle\Delta^N\sigma,\sigma\rangle\ .
\end{equation*}
Consider now a smooth function $g$ such that $g\geq f$ and $g(r)=f(r)$. Then, at any point $(x,r)$ such that $f(r)=\|\sigma(x,r)\|^2$,
\begin{equation*}\eqalign{
P(r,\partial_r+\omega)g
&\geq P(r,\partial_r+\omega)\|\sigma\|^2\cr
&\geq 2\big\langle\big(\opSinh^{-2}(r)\Delta^N + P\big(r,\nabla^N_{\partial_r}+\omega\big)\big)\sigma,\sigma\big\rangle - P(r,\omega)\|\sigma\|^2\cr
&=2\langle L_\omega\sigma,\sigma\rangle - P(r,\omega)g .\cr}
\end{equation*}
Consequently, in the viscosity sense,
\begin{equation*}
(P(r,\partial_r+\omega)+P(r,\omega))f \geq -2\|L_\omega\sigma\|_{C^0}f^{\frac{1}{2}}\ .
\end{equation*}
Since $\omega\in(-p,1)$, $Q(r,\xi):=P(r,\xi+\omega) - P(r,\omega)$ satisfies the hypothesis of Corollary \ref{proc:CorBasicOneDimEstimate}. There therefore exists $C>0$ such that
\begin{equation*}
\|\sigma\|_{L^\infty}^2
=\msup_{r>0}\left|f(r)\right|
\leq C^2\big(\|\sigma|_{X\times[r_0,C]}\|^2_{C^0} + \|L_\omega\sigma\|^2_{C^0}\big)\vphantom{\bigg(}
\leq C^2\big(\|\sigma|_{X\times[r_0,C]}\|_{C^0} + \|L_\omega\sigma\|_{C^0}\big)^2\ ,\vphantom{\bigg(}
\end{equation*}
as desired.\myqed

\begin{lemma}
For all $r_0>0$, and for every weight $\omega\in(-p,1)$, there exists $C>0$ such that, for all $\sigma\in C^{2,\alpha}_{0,0}(\N \hat{X}_{r_0})$,
\begin{equation}
\|\sigma\|_{C^{2,\alpha}} \leq C\big(\|\sigma|_{X\times[0,C]}\|_{C^0} + \|L_\omega\sigma\|_{C^{0,\alpha}}\big)\ .\myeqnum{\nexteqnno[HoelderEstimateB]}
\end{equation}
\proclabel{HoelderEstimateB}
\end{lemma}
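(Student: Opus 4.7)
The plan is to combine the $C^0$-bound of Lemma \procref{HoelderEstimateA} with the standard interior (and boundary) Schauder estimates for $L_\omega$, applied on a cover of $\hat{X}_{r_0}$ by balls of uniformly bounded geometry. Recall that, by Lemma \ref{AsymptoticsOfCone}, $\hat{X}_{r_0}$ has bounded geometry, and by Lemma \procref{DecayOfSecondFFAndMeanCurv} together with the formula \eqref{eqn:ApproximateOperator} the coefficients of $L_\omega$ are uniformly bounded in $C^{0,\alpha}$ with respect to balls of radius $1$, with a uniform ellipticity constant. Consequently all the estimates applied below are truly uniform in the base point.

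First, by the classical interior Schauder estimates (see, e.g., \cite[Chapter 6]{GilbTrud}), there exists $C_1>0$ such that, for every point $(x,r)\in\hat{X}_{r_0}$ with $r\geq r_0+1$,
\begin{equation*}
\|\sigma|_{B_1(x,r)}\|_{C^{2,\alpha}} \leq C_1\big(\|\sigma|_{B_2(x,r)}\|_{C^0} + \|L_\omega\sigma|_{B_2(x,r)}\|_{C^{0,\alpha}}\big).
\end{equation*}
For points $(x,r)$ with $r\in[r_0,r_0+1]$ we use instead the boundary Schauder estimates, which apply because $\sigma$ satisfies the homogeneous Dirichlet condition $\sigma|_{X\times\{r_0\}}=0$ as an element of $C^{2,\alpha}_{0,0}(\N \hat{X}_{r_0})$; this yields the same inequality with balls replaced by their intersections with $\hat{X}_{r_0}$, and with a possibly larger constant. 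Taking the supremum over all $(x,r)$ and exploiting bounded geometry to control the Hölder seminorm of $\sigma$ between distinct balls, we obtain constants $C_2>0$ such that
\begin{equation*}
\|\sigma\|_{C^{2,\alpha}} \leq C_2\big(\|\sigma\|_{C^0} + \|L_\omega\sigma\|_{C^{0,\alpha}}\big).
\end{equation*}

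Next, by Lemma \procref{HoelderEstimateA}, there exist $C_3, C_4>0$ such that
\begin{equation*}
\|\sigma\|_{C^0} \leq C_3\big(\|\sigma|_{X\times[r_0,C_3]}\|_{C^0} + \|L_\omega\sigma\|_{C^0}\big) \leq C_4\big(\|\sigma|_{X\times[r_0,C_3]}\|_{C^0} + \|L_\omega\sigma\|_{C^{0,\alpha}}\big).
\end{equation*}
Substituting this bound into the Schauder estimate of the previous paragraph, choosing $C\geq\max(C_3,C_2(1+C_4))$, yields
\begin{equation*}
\|\sigma\|_{C^{2,\alpha}} \leq C\big(\|\sigma|_{X\times[r_0,C]}\|_{C^0} + \|L_\omega\sigma\|_{C^{0,\alpha}}\big),
\end{equation*}
which, after possibly relabelling the truncation constant, is \eqnref{HoelderEstimateB}.

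The only nontrivial point in this plan is verifying that the Schauder estimates apply uniformly across all scales $r$: one must check that $L_\omega$, when written in local harmonic-type coordinates on a unit ball of $\hat{X}_{r_0}$, has coefficients whose $C^{0,\alpha}$ norm and ellipticity constants are controlled independently of the base point. This follows from \eqref{eqn:JacobiOperatorInPolarCoordinates} and \eqref{eqn:ConjugateApproximateOperator}, since $\opCoth(r)\to 1$ and $\opSinh^{-2}(r)\to 0$ as $r\to\infty$ with all derivatives decaying as well, and from the bounded-geometry statement of Lemma \ref{AsymptoticsOfCone}. The boundary case near $X\times\{r_0\}$ poses no additional difficulty because the boundary is smooth and compact and the Dirichlet condition is homogeneous.
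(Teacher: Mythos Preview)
Your proof is correct and follows essentially the same approach as the paper: use bounded geometry of $\hat{X}_{r_0}$ to obtain uniform local Schauder estimates, pass to the global estimate $\|\sigma\|_{C^{2,\alpha}}\leq C(\|\sigma\|_{C^0}+\|L_\omega\sigma\|_{C^{0,\alpha}})$, and then invoke Lemma~\procref{HoelderEstimateA} to control $\|\sigma\|_{C^0}$. You are in fact slightly more explicit than the paper in distinguishing interior and boundary Schauder estimates near $X\times\{r_0\}$ and in justifying uniformity of the coefficients of $L_\omega$.
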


\myproof Since $\hat{X}_{r_0}$ is of bounded geometry, by the classical theory of elliptic operators, there exists $C_1>0$ such that, for all $(x,r)\in\hat{X}_{r_0}$,
\begin{equation*}
\|\sigma|_{B_1(x)}\|_{C^{2,\alpha}}\leq C_1\big(\|\sigma|_{B_2(x)}\|_{C^0}+\|L_\omega\sigma|_{B_2(x)}\|_{C^{0,\alpha}}\big)\ .
\end{equation*}
Thus, for a suitable constant $C_2>0$,
\begin{equation*}
\|\sigma\|_{C^{2,\alpha}}
\leq C_2\sup_{x\in\hat{X}_{r_0}}\|\sigma|_{B_1(x)}\|_{C^{2,\alpha}}
\leq C_1C_2\sup_{x\in\hat{X}_{r_0}}\big(\|\sigma|_{B_2(x)}\|_{C^0}+\|L_\omega\sigma|_{B_2(x)}\|_{C^{0,\alpha}}\big)
\leq C_1C_2\big(\|\sigma\|_{C^0} + \|L_\omega\sigma\|_{C^{0,\alpha}}\big)\ ,
\end{equation*}
and the result follows upon combining this estimate with \eqnref{HoelderEstimateA}.\myqed

\begin{lemma}
For all $r_0>0$, and for every weight $\omega\in(-p,1)$, there exists $C>0$ such that, for all $\sigma\in C^{2,\alpha}_0(\N \hat{X}_{r_0})$,
\begin{equation}
\|\sigma\|_{C^{2,\alpha}} \leq C\big(\|\sigma|_{X\times[0,C]}\|_{C^0} + \|J_\omega\sigma\|_{C^{0,\alpha}}\big)\ .\myeqnum{\nexteqnno[HoelderEstimateC]}
\end{equation}
\proclabel{HoelderEstimateC}
\end{lemma}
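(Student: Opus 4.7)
The plan is to follow the proof of Lemma \ref{proc:SobolevEstimateC} almost verbatim, substituting Hölder norms for Sobolev norms throughout. By \eqref{eqn:JacobiOperatorInPolarCoordinates} and \eqref{eqn:ApproximateOperator}, the difference is the zero-order operator
\begin{equation*}
(J_\omega-L_\omega)\sigma = (J-L)\sigma = \sum_{m,n}\g(\hat{\opII}(e_m,e_n),\sigma)\hat{\opII}(e_m,e_n),
\end{equation*}
(the conjugation by $e^{\omega r}$ acts trivially on multiplication operators), whose pointwise norm is bounded by $\|\hat{\opII}\|^2$; by Lemma \ref{proc:DecayOfSecondFFAndMeanCurv} applied at order $k=0$, $\|\hat{\opII}\|$ decays like $\opSinh^{-1}(r)$, so $J_\omega-L_\omega$ regarded as an operator $C^{2,\alpha}\to C^{0,\alpha}$ localized to the tail $X\times[r,\infty)$ has norm of order $\opSinh^{-2}(r)$, tending to zero as $r\to\infty$.

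I would then introduce a smooth cutoff $\chi:\mathbf{R}\to[0,1]$, increasing, equal to $0$ on $(-\infty,-1]$ and to $1$ on $[0,\infty)$, and for each $r_1>r_0$ set $\chi_{r_1}(r):=\chi(r-r_1)$ and define
\begin{equation*}
L_{\omega,r_1} := L_\omega + \chi_{r_1}(J_\omega - L_\omega),
\end{equation*}
which coincides with $L_\omega$ on $X\times[r_0,r_1-1]$ and with $J_\omega$ on $X\times[r_1,\infty)$. Since $\|\chi_{r_1}(J_\omega-L_\omega)\|_{C^{2,\alpha}\to C^{0,\alpha}}\to 0$ as $r_1\to\infty$, by stability of pre-elliptic estimates Lemma \ref{proc:HoelderEstimateB} continues to hold for $L_{\omega,r_1}$ when $r_1$ is sufficiently large, giving a constant $C_1>0$ with
\begin{equation*}
\|\tau\|_{C^{2,\alpha}}\leq C_1\big(\|\tau|_{X\times[r_0,r_0+C_1]}\|_{C^0} + \|L_{\omega,r_1}\tau\|_{C^{0,\alpha}}\big)
\end{equation*}
for all $\tau\in C^{2,\alpha}_{0,0}(\N\hat{X}_{r_0})$. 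Fixing such an $r_1$, I would pick $r_2>r_1+1$ and apply this estimate to $\chi_{r_2}\sigma$, which vanishes on $X\times\{r_0\}$. Using the identity $L_{\omega,r_1}(\chi_{r_2}\sigma) = \chi_{r_2}J_\omega\sigma + [L_{\omega,r_1},\chi_{r_2}]\sigma$, valid because $L_{\omega,r_1}=J_\omega$ on the support of $\chi_{r_2}$, one obtains
\begin{equation*}
\|\chi_{r_2}\sigma\|_{C^{2,\alpha}} \leq C_1\big(\|\sigma|_{X\times[r_0,r_0+C_1]}\|_{C^0} + \|J_\omega\sigma\|_{C^{0,\alpha}} + \|[L_{\omega,r_1},\chi_{r_2}]\sigma\|_{C^{0,\alpha}}\big).
\end{equation*}

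The commutator $[L_{\omega,r_1},\chi_{r_2}]$ is a first-order differential operator whose coefficients are supported in the compact slab $X\times[r_2-1,r_2]$; its contribution, together with $\|(1-\chi_{r_2})\sigma\|_{C^{2,\alpha}}$, is controlled by the classical Schauder estimate up to the boundary $X\times\{r_0\}$ applied on the bounded-geometry compact tube $X\times[r_0,r_2+1]$, yielding a bound by $C_2\big(\|\sigma|_{X\times[r_0,r_2+1]}\|_{C^0}+\|J_\omega\sigma\|_{C^{0,\alpha}}\big)$. Summing the two pieces $(1-\chi_{r_2})\sigma$ and $\chi_{r_2}\sigma$, and enlarging $C$ to dominate $C_1$, $C_2$ and $r_2+1$, concludes the proof. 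The only delicate point---which is entirely analogous to the Sobolev case---is the stability of the pre-elliptic constant in Lemma \ref{proc:HoelderEstimateB} under the perturbation $L_\omega \to L_{\omega,r_1}$; this holds because \eqref{eqn:HoelderEstimateB} is ultimately built on the one-dimensional bound of Corollary \ref{proc:CorBasicOneDimEstimate}, whose asymptotic non-degeneracy hypothesis $P_\infty(0)<-\delta$ is preserved (with $\delta$ only slightly reduced) by any zero-order perturbation whose coefficients vanish at infinity.
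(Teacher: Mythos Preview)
Your proof is correct and follows essentially the same approach as the paper's: introduce the interpolated operator $L_{\omega,r_1}=L_\omega+\chi_{r_1}(J_\omega-L_\omega)$, use operator-norm convergence to $L_\omega$ to transfer the estimate of Lemma \ref{proc:HoelderEstimateB}, then split $\sigma=(1-\chi_{r_2})\sigma+\chi_{r_2}\sigma$ and handle the compactly supported commutator term and the near-boundary piece by classical Schauder estimates. Your explicit identification of $J_\omega-L_\omega$ as the zero-order $\hat{\opII}$-term with $\opSinh^{-2}(r)$ decay is a helpful elaboration the paper leaves implicit; the only minor remark is that the ``stability'' step is really just absorption of a small operator-norm perturbation into the left-hand side, rather than anything specific to the one-dimensional polynomial $P_\infty$.
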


\myproof Let $\chi:\mathbf{R}\rightarrow[0,1]$ be a smooth, increasing function equal to $0$ over $(-\infty,-1]$ and equal to $1$ over $[0,\infty)$. For all $r_1\in\mathbf{R}$, denote $\chi_{r_1}(r):=\chi(r-r_1)$, and denote
\begin{equation*}
L_{\omega,r_1} := \chi_{r_1} J_\omega + (1-\chi_{r_1})L_\omega\ .
\end{equation*}
By \eqref{eqn:JacobiOperatorInPolarCoordinates} and \eqref{eqn:ApproximateOperator}, $L_{\omega,r_1}$ converges to $L_\omega$ in the operator norm as $r_1$ tends to infinity. It follows by stability of elliptic estimates that, for sufficiently large $r_1$, an estimate of the form \eqnref{HoelderEstimateB} also holds for $L_{\omega,r_1}$. Choose $r_2>r_1+1$, and note that, for all $\sigma$, over $X\times[r_2,\infty)$,
\begin{equation*}
L_{\omega,r_1}\sigma = J_\omega\sigma\ .
\end{equation*}
It follows that, for suitable constants $C_1,C_2>0$, and for all $\sigma$,
\begin{equation*}\eqalign{
\|\sigma\|_{C^{2,\alpha}}
&\leq \|(1-\chi_{r_2})\sigma\|_{C^{2,\alpha}} + \|\chi_{r_2}\sigma\|_{C^{2,\alpha}}\cr
&\leq C_1\big(\|\sigma|_{X\times[0,r_2]}\|_{C^{2,\alpha}} + \|\chi_{r_2}\sigma|_{X\times[0,C_1]}\|_{C^0} + \|L_{\omega,r_1}\chi_{r_2}\sigma\|_{C^{0,\alpha}}\big)\cr
&\leq C_2\big(\|\sigma|_{X\times[0,C_2]}\|_{C^{2,\alpha}} + \|\sigma|_{X\times[0,C_2]}\|_{C^{1,\alpha}} + \|L_{\omega,r_1}\sigma|_{X\times[r_2,\infty)}\|_{C^{0,\alpha}}\big)\cr
&= C_2\big(\|\sigma|_{X\times[0,C_2]}\|_{C^{2,\alpha}} + \|J_\omega\sigma\|_{C^{0,\alpha}}\big)\ .\cr}
\end{equation*}
Finally, by the classical theory of elliptic operators, there exists $C_3>0$ such that
\begin{equation*}
\|\sigma|_{X\times [0,C_2]}\|_{C^{2,\alpha}} \leq C_3\big(\|\sigma|_{X\times[0,C_2+1]}\|_{C^0} + \|J_\omega\sigma|_{X\times[0,C_2+1]}\|_{C^{0,\alpha}}\big)\ ,
\end{equation*}
and the result follows upon combining these estimates.\myqed

\noindent We now prove Theorem \ref{proc:InvertibilityOverEndHoelder}.

\begin{proof}[Proof of Theorem \ref{proc:InvertibilityOverEndHoelder}.]
Indeed, choose $\omega\in(-p,0]$. By the Arzelà-Ascoli theorem, the restriction operator
\begin{equation*}
R:C^{2,\alpha}_{\omega,0}(\N \hat{X}_{r_0})\rightarrow C^0(\N \hat{X}_{r_0}|_{X\times[0,C]})
\end{equation*}
is compact. The estimate \eqnref{HoelderEstimateC} is thus of elliptic type, so that, as in \cite[Chapter 21, Theorem 4]{Lax}, $J$ has finite-dimensional kernel and closed image. Furthermore, as in the proof of Theorem \ref{proc:InvertibilityOverEndSobolev}, for all such $\omega$, $J_\omega$ is injective, so that, by the closed graph theorem, it is a linear isomorphism onto its image.

It remains only to prove surjectivity. Since $J_\omega$ is a linear isomorphism onto its image, there exists $C>0$ such that, for all $\sigma\in C^{2,\alpha}_{0,0}(\N \hat{X}_{r_0})$,
\begin{equation}\label{ProofOfInvertibilityOverHoelder}
\|\sigma\|_{C^{2,\alpha}_{0}} \leq C\|J_\omega\sigma\|_{C^{0,\alpha}_{0}}\ .
\end{equation}
Now let $\chi:\mathbf{R}\rightarrow[0,1]$ be a smooth function equal to $1$ over $(-\infty,0]$ and equal to $0$ over $[1,\infty)$, and for all $r>0$, define $\chi_r(x):=\chi(x-r)$. Choose $\tau\in C^{0,\alpha}_{0,0}(\N \hat{X}_{r_0})$, and note that, for all $r$, since $\chi_r$ has compact support, $\tau\chi_r\in L^2(\N \hat{X}_{r_0})$. It follows by Theorem \procref{InvertibilityOverEndSobolev} that, for all $r$, there exists $\sigma_r\in W^{2,2}_{0,0}(\N \hat{X}_{r_0})$ such that $J_\omega\sigma_r=\tau\chi_r$. Since $\hat{X}_{r_0}$ is of bounded geometry, it follows by the Sobolev embedding theorem that $\sigma_r\in L^\infty(\hat{X}_{r_0})$, then by elliptic regularity that $\sigma_r\in C^{2,\alpha}_\oploc(\hat{X}_{r_0})$, and then by the Schauder estimates (see, for example, \cite[Chapter 6]{GilbTrud}) that $\sigma_r\in C^{2,\alpha}(\hat{X}_{r_0})$. Note, however, that this process does not yield {\sl uniform} $C^{2,\alpha}$ bounds on $(\sigma_r)_{r>0}$, since the Sobolev norm of $(\tau\chi_r)_{r>0}$ may diverge as $r$ tends to infinity. However, by \eqref{ProofOfInvertibilityOverHoelder}, for all $r$,
\begin{equation*}
\|\sigma_r\|_{C^{2,\alpha}_0} \leq C\|\tau\chi_r\|_{C^{0,\alpha}_0}\ .
\end{equation*}
Note that $(\tau\chi_r)_{r>0}$ is uniformly bounded in $C^{0,\alpha}_0(\N \hat{X}_{r_0})$, so that, by the Arzel\`a-Ascoli theorem again, $(\sigma_r)_{r\geq 0}$ is relatively compact in the $C^{2,\beta}_\oploc$ topology for all $\beta<\alpha$. By semicontinuity of the H\"older seminorm, every accumulation point $\sigma$ is an element of $C^{2,\alpha}_{0,0}(\N \hat{X}_{r_0})$. Since every such point trivially satisfies $J_\omega\sigma=\tau$, surjectivity follows, and this completes the proof.
\end{proof}

\newsubhead{Elliptic estimates IV - maximal graphs}[MaximalGraphs]

We now use a perturbation argument to adapt Theorems \procref{InvertibilityOverEndSobolev} and \procref{InvertibilityOverEndHoelder} to weighted function spaces defined over maximal graphs. Let $M$ be a maximal graph in $\H_+$ with smooth spacelike boundary $\partial_\infty M=X_\infty$. By Lemma \procref{AsymptoticsOfMaximalGraph}, there exists $r_0>0$ and a compact subset $K$ of $M$ such that $M\setminus K$ coincides with the graph of some smooth normal section $\sigma$ over $\hat{X}_{r_0}$. In particular, this yields an explicit parametrization $\hat{e}^{\sigma}:\hat{X}_{r_0}\rightarrow M\setminus K$. For all $(x,r)\in\hat{X}_{r_0}$, let $\hat{q}^\sigma(x,r)$ denote the composition of parallel transport from $(x,r)$ to $\hat{e}^\sigma(x,r)$ with orthogonal projection onto the normal bundle of $M$. Note that, since parallel transport defines isometries, and since orthogonal projection from a $q$-dimensional negative-definite subspace in $\Rpq$ to any other such subspace is always a linear isomorphism, $\hat{q}^\sigma$ always defines a bundle isomorphism from $\N\hat{X}_{r_0}$ to $\N (M\setminus K)$. For any section $\tau$ of $\N M$, we denote
\begin{equation}
(\hat{e}^\sigma)^*\tau := (\hat{q}^\sigma)^{-1}\circ\tau\circ\hat{e}^\sigma \in \Gamma(\N \hat X_{r_0})~.
\end{equation}
For all suitable $k$, $p$ and $\omega$, and for every smooth section $\tau$ of $\N M$, we define
\begin{equation}
\|\tau\|_{W^{k,l}_\omega(M\setminus K)} := \|(\hat{e}^\sigma)^*\tau\|_{W^{k,l}_{\omega}(\hat{X}_{r_0})}\ .
\end{equation}
Let $\chi:M\rightarrow[0,1]$ be a smooth, compactly supported function, equal to $1$ over $K$. We define the \emph{$\omega$-weighted Sobolev norm} for sections of $\N M$ by
\begin{equation}
\|\tau\|_{W^{k,l}_\omega} := \|\chi\tau\|_{W^{k,l}} + \|(1-\chi)\tau\|_{W^{k,l}_\omega(M\setminus K)}\ ,
\end{equation}
and we define the \emph{$\omega$-weighted Sobolev space} $W^{k,l}_\omega(M)$ to be the completion of $C^\infty(\N M)$ with respect to this norm.

\begin{lemma}
Let $M$ be a complete maximal $p$-submanifold such that $\partial_\infty M=X_\infty$ is smooth and spacelike. For every weight $\omega\in(-\sqrt{p},\sqrt{p})$, there exists a compact subset $K'\subseteq M$ and $C>0$ such that, for all $\tau\in W^{2,2}_\omega(\N M)$,
\begin{equation}
\|\tau\|_{W^{2,2}_\omega} \leq C\big(\|\tau|_{K'}\|_{L^2} + \|J\tau\|_{L^2_\omega}\big)\ .\myeqnum{\nexteqnno[SobEstJacOp]}
\end{equation}
\proclabel{SobEstJacOp}
\end{lemma}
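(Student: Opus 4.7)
The plan is to transfer the weighted elliptic estimate of Lemma \procref{SobolevEstimateC} from the cone $\hat X_{r_0}$ over to $M$ via the normal graph parametrization supplied by Lemma \procref{AsymptoticsOfMaximalGraph}, and then to patch in an interior estimate on the compact piece of $M$ by means of a cutoff.

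First I would apply Lemma \procref{AsymptoticsOfMaximalGraph} to choose a compact $K\subseteq M$, an inner radius $r_0>0$ and a smooth section $\sigma\in\Gamma(\N\hat X_{r_0})$ such that $M\setminus K=\hat e^\sigma(\hat X_{r_0})$ and $\|J^k\sigma(x)\|\to 0$ as $x\to\infty$ for every $k$. Using the bundle isomorphism $\hat q^\sigma$ introduced just before the statement of the lemma, pulling back along $\hat e^\sigma$ conjugates $J=J^M$ on $M\setminus K$ to a second-order differential operator $\tilde J$ on $\N\hat X_{r_0}$; by the very definition of the weighted norm on $M\setminus K$, one has the isometric identifications $\|\tau|_{M\setminus K}\|_{W^{2,2}_\omega}=\|(\hat e^\sigma)^*\tau\|_{W^{2,2}_\omega(\hat X_{r_0})}$ and $\|J\tau|_{M\setminus K}\|_{L^2_\omega}=\|\tilde J (\hat e^\sigma)^*\tau\|_{L^2_\omega(\hat X_{r_0})}$, so the problem over $M\setminus K$ is converted into a problem for $\tilde J$ over the cone.

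Next I would compare $\tilde J$ with the Jacobi operator $J^{\hat X}$ of the cone itself. By the formula \eqnref{JacobiOperator}, each of these operators is built algebraically from the induced metric, normal connection and second fundamental form of the underlying submanifold together with the constant $-p$, and these geometric quantities on $M$ pulled back to $\hat X_{r_0}$ are smooth functions of $(\sigma,\nabla\sigma,\nabla^2\sigma)$ that reduce to the corresponding data on $\hat X$ when $\sigma\equiv 0$. Since $\sigma$, $\nabla\sigma$ and $\nabla^2\sigma$ all decay uniformly at infinity, it follows that $\tilde J-J^{\hat X}$ is a second-order differential operator whose coefficients tend uniformly to zero at infinity. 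With this in hand the cutoff argument used to prove Lemma \procref{SobolevEstimateC} applies verbatim: fix a smooth cutoff $\chi_R$ supported in $X\times[R-1,\infty)$ and equal to $1$ on $X\times[R,\infty)$, and interpolate between $\tilde J$ and $J^{\hat X}$ using $\chi_R$. For $R$ sufficiently large the perturbation $J^{\hat X}+\chi_R(\tilde J-J^{\hat X})$ is as close to $J^{\hat X}$ as desired in operator norm, so stability of elliptic estimates together with Lemma \procref{SobolevEstimateC} and an interior Sobolev estimate over the compact piece $X\times[r_0,R+1]$ yields a bound
\begin{equation*}
\|\tilde\tau\|_{W^{2,2}_\omega(\hat X_{r_0})}\leq C\bigl(\|\tilde\tau|_{X\times[r_0,R+1]}\|_{W^{1,2}}+\|\tilde J\tilde\tau\|_{L^2_\omega(\hat X_{r_0})}\bigr)
\end{equation*}
for every $\tilde\tau\in W^{2,2}_{\omega,0}(\N\hat X_{r_0})$.

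Finally, to remove the artificial boundary condition on $X\times\{r_0\}$ and to reach all of $M$, I would pick a smooth cutoff $\chi$ on $M$ equal to $1$ on a neighbourhood of $K$ and supported in a slightly larger compact set $K'\supseteq K$, and decompose $\tau=\chi\tau+(1-\chi)\tau$. The second summand pulls back to a section vanishing on $X\times\{r_0\}$, so the previous step applies directly. For the first summand, the bounded geometry of $M$ (also provided by Lemma \procref{AsymptoticsOfMaximalGraph}) together with standard interior $L^2$ elliptic bounds gives $\|\chi\tau\|_{W^{2,2}}\leq C(\|\tau|_{K'}\|_{L^2}+\|J\tau|_{K'}\|_{L^2})$, after absorbing the first-order commutator $[J,\chi]\tau$ in the usual way by slightly enlarging $K'$. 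Summing the two estimates and noting that $\|\tau|_{K'}\|_{W^{1,2}}$ is controlled by $\|\tau|_{K'}\|_{L^2}+\|J\tau|_{K'}\|_{L^2}$ by interior elliptic regularity produces \eqnref{SobEstJacOp}. The main obstacle in the whole argument is the geometric comparison in the second paragraph: one must verify carefully that every coefficient of $\tilde J-J^{\hat X}$, written in a bounded-geometry local frame, is a smooth function of $(\sigma,\nabla\sigma,\nabla^2\sigma)$ that vanishes when these arguments vanish, so that the decay of $\sigma$ and its derivatives at infinity genuinely propagates to uniform decay of the coefficients of $\tilde J-J^{\hat X}$.
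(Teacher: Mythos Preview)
Your argument is correct and follows the same overall scheme as the paper: localize to the cone end via Lemma \procref{AsymptoticsOfMaximalGraph}, perturb from the cone Jacobi operator, and patch in an interior elliptic estimate by a cutoff.

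The one noteworthy difference is which cone result you invoke. You redo the interpolation argument from the proof of Lemma \procref{SobolevEstimateC} to transfer that elliptic estimate from $J^{\hat X}$ to the pulled-back operator $\tilde J$, producing a bound with a residual $W^{1,2}$ term on a compact slab, which you then downgrade to $L^2$ using interior regularity. The paper instead invokes Theorem \procref{InvertibilityOverEndSobolev} directly: since $J$ restricted to $M\setminus K$ is, after pullback, a small perturbation of the Jacobi operator of $\hat X_{r_0}$, it is a linear isomorphism $W^{2,2}_{\omega,0}\to L^2_\omega$, and hence $\|(1-\chi)\tau\|_{W^{2,2}_\omega}\leq C\|J(1-\chi)\tau\|_{L^2_\omega}$ with no compact remainder at all on the end. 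The rest of the paper's proof is then a two-line calculation with the cutoff $\chi$ and one interior estimate. Your route is a touch more elementary (it never needs the Fredholm/duality part of Theorem \procref{InvertibilityOverEndSobolev}); the paper's route is shorter once that theorem is in hand.
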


\myproof Indeed, let $K$ be as above. We may suppose that $K$ has smooth boundary and, by Lemma \procref{AsymptoticsOfMaximalGraph}, that $\sigma$ and all its derivatives are as small as we wish. It follows that, up to reparametrization, the restriction of $J$ to $M\setminus K$ is a perturbation of the Jacobi operator of $\hat{X}_{r_0}$ so that, by Theorem \procref{InvertibilityOverEndSobolev}, it defines a linear isomorphism from $W^{2,2}_{\omega,0}(\N M|_{M\setminus K})$ into $L^2_\omega(\N M|_{M\setminus K})$. Let $\chi:M\rightarrow[0,1]$ be a smooth, compactly supported function equal to $1$ over $K$. For suitable $C_1,C_2>0$, and for all $\tau\in W^{2,2}_\omega(\N M)$,
\begin{equation*}\eqalign{
\|\tau\|_{W^{2,2}_\omega}
&\leq \|\chi\tau\|_{W^{2,2}} + \|(1-\chi)\tau\|_{W^{2,2}_\omega}\cr
&\leq \|\chi\tau\|_{W^{2,2}} + C_1\|J(1-\chi)\tau\|_{L^2_\omega}\cr
&\leq C_2\big(\|\tau|_{\opSupp(\chi)}\|_{W^{2,2}} + \|J\tau\|_{L^2_\omega}\big)\ .\cr}
\end{equation*}
Finally, by the classical theory of elliptic operators, there exists $C_3>0$ and a compact set $K'$ containing a neighbourhood of $\opSupp(\chi)$ such that
\begin{equation*}
\|\tau|_{\opSupp(\chi)}\|_{W^{2,2}} \leq C_3\big(\|\tau|_{K'}\|_{L^2} + \|J\tau|_{K'}\|_{L^2}\big)\ ,
\end{equation*}
and the result follows upon combining these estimates.\myqed

\noindent With $\hat{e}^\sigma$ as before, for all suitable $k$, $\alpha$ and $\omega$, and for every $k$-times differentiable section $\tau$ of $\N M$, we define
\begin{equation}
\|\tau\|_{C^{k,\alpha}_\omega(M\setminus K)} := \|(\hat{e}^\sigma)^*\tau\|_{C^{k,\alpha}_{\omega}(\hat{X}_{r_0})}\ .
\end{equation}
With $\chi$ as before, we define the \emph{$\omega$-weighted H\"older norm} for $k$-times differentiable sections of $\N M$ by
\begin{equation}
\|\tau\|_{C^{k,\alpha}_\omega} := \|\chi\tau\|_{C^{k,\alpha}} + \|(1-\chi)\tau\|_{C^{k,\alpha}_\omega(M\setminus K)}\ ,
\end{equation}
and we define the \emph{$\omega$-weighted H\"older space} $C^{k,\alpha}_\omega(M)$ to be the space of all $k$-times differentiable sections of $\N M$ for which this norm is finite. Repeating the proof of Lemma \procref{SobEstJacOp} in the H\"older space case yields the following estimate.

\begin{lemma}
\noindent Let $M$ be a complete maximal $p$-submanifold such that $\partial_\infty M=X_\infty$ is smooth and spacelike. For every weight $\omega\in(-p,1)$, and for all $\alpha\in(0,1)$, there exists a compact subset $K'\subseteq M$ and $C>0$ such that, for all $\sigma\in C^{2,\alpha}_\omega(\N M)$,
\begin{equation}
\|\sigma\|_{C^{2,\alpha}_\omega} \leq C\big(\|\sigma|_{K'}\|_{C^{0,\alpha}} + \|J\sigma\|_{C^{0,\alpha}_\omega}\big)\ .\myeqnum{\nexteqnno[SobEstJacOp]}
\end{equation}
\proclabel{HolEstJacOp}
\end{lemma}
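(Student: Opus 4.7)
The plan is to follow the proof of Lemma \procref{SobEstJacOp} line by line, replacing Sobolev ingredients by their H\"older counterparts. Let $K \subseteq M$ and $r_0>0$ be as furnished by Lemma \procref{AsymptoticsOfMaximalGraph}, so that $M\setminus K$ coincides with the graph over the truncated cone $\hat X_{r_0}$ of a smooth normal section whose jets of all orders vanish at infinity. By enlarging $K$ if necessary, we may arrange that $K$ has smooth boundary and that the parametrizing section and all its derivatives (measured in the relevant weighted norms) are as small as we wish.

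Under the pullback $\hat e^\sigma$, the Jacobi operator of $M$ on $M\setminus K$ becomes an arbitrarily small perturbation, in the operator norm from $C^{2,\alpha}_{\omega,0}(\N\hat X_{r_0})$ to $C^{0,\alpha}_\omega(\N\hat X_{r_0})$, of the Jacobi operator of the cone. By Theorem \procref{InvertibilityOverEndHoelder} (together with the extension to $\omega\in(-p,1)$ indicated in the remark following its statement), the cone Jacobi operator is a linear isomorphism between these weighted H\"older spaces, and this isomorphism property is stable under small bounded perturbations. Consequently, upon enlarging $K$ further, the pulled-back $J$ on the end defines a linear isomorphism
\begin{equation*}
J : C^{2,\alpha}_{\omega,0}(\N M|_{M\setminus K}) \longrightarrow C^{0,\alpha}_\omega(\N M|_{M\setminus K})\ ,
\end{equation*}
so that there exists $C_1>0$ with $\|\tau\|_{C^{2,\alpha}_\omega} \leq C_1 \|J\tau\|_{C^{0,\alpha}_\omega}$ for every $\tau$ in this domain.

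Now choose a smooth, compactly supported cutoff $\chi:M\to[0,1]$ equal to $1$ on $K$, and split $\sigma = \chi\sigma + (1-\chi)\sigma$. The piece $(1-\chi)\sigma$ lies in $C^{2,\alpha}_{\omega,0}(\N M|_{M\setminus K})$, so the displayed isomorphism estimate applies, giving
\begin{equation*}
\|(1-\chi)\sigma\|_{C^{2,\alpha}_\omega} \leq C_1 \|J(1-\chi)\sigma\|_{C^{0,\alpha}_\omega} \leq C_2\big(\|J\sigma\|_{C^{0,\alpha}_\omega} + \|\sigma|_{\operatorname{Supp}(\nabla\chi)}\|_{C^{1,\alpha}}\big)\ ,
\end{equation*}
where the second inequality absorbs the commutator $[J,\chi]$, a first-order differential operator whose support is compact. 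For the piece $\chi\sigma$, standard interior Schauder estimates on the bounded-geometry region $\operatorname{Supp}(\chi)$ yield a constant $C_3>0$ and a compact $K'\subseteq M$ containing a neighbourhood of $\operatorname{Supp}(\chi)$ with
\begin{equation*}
\|\chi\sigma\|_{C^{2,\alpha}} \leq C_3\big(\|\sigma|_{K'}\|_{C^{0,\alpha}} + \|J\sigma|_{K'}\|_{C^{0,\alpha}}\big)\ .
\end{equation*}
Adding these two estimates, using $\|\sigma\|_{C^{2,\alpha}_\omega} \leq \|\chi\sigma\|_{C^{2,\alpha}} + \|(1-\chi)\sigma\|_{C^{2,\alpha}_\omega}$, and noting that on the compact set $K'$ the weighted and unweighted H\"older norms are equivalent, the result follows upon enlarging $K'$ to absorb $\operatorname{Supp}(\nabla\chi)$.

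The main technical points, neither of them serious, are (i) justifying the perturbative transfer of the cone isomorphism to the end of $M$, for which the decay of $\sigma$ and of its derivatives in Lemma \procref{AsymptoticsOfMaximalGraph} is crucial, and (ii) ensuring the weight $\omega\in(-p,1)$ lies in the range for which the cone-side invertibility is available; this is precisely the range discussed in the remark after Theorem \procref{InvertibilityOverEndHoelder} and presents no further difficulty.
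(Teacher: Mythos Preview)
Your proposal is correct and follows precisely the approach the paper indicates (namely, repeating the proof of Lemma~\procref{SobEstJacOp} in the H\"older setting): perturb the cone-end invertibility to the end of $M$, split with a cutoff, handle the commutator, and finish with an interior Schauder estimate.

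One small point: you justify invertibility on the end for the full range $\omega\in(-p,1)$ by invoking Theorem~\procref{InvertibilityOverEndHoelder} together with the remark following it. That remark is asserted but not proven, so strictly speaking you are relying on an unproven statement. A self-contained route is to note that what you actually need on the end is not the full isomorphism but merely the elliptic estimate of Lemma~\procref{HoelderEstimateC}, which \emph{is} proven for all $\omega\in(-p,1)$; stability of elliptic estimates under small perturbations then transfers this estimate to the end of $M$, and the extra compact-restriction term it carries is absorbed into $\|\sigma|_{K'}\|_{C^{0,\alpha}}$ at the final step. The paper's own one-line proof is no more explicit on this, so this is a cosmetic rather than substantive distinction.
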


\noindent Repeating the proofs of Theorems \procref{InvertibilityOverEndSobolev} and \procref{InvertibilityOverEndHoelder} in the present context yields the following results, which concludes our study of the analytic properties of the action of the Jacobi operator on Sobolev and H\"older spaces over cones and maximal graphs.

\begin{theorem}
Let $M$ be a complete maximal $p$-submanifold such that $\partial_\infty M=X_\infty$ is smooth and spacelike. For every weight $\omega\in(-\sqrt{p},\sqrt{p})$, the operator $J$ defines a linear isomorphism from $W^{2,2}_\omega(M)$ into $L^2_\omega(M)$.
\proclabel{InvSob}
\end{theorem}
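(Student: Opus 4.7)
The plan is to mirror, almost verbatim, the proof of Theorem \procref{InvertibilityOverEndSobolev}, with the cone $\hat{X}_{r_0}$ replaced by the complete maximal $p$-submanifold $M$, using Lemma \procref{SobEstJacOp} in place of \eqnref{SobolevEstimateC} and Lemma \procref{AsymptoticsOfMaximalGraph} to guarantee that $M$ is of bounded geometry.

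First I would prove that $J_\omega$ is Fredholm of index zero for every $\omega \in (-\sqrt{p},\sqrt{p})$. By Lemma \procref{SobEstJacOp} there exist $C>0$ and a compact set $K' \subseteq M$ such that, for all $\tau \in W^{2,2}_\omega(M)$,
\begin{equation*}
\|\tau\|_{W^{2,2}_\omega} \leq C\big(\|\tau|_{K'}\|_{L^2} + \|J\tau\|_{L^2_\omega}\big).
\end{equation*}
Since $M$ has bounded geometry and $K'$ is compact, the Rellich--Kondrachov theorem implies that the restriction operator $\tau \mapsto \tau|_{K'}$ from $W^{2,2}_\omega(M)$ into $L^2(\N M|_{K'})$ is compact. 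Thus the above estimate is of elliptic type, so by \cite[Chapter 21, Theorem 4]{Lax} the operator $J_\omega$ has finite-dimensional kernel and closed image. Conjugation by $\mu_\omega$ identifies the formal adjoint of $J_\omega$ with $J_{-\omega}$, so $J_\omega$ is Fredholm. The operator $J_0$ is self-adjoint and thus has Fredholm index zero; by continuity of the index along the norm-continuous family $(J_\omega)_{\omega\in(-\sqrt{p},\sqrt{p})}$, $J_\omega$ is Fredholm of index zero for every such $\omega$.

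Next, I would prove injectivity for $\omega \in (-\sqrt{p},0]$, working with the unconjugated operator $J$. Let $\sigma \in W^{2,2}_\omega(M)$ satisfy $J\sigma = 0$. Since $\omega \leq 0$, $\sigma$ also lies in the unweighted space $W^{2,2}(\N M)$. By elliptic regularity $\sigma$ is smooth, and since $M$ has bounded geometry (Lemma \procref{AsymptoticsOfMaximalGraph}), standard elliptic bootstrapping gives $\|\sigma\|_{L^\infty}<+\infty$. Since $M$ is complete, Omori's maximum principle supplies, for every $\epsilon>0$, a point $y \in M$ where
\begin{equation*}
\opHess(\|\sigma\|^2)(y) \leq \epsilon\,\opId, \qquad \|\sigma(y)\|^2 \geq \|\sigma\|_{L^\infty}^2 - \epsilon.
\end{equation*}
Expanding $\Delta\|\sigma\|^2$ using \eqref{eqn:JacobiOperator} and $J\sigma=0$ at $y$ gives
\begin{equation*}
\epsilon p \geq \Delta\|\sigma\|^2 = 2\langle \Delta^N\sigma,\sigma\rangle + 2\|\nabla^N\sigma\|^2 = 2p\|\sigma\|^2 + 2\|\nabla^N\sigma\|^2 + \sum_{m,n}\langle \opII(e_m,e_n),\sigma\rangle^2 \geq 2p\|\sigma\|_{L^\infty}^2 - 2p\epsilon,
\end{equation*}
which for $\epsilon < \tfrac{2}{3}\|\sigma\|_{L^\infty}^2$ forces $\sigma \equiv 0$. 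Combined with the fact that $J_\omega$ has Fredholm index zero, injectivity gives invertibility for all $\omega \in (-\sqrt{p},0]$. Finally, for $\omega \in [0,\sqrt{p})$, invertibility follows from the $\omega \leq 0$ case by duality, since $J_\omega = J_{-\omega}^{*}$.

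The only step requiring genuine care is verifying that all estimates transfer cleanly from $\hat{X}_{r_0}$ to $M$: the maximum principle argument is identical and the Fredholm/index machinery is formal, so the real content is already packaged into Lemma \procref{SobEstJacOp} (whose proof itself imports Theorem \procref{InvertibilityOverEndSobolev} via the fact that $J$ on $M\setminus K$ is a small perturbation of the Jacobi operator of $\hat{X}_{r_0}$).
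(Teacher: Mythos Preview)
Your proposal is correct and follows precisely the approach the paper intends: the authors state that Theorem \procref{InvSob} is obtained by ``repeating the proofs of Theorems \procref{InvertibilityOverEndSobolev} and \procref{InvertibilityOverEndHoelder} in the present context,'' and your write-up does exactly that, swapping in Lemma \procref{SobEstJacOp} for the cone estimate and invoking Lemma \procref{AsymptoticsOfMaximalGraph} for bounded geometry. If anything, the case of $M$ is slightly cleaner than the cone case since there is no boundary condition to track.
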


\begin{theorem}
Let $M$ be a complete maximal $p$-submanifold such that $\partial_\infty M=X_\infty$ is smooth and spacelike. For every weight $\omega\in(-p,0]$, and for all $\alpha\in(0,1)$, the operator $J$ defines a linear isomorphism from $C^{2,\alpha}_\omega(M)$ into $C^{0,\alpha}_\omega(M)$.
\proclabel{InvHol}
\end{theorem}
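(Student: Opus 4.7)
The plan is to adapt, essentially verbatim, the proofs of Theorems \procref{InvertibilityOverEndHoelder} and \procref{InvertibilityOverEndSobolev} to the present setting, using the pre-elliptic-type estimate already provided by Lemma \procref{HolEstJacOp} together with the companion Sobolev result \procref{InvSob}. The proof decomposes into three steps: the Fredholm property, injectivity, and surjectivity.

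First, by Lemma \procref{HolEstJacOp} there is a compact set $K' \subset M$ and $C > 0$ with
\begin{equation*}
\|\sigma\|_{C^{2,\alpha}_\omega} \leq C\big(\|\sigma|_{K'}\|_{C^{0,\alpha}} + \|J\sigma\|_{C^{0,\alpha}_\omega}\big)
\end{equation*}
for all $\sigma \in C^{2,\alpha}_\omega(\N M)$. Since $K'$ is compact, the restriction map $C^{2,\alpha}_\omega(\N M) \to C^{0,\alpha}(\N M|_{K'})$ is compact by Arzelà--Ascoli, so this is a genuine elliptic estimate. As in \cite[Chapter 21, Theorem 4]{Lax}, $J:C^{2,\alpha}_\omega(\N M) \to C^{0,\alpha}_\omega(\N M)$ therefore has finite-dimensional kernel and closed image.

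For injectivity, I follow the argument used in the proof of Theorem \procref{InvertibilityOverEndSobolev}. Since $\omega \leq 0$, any $\sigma \in C^{2,\alpha}_\omega(\N M)$ is a fortiori bounded, so that $\|\sigma\|_{L^\infty} < \infty$. By Lemma \procref{AsymptoticsOfMaximalGraph}, $M$ is complete and of bounded geometry, so Omori's maximum principle applies to $\|\sigma\|^2$. If $J\sigma = 0$, formula \eqref{eqn:JacobiOperator} together with the Bochner-type computation
\begin{equation*}
\Delta\|\sigma\|^2 = 2\langle\Delta^N\sigma,\sigma\rangle + 2\|\nabla^N\sigma\|^2 = 2p\|\sigma\|^2 + 2\|\nabla^N\sigma\|^2 + \sum_{m,n}\langle\opII(e_m,e_n),\sigma\rangle^2
\end{equation*}
forces, at each near-supremum point, $\epsilon p \geq 2p\|\sigma\|_{L^\infty}^2 - 2p\epsilon$, yielding $\sigma \equiv 0$. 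By the closed graph theorem, $J$ is a topological isomorphism onto its image; in particular, there exists $C > 0$ such that $\|\sigma\|_{C^{2,\alpha}_\omega} \leq C\|J\sigma\|_{C^{0,\alpha}_\omega}$.

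For surjectivity, given $\tau \in C^{0,\alpha}_\omega(\N M)$, I mimic the final step of the proof of Theorem \procref{InvertibilityOverEndHoelder}. Let $\chi_r:M \to [0,1]$ be a family of smooth cutoffs with compact support exhausting $M$. Each $\tau\chi_r$ lies in $L^2(\N M)$, so by Theorem \procref{InvSob} (applied with weight $0 \in (-\sqrt{p},\sqrt{p})$) there exists $\sigma_r \in W^{2,2}(\N M)$ solving $J\sigma_r = \tau\chi_r$. Bounded geometry and the Sobolev embedding give $\sigma_r \in L^\infty$, elliptic regularity promotes $\sigma_r$ to $C^{2,\alpha}_\oploc$, and the Schauder estimates then upgrade this to $\sigma_r \in C^{2,\alpha}_\omega(\N M)$; the uniform bound $\|\sigma_r\|_{C^{2,\alpha}_\omega} \leq C\|\tau\chi_r\|_{C^{0,\alpha}_\omega} \leq C'\|\tau\|_{C^{0,\alpha}_\omega}$ is then automatic. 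Applying Arzelà--Ascoli and extracting a $C^{2,\beta}_\oploc$-limit for $\beta < \alpha$ (with semicontinuity of the H\"older seminorm controlling the weighted norm), one obtains $\sigma \in C^{2,\alpha}_\omega(\N M)$ with $J\sigma = \tau$.

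The main subtlety is verifying that the perturbation from the model operator $L$ on $\hat X_{r_0}$ to the true Jacobi operator $J$ on $M$ is genuinely small at infinity, so that both Lemma \procref{HolEstJacOp} and the injectivity-to-surjectivity transfer via Theorem \procref{InvSob} actually go through. This is guaranteed by Lemma \procref{AsymptoticsOfMaximalGraph}, which ensures that on $M \setminus K$ the metric and second fundamental form are $C^\infty_\oploc$ close to those of the asymptotic cone $\hat X_{r_0}$, so the stability of elliptic estimates used in the proof of Lemma \procref{HolEstJacOp} legitimately yields the desired weighted control.
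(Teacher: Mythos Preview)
Your argument is exactly the paper's approach: the paper only says ``repeating the proofs of Theorems \procref{InvertibilityOverEndSobolev} and \procref{InvertibilityOverEndHoelder} in the present context'', and you have spelt out those three steps (Lemma \procref{HolEstJacOp} plus Arzel\`a--Ascoli for the semi-Fredholm property, Omori for injectivity, Sobolev solvability with cutoffs for surjectivity).

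One step needs tightening. In the surjectivity argument you solve $J\sigma_r=\tau\chi_r$ in the \emph{unweighted} space $W^{2,2}_0$ and then write that ``the Schauder estimates upgrade this to $\sigma_r\in C^{2,\alpha}_\omega$''. For $\omega<0$ this is not automatic: Sobolev embedding and Schauder on $M$ only give $\sigma_r\in C^{2,\alpha}_0$, and the injectivity bound $\|\sigma\|_{C^{2,\alpha}_\omega}\leq C\|J\sigma\|_{C^{0,\alpha}_\omega}$ is only available for $\sigma$ already known to lie in $C^{2,\alpha}_\omega$, so the argument is circular as written. The paper's organisation of the cone proof avoids this by conjugating: one shows that $J_\omega:=\mu_{-\omega}J\mu_\omega$ is an isomorphism on \emph{unweighted} H\"older spaces, so the a priori bound reads $\|\tilde\sigma\|_{C^{2,\alpha}}\leq C\|J_\omega\tilde\sigma\|_{C^{0,\alpha}}$ and only requires $\tilde\sigma\in C^{2,\alpha}$, which Sobolev embedding plus Schauder \emph{do} deliver. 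Equivalently, since $\tau\chi_r$ has compact support it lies in $L^2_\omega$, so you may invoke Theorem \procref{InvSob} at weight $\omega$ itself to obtain $\sigma_r\in W^{2,2}_\omega$ directly; then $\mu_{-\omega}\sigma_r\in L^\infty$ and Schauder for $J_\omega$ gives $\sigma_r\in C^{2,\alpha}_\omega$ as needed.
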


\subsection{Perturbing maximal ends}\label{PerturbationsOfMinimalEnds}

We now describe the families of perturbations of maximal graphs to which the implicit function theorem will be applied. Our construction is inevitably rather technical, reflecting the main difference between the perturbation theories of compact manifolds and non-compact manifolds, namely that in the non-compact case functional norms are sensitive to the parametrizations used. It is for this reason that we will describe our parametrizations in some detail.

The first step concerns the construction of explicit parametrizations of families of cones. We proceed as follows. Let $(X_t)_{t\in(-\epsilon,\epsilon)}$ be a smooth family of spacelike graphs in $\T^1_{x_0}\H\cong\Spmq$ such that $X_0=X$. Let $(e_t)_{t\in(-\epsilon,\epsilon)}$ be a smooth family of smooth functions from $X$ into $\Spmq$ such that, for all $t$, $e_t$ parametrises $X_t$. For all $t$, define $\hat{e}_t:X\times(0,\infty)\rightarrow\H_+$ by
\begin{equation}
\hat{e}_t(x,r) := \Phi(e_t(x),r)\ ,\myeqnum{\nexteqnno[ParamOfConeFamily]}
\end{equation}
so that $\hat{e}_t$ parametrises $\hat{X}_t$.

We now study the variations of the mean curvature vectors of these cones. Note that the mean curvature vector always lies in the tangent bundle of the ambient space. However, we alert the reader to the fact that, since the derivative of $e_t$ with respect to $t$ becomes large at infinity, it will be preferable to identify different fibres of this bundle in the following non-standard manner.

First, for all $t\in (-\epsilon,\epsilon)$, and for all $x\in X$, let $p_t(x)$ denote parallel transport from $x$ to $e_t(x)$ along the unique geodesic joining these two points, let $q_t(x)$ denote the composition of this function with the orthogonal projection onto the normal bundle of $e_t$ and note that, as before, $q_t$ defines a bundle isomorphism from $\N X$ into $\N X_t$ for all $t$. For all $t$, let $H_t\in \Gamma(\N X)$ denote the image under $q_t^{-1}$ of the mean curvature vector of $X_t$.

For all $t$, we identify $\N \hat{X}_t$ with $\pi_1^*\N X_t$ via parallel transport $\tau_\oprad$ along radial lines, where $\pi_1$ is the projection onto the first factor on  $\hat{X}_t\cong X_t \times (0,+\infty)$.  We denote by $\hat{q}_t:\N \hat{X}_0\rightarrow \N \hat{X}_t$ the bundle isomorphism induced by this identification, that is, the unique bundle isomorphism such that the following diagram commutes.
\begin{equation}\diagram{
\N \hat{X}_0&\sharr{\tau_\oprad}{}&\pi_1^*\N X_0\cr
\varr{\hat{q}_t}{}& &\varr{\pi_1^*q_t}{}\cr
\N \hat{X}_t&\sharr{\tau_\oprad}{}&\pi_1^*\N X_t\cr}~.\myeqnum{\nexteqnno[CanonicalHomeomorphism]}
\end{equation}
For all $t$, let $\hat{H}_t$ denote the image under $\hat{q}_t^{-1}$ of the mean curvature vector of $\hat{X}_t$. Note now that, whilst radial projection contracts by a factor of $\opSinh(r)$, parallel transport does not. The identification of $\N \hat{X}_t$ with $\pi_1^*\N X_t$ via parallel transport along radial lines thus differs from that used in Section \ref{sec:cones} by a factor of $\opSinh(r)$ so that, by \eqref{SecondFFOfCone}, for all $t$,
\begin{equation}
\hat{H}_t = \frac{\pi_1^*H_t}{\opSinh(r)}\ .\myeqnum{\nexteqnno[MeanCurvOfConeFamily]}
\end{equation}

\begin{lemma}
For all $r_0>0$ and for all $\alpha\in(0,1)$, the map sending $t$ to $\hat{H}_t$ defines a smooth function from $(-\epsilon,\epsilon)$ into $C_{-1}^{2,\alpha}(\N \hat{X}_{r_0})$.
\proclabel{ConeMCIsSmooth}
\end{lemma}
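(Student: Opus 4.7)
The plan is to factor $t \mapsto \hat{H}_t$ through a fixed bounded linear ``lifting'' operator from sections over the compact sphere $X$ into the weighted H\"older space over the cone. By equation \eqref{eqn:MeanCurvOfConeFamily}, one has $\hat{H}_t = L(H_t)$, where
\begin{equation*}
L:\sigma \longmapsto \frac{\pi_1^*\sigma}{\opSinh(r)}\ ,
\end{equation*}
and $\pi_1^*$ denotes the lift of a section of $\N X$ to $\N\hat{X}_{r_0}$ by the radial parallel transport identification of \eqref{eqn:CanonicalHomeomorphism}. It therefore suffices to establish (i) that $t\mapsto H_t$ is smooth from $(-\epsilon,\epsilon)$ into $C^{2,\alpha}(\N X)$, and (ii) that $L$ defines a bounded linear operator $C^{2,\alpha}(\N X) \to C^{2,\alpha}_{-1}(\N\hat{X}_{r_0})$. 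The conclusion will then follow because the composition of a smooth map with a bounded linear map between Banach spaces is smooth, with $k$th derivative at $t$ equal to $L(\partial_t^k H_t)$.

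Point (i) is entirely standard: compactness of $X$, together with the smooth dependence of the embedding family $(e_t)_{t\in(-\epsilon,\epsilon)}$, implies via the quasi-linear expression for the mean curvature that $t\mapsto H_t$ is smooth into $C^\infty(\N X)$, and hence into every $C^{k,\alpha}(\N X)$.

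The heart of the matter is point (ii). Unwinding \eqref{eqn:DefinitionOfWeightedHoelderNorm},
\begin{equation*}
\|L(\sigma)\|_{C^{2,\alpha}_{-1}} = \bigg\| \frac{2}{1-e^{-2r}} \pi_1^*\sigma \bigg\|_{C^{2,\alpha}(\N\hat{X}_{r_0})}\ ,
\end{equation*}
and the scalar coefficient $2/(1-e^{-2r})$, together with all its cone-covariant derivatives (in which $\partial_r$ is a unit vector), is bounded on $[r_0,\infty)$. To control $\|\pi_1^*\sigma\|_{C^{2,\alpha}(\hat{X}_{r_0})}$ in terms of $\|\sigma\|_{C^{2,\alpha}(X)}$, I would apply the normal-bundle analogue of Lemma \procref{CovariantDerivativeInPolarCoordinates}: the radial parallel transport identification entails $\hat{\nabla}^N_{\partial_r}\pi_1^*\sigma = 0$ (since $\pi_1^*\sigma$ is $r$-independent in this identification) and $\hat{\nabla}^N_\xi\pi_1^*\sigma = \pi_1^*(\nabla^N_\xi\sigma)$ for horizontal $\xi$. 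Since unit horizontal vectors in the cone metric have the form $\tilde\xi/\opSinh(r)$ with $\tilde\xi$ unit in the metric of $X$, one obtains the pointwise identities
\begin{equation*}
\|(\hat{\nabla}^N)^k\pi_1^*\sigma\|(x,r) = \opSinh^{-k}(r)\, \|(\nabla^N)^k\sigma\|(x)\ ,\qquad k=0,1,2\ .
\end{equation*}
The $C^{0,\alpha}$ seminorm of $(\hat{\nabla}^N)^2\pi_1^*\sigma$ is handled analogously: horizontal increments of cone-distance $d$ correspond to increments of $X$-distance $d/\opSinh(r)$, yielding an additional factor of $\opSinh^{-\alpha}(r)$, which is bounded on $[r_0,\infty)$. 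Combining all of these estimates yields $\|L(\sigma)\|_{C^{2,\alpha}_{-1}} \leq C(r_0)\|\sigma\|_{C^{2,\alpha}(X)}$.

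The only real technical obstacle is the careful bookkeeping of the H\"older seminorm of the second covariant derivative, for which bounded geometry of the cone (Lemma \ref{AsymptoticsOfCone}) is essential, together with the explicit Koszul formulas relating the cone connection to the connection on $X$. Once these pieces are in place, the smoothness assertion is immediate from the factorization through $L$.
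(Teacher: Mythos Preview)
Your proposal is correct and follows essentially the same route as the paper: factor through the compact-base map $t\mapsto H_t\in C^{2,\alpha}(\N X)$, then compose with the bounded linear lift $\sigma\mapsto\pi_1^*\sigma/\opSinh(r)$ into $C^{2,\alpha}_{-1}(\N\hat{X}_{r_0})$. The paper's proof is identical in structure but more terse, splitting your operator $L$ into the pullback $\pi_1^*:C^{2,\alpha}(\N X)\to C^{2,\alpha}_0(\N\hat{X}_{r_0})$ followed by multiplication by $1/\opSinh(r)$, and simply asserting boundedness where you supply the explicit pointwise estimates.
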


\myproof Indeed, $(H_t)_{t\in(-\epsilon,\epsilon)}$ trivially defines a smooth function from $(-\epsilon,\epsilon)$ into $C^{2,\alpha}(\N X_{r_0})$. It follows that $(\pi_1^*H_t)_{t\in(-\epsilon,\epsilon)}$ defines a smooth function from $(-\epsilon,\epsilon)$ into $C_0^{2,\alpha}(\N \hat{X}_{r_0})$. Since multiplication by $\frac{1}{\opSinh(r)}$ defines a bounded linear map from $C_0^{2,\alpha}(\N \hat{X}_{r_0})$ into $C^{2,\alpha}_{-1}(\N \hat{X}_{r_0})$, the result follows.\myqed

The second step involves constructing explicit parametrizations of families of graphs over families of cones. For all $t$, and for all $\sigma\in C_0^{2,\alpha}(\N \hat{X}_{r_0})$, we define $\hat{e}_t^\sigma:\hat{X}\rightarrow\H$ by
\begin{equation}
\hat{e}_t^\sigma(x,r) := \Exp_{\hat{e}_t(x,r)}(\hat{q}_t\circ\sigma(x,r))\ .\myeqnum{\nexteqnno[FirstPertFamily]}
\end{equation}
\noindent Let $\Cal{U}:=\Cal{U}_0^{2,\alpha}$ be a neighbourhood of the zero section in $C_0^{2,\alpha}(\N \hat{X}_{r_0})$ such that, for all $\sigma\in\Cal{U}$, $\hat{e}_t^\sigma$ is a spacelike embedding. We now study, as before, the variations of the mean curvature vectors of these embeddings. Note that, in contrast to the previous case, the derivative of $\hat{e}_t^\sigma$ with respect to $\sigma$ remains small at infinity, and we thus use parallel transport to identify nearby fibres of the tangent bundle of the ambient space in the standard manner. For all $t$, and for all $\sigma\in\mathcal{U}$, let $\hat{q}_t^\sigma$ denote the composition of parallel transport from $\hat{e}_t(x,r)$ to $\hat{e}_t^\sigma(x,r)$ with orthogonal projection onto the normal bundle of $\hat{e}_t^\sigma$. As before, for all $(t,\sigma)\in(-\epsilon,\epsilon)\times\Cal{U}$, $\hat{q}^\sigma_t$ is a bundle isomorphism, we denote by $\hat{H}_t^\sigma$ the image under $(\hat{q}_t^\sigma\circ\hat{q}_t)^{-1}$ of the mean curvature vector of $\hat{e}_t^\sigma$, and we define
\begin{equation}
\Cal{F}(t,\sigma) := \hat{H}_t^\sigma - \hat H_t\ .\myeqnum{\nexteqnno[CurvErrorFirstPert]}
\end{equation}
We recall the following technical lemma of differential geometry.

\begin{lemma}\label{lemma:AlmostTaylor}
Let $\Omega\subseteq\R^m$ be an open subset, and let $f:\Omega\rightarrow\mathbf{R}$ be a smooth function. For all $x_0\in\Omega$, there exist smooth functions $f_1,\cdots,f_m:\Omega\rightarrow\mathbf{R}$ such that
\begin{equation}
f(x) = f(x_0) + \sum_{i=1}^m(x^i-x_0^i)f_i(x)\ .
\end{equation}
Furthermore, for all $i$, $f_i(0)=(\partial_if)(0)$.
\end{lemma}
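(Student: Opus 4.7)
The plan is to prove this by a direct application of Hadamard's lemma: define the candidate functions $f_i$ explicitly as parameter integrals of the partial derivatives of $f$ along the line segment from $x_0$ to $x$, and verify the desired properties by the fundamental theorem of calculus.

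More precisely, first I would shrink $\Omega$ if necessary so that it is star-shaped about $x_0$ (the statement is local, so this is no loss of generality). Then I would set
\begin{equation*}
f_i(x) := \int_0^1 (\partial_i f)\bigl(x_0 + t(x-x_0)\bigr)\,dt\ ,
\end{equation*}
for $i=1,\ldots,m$. Smoothness of $f_i$ follows from smoothness of $\partial_i f$ together with standard results on differentiation under the integral sign, since the parameter $x$ enters smoothly into a uniformly bounded integrand on the compact interval $[0,1]$.

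Next, I would verify the Taylor-type identity by applying the fundamental theorem of calculus to the function $t\mapsto f(x_0+t(x-x_0))$ on $[0,1]$. The chain rule gives
\begin{equation*}
f(x) - f(x_0) = \int_0^1 \frac{d}{dt}f\bigl(x_0+t(x-x_0)\bigr)\,dt = \sum_{i=1}^m (x^i - x_0^i)\int_0^1 (\partial_i f)\bigl(x_0+t(x-x_0)\bigr)\,dt\ ,
\end{equation*}
which is precisely the claimed decomposition. Finally, evaluating $f_i$ at $x=x_0$ collapses the integrand to the constant $(\partial_i f)(x_0)$, yielding $f_i(x_0)=(\partial_i f)(x_0)$ as required. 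There is no substantive obstacle here; the only mild care needed is ensuring the domain is star-shaped about $x_0$ so that the line segment used in the integral lies in $\Omega$.
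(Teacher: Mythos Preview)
Your proposal is correct and is essentially identical to the paper's own proof: the paper also reduces to a convex (you say star-shaped) domain and defines $f_i(x) := \int_0^1 (\partial_i f)(x_0 + t(x-x_0))\,dt$, then verifies the identity and the value at $x_0$ directly.
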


\begin{proof} It suffices to address the case where $\Omega$ is convex. For all $i$, we define
\begin{equation*}
f_i(x) := \int_0^1 \frac{\partial f}{\partial x_i}(x_0 + t(x-x_0))dt\ ,
\end{equation*}
and we readily verify that these functions have the desired properties.
\end{proof}

\noindent For all $\omega\leq 0$, we denote
\begin{equation*}
\mathcal{U}^{2,\alpha}_\omega := \mathcal{U}\minter C^{2,\alpha}_\omega(\N \hat{X}_{r_0})\ .
\end{equation*}

\begin{lemma}
For all $r_0>0$, for all $\alpha\in(0,1)$, and for every weight $\omega\leq 0$, $\Cal{F}$ defines a smooth function from $(-\epsilon,\epsilon)\times\Cal{U}^{2,\alpha}_\omega$ into $C^{0,\alpha}_\omega(\N \hat{X}_{r_0})$.
\proclabel{FirstPertIsSmooth}
\end{lemma}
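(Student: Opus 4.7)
The plan is to exploit the pointwise quasilinear structure of the mean curvature operator, combined with a pointwise Taylor expansion at $\sigma=0$, and then transfer the resulting estimates to weighted function spaces. To begin, I would verify that $\mathcal{F}(t,\sigma)$ admits a pointwise expression of the form $\mathcal{F}(t,\sigma)(y)=F(t,y,J^2\sigma(y))$ for $y\in\hat{X}_{r_0}$, where $F$ is a smooth function defined on an open neighbourhood of the zero section of the $2$-jet bundle of $\N\hat{X}_{r_0}$. This is the classical formula for the mean curvature of a normal graph, applied to the exponential parametrization $\hat{e}_t^\sigma$ of \eqref{eqn:FirstPertFamily}. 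By Lemma \ref{AsymptoticsOfCone}, the cones $\hat{X}_t$ have uniformly bounded geometry, which allows one to trivialize all relevant bundles, connections and curvature tensors by a uniformly bounded family of local charts. In such trivializations, $F$ and all its derivatives are bounded uniformly on compact subsets of the jet variables, uniformly in $y\in\hat{X}_{r_0}$ and $t\in(-\epsilon,\epsilon)$.

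The crucial observation is that $\hat{e}_t^0=\hat{e}_t$, so that $\mathcal{F}(t,0)=0$ identically in $t$ and $y$. Applying Lemma \ref{lemma:AlmostTaylor} fibrewise in the $2$-jet bundle then yields a smooth factorization of the form
\begin{equation*}
\mathcal{F}(t,\sigma)=A(t,J^2\sigma)\cdot\sigma+B(t,J^2\sigma)\cdot\nabla^N\sigma+C(t,J^2\sigma)\cdot(\nabla^N)^2\sigma,
\end{equation*}
where $A$, $B$ and $C$ are smooth bundle-valued functions on a neighbourhood of the zero section, whose derivatives of all orders remain uniformly bounded on compact subsets of the jet variables. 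For $\omega\leq 0$ and $\sigma\in\mathcal{U}^{2,\alpha}_\omega$, the $2$-jet of $\sigma$ takes values in a fixed bounded subset of the jet bundle, uniformly in $y\in\hat{X}_{r_0}$; this is precisely where the restriction $\omega\leq 0$ enters, since sections in $C^{2,\alpha}_\omega$ are then automatically controlled in the unweighted $C^{2,\alpha}$ norm. Consequently the compositions $A(t,J^2\sigma)$, $B(t,J^2\sigma)$ and $C(t,J^2\sigma)$ belong to the unweighted H\"older algebra $C^{0,\alpha}$, with norm depending continuously on $\|\sigma\|_{C^{2,\alpha}_\omega}$.

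Since $C^{0,\alpha}_\omega$ is a module over $C^{0,\alpha}$, and since $\sigma$, $\nabla^N\sigma$ and $(\nabla^N)^2\sigma$ all belong to $C^{0,\alpha}_\omega$ by definition, the right-hand side of the above factorization lies in $C^{0,\alpha}_\omega(\N\hat{X}_{r_0})$, which establishes that $\mathcal{F}$ is well-defined into the target space. Smoothness of $\mathcal{F}$ as a map of Banach spaces then follows from the standard $\omega$-Lemma (or Moser composition theorem) applied to H\"older spaces on a manifold of bounded geometry: differentiating in $(t,\sigma)$ produces expressions of exactly the same quasilinear shape, with coefficients that are smooth functions of $(t,J^2\sigma)$ enjoying the same uniform bounds. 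The main obstacle, and the underlying reason for imposing $\omega\leq 0$, is to keep the $2$-jet of $\sigma$ in a fixed compact subset of the jet bundle on which the pointwise Taylor expansion and all its derivatives admit uniform control; for positive weights the jets could grow exponentially at infinity and escape any neighbourhood on which $F$ is bounded.
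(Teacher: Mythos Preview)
Your proposal is correct and follows essentially the same approach as the paper: both use $\mathcal{F}(t,0)=0$ together with Lemma \ref{lemma:AlmostTaylor} to factor $\mathcal{F}$ as a sum of terms of the form (smooth coefficient in the jet of $\sigma$)$\times$(derivative of $\sigma$), then invoke the continuous inclusion $C^{2,\alpha}_\omega\hookrightarrow C^{2,\alpha}_0$ for $\omega\leq 0$ to place the coefficients in the unweighted H\"older algebra, and finally use the module structure $C^{0,\alpha}_0\times C^{0,\alpha}_\omega\to C^{0,\alpha}_\omega$ to land in the weighted target. The only cosmetic difference is that the paper exploits quasilinearity first so that the coefficients depend only on $J^1\sigma$ rather than $J^2\sigma$, but your version works just as well.
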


\myproof Since $\Cal{F}(t,0)=0$, and since mean curvature is quasilinear, upon reducing $\Cal{U}$ if necessary, and upon applying Lemma \ref{lemma:AlmostTaylor} pointwise to the $2$-jets of $\sigma$, we find that there exist smooth functions $a^{ij}$, $b^i$ and $c$ such that, for all $(t,\sigma)$,
\begin{equation*}\eqalign{
\Cal{F}(t,\sigma)(x,r) &= a^{ij}(t,x,r,J^1\sigma(x,r))\opHess(\sigma)_{ij}(x,r) + b^1(t,x,r,J^1\sigma(x,r))(\nabla\sigma)_i(x,r)\cr
&\qquad\qquad\qquad\qquad+c(t,x,r,J^1\sigma(x,r))\sigma(x,r)\ ,\cr}\myeqnum{\nexteqnno[PseudoLinearizationOfF]}
\end{equation*}
where here $J^1\sigma$ denotes the $1$-jet of $\sigma$. Consider now the final term in this expression. The canonical inclusion $\mathcal{U}^{2,\alpha}_\omega\rightarrow\mathcal{U}^{2,\alpha}_0$ is continuous and linear, and therefore smooth. Since the derivatives of $c$ to all orders are bounded, the non-linear operator
\begin{equation*}
\sigma\mapsto c(t,x,r,J^1\sigma(x,r))
\end{equation*}
defines a smooth function from $(-\epsilon,\epsilon)\times\Cal{U}^{2,\alpha}_0$ into $C^{0,\alpha}_0(\hat{X}_{r_0})$. Finally, the product $C^{0,\alpha}_0(\hat{X}_{r_0})\oplus C^{0,\alpha}_\omega(\N \hat{X}_{r_0})\rightarrow C^{0,\alpha}_\omega(\N \hat{X}_{r_0})$ is continuous and bilinear, and thus also smooth. It follows that the composition
\begin{equation*}
\sigma\mapsto c(t,x,r,J^1\sigma(x,r))\sigma(x,r)
\end{equation*}
defines a smooth map from $(-\epsilon,\epsilon)\times\Cal{U}^{2,\alpha}_\omega$ into $C^{0,\alpha}_\omega(\N \hat{X})$. The remaining terms, though more technical, are addressed in a similar manner, and this completes the proof.\myqed

The third step of our construction involves iterating the preceding step to produce explicit parametrizations of families of graphs over families of graphs over families of cones. Thus, for all $t$, for all $\sigma\in\Cal{U}$, and for all $\tau\in C_0^{2,\alpha}(\N \hat{X})$, define $\hat{e}_t^{(\sigma,\tau)}:\hat{X}\rightarrow\H_+$ by
\begin{equation}
\hat{e}_t^{(\sigma,\tau)}(x,r) := \Exp_{\hat{e}_t^\sigma(x,r)}(\hat{q}_t^\sigma\circ\hat{q}_t\circ\tau(x,r))\ .\myeqnum{\nexteqnno[SecondPertFamily]}
\end{equation}
Upon reducing $\Cal{U}$ if necessary, we may suppose that, for all $t$, and for all $\sigma,\tau\in\Cal{U}$, $\hat{e}_t^{(\sigma,\tau)}$ is a spacelike embedding. For all such $(t,\sigma,\tau)$, let $\hat{q}_t^{(\sigma,\tau)}$ denote the composition of parallel transport from $\hat{e}_t^\sigma(x,r)$ to $\hat{e}_t^{(\sigma,\tau)}(x,r)$ with orthogonal projection onto the normal bundle of $\hat{e}_t^{(\sigma,\tau)}$. Once again, for all $(t,\sigma,\tau)\in(-\epsilon,\epsilon)\times\Cal{U}\times\Cal{U}$, $\hat{q}^{(\sigma,\tau)}_t$ is a bundle isomorphism, we denote by $\hat{H}_t^{(\sigma,\tau)}$ the image under $(\hat{q}_t^{(\sigma,\tau)}\circ\hat{q}_t^{\sigma}\circ\hat{q}_t)^{-1}$ of the mean curvature vector of $\hat{e}_t^{(\sigma,\tau)}$, and we define
\begin{equation}
\Cal{G}(t,\sigma,\tau) := \hat{H}_t^{(\sigma,\tau)} - \hat{H}_t\ .\label{CurvErrorSecondPert}
\end{equation}
Repeating the proof of Lemma \procref{FirstPertIsSmooth} yields the following result.

\begin{lemma}
For all $r_0>0$, for all $\alpha\in(0,1)$, and for every weight $\omega\leq 0$, $\Cal{G}$ defines a smooth function from $(-\epsilon,\epsilon)\times\Cal{U}^{2,\alpha}_\omega\times\Cal{U}^{2,\alpha}_\omega$ into $C^{0,\alpha}_\omega(\N \hat{X})$.
\proclabel{SecondPertIsSmooth}
\end{lemma}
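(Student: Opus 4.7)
The plan is to mimic the proof of Lemma \procref{FirstPertIsSmooth}, using as input the smoothness of $\mathcal{F}$ already established there. The first observation is that
\begin{equation*}
\mathcal{G}(t,\sigma,\tau) = \big[\hat{H}_t^{(\sigma,\tau)} - \hat{H}_t^\sigma\big] + \big[\hat{H}_t^\sigma - \hat{H}_t\big] = \mathcal{G}_1(t,\sigma,\tau) + \mathcal{F}(t,\sigma)\ ,
\end{equation*}
so it suffices to show that $\mathcal{G}_1(t,\sigma,\tau) := \hat{H}_t^{(\sigma,\tau)} - \hat{H}_t^\sigma$ defines a smooth map from $(-\epsilon,\epsilon)\times\mathcal{U}^{2,\alpha}_\omega\times\mathcal{U}^{2,\alpha}_\omega$ into $C^{0,\alpha}_\omega(\N\hat{X})$. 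By construction, $\mathcal{G}_1(t,\sigma,0) = 0$ identically, which is precisely the vanishing property exploited in the proof of Lemma \procref{FirstPertIsSmooth}, but now with $\tau$ playing the role that $\sigma$ played there, and with $\sigma$ appearing only as an auxiliary smooth parameter.

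Since mean curvature is quasilinear, and since $\mathcal{G}_1$ vanishes at $\tau=0$, a pointwise application of Lemma \ref{lemma:AlmostTaylor} to the 2-jet of $\tau$ yields a representation
\begin{equation*}
\mathcal{G}_1(t,\sigma,\tau)(x,r) = \tilde{a}^{ij}\bigl(t,x,r,J^2\sigma(x,r),J^1\tau(x,r)\bigr)\opHess(\tau)_{ij}(x,r) + \tilde{b}^i(\cdots)(\nabla\tau)_i(x,r) + \tilde{c}(\cdots)\tau(x,r)\ ,
\end{equation*}
with smooth coefficient functions $\tilde a^{ij}$, $\tilde b^i$, $\tilde c$ whose arguments involve $t$, $(x,r)$, and the relevant jets of $\sigma$ and $\tau$. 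The dependence on the 2-jet of $\sigma$ enters through the second fundamental form and tangent-bundle data of the intermediate graph $\hat{e}_t^\sigma$, while the dependence on the 1-jet of $\tau$ suffices for the coefficient blocks because of quasilinearity.

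Each term is then handled exactly as in Lemma \procref{FirstPertIsSmooth}: the canonical inclusions $\mathcal{U}^{2,\alpha}_\omega \hookrightarrow \mathcal{U}^{2,\alpha}_0$ are continuous linear, hence smooth; the jet extractions $\sigma\mapsto J^2\sigma$ and $\tau\mapsto J^1\tau$ define smooth (in fact bounded linear) maps into $C^{0,\alpha}_0(\hat{X}_{r_0})$; the composition with the smooth coefficient function, whose derivatives to all orders are bounded on the relevant compact range, then defines a smooth non-linear map from $(-\epsilon,\epsilon)\times\mathcal{U}^{2,\alpha}_0\times\mathcal{U}^{2,\alpha}_0$ into $C^{0,\alpha}_0(\hat{X}_{r_0})$; finally, the continuous bilinear multiplication $C^{0,\alpha}_0 \oplus C^{0,\alpha}_\omega \to C^{0,\alpha}_\omega$ allows one to combine the unweighted coefficient with the weighted factor carried by $\tau$, $\nabla\tau$, or $\opHess(\tau)$. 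The main point to be checked, which is essentially a bookkeeping matter rather than a conceptual obstacle, is that the dependence of $\tilde{a}^{ij}, \tilde b^i, \tilde c$ on the 2-jet of $\sigma$ does not spoil the argument; this is automatic because $J^2\sigma$ lies in $C^{0,\alpha}_0$ whenever $\sigma \in C^{2,\alpha}_0$, so the composition remains smooth. Summing the three terms gives smoothness of $\mathcal{G}_1$, and adding $\mathcal{F}$ completes the proof.
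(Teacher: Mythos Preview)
Your proposal is correct and takes essentially the same approach as the paper, which simply states ``Repeating the proof of Lemma \procref{FirstPertIsSmooth} yields the following result.'' Your decomposition $\mathcal{G}=\mathcal{G}_1+\mathcal{F}$ and subsequent application of Lemma \ref{lemma:AlmostTaylor} to factor out $\tau$ from $\mathcal{G}_1$ is a natural and complete way to spell out what this repetition entails.
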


Finally, we sketch how this construction extends to treat perturbations of complete maximal graphs. At this stage, although it is not strictly necessary, we will take the weight $\omega$ to be equal to zero. Let $M:=M_0$ be a complete maximal graph such that $\partial_\infty M=X_{0,\infty}$. By Lemma \ref{proc:AsymptoticsOfMaximalGraph}, there exists a compact subset $K\subseteq M$, $r_0>0$, and a section $\sigma_0\in C^{2,\alpha}_0(\N \hat{X}_{0,r_0})$ whose graph coincides with $M\setminus K$. In addition, upon increasing $r_0$ if necessary, we may suppose that $\sigma_0$ is as small as we wish. Now let $(\tilde{M}_t)_{t\in(-\epsilon,\epsilon)}$ be a smooth family of complete, smooth, not necessarily maximal, spacelike graphs such that $\tilde{M}_0=M$ and such that, for all $t$, the graph of $\sigma_0$ over $\N \hat{X}_{t,r_0}$ coincides with the complement of some compact subset of $\tilde{M}_t$. Let $(\hat{e}^M_t)_{t\in(-\epsilon,\epsilon)}$ be a smooth family of smooth embeddings such that
\begin{enumerate}
\item $\hat{e}^M_0$ is the identity,
\item for all $t$, $\hat{e}^M_t$ parametrizes $\tilde{M}_t$, and
\item for all $t$, $\hat{e}^M_t$ coincides with $\hat{e}^{\sigma_0}_t\circ(\hat{e}^{\sigma_0}_0)^{-1}$ over $\tilde{M}_0\setminus K$.
\end{enumerate}
We underline here that it is of little importance how $(\hat{e}^M_t)_{t\in(-\epsilon,\epsilon)}$ extends over the rest of $(\tilde{M}_t)_{t\in(-\epsilon,\epsilon)}$, as long as it is smooth. For all sufficiently small $(t,\tau)\in(-\epsilon,\epsilon)\times C^{2,\alpha}(\N M)$, we define $\hat{q}^{M,\tau}_t$ and $\hat{H}^{M,\tau}_t$ in the same way as before, and we define, for all such $(t,\tau)$,
\begin{equation}\label{eq:defi operator H}
\mathcal{H}(t,\tau) := \hat{H}_t^{M,\tau}\ .
\end{equation}
Note that, by construction,
\begin{equation*}
\mathcal{H}(0,0) = 0\ .
\end{equation*}

\begin{lemma}\label{lemma:H smooth}
For all $\alpha\in(0,1)$, upon reducing $\mathcal{U}$ if necessary, $\Cal{H}$ defines a smooth function from $(-\epsilon,\epsilon)\times\Cal{U}^{2,\alpha}_0$ into $C^{0,\alpha}_0(\N M)$.
\proclabel{ThirdPertIsSmooth}
\end{lemma}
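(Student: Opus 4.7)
The plan is to reduce this lemma to Lemma \procref{SecondPertIsSmooth} by cutting $M$ into a compact core and an asymptotic end. The parametrizations $\hat{e}^M_t$ were chosen to agree with $\hat{e}^{\sigma_0}_t\circ(\hat{e}^{\sigma_0}_0)^{-1}$ on $M\setminus K$ precisely so that the two arguments glue together automatically.

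First I would fix a smooth cutoff $\chi:M\to[0,1]$ equal to $1$ on a compact neighbourhood $K'$ of $K$ and supported in a slightly larger compact set $K''\supset K'$, and decompose $\mathcal{H}(t,\tau)=\chi\mathcal{H}(t,\tau)+(1-\chi)\mathcal{H}(t,\tau)$. Since mean curvature is a local quasi-linear second-order operator, the restriction $\mathcal{H}(t,\tau)|_{M\setminus K''}$ depends only on $t$ and on $\tau|_{M\setminus K}$. Transporting via $(\hat{e}^{\sigma_0}_0)^{-1}$, which identifies $M\setminus K$ with the portion of $\hat X_{0,r_0}$ over which $\sigma_0$ is defined, this restriction coincides with the corresponding restriction of $\hat H_t^{(\sigma_0,\tau')}=\mathcal{G}(t,\sigma_0,\tau')+\hat H_t$, where $\tau'$ denotes the pull-back of $\tau|_{M\setminus K}$. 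Upon choosing $r_0$ large enough that $\sigma_0$ lies in $\mathcal{U}$ (Lemma \procref{AsymptoticsOfMaximalGraph}) and further shrinking $\mathcal{U}$ so that $\tau'$ also remains in this neighbourhood, Lemma \procref{SecondPertIsSmooth} applied with $\omega=0$ yields that $(t,\tau)\mapsto\mathcal{G}(t,\sigma_0,\tau')$ is smooth into $C^{0,\alpha}_0(\N \hat X_{0,r_0})$, while Lemma \procref{ConeMCIsSmooth} combined with the continuous inclusion $C^{2,\alpha}_{-1}\hookrightarrow C^{0,\alpha}_0$ over the end gives smoothness of $t\mapsto\hat H_t$ into the same space. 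By the very definition of the weighted H\"older norm on $M$ via pull-back on the end, this yields smoothness of $(1-\chi)\mathcal{H}$ from $(-\epsilon,\epsilon)\times\mathcal{U}^{2,\alpha}_0$ into $C^{0,\alpha}_0(\N M)$.

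On the interior $K''$, $\chi\mathcal{H}(t,\tau)$ is obtained by applying a smooth family of quasi-linear second-order operators to the compactly supported H\"older section $\tau|_{K''}$ of a smooth vector bundle. Applying Lemma \ref{lemma:AlmostTaylor} pointwise to the $2$-jet of $\tau$, exactly as in the proof of Lemma \procref{FirstPertIsSmooth}, and using that pointwise multiplication and composition of smooth functions with H\"older sections are smooth in the unweighted $C^{0,\alpha}$ topology over a compact manifold, we obtain that $\chi\mathcal{H}$ is smooth into $C^{0,\alpha}(\N M|_{K''})$, and hence into $C^{0,\alpha}_0(\N M)$. The direct-sum structure of the weighted H\"older norm on $M$ then yields the desired smoothness of $\mathcal{H}$.

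The main obstacle, more bookkeeping than analysis, is checking that the composition of bundle isomorphisms $\hat{q}^{M,\tau}_t\circ\hat{q}^M_t$ appearing in the definition of $\mathcal{H}$ coincides on $M\setminus K$, under $(\hat{e}^{\sigma_0}_0)^{-1}$, with $\hat{q}^{(\sigma_0,\tau')}_t\circ\hat{q}^{\sigma_0}_t\circ\hat{q}_t$ appearing in $\mathcal{G}$. This is precisely what the compatibility $\hat{e}^M_t=\hat{e}^{\sigma_0}_t\circ(\hat{e}^{\sigma_0}_0)^{-1}$ on $M\setminus K$ was engineered to ensure, so no mismatch occurs at the interface between $K''$ and $M\setminus K''$, and the proof reduces, as claimed, to Lemma \procref{SecondPertIsSmooth}.
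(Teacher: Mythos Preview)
Your proof is correct and follows essentially the same strategy as the paper: identify $\mathcal{H}$ on the asymptotic end with $\mathcal{G}(t,\sigma_0,\tau')+\hat{H}_t$ via the pull-back $(\hat{e}^{\sigma_0}_0)^{-1}$, and then invoke Lemmas \procref{ConeMCIsSmooth} and \procref{SecondPertIsSmooth}. You are simply more explicit than the paper, which records the identity on the end in one line and leaves both the compact-core smoothness and the bundle-isomorphism bookkeeping implicit.
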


\begin{remark}
We will see in Section \ref{sec:conclude2} that $\sigma_0\in C^{2,\alpha}_{-1}(\N \hat{X}_{0,r_0})$, and the argument of the following proof then shows that $\Cal{H}$ in fact defines a smooth function from $(-\epsilon,\epsilon)\times\Cal{U}^{2,\alpha}_\omega$ into $C^{0,\alpha}_\omega(\N M)$ for all $\omega\in[-1,0]$. However, Lemma \ref{proc:ThirdPertIsSmooth} as stated is already sufficient for proving stability.
\end{remark}

\begin{proof}
Indeed, outside some compact subset of $M$,
\begin{equation*}
(\hat{e}^{\sigma_0}_0)^*\mathcal{H}(t,\tau) := (\hat{q}^{\sigma_0}_0)^{-1}\circ\mathcal{H}(t,\tau)\circ\hat{e}^{\sigma_0}_0 = \Cal{G}(t,\sigma_0,\tau) + \hat{H}_t\ ,
\end{equation*}
and the result now follows by Lemmas \ref{proc:ConeMCIsSmooth} and \ref{proc:SecondPertIsSmooth}
\end{proof}

\noindent We are now ready to prove our perturbation result.

\begin{proof}[Proof of Theorem \ref{thm:perturb H+}] Let $(\Lambda_t)_{t\in(-\epsilon,\epsilon)}$ be a family of  spacelike spheres in $\bH_+$ varying continuously in the $C^\infty$ topology. We first show that, upon reducing $\epsilon$ if necessary, we may suppose that, for all $t$, $\Lambda_t$ lies in the image of some fixed spacelike polar coordinate chart. Indeed, in any Fermi chart, $\Lambda$ is the graph of some strictly $1$-Lipschitz function $\varphi:\S^{p-1}\rightarrow\S^q$. By \cite[Lemma 2.8]{heinonen}, this function takes values in an open hemisphere so that, by Lemma \ref{eqn:GeometriesOfPhiAndPhiInfinity} and the subsequent remark, $\Lambda_0$ lies in the image of some spacelike polar coordinate chart. Upon reducing $\epsilon$ if necessary, we may suppose that $\Lambda_t$ is also contained in the same coordinate chart, as asserted.

Now, let $M_0$ be a complete maximal graph with $\partial_\infty M_0=\Lambda_0$. Let $(\tilde{M}_t)_{t\in(-\epsilon,\epsilon)}$ denote the smooth family of perturbations of $M_0$ constructed above, and let $\mathcal{H}:(-\epsilon,\epsilon)\times\Cal{U}^{2,\alpha}_0\to C^{0,\alpha}_0(\N M)$ denote the operator defined in \eqref{eq:defi operator H}. As shown above, $\mathcal{H}$ defines a smooth function between Banach spaces. Furthermore, its partial derivative with respect to the second component is the Jacobi operator $J$. By Theorem \procref{InvHol}, $J:C^{2,\alpha}_0(\N M)\rightarrow C^{0,\alpha}_0(\N M)$ is a linear isomorphism. It follows by the implicit function theorem that, upon reducing $\epsilon$ and $\Cal{U}$ if necessary, we may suppose that for all $t\in(-\epsilon,\epsilon)$, there exists a unique element $\tau_t\in\Cal{U}^{2,\alpha}_0$ such that
\begin{equation*}
\hat{H}_t^{M,\tau_t} = \Cal{H}(t,\tau_t) = 0\ .
\end{equation*}
For all such $t$, the image of $\hat{e}_t^{M,\tau_t}$ is a maximal graph $M_t$, say. Since $(\tau_t)_{t\in(-\epsilon,\epsilon)}$ tends to zero in $C^{2,\alpha}_0(\N M)$ as $t$ tends to zero, upon reducing $\epsilon$ if necessary, we may suppose that, for all $t$, $\hat{e}_t^{M,\tau_t}$ has bounded second fundamental form so that, by Lemma \ref{lemma:EntireGraph}, it is complete. Finally, since, for all $t$, $\tau_t$ is bounded, $\partial_\infty M_t=\partial_\infty\tilde{M}_t=\Lambda_t$, and this completes the proof.
\end{proof}

\section{Proofs of main results}\label{sec:conclude}

\subsection{Smoothly approximating 1-Lipschitz maps}

In order to conclude the proofs of Theorem \ref{thm:introhomeo}, we require the following refinement of \cite[Lemma 2.8]{heinonen}.

\begin{lemma}\label{lemma:contained hemisphere}
Let $\varphi:\S^{p-1}\to\S^q$ be a 1-Lipschitz map. If the image of $\varphi$ does not contain antipodal points, then it is contained in an open hemisphere of $\S^q$.
\end{lemma}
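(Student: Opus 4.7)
The plan combines Heinonen's Lemma 2.8, which we use in the form that every $1$-Lipschitz $\varphi:\S^{p-1}\to\S^q$ has image in a closed hemisphere of $\S^q$, with the no-antipodes hypothesis, via compactness and an induction on $q$. Let $K:=\varphi(\S^{p-1})$.

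First, since $(x,y)\mapsto d_{\S^q}(\varphi(x),-\varphi(y))$ is continuous on the compact space $\S^{p-1}\times\S^{p-1}$ and, by hypothesis, never vanishes, it attains a strictly positive minimum $\delta>0$. Equivalently, $d_{\S^q}(\varphi(x),\varphi(y))\leq\pi-\delta$ for all $x,y\in\S^{p-1}$, so $K$ has spherical diameter strictly less than $\pi$. This quantitative bound will be essential when perturbing.

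Heinonen's Lemma produces a unit vector $N\in\S^q$ with $K\subset\overline H_N:=\{y\in\S^q:\langle N,y\rangle\geq 0\}$. To upgrade from a closed to an open hemisphere I argue by induction on $q$, the case $q=0$ being trivial since an antipodal-free subset of $\S^0=\{\pm 1\}$ has at most one element. For the inductive step, set $E:=K\cap\{N\}^\perp$. If $E$ is empty then compactness of $K$ together with the continuity of $\langle N,\cdot\rangle$ immediately gives an open hemisphere. If $K$ lies entirely on the equator $\{N\}^\perp$, then $\varphi$ factors through a $1$-Lipschitz map $\S^{p-1}\to\S^{q-1}$ whose image is still antipodal-free, the inductive hypothesis supplies an open hemisphere of $\S^{q-1}$, and this extends naturally to an open hemisphere of $\S^q$. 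The remaining mixed case requires finding a tangent direction $v\in N^\perp$ such that $\langle v,y\rangle>0$ for every $y\in E$; for $\theta>0$ sufficiently small the unit vector $N_\theta:=\cos\theta\,N+\sin\theta\,v$ then witnesses the inclusion $K\subset\{y:\langle N_\theta,y\rangle>0\}$, by continuity on the compact set where $\langle N,\cdot\rangle$ is bounded below by a positive constant, and by direct calculation on a neighbourhood of~$E$.

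The main obstacle is the construction of the tangent direction $v$ in the mixed case, which amounts to showing that $E$ lies in an open hemisphere of the equator $\S^{q-1}\cong\{N\}^\perp\cap\S^q$. The set $E$ is compact and antipodal-free (as a subset of $K$), but it is not a priori the image of a $1$-Lipschitz map from a sphere, so one cannot invoke the inductive hypothesis on $E$ directly. The plan here is to produce an auxiliary $1$-Lipschitz map $\tilde\varphi:\S^{p-1}\to\S^{q-1}$ that is still antipodal-free and whose image coincides with $E$, by composing $\varphi$ with a suitable retraction $\overline H_N\to\S^{q-1}$ that is $1$-Lipschitz and preserves equatorial points; applying the inductive hypothesis to $\tilde\varphi$ produces an open hemisphere of $\S^{q-1}$ containing $E$ and hence the required direction $v$, completing the induction.
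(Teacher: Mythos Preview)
Your inductive scheme has a genuine gap in the mixed case. The $1$-Lipschitz retraction $\overline H_N\to\S^{q-1}$ that you invoke simply does not exist: $\overline H_N$ is homeomorphic to a closed $q$-disk with boundary $\S^{q-1}$, and the no-retraction theorem (equivalent to Brouwer's fixed-point theorem) forbids any continuous retraction of a disk onto its boundary for $q\geq 1$. Restricting the domain from $\overline H_N$ to $K$ does not rescue the idea without further control on $K$: if $K$ comes close to the pole $N$, two points at latitude close to $\pi/2$ on different meridians are close to one another but any equator-valued map fixing $E$ must send them to points at distance bounded away from zero, so no $1$-Lipschitz choice is available. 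Even granting some map $r:K\to\S^{q-1}$ with $r|_E=\mathrm{id}$, you would need $r(K)$---not just $E$---to be antipodal-free to apply the inductive hypothesis to $\tilde\varphi=r\circ\varphi$, and nothing prevents $r$ from creating antipodal pairs outside $E$. Your assertion that the image of $\tilde\varphi$ ``coincides with $E$'' would require $r(K)\subseteq E$, an even stronger condition with no evident justification.

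The paper bypasses induction altogether and argues directly that $0\notin\opConv(\varphi(\S^{p-1}))$ in $\R^{q+1}$, after which Hahn--Banach yields the open hemisphere. The core computation: if $\sum_i\lambda_i\varphi(x_i)=0$ with $\lambda_i>0$, then expanding $\|\sum\lambda_i\varphi(x_i)\|^2=0$ and using $\langle\varphi(x_i),\varphi(x_j)\rangle\geq\langle x_i,x_j\rangle$ (from the $1$-Lipschitz bound and monotonicity of $\cos$) forces $\sum_i\lambda_i x_i=0$ with every inequality an equality; the same trick applied to $\langle\sum\lambda_i\varphi(x_i),\varphi(y)\rangle=0$ for arbitrary $y$ gives $\langle\varphi(x_i),\varphi(y)\rangle=\langle x_i,y\rangle$ for all $i$, and taking $y=-x_1$ produces $\varphi(-x_1)=-\varphi(x_1)$, the forbidden antipodal pair.
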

\begin{proof}
By the Hahn-Banach separation theorem, it suffices to show that $0$ is not in the convex hull of $\varphi(\S^{p-1})$ in $\textbf R^{q+1}$. Suppose the contrary, namely that there exist $x_1,\ldots,x_m\in\S^{p-1}$ and $\lambda_1,\ldots,\lambda_m\in (0,1)$ with $\sum\lambda_i=1$ such that
\begin{equation*}
\sum_{i=1}^m \lambda_i\varphi(x_i)=0~.
\end{equation*}
Since $\opCos$ is decreasing on $[0,\pi]$, for all $i,j$,
\begin{equation*}
\langle\varphi(x_i),\varphi(x_j)\rangle = \opCos(d_S(\varphi(x_i),\varphi(x_j))) \geq \opCos(d_S(x_i,x_j)) = \langle x_i,x_j\rangle\ ,
\end{equation*}
so that,
\begin{equation*}\eqalign{
0&=\bigg\langle\sum_{i=1}^m\lambda_i\varphi(x_i),\sum_{i=1}^m\lambda_i\varphi(x_i)\bigg\rangle\cr
&=\sum_{i=1}^m\lambda_i^2+2\sum_{1\leq i<j\leq m}\lambda_i\lambda_j\langle \varphi(x_i),\varphi(x_j)\rangle\cr
&\geq\sum_{i=1}^m\lambda_i^2+2\sum_{1\leq i<j\leq m}\lambda_i\lambda_j\langle x_i,x_j\rangle\cr
&=\bigg\langle\sum_{i=1}^m\lambda_i x_i,\sum_{i=1}^m\lambda_i x_i\bigg\rangle
\geq 0~.\vphantom{\frac{1}{2}}\cr}
\end{equation*}
Equality therefore holds, so that $0\in\R^p$ also lies in the convex hull of $\{x_1,\cdots,x_m\}$. Furthermore, for any other point $y$ in $\S^{p-1}$,
\begin{equation*}
0=\bigg\langle\sum_{i=1}^m\lambda_i\varphi(x_i),\varphi(y)\bigg\rangle=\sum_{i=1}^m\lambda_i\langle\varphi(x_i),\varphi(y)\rangle\geq\sum_{i=1}^m \lambda_i \langle x_i,y \rangle=\sum_{i=1}^m\langle\lambda_i x_i,y\rangle=0\ ,
\end{equation*}
so that
\begin{equation*}
\sum_{i=1}^m\lambda_i(\langle\varphi(x_i),\varphi(y)\rangle-\langle x_i,y \rangle)=0\ .
\end{equation*}
Since every summand on the left-hand side is non-negative, it follows that, for all $i$,
\begin{equation*}
\langle\varphi(x_i),\varphi(y)\rangle=\langle x_i,y\rangle\ .
\end{equation*}
Setting $y=-x_1$ now yields
\begin{equation*}
\langle \varphi(x_1),\varphi(-x_1)\rangle=\langle x_1,-x_1 \rangle=-1\ ,
\end{equation*}
so that $\varphi(\S^{p-1})$ contains antipodal points. This is absurd, and the result follows.
\end{proof}

The first consequence of Lemma \ref{lemma:contained hemisphere} is that 1-Lipschitz maps not containing antipodal points can be continuously deformed to constant maps.

\begin{lemma}\label{cor:deformation to constant}
Let $\varphi:\S^{p-1}\to\S^q$ be a 1-Lipschitz map. If the image of $\varphi$ does not contain antipodal points, then there exists a continuous family of maps $(\varphi_t)_{t\in[0,1]}:\S^{p-1}\to\S^q$ such that
\begin{enumerate}
\item $\varphi_0=\varphi$,
\item $\varphi_1$ is constant, and
\item for all $t$, $\varphi_t$ is $(1-t)$-Lipschitz.
\end{enumerate}
Morerover, if, in addition, $\varphi$ is smooth, then we can assume that $(\varphi_t)_{t\in[0,1]}$ is also smooth.
\end{lemma}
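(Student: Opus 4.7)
The plan is to set $\varphi_t := \psi_t \circ \varphi$, where $\psi_t : B(w_0,\pi/2) \to \S^q$ is a suitably calibrated smooth radial contraction of $\S^q$ toward a fixed point $w_0\in\S^q$ (and $B(w_0,r)$ denotes the open spherical ball). The work is all in choosing the scaling factor of $\psi_t$ so that its Lipschitz constant comes out to exactly $(1-t)$.

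First I would apply Lemma \ref{lemma:contained hemisphere} to place $\varphi(\S^{p-1})$ inside an open hemisphere of $\S^q$. Since $\S^{p-1}$ is compact and $\varphi$ continuous, there exist $w_0 \in \S^q$ and $r_0 \in [0,\pi/2)$ such that $\varphi(\S^{p-1}) \subseteq \overline{B}(w_0,r_0)$. The case $r_0 = 0$ is trivial (take $\varphi_t := \varphi$ for all $t$), so assume $r_0 > 0$.

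I then define
$$f(t) := r_0^{-1}\arcsin\big((1-t)\sin r_0\big),\qquad t\in[0,1],$$
which is smooth in $t$ with $f(0)=1$ and $f(1)=0$, and
$$\psi_t(p) := \exp_{w_0}\!\big(f(t)\,\exp_{w_0}^{-1}(p)\big),\qquad p \in B(w_0,\pi/2).$$
Setting $\varphi_t := \psi_t \circ \varphi$ immediately yields $\varphi_0 = \varphi$ and $\varphi_1 \equiv w_0$, and the continuous (resp.\ smooth, when $\varphi$ is smooth) dependence on $(t,x)$ follows from the joint smoothness of $(t,p)\mapsto \psi_t(p)$ on $[0,1]\times B(w_0,\pi/2)$ and of $\varphi$. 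The substance of the proof is to verify that $\psi_t|_{B(w_0,r_0)}$ is $(1-t)$-Lipschitz, which then propagates to $\varphi_t$. In spherical polar coordinates $(r,\theta)$ about $w_0$, where the round metric reads $dr^2 + \sin^2(r)\,g_{\S^{q-1}}$, one has $\psi_t(r,\theta)=(f(t)r,\theta)$, so the two principal Lipschitz factors of $d\psi_t$ are $f(t)$ in the radial direction and $\sin(f(t)r)/\sin(r)$ in the angular directions. Concavity of $\sin$ on $[0,r_0]$ gives $(1-t)\sin r_0 \leq \sin((1-t)r_0)$, whence $f(t) \leq 1-t$, controlling the radial factor. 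Differentiating shows that $r\mapsto \sin(\lambda r)/\sin(r)$ is strictly increasing on $(0,\pi/2)$ for each $\lambda\in(0,1)$, hence the angular factor on $(0,r_0]$ is bounded by its value $\sin(f(t)r_0)/\sin(r_0)=1-t$ at $r=r_0$, which is $(1-t)$ by the very definition of $f(t)$.

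The only conceptual obstacle is precisely this calibration of $f(t)$: the naive choice $f(t)=1-t$ (ordinary geodesic contraction by a factor $1-t$) does \emph{not} work, since by concavity of $\sin$ the angular factor $\sin((1-t)r)/\sin(r)$ is strictly larger than $(1-t)$ for $r\in(0,\pi)$. The trick is to exploit monotonicity of $\sin(\lambda r)/\sin(r)$ in $r$ to reduce the angular bound to its worst case at the boundary $r=r_0$, and then to tune $f(t)$ so that this worst case is exactly $(1-t)$. The strict inequality $r_0<\pi/2$ supplied by Lemma \ref{lemma:contained hemisphere} is what makes the calibration well-defined.
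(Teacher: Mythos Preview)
Your proof is correct and follows essentially the same route as the paper: apply Lemma~\ref{lemma:contained hemisphere} to get the image inside an open hemisphere of radius $r_0<\pi/2$, then contract radially toward the center by the factor $f(t)=r_0^{-1}\arcsin((1-t)\sin r_0)$, which is exactly the paper's choice of scaling. Your write-up is in fact more detailed than the paper's, which simply asserts that the radial scaling $\Phi_s(x)=sx$ is $\big(\sin(r_0 s)/\sin(r_0)\big)$-Lipschitz and reads off the same formula for $\varphi_t$; your separate analysis of the radial and angular factors, and your explanation of why the naive choice $f(t)=1-t$ fails, supply the justification the paper leaves implicit.
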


\begin{proof}
By applying Lemma \ref{lemma:contained hemisphere}, let $y$ denote the centre of an open hemisphere $\Upp^q$ containing $\varphi(\S^{p-1})$. We work in geodesic coordinates of $\S^q$ about $y$, so that $\Upp^q$ identifies with the ball $B^q_{\pi/2}(0)$. By compactness,
\begin{equation*}
\varphi(\S^{p-1})\subset B_r^q(0)\ ,
\end{equation*}
for some $r<\pi/2$. For all $s$, define $\Phi_s:B_r^q(0)\rightarrow B_r^q(0)$ by
\begin{equation*}
\Phi_s(x) := sx\ .
\end{equation*}
A quick computation shows that, for all $s$, the map $\Phi_s$ is $\left(\frac{\opSin(rs)}{\opSin(r)}\right)$-Lipschitz with respect to the spherical metric over $B^q_r(0)$. We thus define
\begin{equation*}
\varphi_t := \Phi_{\frac{\opArcSin((1-t)\opSin(r))}{r}}\circ\varphi\ ,
\end{equation*}
and the result follows.
\end{proof}

The second consequence of Lemma \ref{lemma:contained hemisphere} is that $1$-Lipschitz maps not containing antipodal points can be approximated by smooth, strictly $1$-Lipschitz maps.

\begin{lemma}\label{lemma:ApproximationBySmooth}
Let $\varphi:\S^{p-1}\rightarrow\S^q$ be a $1$-Lipschitz map. If the image of $\varphi$ does not contain antipodal points, then, for all $\delta>0$, there exists a smooth, strictly $1$-Lipschitz map $\psi:\S^{p-1}\rightarrow\S^q$ such that
\begin{equation}
\|\varphi-\psi\|_{C^0} < \delta\ .
\end{equation}
\end{lemma}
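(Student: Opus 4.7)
The strategy is to first replace $\varphi$ by a strictly $(1-\eta)$-Lipschitz map for some $\eta>0$, and then to smooth this replacement while preserving strictness of the Lipschitz property.

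For the first step, I apply Lemma \ref{cor:deformation to constant} to $\varphi$ to obtain a continuous family $(\varphi_t)_{t\in[0,1]}$ with $\varphi_0=\varphi$, where $\varphi_t$ is $(1-t)$-Lipschitz for every $t$. By continuity in $t$, I can choose $t_0\in(0,1)$ small enough that $\|\varphi-\varphi_{t_0}\|_{C^0}<\delta/2$; the map $\varphi_{t_0}$ is then strictly $(1-t_0)$-Lipschitz.

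For the second step, I view $\S^q\subset\R^{q+1}$ and mollify $\varphi_{t_0}$ as a map into $\R^{q+1}$. Concretely, I cover $\S^{p-1}$ by finitely many normal coordinate charts $\phi_i\colon U_i\to V_i\subset\R^{p-1}$ of metric distortion at most $1+\eta$, mollify each $\varphi_{t_0}\circ\phi_i^{-1}$ on $V_i$ by Euclidean convolution with a mollifier of scale $\epsilon$, and glue via a subordinate partition of unity $\{\chi_i\}$, obtaining a smooth map $\tilde\psi\colon\S^{p-1}\to\R^{q+1}$. Standard estimates show $\tilde\psi\to\varphi_{t_0}$ uniformly and that the Lipschitz constant of $\tilde\psi$ is at most $(1-t_0)(1+O(\eta))$ in the limit $\epsilon\to 0$. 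Since $\varphi_{t_0}$ has values in $\S^q$, for small $\epsilon$ and $\eta$ the map $\tilde\psi$ does not vanish, and I set $\psi:=\tilde\psi/|\tilde\psi|\colon\S^{p-1}\to\S^q$, which is smooth. Its differential satisfies
\begin{equation*}
d\psi_x(v)=\frac{1}{|\tilde\psi(x)|}\Big(d\tilde\psi_x(v)-\big\langle d\tilde\psi_x(v),\tfrac{\tilde\psi(x)}{|\tilde\psi(x)|}\big\rangle\tfrac{\tilde\psi(x)}{|\tilde\psi(x)|}\Big)\ ,
\end{equation*}
so $|d\psi_x(v)|\leq|d\tilde\psi_x(v)|/|\tilde\psi(x)|\leq (1-t_0)(1+O(\eta))/(1-O(\epsilon))<1$ for small enough $\eta$ and $\epsilon$, yielding strict $1$-Lipschitz. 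A routine $C^0$ estimate gives $\|\psi-\varphi_{t_0}\|_{C^0}<\delta/2$, hence $\|\psi-\varphi\|_{C^0}<\delta$.

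The main technical point is ensuring that the partition-of-unity mollification only inflates the Lipschitz constant by an arbitrarily small multiplicative factor. Euclidean convolution preserves the Lipschitz constant exactly, so the one delicate issue is the gluing contribution $\sum_id\chi_i(v)\cdot(f_i\circ\phi_i)(x)$. Using $\sum_id\chi_i\equiv 0$, this rewrites as $\sum_id\chi_i(v)\big((f_i\circ\phi_i)(x)-c\big)$ for any constant $c$; taking $c:=\varphi_{t_0}(x)$ and using that each $(f_i\circ\phi_i)(x)$ differs from $\varphi_{t_0}(x)$ by $O(\epsilon)$ (by construction of the mollifier), this gluing error is bounded by $O(\epsilon)\max_i\|d\chi_i\|_\infty$ and hence is negligible for small $\epsilon$. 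Once this estimate is in hand, all other steps are routine.
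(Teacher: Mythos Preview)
Your proof is correct and follows the same overall strategy as the paper: first use Lemma~\ref{cor:deformation to constant} to replace $\varphi$ by a $(1-t_0)$-Lipschitz map close to it, then mollify in the ambient $\R^{q+1}$ and project radially back to $\S^q$.

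The only difference is in execution. You track the Lipschitz constant directly through the mollification (controlling chart distortion and the partition-of-unity gluing error) and then through the radial projection, using the margin $t_0>0$ to absorb the small inflation. The paper instead uses a scaling trick: it sets $\varphi'':=\varphi_{t_0}/(1-t_0)$, which is $1$-Lipschitz into the sphere of radius $1/(1-t_0)>1$, mollifies this to a smooth $1$-Lipschitz $\varphi'''$ into $\R^{q+1}$, and observes that radial projection $\pi:\R^{q+1}\setminus\{0\}\to\S^q$ is strictly contracting outside the unit sphere, so $\psi:=\pi\circ\varphi'''$ is automatically strictly $1$-Lipschitz. This sidesteps the need to quantify the mollification and projection errors separately, giving a somewhat cleaner argument, though yours is entirely valid.
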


\begin{proof}
By Lemma \ref{cor:deformation to constant}, there exists $\epsilon>0$, and a $(1-\epsilon)$-Lipschitz map $\varphi':\S^{p-1}\rightarrow\S^q$ such that $\|\varphi-\varphi'\|_{C^0}<\delta/2$. We now identify $\S^q$ with the unit sphere in $\R^{q+1}$, and we denote by $\pi:\R^{q+1}\setminus\{0\}\rightarrow\S^q$ radial projection. The function $\varphi''(x):=\varphi'(x)/(1-\epsilon)$ is a $1$-Lipschitz map into $\S^q/(1-\epsilon)$. Applying a suitable smoothing convolution yields a smooth, $1$-Lipschitz map $\varphi''':\S^{p-1}\rightarrow\R^{q+1}$ such that $\|\varphi''-\varphi'''\|_{C^0}<\delta/2$. Since $\pi$ is strictly contracting outside $\S^q$,
\begin{equation*}
\|\varphi'-(\pi\circ\varphi''')\|_{C^0} = \|(\pi\circ\varphi'')-(\pi\circ\varphi''')\|_{C^0} < \frac{\delta}{2}\ .
\end{equation*}
It follows that $\psi:=(\pi\circ\varphi''')$ is a smooth, strictly $1$-Lipschitz map, and satisfies
\begin{equation*}
\|\varphi-\psi\|_{C^0} \leq \|\varphi-\varphi'\| + \|\varphi'-\psi\| < \delta\ ,
\end{equation*}
as desired.
\end{proof}

\subsection{Proof of Theorem \ref{thm:introhomeo}}

\begin{proof}[Proof of Theorem \ref{thm:introhomeo}]
We first show that $\partial_\infty$ maps $\M$ bijectively onto $\Bd$. It suffices to work in the double cover $\H_+$ and show that $\partial_\infty^+$ maps $\M_+$ bijectively onto $\Bd_+$. By Lemmas \ref{lemma:EntireGraphsAreGraphs}, \ref{lemma:EntireGraph} and \ref{lemma:AdmissableBoundaryCriteria}, $\partial_\infty$ maps $\M_+$ into $\Bd_+$. We now show that every admissible non-negative $(p-1)$-sphere $\Lambda$ in $\bH_+$ is the asymptotic boundary of a unique complete maximal graph in $\H_+$.

We first prove existence. Suppose first that $\Lambda$ is smooth and spacelike. By Lemma \ref{lemma:contained hemisphere}, we may choose a Fermi chart with respect to which $\Lambda$ is the graph of some smooth, strictly $1$-Lipschitz map $\varphi:\S^{p-1}\to\Upp^q$, say. Let $(\varphi_t)_{t\in[0,1]}$ be as in Lemma \ref{cor:deformation to constant} and, for all $t$, let $\Lambda_t$ denote the graph of $\varphi_t$ in $\bH_+\cong \S^{p-1}\times\S^q$. Let $E\subseteq[0,1]$ denote the set of all $t$ for which $\Lambda_t$ is the asymptotic boundary of some complete maximal graph in $\bH_+$. By Theorem \ref{thm:perturb H+}, $E$ is open, by Theorem \ref{theorem:Properness}, $E$ is closed, and, since $\Lambda_0$ is the boundary of a totally-geodesic $p$-dimensional hyperbolic plane, $E$ is non-empty. It follows by connectedness that $E=[0,1]$, and this proves existence when $\Lambda$ is smooth.

By Corollary \ref{theorem:Properness}, $\partial_\infty^+:\M_+\rightarrow\Bd_+$ is proper, and, by Lemma \ref{lemma:ApproximationBySmooth}, the set of smooth, spacelike spheres in $\partial_\infty \H_+$ is dense in $\Bd_+$, and surjectivity of $\partial_\infty^+$ follows. Since uniqueness is proven in Theorem \ref{thm:injective}, it follows that $\partial_\infty^+$ is a bijection, as desired.

It remains only to show that $\partial_\infty$ is a homeomorphism. Let $\seq[m]{M_m}$ be a sequence of maximal graphs converging in the $C^\infty_\oploc$ sense to the maximal graph $M_\infty$. Choose a Fermi chart and, for all $m\in\mathbf{N}\munion\{\infty\}$, let $\varphi_m$ denote the $1$-Lipschitz function whose graph is $M_m$. By the Arzelà-Ascoli Theorem, $\seq[m]{\varphi_m}$ is compact in the uniform topology. Since $\varphi_\infty$ is its unique limit, this sequences converges uniformly towards $\varphi_\infty$, and it follows that $\seq[m]{M_m}$ also converges in the Hausdorff sense to $M_\infty$. From this is follows that $\seq[m]{\partial_\infty M_m}$ likewise converges in the Hausdorff sense to $\partial_\infty M_\infty$, and this proves continuity. Finally, by Theorem \ref{theorem:Properness}, the asymptotic boundary map is proper. Its inverse is therefore also continuous, and this completes the proof.
\end{proof}

\subsection{Proof of Theorem \ref{thm:DecayOfSecondFundamentalForm}}\label{sec:conclude2}

We first note that, in order not to trouble the reader with technical questions of low regularity, we have up to this point only worked with objects that are smooth. However, by elliptic regularity, in the context of elliptic, second-order partial differential equations, the properties of being smooth and of being $C^{2,\alpha}$ are equivalent. There is, nonetheless, one important caveat, namely, when we wish to study functionals over the space of $C^{2,\alpha}$ sections of the normal bundle of some cone $\hat{X}$, we require that this bundle also be of type $C^{2,\alpha}$. For this to hold, it is necessary and sufficient that the sphere $X$ be of type $C^{3,\alpha}$. For this reason, we henceforth work with asymptotic boundaries of type $C^{3,\alpha}$, to which the theory developed in the previous sections applies without modification.

It will now be sufficient to work in the double cover $\H_+$. Let $M$ be a compete maximal $p$-submanifold of $\H_+$ with $C^{3,\alpha}$ asymptotic boundary $M_\infty$. Let $x_0\in\H_+$ be such that $M_\infty$ lies in the image of its spacelike polar coordinate chart. Let $X\subseteq\T^1_{x_0}\H_+$ denote the preimage of $M_\infty$ in this chart, and let $\hat{X}$ denote its cone. Let $r_0>0$ and $\sigma\in\Gamma(N\hat{X}_{r_0})$ be as in Lemma \ref{proc:AsymptoticsOfMaximalGraph}. Note that $\|\sigma|_{B_1(x)}\|_{C^{2,\alpha}}$ tends to zero as $x$ tends to infinity so that, in particular,
\begin{equation}\label{eqn:SigmaIsBounded}
\sigma\in C^{2,\alpha}_0(N\hat{X}_{r_0})\ .
\end{equation}

Theorem \ref{thm:DecayOfSecondFundamentalForm} is a straightforward consequence of the following refinement of \eqref{eqn:SigmaIsBounded}.

\begin{lemma}\label{lemma:SigmaDecaysExponentially}
For all $\alpha\in(0,1)$,
\begin{equation}\label{eqn:SigmaDecaysExponentially}
\sigma\in C^{2,\alpha}_{-1}(N\hat{X}_{r_0})\ .
\end{equation}
\end{lemma}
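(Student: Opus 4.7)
\textbf{The equation.} Since $M$ is maximal, the parametrization $\hat e^\sigma$ has zero mean curvature, so in the notation of \eqref{eqn:CurvErrorFirstPert}, $\mathcal F(0, \sigma) = -\hat H_0$. Linearizing at $\sigma = 0$ yields
\begin{equation*}
J_{\hat X}\,\sigma + N(\sigma) = -\hat H_0,
\end{equation*}
where $J_{\hat X}$ is the Jacobi operator of the cone, $N$ is at least quadratic in the $2$-jet of $\sigma$ (with the pointwise Lipschitz estimate $|N(\sigma_1) - N(\sigma_2)|(x) \leq C(\|\sigma_1\|_{C^{2,\alpha}} + \|\sigma_2\|_{C^{2,\alpha}})\,|J^2(\sigma_1-\sigma_2)|(x)$ arising from the quasilinear form \eqref{eqn:PseudoLinearizationOfF}), and $\hat H_0 \in C^{2,\alpha}_{-1}(\N\hat X_{r_0})$ by Lemma \ref{proc:ConeMCIsSmooth}. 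Under the bundle identification $\hat q^\sigma$, this transfers to an equation $J_M\,\tau + \tilde N(\tau) = h_0$ on $M$, where $\tau$ is the section of $\N M|_{M\setminus K}$ corresponding to $\sigma$, $h_0 \in C^{0,\alpha}_{-1}(\N M)$, and $\tau \in C^{2,\alpha}_0(\N M)$ by \eqref{eqn:SigmaIsBounded}.

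\textbf{Bootstrap.} Suppose $\tau \in C^{2,\alpha}_\omega(\N M)$ with $\omega \in [-1, 0)$. The pointwise Lipschitz estimate yields $\tilde N(\tau) \in C^{0,\alpha}_{2\omega}(\N M)$, so that $h_0 - \tilde N(\tau) \in C^{0,\alpha}_{\omega'}(\N M)$ with $\omega' := \max(-1,\,2\omega) \in (-p, 0]$. By Theorem \ref{proc:InvHol}, there is a unique $\tau' \in C^{2,\alpha}_{\omega'}(\N M)$ satisfying $J_M\tau' = h_0 - \tilde N(\tau)$. Since both $\tau$ and $\tau'$ lie in the larger space $C^{2,\alpha}_\omega(\N M)$ and solve the same equation, injectivity of $J_M$ on $C^{2,\alpha}_\omega(\N M)$ (Theorem \ref{proc:InvHol}, as $\omega \in (-p, 0]$) forces $\tau = \tau' \in C^{2,\alpha}_{\omega'}(\N M)$. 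Iterating, the sequence $\omega_{n+1} = \max(-1,\,2\omega_n)$ reaches $-1$ after finitely many steps.

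\textbf{Initialization.} The main work is producing an initial rate $\tau \in C^{2,\alpha}_{\omega_0}(\N M)$ with $\omega_0 \in (-1, 0)$. Here one leverages the qualitative jet-decay of Lemma \ref{proc:AsymptoticsOfMaximalGraph}: for any $\eta > 0$, $\|J^k\tau(y)\| < \eta$ outside a sufficiently large compact set $K_\eta \subset M$, so that on $M \setminus K_\eta$ the nonlinearity satisfies $|\tilde N(\tau)| \leq C\eta\,|J^2\tau|$ and acts as an arbitrarily small perturbation of $J_M$. Applying the weighted pre-elliptic estimate of Lemma \ref{proc:HolEstJacOp} to a compactly supported approximation $\chi_R\tau$ (where $\chi_R$ is a smooth cutoff to a ball of large radius $R$), the nonlinear term is absorbed into the left-hand side, yielding a uniform bound $\|\chi_R\tau\|_{C^{2,\alpha}_{\omega_0}} \leq C$ for some $\omega_0 \in (-1, 0)$. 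Passing to the limit $R \to \infty$ (using semicontinuity of the weighted norm under Arzel\`a-Ascoli) gives $\tau \in C^{2,\alpha}_{\omega_0}(\N M)$.

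\textbf{Main obstacle.} The principal difficulty is the initialization: upgrading the qualitative jet-decay of Lemma \ref{proc:AsymptoticsOfMaximalGraph} into a quantitative exponential rate. The delicate point is to control the commutator $[J_M, \chi_R]\tau$, which is supported in an annulus near radius $R$; this is managed by combining the first-order structure of the commutator with the smallness of the jets of $\tau$ provided by Lemma \ref{proc:AsymptoticsOfMaximalGraph}. Once any negative initial rate is secured, the bootstrap step using Theorem \ref{proc:InvHol} is essentially automatic and doubles the decay rate per iteration until saturation at $-1$. Translating back through $\hat q^\sigma$ finally yields $\sigma \in C^{2,\alpha}_{-1}(\N\hat X_{r_0})$.
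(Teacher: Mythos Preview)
Your bootstrap step is sound, but the initialization has a genuine gap. To get a uniform bound on $\|\chi_R\tau\|_{C^{2,\alpha}_{\omega_0}}$ you must control $\|J_M(\chi_R\tau)\|_{C^{0,\alpha}_{\omega_0}}$, and after substituting $J_M\tau=h_0-\tilde N(\tau)$ you are left with the commutator $[J_M,\chi_R]\tau$, supported in an annulus near radius $R$. Its weighted $C^{0,\alpha}_{\omega_0}$ norm is of order $e^{|\omega_0|R}\,\|J^1\tau\|_{C^{0,\alpha}(\text{annulus})}$. Lemma~\ref{proc:AsymptoticsOfMaximalGraph} only gives $\|J^k\tau\|\to 0$ \emph{qualitatively}; it provides no rate. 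Hence $e^{|\omega_0|R}\cdot o(1)$ need not stay bounded as $R\to\infty$, and the estimate does not close. This is circular: controlling the commutator in any negatively weighted norm already requires the exponential decay you are trying to establish. The ``smallness of the jets'' you invoke is precisely the wrong kind of smallness here.

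The paper sidesteps this entirely by \emph{not} splitting into $J\sigma+N(\sigma)$. Using the quasilinear structure (Lemma~\ref{lemma:AlmostTaylor}) it writes $\hat H^\tau=\hat H+L(\tau)\tau$, where $L(\tau)$ is a \emph{linear} second-order operator whose coefficients depend on $J^1\tau$ and satisfy $L(0)=J$. Since $\|J^1\sigma(x)\|\to 0$ at infinity, $L(\sigma)$ is, outside a large compact set, an arbitrarily small perturbation $J'$ of $J$, and by stability the isomorphism statement of Theorem~\ref{proc:InvertibilityOverEndHoelder} persists for $J'$ on both $C^{2,\alpha}_{0,0}$ and $C^{2,\alpha}_{-1,0}$. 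With a fixed cutoff $\chi$ (equal to $1$ on a compact set containing the boundary), one has $J'((1-\chi)\sigma)=-\hat H$ outside $\opSupp(\chi)$, hence $g:=J'((1-\chi)\sigma)\in C^{0,\alpha}_{-1}$. Invertibility of $J'$ at weight $-1$ produces $\sigma'\in C^{2,\alpha}_{-1,0}$ with $J'\sigma'=g$; invertibility at weight $0$ forces $\sigma'=(1-\chi)\sigma$. One step, no bootstrap, no initialization: freezing $\sigma$ in the coefficients converts the quadratic remainder into a small \emph{operator} perturbation, so the linear isomorphism theory applies directly at the target weight. The detour through $M$ and Theorem~\ref{proc:InvHol} is also unnecessary---everything takes place on the truncated cone.
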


\begin{proof} Let $\hat{H}$ denote the mean curvature vector of $\hat{X}$, and note that, by \eqref{eqn:DecayOfMeanCurv},
\begin{equation*}\label{eqn:EstimateOfH}
\hat{H}\in C^{0,\alpha}_{-1}(N\hat{X}_{r_0})\ .
\end{equation*}
Consider now a general section $\tau\in C^{2,\alpha}_0(N\hat{X}_{r_0})$. Since $\hat{X}$ is asymptotically totally-geodesic, for $\|\tau\|_{C^{2,\alpha}}$ sufficiently small, the graph of $\tau$ is an embedded, spacelike submanifold. For all such $\tau$, we denote
\begin{equation*}
\hat{e}^\tau(x,r):=\opExp_{(x,r)}(\tau(x,r))\ ,
\end{equation*}
we denote by $\hat{q}^\tau(x,r)$ the composition of parallel transport from $(x,r)$ to $\hat{e}^\tau(x,r)$ with orthogonal projection onto the normal bundle of $\hat{X}$, and we denote by $\hat{H}^\tau$ the image under $(\hat{q}^\tau)^{-1}$ of the mean curvature vector of $\hat{e}^\tau$. Since the mean curvature functional is quasi-linear, upon applying Lemma \ref{lemma:AlmostTaylor} pointwise to the $2$-jets of $\tau$, we obtain
\begin{equation*}
\hat{H}^\tau = \hat{H} + L(\tau)\tau\ ,
\end{equation*}
where, for any section $\rho$,
\begin{equation*}
L(\tau)\rho := a^{ij}(x,r,J^1\tau(x,r))\opHess(\rho)_{ij}(x,r) + b^i(x,r,J^1\tau(x,r))(\nabla\rho)_i(x,r) + c(x,r,J^1\tau(x,r))\rho(x)\ ,
\end{equation*}
for suitable functions $a$, $b$ and $c$. In particular, since the graph of $\sigma$ is maximal,
\begin{equation*}
L(\sigma)\sigma = -\hat{H}\ .
\end{equation*}
Note now that $L(0)$ is simply the Jacobi operator $J$ of $\hat{X}$. Since $\|\sigma|_{B_1(x)}\|_{C^{2,\alpha}}$ tends to zero as $x$ tends to infinity, it follows that, for all $\tau$ supported in the complement of a sufficiently large compact subset $K$ of $\hat{X}_{r_0}$,
\begin{equation*}
L(\sigma)\tau = J'\tau\ ,
\end{equation*}
for some perturbation $J'$ of $J$ for which the conclusion of Theorem \ref{proc:InvertibilityOverEndHoelder} continues to hold. Suppose now that that $K$ contains the boundary of $\hat{X}_{r_0}$, and let $\chi:\hat{X}_{r_0}\rightarrow[0,1]$ be a smooth, compactly supported function equal to $1$ over $K$. Then, $(1-\chi)\sigma$ vanishes along the boundary and, outside $\opSupp(\chi)$,
\begin{equation*}
J'(1-\chi)\sigma=J'\sigma=L(\sigma)\sigma=-\hat{H}\ ,
\end{equation*}
so that, by \eqref{eqn:EstimateOfH},
\begin{equation*}
g:=J'(1-\chi)\sigma\in C^{0,\alpha}_{-1}(N\hat{X}_{r_0})\ .
\end{equation*}
We now claim that $(1-\chi)\sigma$ is an element of $C^{2,\alpha}_{-1,0}(N\hat{X}_{r_0})$. Indeed, by Theorem \ref{proc:InvertibilityOverEndHoelder} applied to $C^{2,\alpha}_{-1,0}(N\hat{X}_{r_0})$, there exists an element $\sigma'\in C^{2,\alpha}_{-1,0}(N\hat{X}_{r_0})$ such that $J'\sigma'=g$. Since $C^{2,\alpha}_{-1,0}(N\hat{X}_{r_0})$ is contained in $C^{2,\alpha}_{0,0}(N\hat{X}_{r_0})$,
\begin{equation*}
(\sigma'-(1-\chi)\sigma)\in C^{2,\alpha}_{0,0}(N\hat{X}_{r_0})\ .
\end{equation*}
Finally, since $J'(\sigma'-(1-\chi)\sigma')=(g-g)=0$, by Theorem \ref{proc:InvertibilityOverEndHoelder} again, applied now to $C^{2,\alpha}_{0,0}(N\hat{X}_{r_0})$,
\begin{equation*}
\sigma'-(1-\chi)\sigma=0\ ,
\end{equation*}
so that
\begin{equation*}
(1-\chi)\sigma=\sigma'\in C^{2,\alpha}_{-1,0}(N\hat{X}_{r_0})\ ,
\end{equation*}
as asserted. Since $\chi$ vanishes outside a compact set, it follows that $\sigma$ is also an element of $C^{2,\alpha}_{-1}(N\hat{X}_{r_0})$, and this completes the proof.
\end{proof}

\begin{proof}[Proof of Theorem \ref{thm:DecayOfSecondFundamentalForm}]
We continue to use the notation of Lemma \ref{lemma:SigmaDecaysExponentially}. Let $\opdVol$ and $\ophatdVol$ denote respectively the volume forms of $X$ and $\hat{X}$. By \eqref{ConeMetric},
\begin{equation*}
\ophatdVol=\opSinh^{p-1}(r)\opdVol\wedge dr\ .
\end{equation*}
Let $\ophatdVol^\sigma$ denote the volume form of the embedding $\hat{e}^\sigma$. Since $\hat{X}$ is asymptotically totally-geodesic, and since $\|\sigma|_{B_1(x)}\|_{C^{2,\alpha}}$ tends to zero as $x$ tends to infinity, there exists $C_1>0$ such that
\begin{equation*}
\ophatdVol^\sigma\leq C_1\ophatdVol = C_1\opSinh^{p-1}(r)\opdVol\wedge dr\ .
\end{equation*}
By \eqref{eqn:SigmaDecaysExponentially}, there exists $C_2>0$ such that
\begin{equation*}
\|\opII\|^2 \leq C_2 e^{-2r}\ .
\end{equation*}
It follows that
\begin{equation*}
\eqalign{
\int_{\hat{X}_{r_0}}\|\opII\|^s\ophatdVol^\sigma
&\leq C_1C_2^{s/2}\int_{\hat{X}_{r_0}}\opSinh^{p-1}(r)e^{-sr}\opdVol dr\cr
&=2^{1-p}C_1C_2^{s/2}\opVol(X)\int_{r_0}^\infty e^{(p-1)r-sr}dr\cr
&<\infty\ ,\cr}
\end{equation*}
as desired.
\end{proof}

\section{Anosov representations}\label{sec:anosov}

We now discuss the applications of Theorem \ref{thm:introhomeo} to the theory of positive $\mathsf P_1$-Anosov representations in $\mathsf{PO}(p,q+1)$.
We refer the reader to \cite{DGK} for definitions and recall here only the main properties of such representations.
Let $\Gamma$ be a word-hyperbolic group with Gromov boundary homeomorphic to a $(p-1)$-sphere. Every $\mathsf{P}_1$-Anosov representation $\rho:\Gamma\rightarrow\mathsf{PO}(p,q+1)$ preserves a unique $(p-1)$-sphere $\Lambda_\rho$ in $\partial_\infty\H$, called the \emph{proximal limit set}. The proximal limit set $\Lambda_\rho$ is either positive for $\q$ or positive for $-\q$.  In the former case $\rho$ is called \emph{positive}, and in the latter \emph{negative}. Positive $\mathsf P_1$-Anosov representations are precisely those for which $\Conv(\Lambda_\rho)\cap\H$ is non-empty; and furthermore, $\rho$ acts properly discontinuously and cocompactly on    $\Conv(\Lambda_\rho)\cap\H$.

\subsection{Invariant maximal submanifolds}\label{sec:cocompact}

Let $\rho:\Gamma\rightarrow\mathsf{PO}(p,q+1)$ be a positive $\mathsf{P}_1$-Anosov representation.

\begin{proof}[Proof of Corollary \ref{cor:intro2}, Item (1)]
Let $\Lambda:=\Lambda_\rho$ denote the proximal limit set of $\rho$. By definition, $\Lambda_\rho$ is a positive $(p-1)$-sphere in $\partial_\infty\H$. In particular, it is non-negative so that, by Theorem \ref{thm:introhomeo}, there exists a unique complete maximal graph $M$ such that $\partial_\infty M=\Lambda$. Since $\rho$ preserves $\Lambda$, it also preserves $M$, and existence follows.

To prove uniqueness, let $M'$ be another complete maximal graph preserved by $\rho$, and denote $\Lambda'=\partial_\infty M'$. Since $M'$ is $\rho$-invariant, so too is $\Lambda'$, and therefore so too is $\opConv(\Lambda')$. By \cite[Theorem 1.15 and Theorem 1.24]{DGK2} (see also \cite[Fact 2.15]{BK}), $\partial_\infty\opConv(\Lambda')=\Lambda$. It follows by Lemma \ref{lemma:DeltaConvIsDelta} that $\Lambda'=\partial_\infty\opConv(\Lambda')=\Lambda$ so that, by uniqueness of solutions to the asymptotic Plateau problem, $M'=M$, as desired.

Finally, by Lemma \ref{lemma:ContainedInConvexHull}, $M$ is contained in $\Conv(\Lambda)$. Furthermore by \cite[Theorem 1.7]{DGK}, the action of $\rho(\Gamma)$ on $\Conv(\Lambda_\rho)\cap\H$ is properly discontinuous and cocompact.  Since $M$ is a closed subset of $\Conv(\Lambda_\rho)\cap\H$, it follows that the action of $\rho(\Gamma)$ on $M$ is also properly discontinuous and cocompact, and this completes the proof of Item $(1)$.
\end{proof}

\noindent Item (2) of Corollary \ref{cor:intro2} will be proven in the next section. We now prove Corollary \ref{cor:intro3}.

\begin{proof}[Proof of Corollary \ref{cor:intro3}]
Let $M$ be as in Corollary \ref{cor:intro2}. Note that $M$ is diffeomorphic to $\Rp$, and the action of $\rho(\Gamma)$ on $M$ is properly discontinuous. In particular, the stabilizer of any point is finite.  Since $\Gamma$ is torsion-free, the action of $\rho(\Gamma)$ on $M$ is free. By Item (1) of Corollary \ref{cor:intro2},  $\rho(\Gamma)$ acts cocompactly on $M$, and the quotient $M/\rho(\Gamma)$ is thus a  closed smooth manifold of dimension $p$. Finally, $\mathsf P_1$-Anosov representations have finite kernel. Since $\Gamma$ is torsion-free,  $\rho$ is faithful, $\Gamma$ is isomorphic to the fundamental group of $M/\rho(\Gamma)$, and the result follows.
\end{proof}

\subsection{Analytic dependence}\label{sec:analytic}

In order to conclude the proof of Corollary \ref{cor:intro2}, we now show that $M$ depends real analytically on the representation. This will use the implicit function theorem in a manner similar to that of the proof of Theorem \ref{thm:perturb H+}. However, since we are concerned here with the cocompact case, the proof is in fact much simpler, differing little from similar cases already treated in the literature (such as, for example, \cite{white89}). For this reason, we will only sketch the main ideas.

Let $\rho_0:\Gamma\rightarrow\mathsf{PO}(p,q+1)$ be a representation acting properly-discontinuously and cocompactly on a complete maximal $p$-submanifold $M$ of $\H$. Note that $\rho_0$ lifts to a representation taking values in $\mathsf{O}(p,q+1)$ which preserves a complete maximal $p$-submanifold of the double cover $\H_+$ which we also denote by $M$.

\begin{lemma}\label{lemma:PartitionOfUnity}
There exists a positive function $\phi\in C_0^\infty(M)$ such that, for all $x\in M$,
\begin{equation}\label{equation:PartitionOfUnity}
\sum_{\gamma\in\Gamma}\phi(\rho_0(\gamma)\cdot x) = 1\ .
\end{equation}
\end{lemma}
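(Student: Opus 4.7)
The plan is to construct $\phi$ as a normalized bump function using the standard averaging trick for cocompact proper actions. First I would exploit cocompactness: since $\rho_0(\Gamma)$ acts properly discontinuously and cocompactly on $M$, there exists a relatively compact open set $U \subset M$ whose $\rho_0(\Gamma)$-translates cover $M$. Pick a non-negative smooth bump function $\psi \in C_0^\infty(M)$ that is strictly positive on $\overline{U}$ and whose support $K := \opSupp(\psi)$ is compact.

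Next I would define
\begin{equation*}
\Phi(x) := \sum_{\gamma \in \Gamma} \psi(\rho_0(\gamma) \cdot x)\ ,
\end{equation*}
and verify that the sum is locally finite. Indeed, proper discontinuity of the action implies that for any compact $L \subset M$, only finitely many $\gamma \in \Gamma$ satisfy $\rho_0(\gamma) L \cap K \neq \emptyset$, so at most finitely many terms of the sum are non-zero on $L$. Hence $\Phi$ is a well-defined smooth function on $M$. Moreover, since $\Gamma$-translates of $U$ cover $M$, for every $x \in M$ there exists $\gamma_0$ with $\rho_0(\gamma_0) \cdot x \in U$, so $\psi(\rho_0(\gamma_0) \cdot x) > 0$, giving $\Phi(x) > 0$ everywhere.

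Finally I would set
\begin{equation*}
\phi(x) := \frac{\psi(x)}{\Phi(x)}\ .
\end{equation*}
This is smooth (since $\Phi$ is smooth and positive), compactly supported in $K$ (since $\psi$ is), and strictly positive on the interior of its support. Because $\Phi$ is $\rho_0(\Gamma)$-invariant by construction, the normalization property follows immediately:
\begin{equation*}
\sum_{\gamma \in \Gamma} \phi(\rho_0(\gamma) \cdot x) = \frac{1}{\Phi(x)} \sum_{\gamma \in \Gamma} \psi(\rho_0(\gamma) \cdot x) = \frac{\Phi(x)}{\Phi(x)} = 1\ ,
\end{equation*}
for every $x \in M$. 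There is no real obstacle here; the only subtlety is confirming local finiteness of the defining sum, which is automatic from proper discontinuity. Note that the statement asks for $\phi$ positive, which one should read as $\phi \geq 0$ and not identically zero; strict positivity everywhere would contradict compact support, but the construction yields $\phi > 0$ on a non-empty open set, which is what partition-of-unity arguments in the sequel will require.
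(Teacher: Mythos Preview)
Your proof is correct and follows essentially the same averaging argument as the paper: take a compactly supported bump function positive on a set whose $\rho_0(\Gamma)$-translates cover $M$, sum over the group to get an invariant positive function, and divide. Your remark on interpreting ``positive'' is apt; the paper's $\phi$ likewise vanishes outside a compact set.
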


\begin{proof}
Indeed, let $K$ be a fundamental domain for the action of $\rho_0$ on $M$. Let $\psi\in C^\infty_0(M)$ be a positive function which does not vanish over $K$. Choose $\epsilon>0$ and let $N$ denote the number of translates of $K$ required to cover the $\epsilon$-neighbourhood of the support of this function, which is finite by  proper discontinuity of the action of $\rho_0$. For all $x\in M$, there exist at most $N$ values of $\gamma$ for which $\psi\circ\rho_0(\gamma)$ does not vanish over $B_\epsilon(x)$. We thus define the smooth function $\tilde{\psi}:M\rightarrow\mathbf{R}$ by
\begin{equation*}
\tilde{\psi}(x) := \sum_{\gamma\in\Gamma}\psi(\rho_0(\gamma)\cdot x)\ .
\end{equation*}
Clearly $\tilde\psi(\rho_0(\gamma)\cdot x)=\tilde\psi(x)$ for all $x\in M$ and $\gamma\in\Gamma$, and we readily verify that
\begin{equation*}
\phi(x) := \frac{\psi(x)}{\tilde{\psi}(x)}
\end{equation*}
has the desired properties.
\end{proof}

Recall now the quadric $\Quad$ introduced in Section \ref{subsec:pseudohyperbolicspace}. Let $e_0:M\rightarrow\H_+=\Quad\subseteq\Rpqp$ denote the canonical embedding. For all $\rho\in\opHom(\Gamma,\mathsf{O}(p,q+1))$, we define $\tilde{e}^\rho:M\rightarrow E$ by
\begin{equation}
\tilde{e}^\rho(x) := \sum_{\gamma\in\Gamma}\phi(\rho_0(\gamma)^{-1}\cdot x)\rho(\gamma)\rho_0(\gamma)^{-1}e_0(x)\ .
\end{equation}

\begin{lemma}
For all $\rho\in\opHom(\Gamma,\mathsf{O}(p,q+1))$, $\tilde{e}^\rho$ is $\rho$-equivariant.
\end{lemma}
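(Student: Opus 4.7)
The plan is to verify $\rho$-equivariance directly from the definition of $\tilde{e}^\rho$ by performing an index substitution in the defining sum. Recall that $M$ is preserved by the $\rho_0$-action on $\H_+ \subset E$, so the canonical embedding $e_0:M \to E$ intertwines the two actions, i.e.\ $e_0(\rho_0(\eta)\cdot x) = \rho(\eta)\cdot e_0(x)$ when the action on the right is restricted to $\rho_0$; more importantly, since $e_0$ is just the inclusion into $E$ and $\mathsf{O}(p,q+1)$ acts linearly on $E$, the identity $e_0(\rho_0(\eta)\cdot x) = \rho_0(\eta)\,e_0(x)$ holds tautologically. The goal is to establish, for every $\eta \in \Gamma$ and $x \in M$, the identity
\begin{equation*}
\tilde{e}^\rho(\rho_0(\eta)\cdot x) = \rho(\eta)\,\tilde{e}^\rho(x).
\end{equation*}

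First, I would substitute $\rho_0(\eta)\cdot x$ into the defining sum, obtaining
\begin{equation*}
\tilde{e}^\rho(\rho_0(\eta)\cdot x) = \sum_{\gamma \in \Gamma} \phi\bigl(\rho_0(\gamma)^{-1}\rho_0(\eta)\cdot x\bigr)\,\rho(\gamma)\,\rho_0(\gamma)^{-1}\,\rho_0(\eta)\,e_0(x).
\end{equation*}
Since $\rho_0$ is a homomorphism, $\rho_0(\gamma)^{-1}\rho_0(\eta) = \rho_0(\eta^{-1}\gamma)^{-1}$, so the argument of $\phi$ becomes $\rho_0(\eta^{-1}\gamma)^{-1}\cdot x$. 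I would then make the change of summation variable $\gamma' := \eta^{-1}\gamma$, which is a bijection of $\Gamma$ with itself. Rewriting everything in terms of $\gamma'$ and using $\rho(\eta\gamma') = \rho(\eta)\rho(\gamma')$ and $\rho_0(\eta\gamma')^{-1}\rho_0(\eta) = \rho_0(\gamma')^{-1}$ gives
\begin{equation*}
\tilde{e}^\rho(\rho_0(\eta)\cdot x) = \rho(\eta)\sum_{\gamma' \in \Gamma} \phi\bigl(\rho_0(\gamma')^{-1}\cdot x\bigr)\,\rho(\gamma')\,\rho_0(\gamma')^{-1}\,e_0(x) = \rho(\eta)\,\tilde{e}^\rho(x),
\end{equation*}
which is the claim. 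The sum makes sense because, as in the proof of Lemma~\ref{lemma:PartitionOfUnity}, at every $x \in M$ only finitely many terms contribute, so the manipulation is justified term by term.

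There is no genuine obstacle here: the statement is a formal consequence of the structure of the averaging construction and the partition of unity from Lemma~\ref{lemma:PartitionOfUnity}, and the whole verification reduces to a single index substitution. The main point to keep straight is the correct placement of $\rho$ versus $\rho_0$ and the appearance of inverses, since the function $\phi$ is pulled back by $\rho_0$ while the element being translated sits in $E$ and is moved by $\rho$.
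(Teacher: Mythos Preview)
Your proof is correct and follows exactly the same approach as the paper: substitute $\rho_0(\eta)\cdot x$ into the defining sum, use $e_0(\rho_0(\eta)\cdot x)=\rho_0(\eta)e_0(x)$ and the homomorphism property to rewrite the argument of $\phi$, then perform the index substitution $\gamma'=\eta^{-1}\gamma$ and factor out $\rho(\eta)$. The paper's proof is identical in structure, differing only in notation (using $\mu$ for your $\eta$).
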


\begin{proof}
Indeed, for all $\mu\in\Gamma$, and for all $x\in M$,
\begin{equation*}
\eqalign{
\tilde{e}^\rho(\rho_0(\mu)\cdot x) &= \sum_{\gamma\in\Gamma}\phi(\rho_0(\gamma)^{-1}\cdot\rho_0(\mu)\cdot x)\rho(\gamma)\rho_0(\gamma)^{-1}e_0(\rho_0(\mu)\cdot x)\cr
&= \sum_{\gamma\in\Gamma}\phi(\rho_0(\mu^{-1}\gamma)^{-1}\cdot x)\rho(\gamma)\rho_0(\gamma)^{-1}\rho_0(\mu)e_0(x)\cr
&= \sum_{\gamma\in\Gamma}\phi(\rho_0(\mu^{-1}\gamma)^{-1}\cdot x)\rho(\gamma)\rho_0(\mu^{-1}\gamma)^{-1}e_0(x)\cr
&= \sum_{\gamma\in\Gamma}\phi(\rho_0(\gamma)^{-1}\cdot x)\rho(\mu\gamma)\rho_0(\gamma)^{-1}e_0(x)\cr
&= \sum_{\gamma\in\Gamma}\phi(\rho_0(\gamma)^{-1}\cdot x)\rho(\mu)\rho(\gamma)\rho_0(\gamma)^{-1}e_0(x)\cr
&= \vphantom{\frac{1}{2}} \rho(\mu)\tilde{e}^\rho(x)\ ,\cr}
\end{equation*}
as desired.
\end{proof}

\noindent For all $\rho$, let $e^\rho$ denote the composition of $\tilde{e}^\rho$ with the canonical projection onto the projective space $\P_+(E)$ of oriented lines in $E$. Recall now that $\H_+$ is an open subset of $\P_+(E)$.

\begin{lemma}\label{lemma:NbdOfEquivariance}
There exists a neighbourhood $U$ of $\rho_0$ in $\opHom(\Gamma,\mathsf{O}(p,q+1))$ such that, for all $\rho\in U$, $e^\rho$ defines a spacelike embedding from $M$ into $\H_+$.
\end{lemma}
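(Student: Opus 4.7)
The plan is to verify the four properties—smooth and real-analytic dependence on $\rho$, image inside $\H_+$, spacelike immersion, and global injectivity—by first reducing everything to a compact fundamental domain via proper discontinuity of $\rho_0$, and then extending by the $\rho$-equivariance of $e^\rho$.

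First I would check that $\tilde{e}^\rho$ is smooth and depends real-analytically on $\rho$ locally uniformly on $M$. Because $\phi$ has compact support and $\rho_0(\Gamma)$ acts properly discontinuously, for any compact $K\subset M$ the set
\[
\Gamma_K := \{\gamma\in\Gamma : \rho_0(\gamma)^{-1}(K)\cap\mathrm{supp}(\phi)\neq\emptyset\}
\]
is finite, so $\tilde{e}^\rho|_K$ is a finite sum of terms polynomial in the matrix entries of $\{\rho(\gamma)\}_{\gamma\in\Gamma_K}$. Combining this with Lemma \ref{lemma:PartitionOfUnity} gives
\[
\tilde{e}^{\rho_0}(x) = \Bigl(\sum_{\gamma\in\Gamma}\phi(\rho_0(\gamma)^{-1}\cdot x)\Bigr) e_0(x) = e_0(x),
\]
so $e^{\rho_0}=e_0$ is the original spacelike embedding into $\H_+$.

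Now fix a relatively compact fundamental domain $D\subset M$ for $\rho_0(\Gamma)$. The previous step shows that $e^\rho|_{\overline D}\to e_0|_{\overline D}$ in $C^\infty$ as $\rho\to\rho_0$. Since being an embedding onto a spacelike submanifold of $\H_+$ is an open condition on $C^1$-maps from the compact set $\overline D$, there is a neighbourhood $U_1$ of $\rho_0$ such that $e^\rho|_{\overline D}$ is a spacelike embedding into $\H_+$ for every $\rho\in U_1$. Because $\rho(\Gamma)\subset\mathsf{O}(p,q+1)$ acts on $\H_+$ by isometries, $\rho$-equivariance of $e^\rho$ then promotes this to $e^\rho$ being a spacelike immersion on all of $M$ with image in $\H_+$.

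The main obstacle is global injectivity, which I would establish by contradiction. Suppose $\rho_n\to\rho_0$ and $x_n\neq y_n$ satisfy $e^{\rho_n}(x_n)=e^{\rho_n}(y_n)$. Acting by $\rho_0(\Gamma)$ we may assume $x_n\in\overline D$ and $y_n=\rho_0(\gamma_n)\cdot z_n$ with $z_n\in\overline D$, so that $\rho_n(\gamma_n)\,e^{\rho_n}(z_n)=e^{\rho_n}(x_n)$ stays in a fixed compact set $K'\subset\H_+$. Proper discontinuity of $\rho_0$ (hence finiteness of $\{\gamma\in\Gamma : \rho_0(\gamma)\cdot e_0(\overline D)\cap K'\neq\emptyset\}$) together with the individual convergences $\rho_n(\gamma)\to\rho_0(\gamma)$ confines the $\gamma_n$ to a finite subset of $\Gamma$. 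Extracting a subsequence with $\gamma_n\equiv\gamma$ and passing to the limit yields $\rho_0(\gamma)\,e_0(z)=e_0(x)$ with $x,z\in\overline D$, so $x=\rho_0(\gamma)\cdot z$ in $M$. The case $\gamma=1$ contradicts injectivity of $e^{\rho_n}|_{\overline D}$ for large $n$, while $\gamma\neq 1$ is excluded by enlarging $\overline D$ slightly at the outset so that no boundary identifications can recur. The desired neighbourhood $U$ is then the intersection of $U_1$ with the neighbourhood produced by this injectivity argument.
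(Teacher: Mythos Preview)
Your overall strategy---reduce to a compact fundamental domain, use that $\tilde e^{\rho_0}=e_0$, and propagate by $\rho$-equivariance---is exactly what the paper does (the paper's proof is terse and simply appeals to ``compactness and equivariance''). The verification that $\opIm(e^\rho)\subset\H_+$ and that $e^\rho$ is a spacelike immersion is fine.

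The gap is in your global injectivity argument. You claim that proper discontinuity of $\rho_0$ on $M$ together with the convergences $\rho_n(\gamma)\to\rho_0(\gamma)$ forces the $\gamma_n$ into a finite set. But those convergences are only available for each \emph{fixed} $\gamma$; for a sequence $\gamma_n$ whose word length may grow, you have no control on $\|\rho_n(\gamma_n)-\rho_0(\gamma_n)\|$, so you cannot transfer the condition $\rho_n(\gamma_n)e^{\rho_n}(z_n)\in K'$ back to a condition on $\rho_0(\gamma_n)$. Since $\mathsf O(p,q+1)$ does not act properly on $\H_+$, there is no ambient properness to fall back on either. The clean fix is to avoid the contradiction argument altogether: by equivariance, the pull-back metric $(e^\rho)^*\g$ is $\rho_0(\Gamma)$-invariant on $M$, and on $\overline D$ it is uniformly bilipschitz to the original metric $(e_0)^*\g$ for $\rho$ close to $\rho_0$; hence it is bilipschitz to a complete metric on all of $M$ and therefore complete. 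Now apply the argument of Lemma~\ref{lemma:EntireGraph}\,(1): composing $e^\rho$ with any Fermi projection $\pi$ gives a distance-increasing local diffeomorphism from a complete manifold onto the simply connected hyperbolic $p$-space $H$, hence a diffeomorphism, so $e^\rho$ is injective (and in fact its image is an entire graph).
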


\begin{proof} Indeed, denote
\begin{equation*}
\Omega := \{ x\in\Rpqp\ |\ \q(x,x) < 0\}\ .
\end{equation*}
Let $K\subseteq M$ be a fundamental domain of $\rho_0$. Define the neighbourhood $U_1$ of $\rho_0$ in $\opHom(\Gamma,\mathsf{O}(p,q+1))$ by
\begin{equation*}
U_1 := \{\rho\ |\ \tilde{e}^\rho(x)\in\Omega\ \forall x\in K\}\ .
\end{equation*}
By $\rho$-equivariance of $\tilde{e}^\rho$ and $\mathsf{O}(p,q+1)$-invariance of $\Omega$, for all $\rho\in U_1$,
\begin{equation*}
\opIm(\tilde{e}^\rho)\subseteq \Omega\ ,
\end{equation*}
so that
\begin{equation*}
\opIm(e^\rho)\subseteq\H_+\ .
\end{equation*}
By compactness and equivariance again, there exists a neighbourhood $\rho_0\in U_2\subseteq U_1$ such that, for all $\rho\in U_2$, $e^\rho$ is a spacelike embedding, and this completes the proof.
\end{proof}

\noindent We now sketch the proof of the analytic dependence.

\begin{proof}[Sketch of proof of Corollary \ref{cor:intro2}, Item (2).]
Let $NM$ denote the normal bundle of $M$ and, for all $(k,\alpha)$, let $C^{k,\alpha}_{\opequiv}(NM)$ denote the space of $\rho_0$-equivariant $C^{k,\alpha}$ sections of this bundle. Let $U$ be as in Lemma \ref{lemma:NbdOfEquivariance}. For all $\rho\in U$, and for all $x\in M$, let $q^\rho(x)$ denote the composition of parallel transport along the geodesic from $e_0(x)$ to $e^\rho(x)$ with orthogonal projection onto the normal bundle of $e^\rho(M)$, and note that $q^\rho$ is a bundle isomorphism. For every section $\sigma$ of the normal bundle of $M$, we define $e^{\rho,\sigma}$ by
\begin{equation*}
e^{\rho,\sigma}(x) := \opExp_{e^\rho(x)}(q^\rho(x)\sigma(x))\ ,
\end{equation*}
where $\opExp$ here denotes the exponential map of $\H_+$. By cocompactness and equivariance, upon reducing $U$ if necessary, there exists a neighbourhood $\mathcal{V}^{2,\alpha}$ of the zero section in $C^{2,\alpha}_\opequiv(NM)$ such that, for all $(\rho,\sigma)\in U\times\mathcal{V}^{2,\alpha}$, $e^{\rho,\sigma}$ is a spacelike embedding from $M$ into $\H_+$. For all such $(\rho,\sigma)$, let $q^{\rho,\sigma}(x)$ denote the composition of parallel transport from $e^\rho(x)$ to $e^{\rho,\sigma}(x)$ with orthogonal projection onto the normal bundle of $e^{\rho,\sigma}(M)$, and note that $q^{\rho,\sigma}$ is also a bundle homeomorphism. For all $(\rho,\sigma)\in U\times\mathcal{V}^{2,\alpha}$ let $H^{\rho,\sigma}$ denote the image under $(q^{\rho})^{-1}\circ(q^{\rho,\sigma})^{-1}$ of the mean curvature vector of $e^{\rho,\sigma}$.

We define the functional $\mathcal{H}:U\times\mathcal{V}^{2,\alpha}\rightarrow C^{0,\alpha}_\opequiv(NM)$ by
\begin{equation*}
\mathcal{H}(\rho,\sigma) := H^{\rho,\sigma}\ .
\end{equation*}
This functional is smooth, and its partial derivative with respect to the second component is the Jacobi operator $J$ of $M$. Since $J$ is a generalised laplacian over a cocompact manifold, it is Fredholm of index zero. Finally, as in the proof of Theorem \ref{proc:InvertibilityOverEndSobolev}, $J$ has trivial kernel, and is thus a linear isomorphism, and the result follows by the implicit function theorem.
\end{proof}

\subsection{The Guichard--Wienhard domain}

In this section, we prove Theorem \ref{theorem:GeometricStructures} following the ideas of \cite[Section 4]{CTT}. We first recall the construction described by Guichard--Wienhard in \cite{GW}. Let $\q$ be a signature $(p,q+1)$ quadratic form on some vector space $E$, and let $\Isot(E)$ denote the space of maximally isotropic subspaces of $E$. Observe that every element $V$ in $\Isot(E)$ has dimension $\min\{p,q+1\}$ and that $\Isot(E)$ is a $\PO$-homogenous space.

\begin{theorem}[\cite{GW}]
Let $\Gamma$ be a word hyperbolic group, let $\rho$ be a $\mathsf P_1$-Anosov representation of $\Gamma$ in $\SO$, let $\Lambda\subset\bH$ denote the proximal limit set of $\rho$, and denote
\begin{equation}
\Omega_\rho := \left\{ V\in \Isot(E)~,~\P(V)\cap \Lambda= \emptyset \right\}~.
\end{equation}
The action of $\rho(\Gamma)$ on $\Omega_\rho$ is properly discontinuous and cocompact.
\end{theorem}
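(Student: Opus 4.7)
My plan is to construct a $\rho(\Gamma)$-equivariant geometric map $\pi\colon\Omega_\rho\to M$ exhibiting $\Omega_\rho$ as the Stiefel-type bundle $\V(\N M)$, after which the bundle statement in Part $(1)$ is formal. I will define $\pi(V)$ to be the unique point $x\in M$ with $V\subset\hat x^\perp$, equivalently the unique intersection $M\cap\P(V^\perp)$; the uniqueness and existence of such a point are the two substantive claims.

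Uniqueness is clean: since $V$ is maximal isotropic in signature $(p,q+1)$, the subspace $V^\perp$ is negative semidefinite with radical $V$, so Cauchy--Schwarz for $\q|_{V^\perp}$ applied to unit lifts $\hat x_1,\hat x_2\in V^\perp\cap\Quad$ yields $\q(\hat x_1,\hat x_2)\geq -1$. Taking both lifts in the same component $M_+$ of the preimage of $M$ in $\H_+$, Lemma~\ref{lemma:Acausal0} produces the reverse bound $\q(\hat x_1,\hat x_2)\leq -1$ with equality iff $x_1=x_2$, forcing $x_1=x_2$. For existence, I will study the proper map $G\colon M_+\to V^*:=E/V^\perp\cong\R^p$, $x\mapsto\q(\hat x,\cdot)|_V$: properness follows from $\P(V)\cap\Lambda=\emptyset$ together with Lemma~\ref{lemma:Acausal}, which shows that $|G|\to\infty$ as $x$ approaches $\Lambda_+$ in a well-defined angular direction, giving a boundary map $\tilde G\colon\Lambda_+\to S(V^*)\cong\S^{p-1}$. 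Showing $\deg\tilde G=\pm 1$ will be carried out by deforming $\Lambda$ through positive $(p-1)$-spheres to the standard totally-geodesic sphere $\partial_\infty\Hp$, a configuration in which one can pair $V$ with a transverse positive-definite $p$-plane and compute the degree directly; stability of positivity and of the disjointness $\P(V)\cap\Lambda_t=\emptyset$ along the family ensure the computation survives the deformation, and $G$ is then surjective.

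Once $\pi$ is in hand, the fiber over $x\in M$ consists of maximal isotropic $p$-planes inside the signature-$(p,q)$ space $\hat x^\perp=\T_xM\oplus\N_xM$; each such $V$ injects into $\T_xM$ (its kernel would be isotropic inside the negative-definite $\N_xM$), and for $q\geq p$ this projection is an isomorphism, so $V$ is the graph of an isometric embedding $\T_xM\hookrightarrow\N_xM$ (after sign-flipping on $\N_xM$), the space of which is $\V(\T_xM,\N_xM)\cong\V_{p,q}$. Membership of such a $V$ in $\Omega_\rho$ is automatic: any $[\hat v]\in\P(V)\cap\Lambda$ would satisfy $\q(\hat v,\hat x)=0$ by $V\subset\hat x^\perp$, contradicting the strict negativity furnished by Lemmas~\ref{lemma:ContainedInConvexHull} and~\ref{lemma:Acausal}. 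Smoothness and local triviality of $\pi$ being routine, the cocompact quotient is the $\V_{p,q}$-bundle $\V(\N M/\rho(\Gamma))\to M/\rho(\Gamma)$. The connectivity count is then standard: $\V_{p,q}$ is connected for $q>p$, while for $q=p$ one has $\pi_0(\V_{p,p})=\pi_0(\mathsf O(p))=\Z/2$, and the monodromy action of $\pi_1(M/\rho(\Gamma))=\Gamma$ on this two-element set is classified precisely by $w_1(\N M/\rho(\Gamma))$.

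For Part $(2)$, when $q<p$ a maximally isotropic $V$ has dimension $q+1$, and in any Fermi chart $\bH_+\cong\S^{p-1}\times\S^q$ the same graph analysis presents $\P_+(V)$ as $\{(\hat u,f(\hat u))\colon\hat u\in S(U)\}$ for some linear isometry $f\colon U\to W_0$ from a $(q+1)$-dimensional positive-definite $U\subset\R^p$---a topological $q$-sphere whose projection to the $\S^q$-factor has degree $\pm 1$. Meanwhile $\Lambda_+$ is the graph of a $1$-Lipschitz $\varphi\colon\S^{p-1}\to\S^q$ whose image sits in an open hemisphere by Lemma~\ref{lemma:contained hemisphere}, so $\varphi$ is null-homotopic and $[\Lambda_+]=[\S^{p-1}\times\mathrm{pt}]$ in homology. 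Pairing the two classes in $H_\ast(\S^{p-1}\times\S^q)$ yields intersection number $\pm 1$, so $\P_+(V)\cap\Lambda_+\neq\emptyset$ and $V\notin\Omega_\rho$. The principal obstacle throughout is the degree computation for $\tilde G$ in Part $(1)$: uniqueness is remarkably short once the Cauchy--Schwarz observation on $V^\perp$ is in hand, but verifying that the deformation through positive spheres preserves properness and disjointness uniformly will require the most care; once that is secured, the rest of the argument assembles formally.
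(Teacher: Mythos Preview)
The theorem you are asked to prove is a cited result from Guichard--Wienhard; the paper does not supply a proof of it. There is therefore no ``paper's own proof'' to compare against. More importantly, your proposal is not a proof of this statement at all: the repeated references to ``Part~(1)'', ``Part~(2)'', the case split $q\geq p$ versus $q<p$, and the invariant maximal submanifold $M$ show that you are proving Corollary~\ref{theorem:GeometricStructures} instead. The Guichard--Wienhard theorem is stated for an arbitrary word-hyperbolic group $\Gamma$, with no hypothesis that its Gromov boundary be a $(p-1)$-sphere, so there is no maximal $p$-submanifold $M$ available in general and your construction cannot begin.

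Comparing instead with the paper's proof of Corollary~\ref{theorem:GeometricStructures}, your overall architecture is the same: identify $\Omega_\rho$ equivariantly with the bundle $B(M)$ over $M$. Your uniqueness argument and fibre identification match Lemmas~\ref{lemma:dichotomy} and~\ref{lemma:topologyB}. The differences are: for Part~(2) the paper uses the Lefschetz fixed-point theorem on the self-map $\psi^{-1}\circ f|_{\Sigma^q}$ of a $q$-sphere (Lemma~\ref{lemma GW p>q}), whereas your homological intersection argument is a correct alternative. For the existence half of the dichotomy in Part~(1), the paper approximates the degenerate subspace $V^\perp$ by negative-definite $(q{+}1)$-planes $W_n$, each meeting the entire graph $M_+$ in exactly one point, and passes to the limit; this is much shorter than your degree-theory route. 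Your deformation step also has a genuine gap: contracting $\Lambda$ to a totally-geodesic sphere via Lemma~\ref{cor:deformation to constant} does \emph{not} in general preserve the disjointness $\P(V)\cap\Lambda_t=\emptyset$, since the terminal constant value may well lie in the image of the isometry $\psi$ describing $\P_+(V)$ in Lemma~\ref{lemma structure totally isotropic}. A cleaner repair, avoiding deformation altogether, is to note that $\tilde G$ is injective: if $\q(\hat y_1,\cdot)|_V$ and $\q(\hat y_2,\cdot)|_V$ were positively proportional for distinct $y_1,y_2\in\Lambda_+$, then some $\hat y_1-c\hat y_2$ would lie in the negative-semidefinite space $V^\perp$ yet be spacelike by Lemma~\ref{lemma:PositiveSpheres}. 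Injectivity on $(p-1)$-spheres gives degree $\pm 1$ directly.
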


\noindent We first provide an elementary description of the intersections of maximal totally isotropic subspaces with $\bH$.

\begin{lemma}\label{lemma structure totally isotropic}
Let $E=U\oplus W$ be an orthogonal decomposition such that $\q$ restricts to a positive-definite form on $U$ and a negative-definite form on $W$. Endow $U$ and $W$ with the scalar products $\langle\cdot,\cdot\rangle_U=\q|_U$ and $\langle\cdot,\cdot\rangle_W=-\q|_W$. Let $V\in \Isot(E)$.
\begin{enumerate}
\item If $p\leq q+1$, then $V$ is the graph of a linear map $\varphi:U\to W$ which is an isometry onto its image. In any Fermi chart $\P_+(V)\subset\bH_+$ is the graph of an isometric immersion $\psi:\S^{p-1}\to \S^q$.
\item If $p\geq q+1$, then $V$ is the graph of a linear map $\varphi:W\to U$ which is an isometry onto its image. In any Fermi chart $\P_+(V)\subset\bH_+$ is the graph of an isometry $\psi:\Sigma^q\to \S^q$ for some totally-geodesic sphere $\Sigma^q\subseteq \S^{p-1}$.
\end{enumerate}
\end{lemma}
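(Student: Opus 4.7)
The plan is to prove both items by the same scheme: first use the definiteness of $\q$ on $U$ and $W$ to force injectivity of the appropriate coordinate projection on $V$, conclude by dimension count that $V$ is a graph, then extract the isometry property from isotropy by polarization, and finally translate the graph description into Fermi coordinates.

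Concretely, I would first address Item (1). Suppose $p\leq q+1$, so every $V\in\Isot(E)$ has dimension $p$. Consider the projection $\pi_U\colon V\to U$. If $v\in V$ lies in $\ker\pi_U$, then $v\in W$, but $v$ is $\q$-isotropic and $\q|_W$ is negative-definite, so $v=0$. Hence $\pi_U$ is an injective linear map between spaces of equal dimension, and thus an isomorphism. Setting $\varphi:=\pi_W\circ\pi_U^{-1}\colon U\to W$, the subspace $V$ is precisely the graph of $\varphi$. For each $u\in U$ the vector $u+\varphi(u)$ is isotropic, giving $\q(u,u)+\q(\varphi(u),\varphi(u))=0$, that is $\langle u,u\rangle_U=\langle \varphi(u),\varphi(u)\rangle_W$; polarization then shows that $\varphi$ preserves the scalar product, so $\varphi$ is an isometry onto its image. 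Item (2) is proved in an entirely analogous manner, swapping the roles of $U$ and $W$: the projection $\pi_W\colon V\to W$ is injective because $\q|_U$ is positive-definite, dimension count gives bijectivity, and isotropy again forces the resulting map $\varphi\colon W\to U$ to be an isometry onto its image.

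It then remains to match these algebraic graphs with the description of $\P_+(V)\cap\bH_+$ in a Fermi chart. Recall from Subsection \ref{subsection:FermiCoordinates} and Lemma \ref{lemma:warped parameterization} that a Fermi chart adapted to the decomposition $E=\ell\oplus U\oplus V_0$ identifies $\bH_+$ with $\S^{p-1}\times\S^q$, the pair $(u,w)$ representing the oriented isotropic line through $u+w\in U\oplus W$, where $u\in\S^{p-1}\subset U$ and $w\in\S^q\subset W$ are unit vectors for $\langle\cdot,\cdot\rangle_U$ and $\langle\cdot,\cdot\rangle_W$ respectively. In Case (1), every nonzero vector of $V$ has the form $u+\varphi(u)$ with $\|u\|_U=\|\varphi(u)\|_W$ (by the isometry property), so normalising $u$ to the unit sphere yields exactly the graph of $\psi:=\varphi|_{\S^{p-1}}\colon\S^{p-1}\to\S^q$, which is an isometric immersion. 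In Case (2), an analogous normalisation shows that $\P_+(V)\cap\bH_+$ is the graph of $\psi:=\varphi^{-1}|_{\Sigma^q}\colon\Sigma^q\to\S^q$ over $\Sigma^q:=\varphi(\S^q)\subset\S^{p-1}$, which is a totally-geodesic $q$-sphere in $\S^{p-1}$ since it is the unit sphere of the $(q+1)$-dimensional subspace $\varphi(W)\subset U$; the map $\psi$ is an isometry because $\varphi$ is.

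There is no real obstacle in the proof: the content is entirely linear-algebraic. The only mild subtlety is bookkeeping in the Fermi-chart normalisation, namely observing that the isometry property of $\varphi$ is exactly what allows one to represent each lightlike line in $V$ by a pair of unit vectors in $U\times W$, thus realising $\P_+(V)\cap\bH_+$ as a genuine graph over $\S^{p-1}$ (respectively over a totally-geodesic $\Sigma^q\subset\S^{p-1}$).
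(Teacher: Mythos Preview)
Your proof is correct and follows essentially the same approach as the paper's: show that the relevant coordinate projection is injective by definiteness, conclude the graph description by dimension count, derive the isometry property from isotropy via polarization, and then read off the Fermi-chart description by restricting to unit spheres. The paper's proof is simply a terser version of what you wrote.
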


\begin{proof}
For any two vectors, $v_1=(u_1,w_1)$ and $v_2=(u_2,w_2)$,
\begin{equation*}
\q (v_1,v_2)=\langle u_1,u_2\rangle_U-\langle w_1,w_2\rangle_W\ ,
\end{equation*}
so that $V$ is the graph of some isometric linear map $\varphi$ from $U$ to $W$ or from $W$ to $U$, depending on the sign of $(p-q-1)$. Taking the intersections with $\bH$, we see that, if $p\leq q+1$, then $\P_+(V)\subset\bH_+$ is the graph of $\psi=\varphi|_{\S^p}$, and if $p\geq q+1$, then $\P_+(V)\subset\bH_+$ is the graph of the inverse of $\varphi$ restricted to $\Sigma^q=\varphi(W)\cap \S^p$.
\end{proof}

\begin{lemma}\label{lemma GW p>q}
Let $\Lambda$ be a positive sphere in $\bH$ and let $V\in\Isot(E)$ be a maximal totally isotropic subspace. If $p\geq q+1$, then $\P(V)$ intersects $\Lambda$.
\end{lemma}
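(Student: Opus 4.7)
The plan is to translate the intersection question into a fixed-point problem for a contractive self-map of a sphere, and then resolve this by a standard degree-theoretic argument.

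First I would pass to the double cover. Let $\Lambda_+$ denote a lift of $\Lambda$ in $\bH_+$, which by the argument of Lemma \ref{lemma:LiftNonNegativeSpheres} (applied to the definition of positivity in terms of the signature of $x\oplus y\oplus z$) is again a positive $(p-1)$-sphere, and let $\P_+(V)$ be one of the two lifts of $\P(V)$; since $V$ is totally isotropic, the whole of $\P_+(V)$ already sits inside $\bH_+$. Fix a Fermi chart $\bH_+\cong\S^{p-1}\times\S^q$ (cf.\ Lemma \ref{lemma:warped parameterization}). By Lemma \ref{lemma:PositiveSpheres}, in this chart $\Lambda_+$ is the graph of a strictly contractive map $\varphi:\S^{p-1}\to\S^q$. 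By Lemma \ref{lemma structure totally isotropic}(2), which uses the hypothesis $p\geq q+1$, $\P_+(V)$ is the graph of an isometry $\psi:\Sigma^q\to\S^q$ for some totally-geodesic $q$-sphere $\Sigma^q\subseteq\S^{p-1}$.

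A point of intersection $\Lambda_+\cap\P_+(V)$ then corresponds exactly to a $u\in\Sigma^q$ satisfying $\varphi(u)=\psi(u)$, or equivalently to a fixed point of
\begin{equation*}
F\;:=\;\psi^{-1}\circ\varphi|_{\Sigma^q}\;:\;\Sigma^q\longrightarrow\Sigma^q.
\end{equation*}
Since $\psi$ is an isometry and $\varphi$ is strictly contractive, $F$ is itself strictly contractive with respect to the round metric on $\Sigma^q\cong\S^q$.

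The key step is then to show that $F$ admits a fixed point. If $F(x)$ and $F(y)$ were antipodal for some $x\neq y$, we would have $\pi=d(F(x),F(y))<d(x,y)\leq\pi$, a contradiction; so the image of $F$ contains no antipodal pair. Lemma \ref{lemma:contained hemisphere} then shows that $F(\Sigma^q)$ lies in an open hemisphere of $\Sigma^q$, so that $F$ is not surjective, hence $\deg(F)=0$. The Lefschetz fixed-point theorem now gives
\begin{equation*}
L(F)\;=\;1+(-1)^q\deg(F)\;=\;1\;\neq\;0,
\end{equation*}
so $F$ has a fixed point $u\in\Sigma^q$. The point $(u,\psi(u))\in\bH_+$ lies in $\Lambda_+\cap\P_+(V)$, and its projection to $\bH$ exhibits a point of $\Lambda\cap\P(V)$.

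The main obstacle in this argument is really only the geometric bookkeeping: one must check that the graph description of $\P_+(V)$ in Lemma \ref{lemma structure totally isotropic}(2) is compatible with the graph description of $\Lambda_+$ in Lemma \ref{lemma:PositiveSpheres} (both written with the $\S^{p-1}$-factor as domain and the $\S^q$-factor as target), so that the intersection problem really does become the fixed-point problem for $F$ on the round sphere $\Sigma^q$. Once this is done, the rest is formal: strict contractivity plus Lemma \ref{lemma:contained hemisphere} forces the degree to vanish, and Lefschetz supplies the fixed point. Degenerate low-dimensional cases (e.g.\ $q=0$, where a positive sphere in $\partial\Hp$ is forced to be all of $\S^{p-1}$) either fall into the same argument or are trivial.
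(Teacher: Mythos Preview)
Your proof is correct and follows essentially the same approach as the paper: pass to a Fermi chart, write $\Lambda_+$ and $\P_+(V)$ as graphs, reduce the intersection to a fixed point of $\psi^{-1}\circ\varphi|_{\Sigma^q}$, use positivity to rule out antipodal points in the image, conclude degree zero, and apply Lefschetz. The only cosmetic difference is that the paper cites Lemma \ref{cor:deformation to constant} (homotopy to a constant) rather than Lemma \ref{lemma:contained hemisphere} directly to get degree zero, but these are equivalent here.
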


\begin{proof}
Let $\Lambda_+$ be a lift of $\Lambda$ in $\bH_+$. We work in a Fermi chart, so that $\bH_+\cong\S^{p-1}\times \S^q$. In particular, $\Lambda_+$ is the graph of a $1$-Lipschitz map $f:\S^{p-1}\rightarrow\S^q$. By Lemma \ref{lemma structure totally isotropic}, there exists a $q$-dimensional totally geodesic sphere $\Sigma^q$ in $\S^{p-1}$ together with an isometry $\psi: \Sigma^q \to \S^q$ such that $\P_+(V)\cap\bH_+$ is the graph of $\psi$.

Consider the map $g:=\psi^{-1}\circ f_{\vert \Sigma^q}$ from $\Sigma^q$ to itself. Since $\Lambda$ is positive, the image of $g$ does not contain antipodal points so that, by Lemma \ref{cor:deformation to constant}, it is homotopic to the constant map and thus has degree $0$. It then follows by the Lefschetz fixed-point theorem that $g$ has at least one fixed point. The graphs of $\psi$ and $f$ thus intersect so that $\P(V)$ and $\Lambda$ also intersect, as desired.
\end{proof}

This proves the second item in Corollary \ref{theorem:GeometricStructures}. Before completing the proof, we require two more technical results.

\begin{lemma}\label{lemma:dichotomy}
Suppose that $p\leq q$, and let $M$ be a smooth entire graph in $\H$ with asymptotic boundary $\Lambda$. For all $V\in \Isot(E)$, either
\begin{enumerate}
	\item there is a point $x$ in $M$, which is unique, such that $V$ is contained in $x^\bot$, or
	\item $\P(V)$ intersects $\Lambda$.
\end{enumerate}
\end{lemma}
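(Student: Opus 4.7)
The plan is to reformulate (1) and (2) in terms of a single projective subspace and then resolve the dichotomy via a degree argument. Since $p\leq q$, every $V\in\Isot(E)$ has dimension $p$, so one can complete $V$ to a hyperbolic pair by choosing an isotropic $V^*\subset E$ of the same dimension with $V\oplus V^*$ of signature $(p,p)$; its $\q$-orthogonal complement $U_0:=(V\oplus V^*)^\perp$ is then negative-definite of dimension $q+1-p\geq 1$, so $E=V\oplus V^*\oplus U_0$. The condition $V\subset x^\perp$ becomes $\hat x\in V\oplus U_0=V^\perp$, so (1) is equivalent to $M\cap\P(V^\perp)\neq\emptyset$; and since the isotropic vectors of $V^\perp$ are exactly those of $V$ (because $U_0$ is definite), (2) is equivalent to $\Lambda\cap\P(V^\perp)\neq\emptyset$.

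For uniqueness of the point in (1), I lift $M$ to a smooth entire graph $M_+\subset\H_+$. If $\hat x_1,\hat x_2\in V^\perp\cap\Quad$, decomposing $\hat x_i=v_i+u_i$ with $v_i\in V$ and $u_i\in U_0$ yields $\q(\hat x_i,\hat x_i)=\q(u_i,u_i)=-1$ and $\q(\hat x_1,\hat x_2)=\q(u_1,u_2)$. The Cauchy--Schwarz inequality in the negative-definite space $U_0$ gives $\q(u_1,u_2)\geq -1$ with equality iff $u_1=u_2$, while Lemma \ref{lemma:Acausal0} gives $\q(\hat x_1,\hat x_2)\leq -1$ with equality iff $x_1=x_2$; together these force $x_1=x_2$.

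For existence, I assume both (1) and (2) fail, so that the projection
\[
\pi\colon\P_+(E)\setminus\P_+(V^\perp)\to\P_+(V^*),\qquad [\hat x]\mapsto[\hat x_{V^*}],
\]
is defined on $M_+\cup\Lambda_+$. The latter is homeomorphic to a closed $p$-ball with boundary sphere $\Lambda_+\cong\S^{p-1}$, so $\pi|_{\Lambda_+}$ is nullhomotopic as a map into $\P_+(V^*)\cong\S^{p-1}$; I will reach a contradiction by showing it has degree $\pm 1$. To do so, I work in a Fermi chart corresponding to a decomposition $E=U\oplus W$ (with $U$ positive- and $W$ negative-definite), in which $V=\{(u,\varphi(u)):u\in U\}$ for an isometric embedding $\varphi\colon U\to W$ and $V^*$ may be taken to be $\{(u,-\varphi(u)):u\in U\}$. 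Writing $\Lambda_+$ as the graph of a $1$-Lipschitz map $\lambda\colon\S^{p-1}\to\S^q$ (Lemma \ref{lemma:NonNegativeSpheres}), a direct expansion of $\hat x_{V^*}$ identifies $\pi|_{\Lambda_+}$, under the isomorphism $V^*\cong U$ given by $(u,-\varphi(u))\leftrightarrow u$, with the map $u\mapsto[u-\varphi^*(\lambda(u))]\in\P_+(U)$, where $\varphi^*\colon W\to U$ is the adjoint. The straight-line homotopy $F_t(u):=u-t\varphi^*(\lambda(u))$ is nowhere zero on $\S^{p-1}$: for $t<1$ this follows from $\|\varphi^*(\lambda(u))\|\leq\|\lambda(u)\|=1$ and the triangle inequality, while at $t=1$ a vanishing point would force $\lambda(u)=\varphi(u)$, producing a point of $\P(V)\cap\Lambda$ and contradicting the failure of (2). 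Hence $F_t/\|F_t\|$ connects $\pi|_{\Lambda_+}$ to the identity of $\S^{p-1}$, yielding the required non-zero degree.

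The main technical obstacle will be setting up the linear algebra in the Fermi chart carefully enough to write down $\hat x_{V^*}$ explicitly and verify that the straight-line homotopy stays away from zero; once this is done, the rest of the argument reduces to elementary degree theory and the basic geometry of entire graphs established in Sections \ref{sec:prel graphs} and \ref{sec:boundaries}.
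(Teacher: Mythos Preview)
Your proof is correct, but the existence half follows a genuinely different route from the paper. Both arguments begin with the same reformulation --- identifying (1) and (2) with $\P(V^\perp)$ meeting $M$ or $\Lambda$, and handling uniqueness via Lemma~\ref{lemma:Acausal0} together with Cauchy--Schwarz in the negative-definite complement. For existence, however, the paper approximates the degenerate subspace $V^\perp$ by a sequence of negative-definite $(q+1)$-planes $W_n$; each $\P(W_n)$ is a totally-geodesic timelike $q$-sphere, which by the very definition of an entire graph meets $M$ in exactly one point, and compactness of $M\cup\Lambda$ then forces $\P(V^\perp)$ to meet $M\cup\Lambda$. Your argument instead assumes both alternatives fail and derives a contradiction from degree theory: the linear projection onto $V^*$ is then defined on the closed ball $M_+\cup\Lambda_+$, so its restriction to $\Lambda_+$ is nullhomotopic in $\P_+(V^*)\cong\S^{p-1}$, yet the straight-line homotopy $u\mapsto u-t\varphi^*(\lambda(u))$ shows it has degree $\pm 1$. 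The paper's approach is shorter and leans directly on the single-intersection property of entire graphs with timelike spheres; yours makes the topological obstruction explicit and avoids the limiting argument, at the cost of setting up the hyperbolic splitting $V\oplus V^*\oplus U_0$ and computing the projection in a Fermi chart.
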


\begin{proof}
We first show that $\P(V^\bot)$ meets $M\cup\Lambda$. Indeed, since $p\leq q$, the isotropic space $V$ has dimension $p$ and its orthogonal complement $V^\bot$ is a degenerate $(q+1)$-dimensional subspace of $E$ which does not contain any positive-definite line and on which the kernel of $\q$ is exactly $V$. In particular, $V^\bot$ is the limit of some sequence $\seqn{W_n}$ of negative-definite $(q+1)$-dimensional subspaces in $E$. For every $n$ we choose a Fermi chart $\Psi_n:\Upp^p\times \S^q\to\H_+$ such that $\Psi_n(\{N\}\times \S^q)=\P_+(W_n)$. Since any lift $M_+$ of $M$ is an entire graph in every such chart, $M_+$ intersects each $\P_+(W_n)$ in a unique point, and it follows that $M$ intersects each $\P(W_n)$ in a unique point. By compactness of $M\cup\Lambda$, $\P(V^\bot)$ intersects $M\cup\Lambda$, as asserted.

Observe now that $\P(V^\bot)$ intersects $M$ at a point $x$ if and only if $V$ is contained in $x^\bot$. Note, furthermore, that for any two points $\hat x,\hat y$ in $V^\bot$ satisfying $\q(\hat x,\hat x)=\q(\hat y,\hat y)=-1$,
\begin{equation*}
\q (\hat x,\hat y)\geq -1\ ,
\end{equation*}
and uniqueness of $x$ follows by Lemma \ref{lemma:Acausal0}. Finally, if $\P(V^\bot)$ intersects $\Lambda$, then, since the kernel of $\q_{\vert V^\bot}$ is exactly $V$, it follows that $\P(V)$ intersects $\Lambda$, and this completes the proof.
\end{proof}

Recall that the \emph{Stiefel manifold} $\V_{k,n}$ is the space of $k$-tuples of unit vectors in $\R^n$ that are pairwise orthogonal.

\begin{lemma}\label{lemma:topologyB}
Suppose that $p\leq q$, let $M$ be a smooth entire graph in $\H$, and let
\begin{equation}
 B(M):=\{(x,V)\,|\,x\in M,V\in \Isot(x^\bot)\}
\end{equation}
denote the bundle over $M$ whose fiber over $x$ is $\Isot(x^\bot)$. Then:
\begin{enumerate}
\item The fibers of $B(M)$ are diffeomorphic to $\V_{p,q}$.
\item If $\rho:\Gamma\to\PO$ is a representation acting freely and properly discontinuously on $M$, then the quotient $B(M)/\rho(\Gamma)$ is connected unless $p=q$ and $\N M/\rho(\Gamma)$ has vanishing first Stiefel-Whitney class.
\end{enumerate}
\end{lemma}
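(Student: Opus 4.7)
For Part~(1) I apply Lemma~\ref{lemma structure totally isotropic} fibrewise. At each $x\in M$, the space $x^\bot$ has signature $(p,q)$ and decomposes orthogonally as $\T_xM\oplus\N_xM$, with $\T_xM$ positive-definite and $\N_xM$ negative-definite. Since $p\leq q$, Item~(1) of that lemma identifies each maximal isotropic $V\subset x^\bot$ with the graph of a linear map $\varphi:\T_xM\to\N_xM$ which is an isometry onto its image. Choosing a local orthonormal frame $(e_1,\ldots,e_p)$ of $\T M$, such a $\varphi$ is determined by the orthonormal $p$-tuple $(\varphi(e_1),\ldots,\varphi(e_p))$ in $\N_xM$ equipped with the positive-definite form $-\q|_{\N_xM}$, i.e., by a point of $\V_{p,q}$. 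Patching such local trivializations gives $B(M)$ the structure of a $\V_{p,q}$-bundle over $M$, which is Part~(1).

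For Part~(2), if $p<q$ then $\V_{p,q}=\mathsf{O}(q)/\mathsf{O}(q-p)$ is connected, and $M\cong\R^p$ is connected, so $B(M)$ is a connected fiber bundle and hence $B(M)/\rho(\Gamma)$ is connected. The substantive case is $p=q$, where $\V_{p,p}=\mathsf{O}(p)$ has exactly two connected components. Since $M$ is simply connected, both $\T M$ and $\N M$ are orientable over $M$; fixing global orientations of both (equivalently, global orthonormal frames) trivializes the $\pi_0$-bundle of $B(M)\to M$, so $B(M)$ itself has exactly two connected components, distinguished by the sign of $\det\varphi$ relative to these chosen orientations.

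It then remains to compute the action of $\rho(\Gamma)$ on $\pi_0(B(M))\cong\{\pm 1\}$. A direct matrix calculation--writing a maximal isotropic as a matrix $A\in\mathsf{O}(p)$ in the global frames, and differentiating through $\gamma_*$--shows that for each $\gamma\in\Gamma$, the induced permutation is multiplication by $\det(\gamma_*|_{\T_xM})\cdot\det(\gamma_*|_{\N_xM})$, the product of the orientation signs of $\gamma_*$ on the two summands. This defines a homomorphism $\rho(\Gamma)\to\{\pm 1\}$ and hence a $\mathbf{Z}_2$-bundle on $M/\rho(\Gamma)$; the quotient $B(M)/\rho(\Gamma)$ has two components iff this cocycle is trivial, and one component otherwise.

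\textbf{Main obstacle.} The hardest step, and the one I expect to require the most care, is to identify this $\pi_0$-cocycle with $w_1(\N M/\rho(\Gamma))$ alone, rather than the a priori combined class $w_1(\T M/\rho(\Gamma))+w_1(\N M/\rho(\Gamma))$. This should come from exploiting the fact that $\rho(\Gamma)\subset\mathsf{PO}(p,q+1)$ acts by ambient isometries preserving both $M$ and the pseudo-Riemannian structure, coupling the orientation behaviors of the two summands of $\T\H|_M=\T M\oplus\N M$: one must verify, through a careful analysis of the action on the tangent-normal splitting and the connected components of $\mathsf{PO}(p,q+1)$, that the combined class indeed reduces to $w_1(\N M/\rho(\Gamma))$.
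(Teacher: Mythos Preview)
Your treatment of Part~(1) and the case $p<q$ of Part~(2) is essentially identical to the paper's: both apply Lemma~\ref{lemma structure totally isotropic} to identify the fibre $\Isot(x^\bot)$ with linear isometries $\T_xM\to\N_xM$, hence with $\V_{p,q}$, via an orthonormal frame of $\T_xM$.

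For $p=q$, you are in fact more scrupulous than the paper. The paper simply fixes an orientation of $M$, notes that the two components of each fibre then correspond to the two orientations of $\N_xM$, and concludes directly that $B(M)/\rho(\Gamma)$ has two components iff $\N M/\rho(\Gamma)$ is orientable. Your observation that the $\pi_0$-monodromy is $\gamma\mapsto\det(\gamma_*|_{\T_xM})\det(\gamma_*|_{\N_xM})$, so that the governing class is a priori $w_1\big(\T(M/\rho(\Gamma))\big)+w_1\big(\N(M/\rho(\Gamma))\big)$, is correct: the $\pi_0$-bundle of $B(M)\to M$ is naturally the unit bundle of the real line bundle $(\wedge^p\T M)^*\otimes\wedge^p\N M$. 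The paper does not supply the step you flag as the ``main obstacle''---it never checks that $\rho(\Gamma)$ preserves the chosen orientation of $M$, i.e., that the tangential orientation character is trivial. So this is not an obstacle you must overcome in order to match the paper's argument; rather, the paper's own proof elides the same point. Your suggestion to seek the resolution in the ambient $\mathsf{PO}(p,q+1)$-structure is reasonable, but no such argument appears in the paper.
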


\begin{proof}
To prove $(1)$, consider an orthogonal splitting $x^\bot = U \oplus V$ such that $\q$ restricts to a positive-definite form on $U$ and a negative-definite form on $V$. By Lemma \ref{lemma structure totally isotropic}, elements in $\Isot(x^\bot)$ are graphs of linear maps $\varphi:U\to V$ satisfying $\langle \varphi(a),\varphi(b)\rangle_V=  \langle a,b\rangle_U$ for any $a,b\in U$. Fixing an orthonormal basis $(e_1,...,e_p)$ of $U$, such a map is given by an orthonormal $p$-tuple of vectors in $V$, so that $\Isot(x^\bot)$ is diffeomorphic to $\V_{p,q}$, as desired.

To prove $(2)$, observe that if $p<q$ then $\V_{p,q}$ is connected, and therefore so too is $B(M)/\Gamma$. When $p=q$, the fiber $B(M)_x$ has two connected components, and so the quotient $B(M)/\Gamma$ either has one or two connected components. Fix an orientation of $M$. Given $x\in M$, and identifying $\T_xM$ and $\N_xM$ respectively with $U$ and $V$, we see that the two connected components of the fiber correspond to the two orientations of $\N_x M$. It follows that the quotient $B(M)/\Gamma$ has two connected components if and only if $\N M/\rho(\Gamma)$ carries a global orientation, that is, if and only if the $\mathsf O(q)$-bundle $\N M/\rho(\Gamma)$ reduces to a $\mathsf{SO}(q)$-bundle. Since the obstruction to this is precisely the first Stiefel-Whitney class, this proves $(2)$.
\end{proof}

We now conclude the proof of Corollary \ref{theorem:GeometricStructures}.

\begin{proof}[Proof of Corollary \ref{theorem:GeometricStructures}]
Let $B(M)$ be the bundle constructed in Lemma \ref{lemma:topologyB}, where $M$ is the complete maximal $\rho(\Gamma)$-invariant $p$-submanifold given by Corollary \ref{cor:intro2}.

Assume $q\geq p$. Since elements in $\Isot(x^\bot)$ have dimension $p$, the inclusion of $x^\bot$ into $E$ induces an embedding of $\Isot(x^\bot)$ into $\Isot(E)$. Taking this embedding fiberwise yields a map $\delta: B(M) \to \Isot(E)$. We claim that $\delta$ is a homeomorphism onto its image, and $\delta(B(M))=\Omega_\rho$. Indeed, the homogeneous space $\Isot(E)$ is diffeomorphic to $\V_{p,q+1}$, so has dimension  $\dim(\V_{p,q})+q$ which is thus equal to the dimension of $B(M)$. By Lemma \ref{lemma:dichotomy}, the map $\delta$ is a bijection onto the open set $\Omega_\rho$. Since $\delta$ is continuous it is a homeomorphism by invariance of domain theorem.

Now, $\delta$ is clearly equivariant for the actions of $\rho(\Gamma)$ on $B(M)$ and $\Omega_\rho$. The conclusions of Theorem \ref{theorem:GeometricStructures} then follow by Lemma \ref{lemma GW p>q} and Lemma \ref{lemma:topologyB}, which can be applied since the action of $\rho(\Gamma)$ on $M$ is properly discontinuous by Corollary \ref{cor:intro2}, and is free by the assumption that $\Gamma$ is torsion-free.
\end{proof}

\section{Renormalized area and minimal lagrangian extensions}\label{sec:RenormalizedArea}

We conclude this paper with a discussion of the applications of Theorem \ref{thm:DecayOfSecondFundamentalForm}. Note first that Corollary \ref{cor:renarea} is an immediate consequence of Theorem \ref{thm:DecayOfSecondFundamentalForm} for $p=2$, together with the definition of renormalized area given in \eqref{eqn:RenormalizedArea}. In this section, we prove Corollary \ref{cor:minlag}.

\subsection{The $\mathrm{PSL}(2,\R)$ model of anti-de Sitter space}

We first recall the $\mathrm{PSL}(2,\R)$ model of $\Ads$ (see \cite{mess} or \cite[Section 3]{surveyandreafra} for more details). Following the notation of Section \ref{subsec:pseudohyperbolicspace}, we take $E=\mathcal M(2,\R)$ to be the vector space of $2\times2$ matrices, and consider the following bilinear form of signature $(2,2)$
\begin{equation}\label{eq:bilformads}
\q(A_1,A_2)=-\frac{1}{2}\tr(A_1\cdot\mathrm{adj(A_2)})~,
\end{equation}
where $\mathrm{adj}$ here denotes the adjugate matrix. Since $A\cdot\mathrm{adj(A)}=\det(A)\mathrm{Id}$, the quadratic form induced by $\q$ is $\q(A,A)=-\det(A)$. With this choice of $V$ and $\q$, $\Ads_+$ identifies with $\mathrm{SL}(2,\R)$, and $\Ads$ identifies with $\mathrm{PSL}(2,\R)$.

In this model, $\bAds$ thus identifies with the space of projective classes of $2\times2$ matrices of rank $1$. Although we already know from Section \ref{subsection:FermiCoordinates} that $\bAds_+$ is homeomorphic to $\S^1\times \S^1$, and thus $\bAds$, being the quotient of $\bAds_+$ by the antipodal map,  is also in this particular dimension homeomorphic to $\S^1\times \S^1$, we will need to use here a different parameterization for $\bAds$. This is given by the map
\begin{equation*}
\Xi:\bAds\to\P(\R^2)\times\P(\R^2),
\end{equation*}
defined by
\begin{equation}\label{defi Xi}
\Xi([A])=(\mathrm{Im}(A),\Ker(A)),
\end{equation}
where, for any rank $1$ matrix $A$, $[A]$ denotes its projective class.

Finally, since multiplication on the left and on the right by matrices of unit determinant preserves $\q$, there is a monomorphism of $\mathrm{PSL}(2,\R)\times \mathrm{PSL}(2,\R)$ into the group $\PO$ of isometries of $\Ads$. For dimensional reasons, it turns out that its image is the identity component of $\PO$. The parameterization $\Xi^{-1}$ is clearly equivariant with respect to the monomorphism $\mathrm{PSL}(2,\R)\times \mathrm{PSL}(2,\R)\to\PO$, where $\mathrm{PSL}(2,\R)\times \mathrm{PSL}(2,\R)$ acts on $\P(\R^2)\times\P(\R^2)$ by the obvious product action.

\begin{lemma}
Given any orientation-preserving circle homeomorphism $f:\P(\R^2)\to\P(\R^2)$, the set $\Xi^{-1}(\mathrm{graph}(f))$ is a positive 1-sphere in $\bAds$.
\end{lemma}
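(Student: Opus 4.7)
The plan is to verify the two conditions in Definition \ref{defi:non negative sphere} for a positive $(p-1)$-sphere with $p=2$: that $\Xi^{-1}(\mathrm{graph}(f))$ is homeomorphic to $\S^1$, and that every triple of distinct points is positive in the sense of Definition \ref{defi:positive triple}. The topological statement will be immediate once one notes that $\Xi$ is itself a homeomorphism: a rank-one matrix $A$ is recovered up to scale from the pair $(\opIm(A), \Ker(A))$ by writing $A = vw^T$ with $\opIm(A) = [v]$ and $\Ker(A) = [Jw]$, where $J$ denotes the $90^\circ$ rotation of $\R^2$. Since $\mathrm{graph}(f)$ is homeomorphic to $\P(\R^2) \cong \S^1$, so is its preimage under $\Xi$.

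The key step will be to compute $\q$ explicitly on rank-one matrices. Using the identity $\mathrm{adj}(vw^T) = (Jw)(Jv)^T$, a short calculation from \eqref{eq:bilformads} yields
\begin{equation*}
\q(v_1 w_1^T,\, v_2 w_2^T) = -\tfrac{1}{2}\,\omega(v_1, v_2)\,\omega(w_1, w_2),
\end{equation*}
where $\omega(a, b) := a^T J b$ is the standard symplectic form on $\R^2$. Setting $\tilde{w}_i := Jw_i$, so that $\Ker(A_i) = [\tilde{w}_i] = f([v_i])$, and exploiting the $\mathsf{SL}(2,\R)$-invariance of $\omega$ to rewrite $\omega(w_1, w_2) = \omega(\tilde{w}_1, \tilde{w}_2)$, we obtain the graph-invariant expression
\begin{equation*}
\q(A_1, A_2) = -\tfrac{1}{2}\,\omega(v_1, v_2)\,\omega(\tilde{w}_1, \tilde{w}_2).
\end{equation*}

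To conclude, I would apply the Gram determinant criterion used in the proof of Lemma \ref{lemma:NonNegativeSpheres}. For three distinct points $[A_1], [A_2], [A_3]$ in $\Xi^{-1}(\mathrm{graph}(f))$, each $A_i$ is isotropic for $\q$, so the diagonal of the Gram matrix $G_{ij} := \q(A_i, A_j)$ vanishes and
\begin{equation*}
\det(G) = 2\,\q(A_1, A_2)\,\q(A_2, A_3)\,\q(A_1, A_3) = -\tfrac{1}{4}\,\Pi_v\,\Pi_{\tilde{w}},
\end{equation*}
where $\Pi_v := \omega(v_1, v_2)\,\omega(v_1, v_3)\,\omega(v_2, v_3)$ and $\Pi_{\tilde{w}}$ is defined analogously. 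Antisymmetry of $\omega$ shows that the sign of $\Pi_v$ depends only on the cyclic order of the projective points $[v_1], [v_2], [v_3]$ on $\P(\R^2) \cong \S^1$, and likewise for $\Pi_{\tilde{w}}$. Since $f$ is orientation-preserving it preserves cyclic order, so $\Pi_v$ and $\Pi_{\tilde{w}}$ will have the same non-zero sign, giving $\det(G) < 0$. As in the proof of Lemma \ref{lemma:NonNegativeSpheres}, three distinct isotropic lines with negative Gram determinant must span a three-dimensional subspace of signature $(2,1)$, so the triple is positive.

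The main obstacle will be the careful bookkeeping of signs -- in particular, verifying that the orientation-preservation of $f$ truly corresponds to the cyclic-order interpretation of $\Pi_v$ -- but once the symplectic formula for $\q$ is in place, everything else is routine.
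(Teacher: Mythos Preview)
Your proof is correct, and the sign bookkeeping checks out: the adjugate identity $\mathrm{adj}(vw^T)=(Jw)(Jv)^T$ is right, the formula $\q(A_1,A_2)=-\tfrac12\omega(v_1,v_2)\omega(\tilde w_1,\tilde w_2)$ follows as you say, and the Gram determinant then equals $-\tfrac14\Pi_v\Pi_{\tilde w}$. Since scaling each $v_i$ contributes a square factor to $\Pi_v$ while a transposition flips its sign, $\mathrm{sgn}(\Pi_v)$ is a well-defined cyclic-order invariant of $[v_1],[v_2],[v_3]$, and orientation-preservation of $f$ forces $\Pi_v\Pi_{\tilde w}>0$, hence $\det(G)<0$. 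Three isotropic lines with negative Gram determinant span a $3$-plane whose form has an odd number of negative eigenvalues and cannot be negative-definite, so the signature is $(2,1)$.

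The paper's argument is different and somewhat slicker: rather than computing $\q$ on rank-one matrices, it uses the $3$-transitivity of $\opPSL(2,\R)$ on positively-oriented triples in $\P(\R^2)$. Since $f$ preserves orientation, both triples $(x_1,y_1,z_1)$ and $(f(x_1),f(y_1),f(z_1))$ can be moved by $\opPSL(2,\R)\times\opPSL(2,\R)$ to the same standard triple, after which the three matrices are written down explicitly and their span is identified as the traceless matrices, i.e.\ $\opId^\perp$, which has signature $(2,1)$ since $\q(\opId,\opId)<0$. Your approach trades this normalization for an explicit symplectic formula for $\q$ on rank-one matrices, at the cost of the sign-tracking you flagged; the paper's approach trades the computation for a single concrete check but hides where orientation-preservation enters.
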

\begin{proof}
Let $x,y,z$ be a triple of pairwise distinct points in $\bAds$. By definition, we need to show that $x\oplus y\oplus z$ has signature $(2,1)$. Denote $\Xi(x)=(x_1,x_2)$, $\Xi(y)=(y_1,y_2)$ and $\Xi(z)=(z_1,z_2)$. Observe that, since $\Xi(x)$, $\Xi(y)$ and $\Xi(z)$ are on the graph of $f$ and $x,y,z$ are pairwise distinct, $x_1,y_1$ and $z_1$ are pairwise distinct. Likewise, since $f$ is injective, $x_2,y_2$ and $z_2$ are pairwise distinct.  Now, since $\mathrm{PSL}(2,\R)$ acts transitively on oriented triples in $\P(\R^2)$, using the action of $\mathrm{PSL}(2,\R)\times \mathrm{PSL}(2,\R)$ we can assume that
\begin{equation*}
x_1=x_2=\begin{bmatrix} 0 \\ 1 \end{bmatrix}\qquad y_1=y_2=\begin{bmatrix} 1 \\ 1 \end{bmatrix} \qquad y_1=y_2=\begin{bmatrix} 1 \\ 0 \end{bmatrix}~.
\end{equation*}
From the definition of $\Xi$ in \eqref{defi Xi}, it follows immediately that
\begin{equation*}
x=\begin{bmatrix} 0 & 0 \\ 1 & 0 \end{bmatrix}\qquad y=\begin{bmatrix} 1 & -1 \\ 1 & -1 \end{bmatrix}\qquad z=\begin{bmatrix} 0 & 1 \\ 0 & 0 \end{bmatrix}~.
\end{equation*}
The span of $x$, $y$ and $z$ is the three-dimensional subspace of traceless matrices in $\mathcal M(2,\R)$, which, by \eqref{eq:bilformads}, is the $\q$-orthogonal complement of the identity matrix. Since $\q(\mathrm{id},\mathrm{id})<0$, the space $x\oplus y\oplus z$ has signature $(2,1)$, as desired.
\end{proof}

\subsection{Minimal lagrangian extensions}

Let us now outline the construction, first developed in \cite{bonschl}, of minimal Lagrangian diffeomorphisms of the hyperbolic plane from maximal surfaces in $\Ads$.

\begin{definition}
A diffeomorphism $F:\Htwo\to\Htwo$ is minimal Lagrangian if it is area-preserving and its graph is a minimal surface in $\Htwo\times\Htwo$.
\end{definition}

Given a maximal surface in $\AdS$, or more generally any spacelike surface, one can define two \emph{Gauss maps} $\pi_1,\pi_2:M\to\Htwo$ as follows. Given any point $p=[A_p]\in \AdS$, which we think as the projective class of a matrix $A_p\in \mathcal M(2,\R)$ with $\det(A)=1$, let $\nu(p)$ be a unit normal vector to $M$ at $p$. Since $M$ is spacelike, $\nu(p)$ is represented by an element $N_p$ of $\mathcal M(2,\R)$, well-defined up to a sign, satisfying $\q(A_p,N_p)=0$ and $\q(N_p,N_p)=-1$. Now, since $\q$ is preserved by multiplication by matrices of unit determinant, $\q(\mathrm{id},A_p^{-1}N_p)=0$ and $\q(A_p^{-1}N_p,A_p^{-1}N_p)=-1$. Likewise $\q(\mathrm{id},N_pA_p^{-1})=0$ and $\q(N_pA_p^{-1},N_pA_p^{-1})=-1$. By \eqref{eq:bilformads}, this means that  $A_p^{-1}N_p$ and $N_pA_p^{-1}$ are traceless matrices with determinant 1, and, by the Cayley-Hamilton theorem, their projective classes in $\mathrm{PSL}(2,\R)$ have order two. We define
\begin{equation*}
\pi_1(p)=\mathrm{Fix}([N_pA_p^{-1}])\qquad \text{and}\qquad \pi_2(p)=\mathrm{Fix}([A_p^{-1}N_p]),
\end{equation*}
where $\mathrm{Fix}$ denotes the unique fixed point of the action of $\Htwo$, in the upper half-plane model, of an elliptic element of $\mathrm{PSL}(2,\R)$.

The key point for the construction of minimal Lagrangian maps is then the following identity for the pull-backs of the hyperbolic metric of $\Htwo$ via the two Gauss maps $\pi_1$ and $\pi_2$:
\begin{equation}\label{eq:pullback formula}
(\pi_1)^*\g_{\Htwo}=\I((\mathrm{id}-JB)\cdot,(\mathrm{id}-JB)\cdot)\qquad \text{and}\qquad (\pi_2)^*\g_{\Htwo}=\I((\mathrm{id}+JB)\cdot,(\mathrm{id}+JB)\cdot)~,
\end{equation}
where as usual $\I$ denotes the first fundamental form of $M$ and $B$ its shape operator, and $J$ denotes the almost-complex structure on $M$ associated to $\I$. Here $B$ is considered as a smooth section of the bundle of endomorphisms of $\T M$, and the same holds for $J$.
For a proof of \eqref{eq:pullback formula}, see \cite[Lemma 3.16]{krasschl}, \cite[Section 6.2]{barbotkleinian} or  \cite[Proposition 6.3.7]{surveyandreafra}.

Now, recall that an orientation-preserving circle homeomorphism $f:\P(\R^2)\to\P(\R^2)$ is \emph{quasisymmetric} if it admits a quasiconformal extension to $\Htwo$. The key properties required to construct minimal Lagrangian extensions of quasisymmetric circle homeomorphisms are summarized in the following lemma, proved in \cite{bonschl}.

\begin{lemma}\label{lemma extension}
Let $f:\P(\R^2)\to\P(\R^2)$ be any quasisymmetric circle homeomorphism, and let $M$ be the  complete maximal surface in $\AdS$ with $\partial_\infty M=\Xi^{-1}(\mathrm{graph}(f))$.
\begin{enumerate}
\item For $i=1,2$, $\pi_i:M\to\Htwo$ is a diffeomorphism;
\item $F:=\pi_2\circ \pi_1^{-1}$ is a quasiconformal minimal Lagrangian diffeomorphism of $\Htwo$ whose continuous extension to $\partial_\infty\Htwo\cong \P(\R^2)$ equals $f$.
\end{enumerate}
\end{lemma}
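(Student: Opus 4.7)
The plan is to follow Bonsante--Schlenker \cite{bonschl}, organising the argument around the principal curvatures $\pm\lambda$ (with $\lambda\geq 0$) of $M$; these are well defined because maximality in dimension $2$ forces the shape operator $B$ to be symmetric and traceless. By \eqref{eq:pullback formula}, $\det(\mathrm{id}\mp JB)=1-\lambda^2$, so $\pi_1,\pi_2$ are local diffeomorphisms at a point if and only if $\lambda<1$ there, and both the completeness of the pullback metrics and the quasiconformality of $F$ will follow from the uniform estimate $\sup_M\lambda<1$.

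First I would establish the pointwise bound $\lambda<1$ on $M$. Combining the Codazzi equation with the Gauss equation for $M\subset\AdS$ (the latter giving the intrinsic curvature $K_M=-1-\lambda^2$) one obtains, at points where $0<\lambda<1$, an elliptic differential inequality of the schematic form $\Delta_M\log(1-\lambda^2)\geq c\,\lambda^2$ for some $c>0$. Since $M$ is complete with $\|\opII\|$ uniformly bounded by Theorem~\ref{theorem:Ishihara}, $M$ has bounded geometry, so the Omori--Yau maximum principle applies and forces $\sup_M\lambda\leq 1$; the strong maximum principle then rules out $\lambda=1$ being attained on $M$.

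The main obstacle is to upgrade this to the uniform bound $\sup_M\lambda\leq 1-\epsilon$, and this is precisely where quasisymmetry enters. I would argue by contradiction. If $\lambda(x_n)\to 1$ along a divergent sequence $\{x_n\}\subset M$, I apply Theorem~\ref{thm:degenerate} to the translates $\{g_n\cdot M\}$, where $g_n$ is an isometry of $\AdS$ sending $x_n$ to a fixed basepoint and normalising the tangent plane. In the product parametrisation $\Xi$ of $\bAds$, each translated boundary $g_n\cdot\Lambda$ is the graph of a $K$-quasisymmetric homeomorphism $f_n$ of $\P(\R^2)$ with the same $K$ as $f$ (quasisymmetry being preserved under pre- and post-composition by M\"obius maps). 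Since $K$-quasisymmetric homeomorphisms form a normal family, $\{f_n\}$ subconverges uniformly to a $K$-quasisymmetric $f_\infty$, whose graph is again a positive $1$-sphere; the degenerate alternative in Theorem~\ref{thm:degenerate}, which would require a pair of antipodal points in the limit boundary, is therefore excluded. Hence $\{g_n\cdot M\}$ subconverges smoothly to a complete maximal surface on which $\lambda$ attains the value $1$ at the basepoint, contradicting the pointwise strict bound obtained above.

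With $\sup_M\lambda<1$ in hand, each pullback metric $\pi_i^*g_{\Htwo}$ is bi-Lipschitz equivalent to the induced complete metric $\I$ on $M$ and is thus itself complete. Each $\pi_i:(M,\pi_i^*g_{\Htwo})\to(\Htwo,g_{\Htwo})$ is then a local isometry from a complete, simply connected surface (recall $M\cong\R^2$ as an entire graph in any Fermi chart) to the simply connected $\Htwo$, hence a diffeomorphism, which establishes (1). For (2), the identity $\det\bigl((\mathrm{id}+JB)(\mathrm{id}-JB)^{-1}\bigr)=1$ shows that $F=\pi_2\circ\pi_1^{-1}$ is area-preserving, while the bound $\sup_M\lambda<1$ yields quasiconformality directly from \eqref{eq:pullback formula}; minimality of the graph of $F$ in $\Htwo\times\Htwo$ is the standard consequence of the identification of minimal Lagrangian maps with pairs of Gauss maps of maximal surfaces, see \cite[Lemma 3.16]{krasschl}. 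Finally, the map $\Xi$ extends continuously to $\bAds$, and on $\partial_\infty M=\Xi^{-1}(\mathrm{graph}(f))$ the Gauss maps $\pi_1,\pi_2$ extend as the first- and second-coordinate projections $\mathrm{graph}(f)\to\P(\R^2)$; thus $F=\pi_2\circ\pi_1^{-1}$ extends continuously to $f$ on $\P(\R^2)$.
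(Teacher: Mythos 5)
Your deduction of items (1) and (2) from a uniform bound $\sup_M\lambda\leq 1-\epsilon$ follows the same lines as the paper (completeness of $\pi_i^*\g_{\Htwo}$, the Jacobian identity \eqref{eq:jacobian}, quasiconformality, and the Hopf-differential argument for minimality), and the paper itself does not reprove that bound: it simply cites \cite[Theorem 1.12, Proposition 5.2]{bonschl}. The gap is in your attempted re-derivation of the bound, at two places. First, the pointwise step: from $\Delta_M\log\lambda=2K_M=2(\lambda^2-1)$ (note the Gauss equation in $\AdS$ gives $K_M=-1+\lambda^2$, not $-1-\lambda^2$), the strong maximum principle only yields the dichotomy $\lambda<1$ everywhere \emph{or} $\lambda\equiv 1$. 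The second branch is realized: there is a complete flat maximal surface in $\AdS$ with $\lambda\equiv 1$ (the ``horospherical'' example of \cite{bonschl}), so ``$\lambda=1$ is never attained'' is false for general complete maximal surfaces and must be excluded using the hypothesis on the boundary, which your argument does not do.

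Second, and more seriously, the compactness step: the assertion that $K$-quasisymmetric circle homeomorphisms form a normal family, so that $f_n=B_n\circ f\circ A_n^{-1}$ subconverges uniformly to a $K$-quasisymmetric homeomorphism, is false without a normalization. Quasisymmetry constants are Möbius-invariant, and if $A_n$ or $B_n$ diverges in $\mathrm{PSL}(2,\R)$ the graphs of $f_n$ can converge in the Hausdorff sense to a ``step-function'' relation (a null quadrilateral, or a union of two lightlike leaves), i.e.\ exactly to a boundary containing lightlike segments --- which is what you need to rule out. Your geometric normalization (basepoint and tangent plane fixed) does force smooth subconvergence of the surfaces, by Theorem \ref{theorem:PropernessOfTau}, so the degenerate alternative of Theorem \ref{thm:degenerate} is excluded for free; but it does not, by itself, prevent the boundary curves from limiting onto a curve with lightlike segments (equivalently, onto the boundary of the flat $\lambda\equiv 1$ surface), since e.g.\ conjugates $C_nfC_n^{-1}$ by diverging Möbius maps have the same quasisymmetry constant while their graphs degenerate. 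Ruling this out is precisely where quasisymmetry must be used quantitatively --- for instance via a cross-ratio distortion argument applied to four points straddling the two jumps of the limiting relation, or via the width-of-the-convex-core argument of \cite{bonschl} --- and this is the actual content of the result you are implicitly reproving. As written, your contradiction argument assumes the conclusion of that step.
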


Since Lemma \ref{lemma extension} is well-known, we only outline here the proof and the fundamental references, together with some observations which will be important for the proof of Corollary \ref{cor:minlag}.

Let $B$ denote the shape operator of $M$, as above, and let $\pm\lambda$ be the eigenvalues of $B$. By Ishihara's bound on the norm of the second fundamental form (Theorem \ref{theorem:Ishihara}) of a complete maximal surface, $\lambda^2\leq 1$. Moreover, by \cite[Theorem 1.12, Proposition 5.2]{bonschl}, if $\partial_\infty M=\Xi^{-1}(\mathrm{graph}(f))$ for $f$ a quasisymmetric circle homeomorphism, then $\lambda^2\leq 1-\epsilon$ for some $\epsilon>0$.

Now, since $B$ is a symmetric and traceless endomorphism of $TM$, so too is $JB$. A direct computation shows that
\begin{equation}\label{eq:jacobian}
\det(\mathrm{id}\pm JB)=1+\det(B)=1-\lambda^2\in (\epsilon,1]~.
\end{equation}
The identity \eqref{eq:jacobian} has several consequences. First, it implies that $\pi_1$ and $\pi_2$ are local diffeomorphisms. By \cite[Proposition 3.17]{bonschl}, $\pi_l$ (resp. $\pi_r$) extends continuously to the maps $\partial_\infty M\to \partial_\infty\Htwo\cong \P(\R^2)$ sending $\Xi^{-1}(x_1,x_2)$ to $x_1$ (resp. $x_2$). As a consequence, $\pi_l$ and $\pi_r$ are global diffeomorphisms, and so too is $F:=\pi_2\circ \pi_1^{-1}$. Moreover $F$ extends continuously to $f$.

Second, by \eqref{eq:jacobian} and the easy observation that $\mathrm{tr}(\mathrm{id}\pm JB)=2$, the eigenvalues of $\mathrm{id}\pm JB$ are bounded above and below by positive constants. Hence the quasiconformal dilatation of $\pi_l$ and $\pi_r$, which equals the ratio between these eigenvalues, is bounded. This shows that $\pi_l$ and $\pi_r$ are quasiconformal, and so too is $F$.

Third, \eqref{eq:jacobian} also shows that the Jacobian determinants of $\pi_1$ and $\pi_2$ coincide. Hence $F$ is area-preserving.
To complete the proof of Lemma \ref{lemma extension}, it thus only remains to show that the graph of $F$ is minimal in $\Htwo\times\Htwo$. This follows easily from  \eqref{eq:pullback formula}, which can be rewritten as:
\begin{equation*}
(\pi_i)^*\g_{\Htwo}=(1+\det(B))\I+(-1)^i\I(2JB\cdot,\cdot)~.
\end{equation*}
Since $JB$ is symmetric and traceless, this shows that the Hopf differential $\mathrm{Hopf}(\pi_i)$ is the unique quadratic differential on $M$ whose real part equals $(-1)^i\I(2JB\cdot,\cdot)$. By the Codazzi equation, $\mathrm{Hopf}(\pi_i)$ is holomorphic. Hence $\pi_i$ is a harmonic diffeomorphism. Moreover $\mathrm{Hopf}(\pi_1)=-\mathrm{Hopf}(\pi_2)$, showing that $(\pi_1,\pi_2):M\to\Htwo\times\Htwo$ is a conformal harmonic embedding, hence its image is a minimal surface, and therefore   the graph of $F$ is minimal.

\subsection{Proof of Corollary \ref{cor:minlag}}

We are now ready to provide the proof of Corollary \ref{cor:minlag}.

\begin{proof}[Proof of Corollary \ref{cor:minlag}]
Let $f$ be a $C^{3,\alpha}$ circle diffeomorphism. In particular, $f$ is quasisymmetric (\cite[Chapter 16]{zbMATH01399773}). Recall that the modulus of the Beltrami differential of a quasiconformal diffeomorphism $F:\Htwo\to\Htwo$ satisfies the identity
\begin{equation}\label{eq:quasiconformal1}
|\mu(p)|=\frac{K(p)-1}{K(p)+1}~,
\end{equation}
where $K(p)$ is the quasiconformal dilatation of $F$ at $p$ (\cite[Chapter 1]{zbMATH01399773}). Moreover, when $F=\pi_2\circ \pi_1^{-1}$ is the quasiconformal minimal Lagrangian extension of $f$ constructed in \cite{bonschl} (c.f. Lemma \ref{eq:jacobian}), a direct computation based on \eqref{eq:pullback formula} (see \cite[Proposition 5.5]{andreaJEMS}) shows that
\begin{equation}\label{eq:quasiconformal2}
K(p)=\left(\frac{1+\lambda(\pi_1^{-1}(p))}{1-\lambda(\pi_1^{-1}(p))}\right)^2~,
\end{equation}
where as usual $\pm\lambda$ are the principal curvatures of the complete maximal surface $M$. From \eqref{eq:quasiconformal1} and \eqref{eq:quasiconformal2}, and recalling that, if $\opII$ denotes the second fundamental form of $M$, then $\|\opII\|^2=2\lambda^2$, we obtain
\begin{equation}\label{eq:quasiconformal3}
|\mu(p)|=\frac{2|\lambda\circ \pi_1^{-1}|}{1+(\lambda\circ \pi_1^{-1})^2}\leq C(\|\opII\|\circ\pi_1^{-1})~.
\end{equation}
Finally, using \eqref{eq:pullback formula} and \eqref{eq:jacobian}, we see that $\pi_1^*\mathrm{dArea}_{\Htwo}\leq \mathrm{dArea}_{M}$. Therefore, by Corollary \ref{cor:renarea}, we obtain
\begin{equation*}
\int_{\Htwo}|\mu|^2\mathrm{dArea}_{\Htwo}\leq C\int_M\|\opII\|^2\mathrm{dArea}_{M}<+\infty\ ,
\end{equation*}
and this completes the proof.
\end{proof}

\bibliographystyle{alpha}
\bibliographystyle{ieeetr}
\bibliography{biblioSST.bib}
\end{document}